\newtheorem{theorem}{Theorem}[section]
\newtheorem*{thm}{Theorem}
\newtheorem{lemma}[theorem]{Lemma}
\newtheorem{proposition}[theorem]{Proposition}
\newtheorem{corollary}[theorem]{Corollary}
\theoremstyle{definition}
\newtheorem{definition}[theorem]{Definition}
\newtheorem{construction}[theorem]{Construction}
\newtheorem{convention}[theorem]{Convention}
\newtheorem{notation}[theorem]{Notation}
\newtheorem{warning}[theorem]{Warning}
\newtheorem{remark}[theorem]{Remark}
\newtheorem*{question}{Question}
\newcommand{\hf}{\textup{H}\underline{\mathbb{F}}}
\newcommand{\rpi}{\underline{\pi}}
\newcommand{\fpi}{\pi^{C_2}}
\newcommand{\rpis}{\rpi_{\star}}
\newcommand{\F}{\mathbb{F}}
\newcommand{\uF}{\underline{\mathbb{F}}}
\newcommand{\bpr}{\textnormal{BP}\mathbf{R}}
\newcommand{\emu}{\textnormal{E}\mu_2}
\newcommand{\hz}{\textup{H}\underline{\mathbb{Z}}}
\newcommand{\mur}{\textup{MU}\mathbf{R}}
\newcommand{\ev}{\textup{Even}(S^0)}
\newcommand{\arep}{\mathcal{A}_{\star}}
\newcommand{\rarep}{\overline{\mathcal{A}}_{\star}}
\DeclareMathOperator*{\hocolim}{hocolim}
\DeclareMathOperator*{\holim}{holim}
\DeclareMathOperator*{\colim}{colim}
\begin{document}
\title{Power operations for $\hf_2$ and a cellular construction of $\bpr$}
\author{Dylan Wilson}
\maketitle
\begin{abstract} We study some power operations for ordinary $C_2$-equivariant
homology with coefficients in the constant Mackey functor $\underline{\F}_2$. In addition to a few foundational
results, we calculate the action of these power operations on a $C_2$-equivariant dual Steenrod
algebra. As an application, we give a cellular construction of the $C_2$-spectrum $\bpr$ and deduce
its slice tower.
\end{abstract}
\tableofcontents
\newpage

\section*{Introduction}
From a user's perspective, the primary goal of this paper is to construct equivariant
power operations for $\hf_2$ and compute their action on an equivariant 
dual Steenrod algebra.
This comprises the bulk of the work below, and the
main results are contained in Theorem \ref{omnibus} and Theorem \ref{action} below. They
can be summarized as follows:

\begin{thm} If $X$ is a $C_2$-spectrum admitting an equivariant symmetric multiplication,
$\mathbb{P}_2(X) \to X$, then $(\hf_2)_{\star}X$ admits natural operations $Q^{n\rho}$
of degree $n\rho$
for $n \in \mathbb{Z}$
and satisfying analogues of the standard properties of the
non-equivariant Dyer-Lashof operations as in, e.g.,  \cite[Ch. III]{Hinfty}.
\end{thm}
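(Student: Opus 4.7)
The plan is to adapt the classical construction of Dyer--Lashof operations via extended powers (see \cite[Ch. III]{Hinfty}) to the $C_2$-equivariant setting. The two main inputs are: (i) the $H_\infty$-type structure on $X$ encoded by the map $\xi : \mathbb{P}_2(X)\to X$; and (ii) a sufficiently complete calculation of the $\hf_2$-homology of the equivariant binary extended power $\mathbb{P}_2(S^V)$ of a representation sphere.

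First, suppose $x\in (\hf_2)_V X$ is represented by $x:S^V\to \hf_2\wedge X$. Its external square $x^{\wedge 2}$ is a $\Sigma_2$-equivariant map with respect to the permutation action on both sides, and passing to $E_{C_2}\Sigma_2 \ltimes_{\Sigma_2}(-)$ yields
\[
\mathbb{P}_2(S^V) \longrightarrow \mathbb{P}_2(\hf_2\wedge X) \xrightarrow{\mu \wedge \xi} \hf_2 \wedge X.
\]
Thus every class in $(\hf_2)_\star \mathbb{P}_2(S^V)$ produces a natural operation on $(\hf_2)_V X$.

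Second, I would specialise to $V=n\rho$, because representation spheres on integral multiples of $\rho$ are precisely the cells out of which $E_{C_2}\Sigma_2$ is naturally built: the $C_2$-space $E_{C_2}\Sigma_2/\Sigma_2$ admits an equivariant CW filtration whose associated graded is a wedge of spheres $S^{k\rho}$. Combining this with the freeness and structure results for $\hf_2$-homology cited earlier in the paper, $(\hf_2)_\star \mathbb{P}_2(S^{n\rho})$ is a free $(\hf_2)_\star$-module on generators $e_k \otimes x^{\wedge 2}$, $k \geq 0$. Setting $Q^{(n+k)\rho}(x) := \xi_\ast(e_k \otimes x^{\wedge 2})$ then produces the desired operations; to allow the index in $Q^{m\rho}$ to range over all of $\mathbb{Z}$ (with many such operations vanishing automatically for degree reasons) one Spanier--Whitheller dualises $\mathbb{P}_2(S^{n\rho})$ against a large representation sphere and extracts the remaining classes from the bottom of the resulting tower.

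With the construction in place, the standard arguments of \cite[Ch. III]{Hinfty} carry over: additivity follows from the splitting $\mathbb{P}_2(X\vee Y)\simeq \mathbb{P}_2(X)\vee \mathbb{P}_2(Y)\vee (X\wedge Y)$; the Cartan formula from the natural comparison $\mathbb{P}_2(X\wedge Y)\to \mathbb{P}_2(X)\wedge \mathbb{P}_2(Y)$ together with the diagonal on $E_{C_2}\Sigma_2$; the Frobenius identity $Q^{|x|\rho}(x)=x^2$ from the bottom cell of $\mathbb{P}_2(S^{n\rho})$; stability from naturality with respect to suspension by $\rho$; and Adem relations from the corresponding calculation for $\mathbb{P}_2\mathbb{P}_2(S^{n\rho})$. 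The main technical obstacle is therefore the cellular analysis of $\mathbb{P}_2(S^{n\rho})$ and the verification that no unexpected $a_\sigma$- or $u_\sigma$-torsion appears in $(\hf_2)_\star \mathbb{P}_2(S^{n\rho})$; once this is established, the translation of the non-equivariant template is essentially formal.
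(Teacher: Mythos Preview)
Your overall strategy---build operations from homology classes in $\mathbb{P}_2(S^V)$ and then verify the standard list of properties---matches the paper's approach in spirit. However, there are several genuine gaps and inaccuracies.

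\textbf{The filtration of $E_{C_2}\Sigma_2$ is not what you claim.} You assert that the associated graded of $B_{C_2}\Sigma_2 = E_{C_2}\Sigma_2/\Sigma_2$ is a wedge of spheres $S^{k\rho}$. In fact the paper's filtration (Construction~\ref{geometric-filtration} and Theorem~\ref{thm-extended-power-filtration}) has \emph{two} kinds of layers: $\mathrm{gr}_{2k}\mathbb{P}_2(S^{m\rho})\simeq S^{(k+2m)\rho}$ and $\mathrm{gr}_{2k+1}\mathbb{P}_2(S^{m\rho})\simeq S^{(k+2m)\rho+\sigma}$. Consequently $(\hf_2)_\star\mathbb{P}_2(S^{m\rho})$ is free on \emph{two} families of generators $e^{m\rho}_{s\rho}$ and $e^{m\rho}_{s\rho+\sigma}$ (Theorem~\ref{thm:homology-ext-powers}), not one. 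The second family produces the operations $Q^{s\rho-1}$, which the paper treats on equal footing with $Q^{s\rho}$ and which are needed, for instance, for the Bockstein relation $\beta Q^{s\rho}=Q^{s\rho-1}$.

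\textbf{You only define operations on classes of degree $n\rho$.} The theorem asserts operations on all of $(\hf_2)_\star X$, and the paper stresses that this matters: the intended generator $\tau_0$ of the dual Steenrod algebra lives in degree~$1$, not in any $n\rho$. Your dualisation remark addresses extending the \emph{index} $m$ of $Q^{m\rho}$ to negative integers, but not the degree of the input class. The paper solves this by passing to the inverse limit $\textup{Ops}_{\hf}=\holim_n \hf\wedge\Sigma^{n\rho}\mathbb{P}_2(S^{-n\rho})$ and defining $Q^e$ for any $e\in\rpis\textup{Ops}_{\hf}$ via an assembly map; this is the device that makes the Cartan formula work for $x,y$ in \emph{arbitrary} degrees (see the remark following Proposition~4.9).

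\textbf{Adem relations are not available here.} You list Adem relations among the properties that ``carry over.'' The paper explicitly disclaims this (Remark after Theorem~\ref{omnibus}): with only a map $\mathbb{P}_2(X)\to X$ and no compatible $\mathbb{P}_4(X)\to X$, there is no reason for Adem relations to hold, and the paper does not prove them. What the paper proves instead are the co-Nishida relations, via identifying $\Sigma\,\textup{Ops}_{\hf}$ with a localisation of $F(B\mu_{2+},\hf)$ and reading off the Steenrod coaction.
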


\begin{thm} As an algebra over the ring of equivariant power operations, the 
equivariant dual Steenrod algebra $\rpis(\hf_2 \wedge \hf_2)$ is generated by
$\tau_0 \in \pi^{C_2}_1(\hf_2 \wedge \hf_2)$. 
\end{thm}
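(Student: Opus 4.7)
The plan is to reduce the statement to an explicit check that each of the standard Hu--Kriz generators of the equivariant dual Steenrod algebra lies in the $Q$-subalgebra generated by $\tau_0$, and then to exhibit the needed power operation formulas using Theorem \ref{action}.

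First, I would invoke the Hu--Kriz description of $\arep = \rpis(\hf_2 \wedge \hf_2)$ as an algebra over $\rpis \hf_2$: it is generated by classes $\tau_i$ (of bidegree $(2^i - 1)\rho + 1$) and $\xi_i$ (of bidegree $(2^i - 1)\rho$) with the quadratic relations linking $\tau_i^2$ to $\xi_{i+1}$. The element $\xi_i$ can be recovered from $\tau_i$ and lower generators via the relation, so it suffices to show that every $\tau_i$ lies in the $Q$-algebra generated by $\tau_0$. This reduces the statement to a recursive statement of the form $\tau_{i+1} \in \langle \tau_0, \tau_1, \ldots, \tau_i, Q\text{-operations} \rangle$.

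Next, I would apply Theorem \ref{action} to compute $Q^{2^{i}\rho}(\tau_i)$ (the equivariant analog of the non-equivariant formula $Q^{2^i}\xi_i \equiv \xi_{i+1}$ modulo decomposables, e.g. as in Baker \cite{Ba} or Bisson--Joyal \cite{BJ}). The degree matches: $\tau_i$ has degree $(2^i - 1)\rho + 1$, and $2^i\rho$ added to this gives $(2^{i+1} - 1)\rho + 1$, the degree of $\tau_{i+1}$. I expect the leading term of $Q^{2^i\rho}(\tau_i)$ to be a unit multiple of $\tau_{i+1}$, with all other terms polynomial in $\tau_0, \ldots, \tau_i$ and elements of $\rpis \hf_2$. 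An induction on $i$ starting from $\tau_0$ then shows that every $\tau_i$, and hence every $\xi_i$, lies in the $Q$-algebra generated by $\tau_0$, which completes the proof.

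The main obstacle is the last step: extracting from Theorem \ref{action} a formula precise enough to identify the leading $\tau_{i+1}$-term. Because the equivariant Cartan formula mixes elements of $\rpis \hf_2$ with products of $\tau_j$'s (and because $a_\sigma$-divisibilities in $\rpis \hf_2$ allow for somewhat subtle cancellations), one must argue carefully that the coefficient of $\tau_{i+1}$ in $Q^{2^i\rho}(\tau_i)$ is a unit in the appropriate sense and cannot be absorbed into lower $\tau$-polynomials. The cleanest way to do this is probably to reduce modulo the ideal of lower generators and compare with Baker's non-equivariant computation via the geometric fixed point or underlying-spectrum map, where the corresponding statement is classical.
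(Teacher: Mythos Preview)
Your approach is essentially the paper's: the statement is recorded as an immediate corollary of Theorem~\ref{action}, whose explicit formulas do all the work. Two small corrections will make your argument go through cleanly.

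First, your reduction ``$\xi_i$ can be recovered from $\tau_i$ and lower generators via the relation'' does not work as stated: the relation $\tau_i^2 = a_\sigma\tau_{i+1} + \overline{u_\sigma}\xi_{i+1}$ expresses $\xi_{i+1}$ only after dividing by $\overline{u_\sigma}$, which is not a unit. The correct reduction is that $\xi_i = \beta\tau_i$ and the Bockstein is itself a power operation (Theorem~\ref{omnibus}(c), $\beta = Q^{-1}$), so $\xi_i$ lies in the $Q$-algebra generated by $\tau_i$ automatically.

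Second, your ``main obstacle'' is not one: Theorem~\ref{action} already states the exact formula $Q^{2^k\rho}\tau_k = \tau_{k+1} + \tau_0\xi_{k+1}$, so no comparison with underlying or geometric fixed points is needed. Note the correction term involves $\xi_{k+1}$, not a lower $\xi$; but the companion formula $\beta Q^{2^k\rho}\tau_k = \xi_{k+1}$ produces $\xi_{k+1}$ from $\tau_k$ first, after which $\tau_{k+1} = Q^{2^k\rho}\tau_k + \tau_0\xi_{k+1}$ closes the induction. Even cleaner is the conjugate formula $Q^{2^k\rho}\overline{\tau}_k = \overline{\tau}_{k+1}$ from the same theorem: since $\overline{\tau}_0 = \tau_0$ this gives every $\overline{\tau}_k$ by iterated application of a single operation, and the conjugates generate the same algebra.
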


There are several departures from the classical story in the proofs. Any
non-equivariant result that depends on the fact that $\pi_*\textup{H}\mathbb{F}_2$ vanishes away
from $\pi_0$ is now much more delicate, since the equivariant homology of a point
is far from trivial. For example, in general there is no Thom isomorphism which computes
the homology of extended powers of representation spheres.

Nevertheless, it is possible to give a complete analysis of the extended powers of \emph{regular}
representation spheres, $S^{n\rho}$,
and single desuspensions of these- in other words, slice cells. This aligns
with the emerging philosophy on equivariant homotopy theory:
\newline\newline
{\bf The Slice Filtration}: It's ordinary homotopy theory, only twisted.\footnote{This 
slogan is inspired by the 1997 ad campaign for the
orange-flavored soft drink `Slice'.}
\newline

That said, we would still like to have operations for classes in degree $V$ for an
arbitrary representation sphere. After all, the purported generator
$\tau_0$ in the dual Steenrod algebra
is in degree 1, which is not of the form $n\rho -1$. To do this, we use a trick inspired
by Goodwillie calculus. The idea is to replace the extended powers $\mathbb{P}_2(S^{n\rho})$
by the co-linearization:
	\[
	\textup{Ops}_{\hf_2} := \holim_n (\hf_2 \wedge \Sigma^{n\rho}\mathbb{P}_2(S^{-n\rho})).
	\]
Since regular representation spheres are cofinal amongst all representation spheres, this allows
us to define power operations in general degrees. This trick
has a long history, and can be found in various degrees of overtness in the
references
\cite{JW, Lur, Lur2, KM}. 

The use of this spectrum $\textup{Ops}_{\hf_2}$ has more advantages. In \S4 we identity
this spectrum with the desuspension of a Tate construction, and this allows us to easily
deduce the Steenrod coaction on the homotopy of this spectrum. In \S5 we apply this calculation
to prove the Nishida relations and compute the action of power operations on
the dual Steenrod algebra. This approach is worthwhile even in the non-equivariant
case, where it gives a clean derivation of Steinberger's formulas in \cite[III.2]{Hinfty}.
\newline

Before describing the second main goal of this paper, it is perhaps helpful to give some
motivation.

In the course of their resolution of the Kervaire invariant problem, Hill, Hopkins, and Ravenel
used a machine-crafted $C_8$-equivariant spectrum. This machine had organic input:
the $C_2$-equivariant spectrum $\mur$ of $\mathbf{R}$eal cobordism, which is found
in nature.

In the odd-primary setting, we are not so lucky. Hill-Hopkins-Ravenel, in unpublished work, 
have indicated that,
if we had a $C_3$-spectrum that displayed some of the same excellent behavior as $\mur$ does
in the $C_2$-equivariant case, we could likely resolve the 3-primary Kervaire invariant problem.
Moreover, $\mur$ and spectra built from it have had applications in homotopy theory at the
prime 2 aside
from the Kervaire invariant problem, and we could hope to achieve similar success at odd primes
if we had an analog of $\mur$.

Before tackling the odd primary story, it seems prudent to revisit the prime 2 with an eye towards
generalization. The original motivation for this paper was to answer the following:

\begin{question} Is it possible to construct
$\textup{MU}\mathbf{R}$, or the 2-local summand $\bpr$,
without using anything about manifolds or formal group laws?
\end{question}

The answer to this question non-equivariantly is yes: the first construction of $\textup{BP}$
by Brown and Peterson used only homological properties of the Steenrod algebra. Their
technique was to build $\textup{BP}$ by writing down its Adams resolution. Later, a dual construction
was found by Priddy \cite{P} that was even simpler:

\begin{thm}[Priddy, \cite{P}] The spectrum obtained from $S^0_{(p)}$ by non-trivially attaching
even, $p$-local cells in order to kill all odd homotopy groups is equivalent to $\textup{BP}$.
\end{thm}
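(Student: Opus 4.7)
The plan is to construct $X$ cellularly, build a comparison map $X \to \textup{BP}$ by obstruction theory, and reduce the theorem to showing this map is a mod $p$ homology equivalence between $(-1)$-connected $p$-local spectra. The construction is inductive: start with $X_0 := S^0_{(p)}$; if $X_n$ has been built, set $X_{n+1} := X_n$ when $n$ is even, and when $n$ is odd choose a minimal generating set $\{\alpha_j\}$ for $\pi_n(X_n)$ and form
\[
X_{n+1} := \mathrm{cofib}\Bigl(\bigvee_j S^n \xrightarrow{\vee_j \alpha_j} X_n\Bigr).
\]
Set $X := \hocolim_n X_n$. A cofiber-sequence argument shows $\pi_k(X_m) = 0$ whenever $k$ is odd and $m \geq k+1$, so $\pi_{\mathrm{odd}}(X) = 0$; also $X$ is $p$-local, $(-1)$-connected, and built entirely from even-dimensional cells, so $H_*(X;\mathbb{F}_p)$ is concentrated in even degrees (the odd-degree attaching maps act as zero on even-concentrated mod $p$ homology).

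Next I would construct a map $f : X \to \textup{BP}$ by obstruction theory starting from the unit $S^0_{(p)} \to \textup{BP}$. Given an extension $X_n \to \textup{BP}$, the obstruction to extending across the cofiber sequence defining $X_{n+1}$ lies in $\prod_j \pi_n(\textup{BP})$, which vanishes for $n$ odd because $\pi_*(\textup{BP})$ is concentrated in even degrees by Milnor's computation. Passing to the colimit produces $f$. Writing $E \subset A$ for the exterior subalgebra on the Milnor primitives $Q_i$ (of odd degree $2p^i-1$), the induced map $f^* : H^*(\textup{BP};\mathbb{F}_p) = A/\!/E \to H^*(X;\mathbb{F}_p)$ is an $A$-linear map of cyclic $A$-modules on which the $Q_i$ act by zero, sending bottom class to bottom class.

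The heart of the proof is showing $f^*$ is an isomorphism; once established, the fiber of $f$ has trivial mod $p$ homology and is $p$-local and $(-1)$-connected, so a standard Hurewicz argument forces it to be contractible, making $f$ a weak equivalence. The image of $f^*$ is the $A$-submodule of $H^*(X;\mathbb{F}_p)$ generated by the bottom class, so surjectivity of $f^*$ amounts to showing this submodule exhausts the cohomology. I would prove this by induction on the cellular filtration, exploiting that the non-triviality of each attaching map $\alpha_j$ forces the new cohomology class introduced in $H^*(X_{n+1};\mathbb{F}_p)$ to arise as a primary Steenrod operation applied to a class already in the image. Injectivity then follows from a Poincar\'e-series comparison: because at each stage we chose a minimal generating set for $\pi_n(X_n)$, the number of cells attached in degree $n+1$ matches the rank growth of $A/\!/E$ in that degree.

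The main obstacle is translating ``non-trivial attachment'' into the cohomological statement that every new class lies in $A \cdot 1$, and simultaneously pinning down the cell count via the minimality of the chosen generators. This is the heart of Priddy's original argument: one must identify the chosen attaching maps with elements detected by primary operations in an Adams-style resolution for $X_n$, so that the cellular construction realizes $A/\!/E$ one stage at a time.
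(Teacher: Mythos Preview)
The paper does not prove Priddy's classical theorem in detail; it cites \cite{P}, sketches Priddy's strategy at the opening of \S6, and proves the $C_2$-equivariant analogue. Your proposal differs from both the sketch and the equivariant proof, and has a genuine gap at exactly the point you flag as ``the main obstacle.''

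You build $f\colon X\to\textup{BP}$ and then try to show $f^*$ is an isomorphism by arguing that each new cohomology class introduced when a $2n$-cell is attached must be a \emph{primary} Steenrod operation on a class already in $A\cdot 1$. But ``non-trivial attaching map'' does not imply ``Adams filtration one'': an $\alpha\in\pi_{2n-1}(X_{2n-1})$ of Adams filtration $\ge 2$ can be nonzero yet produce a new cohomology class not reachable from $1$ by a single Steenrod operation. Minimality of the generating set and a Poincar\'e-series count do not repair this; you would need to already know $H^*(X;\F_p)\cong A/\!/E$ to make the count go through, which is circular.

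What minimality \emph{does} buy you, and what both Priddy and the paper actually use, is the vanishing of the mod~$p$ Hurewicz map $\pi_{2k}(X)\to H_{2k}(X;\F_p)$ for $k>0$ (see Lemma~\ref{hurewicz-vanish} for the equivariant version). Priddy's own route, as recounted in \S6, then maps $X\to\textup{H}\F_p$ rather than to $\textup{BP}$, constructs a symmetric multiplication $\mathbb{P}_2(X)\to X$ by obstruction theory, and uses Dyer--Lashof operations $Q^{2^k}\overline{\zeta}_k=\overline{\zeta}_{k+1}$ to propagate generators in $H_*(X)\to\mathcal{A}_*$; Hurewicz vanishing then rules out extra primitive summands. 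The short alternative the paper alludes to (and uses in Theorem~\ref{its-bpr}) is to build maps \emph{both} ways by obstruction theory: since $\pi_{\mathrm{odd}}X=\pi_{\mathrm{odd}}\textup{BP}=0$ one gets $f\colon X\to\textup{BP}$ and $g\colon\textup{BP}\to X$, and $f\circ g$ is a unit-preserving self-map of $\textup{BP}$, hence an equivalence because $H^*(\textup{BP};\F_p)=A/\!/E$ is cyclic over $A$. This splits $X\simeq\textup{BP}\vee Y$ with $\pi_0Y=0$; a nonzero $Y$ would have a bottom cell giving a nonzero mod~$p$ Hurewicz image in positive even degree, contradicting the vanishing lemma. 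Either route closes the gap without the unjustified ``primary operation'' step.
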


In \S6 below, we prove the following as Theorem \ref{its-bpr}.

\begin{thm} The $C_2$-spectrum obtained from $S^0_{(2)}$ by non-trivially attaching
even dimensional, $2$-local slice 
cells in order to kill all homotopy Mackey functors in degrees of the form
$n\rho-1$ is equivalent to $\textup{BP}\mathbf{R}$. 
\end{thm}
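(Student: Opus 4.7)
The plan is to transplant Priddy's cellular construction of $\textup{BP}$ \cite{P} to the $C_2$-equivariant slice context. Write the constructed spectrum as $X = \colim_n X_n$, where $X_0 = S^0_{(2)}$ and at each stage $X_{n+1}$ is built from $X_n$ by attaching $n\rho$-dimensional $2$-local slice cells along a generating set of $\rpi_{n\rho-1}(X_n)$.

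First I would construct a map $f : X \to \bpr_{(2)}$ extending the unit. The obstruction to extending $X_n \to \bpr_{(2)}$ over the cone of an attaching map $S^{n\rho-1}_{(2)} \to X_n$ is the image of that map in $\rpi_{n\rho-1}(\bpr_{(2)})$, so it suffices to verify that $\rpi_{n\rho-1}(\bpr_{(2)}) = 0$. This is a consequence of the slice tower for $\bpr$ of Hill-Hopkins-Ravenel \cite{HHR} and Hu-Kriz \cite{HK}: its slices are concentrated in regular-representation dimensions, so the slice spectral sequence has no room to produce Mackey-functor classes in degrees $n\rho - 1$.

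Next I would show $f$ is an equivalence. Since both source and target are connective and $2$-local, a Mackey-functor Whitehead argument reduces this to showing $f_\star \colon (\hf_2)_\star X \to (\hf_2)_\star \bpr$ is an isomorphism. Because $X$ has a cell structure consisting only of even-dimensional slice cells, Kronholm's freeness theorem \cite{Kr} guarantees that $(\hf_2)_\star X$ is free over $(\hf_2)_\star$, with generators in bijection with the cells attached in the construction. To identify these generators with the polynomial generators of $(\hf_2)_\star \bpr$ from Hu-Kriz, I would invoke Theorem \ref{action}: iterated power operations $Q^{n\rho}$ on the generator $\tau_0$ of $\rpis(\hf_2 \wedge \hf_2)$ produce classes in exactly the slice dimensions corresponding to the cells of $X$. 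A comparison of $RO(C_2)$-graded Poincar\'e series then forces $f_\star$ to be an isomorphism.

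The main technical obstacle is the inductive identification of the homology generators. Unlike Priddy's case, where a Thom isomorphism makes each cell contribute a predictable class to homology, here one must leverage the Nishida relations and the Steenrod coaction developed in \S\S4--5 to track, cell-by-cell, that the power operation detecting the $n$th attaching map is precisely the one supplying the desired polynomial generator. A subsidiary issue is ensuring the cellular construction is well-defined up to $2$-local equivalence despite the choices involved in picking attaching maps; this can be handled by working with a minimal slice resolution of the obstruction classes.
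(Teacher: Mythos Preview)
Your first step---constructing a map $f\colon X \to \bpr$ by obstruction theory up the slice tower of $\bpr$, using that $\rpi_{n\rho-1}\bpr=0$---agrees exactly with the paper's argument.

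Your second step, however, has a genuine circularity. You appeal to Kronholm's freeness theorem to conclude that $(\hf_2)_\star X$ is free ``with generators in bijection with the cells attached,'' and then propose to compare Poincar\'e series. But Kronholm's theorem gives freeness, not a bijection between generators and cells; more seriously, the number of cells attached at each stage is \emph{not known a priori}---it is determined by the size of $\rpi_{n\rho-1}(X_n)$, which is precisely what one is trying to compute. So the Poincar\'e-series comparison cannot get off the ground without already knowing $\hf_{\star}X$. Your invocation of power operations has the same problem: to apply $Q^{n\rho}$ to classes in $\hf_{\star}X$ you need an equivariant symmetric multiplication on $X$, and constructing such a map is exactly the obstruction-theoretic difficulty the paper devotes most of \S6 to circumventing.

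The paper's route is different. It first establishes $\hf_{\star}X \cong P_{\star}$ directly, via a delicate simultaneous induction: at each stage one has only a \emph{partially defined} map $F_{2^{k+1}}\mathbb{P}_2(X_{2^k-1}) \to X$, just enough to define the single operation $Q^{2^k\rho}$ needed to hit $\overline{\xi}_{k+1}$; this feeds into an approximate Ext computation (Lemmas~\ref{approx-split}--\ref{approx-ext}) which shows the relevant homotopy Mackey functors are constant and torsion-free in a range, which in turn allows the next obstruction to vanish. Only after this is $\hf_{\star}X$ known. The equivalence $X\simeq\bpr$ is then deduced either by checking $f$ on underlying spectra and geometric fixed points (where it becomes a self-map of $\textup{BP}$, resp.\ $\textup{H}\F_2$, that is the identity on $\pi_0$), or by the $RO(C_2)$-graded Whitehead theorem of Hill--Meier. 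Your outline skips the hard inductive computation and tries to replace it with a counting argument that presupposes its own conclusion.
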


It is important to stress that if this theorem were all we were after, the paper would be very short. Since
we know $\bpr$ exists, we can compare it to this cellular construction and see immediately that it
gives the right answer, just as in Priddy \cite[31]{P}
. But this paper is a thought experiment: can we deduce the basic properties of $\textup{BP}\mathbf{R}$
directly from its cellular construction? Let $\textup{Even}(S^0)$ denote the $2$-local,
$C_2$-equivariant spectrum obtained as in the theorem. We prove the next
result as Theorem \ref{main-bpr-thm} in the text.

\begin{thm} Without invoking the existence of $\bpr$, it is possible to establish the following
properties of $\textup{Even}(S^0)$ from its construction:
	\begin{enumerate}[(i)]
	\item The spectrum underlying $\textup{Even}(S^0)$ is $\textup{BP}$,
	\item the geometric fixed points of $\textup{Even}(S^0)$ are equivalent to $\textup{H}\mathbb{F}_2$,
	\item the homotopy Mackey functors $\rpi_{*\rho}\textup{Even}(S^0)$ are constant and given by
	$\underline{\mathbb{Z}}_{(2)}[v_1, v_2, ...]$ where $|v_i| = (2^i-1)\rho$, and $\rho$ is the regular
	representation of $C_2$.
	\item the homotopy Mackey functors $\rpi_{*\rho -1}\textup{Even}(S^0)$ vanish.
	\end{enumerate}
These last two properties determine the slice tower of $\textup{Even}(S^0)$.
\end{thm}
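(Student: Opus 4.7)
The plan is to prove the four properties in the order (iv), (i), (iii), (ii), exploiting the inductive structure of $\ev = \colim_k X_k$. Property (iv) is immediate from the construction: at each stage the procedure attaches enough slice cells $S^{n\rho}$ to kill $\rpi_{n\rho-1}X_k$, and since such a cell only affects Mackey functor homotopy in degrees $\geq n\rho-1$, the process stabilizes in each bidegree. For property (i), apply the underlying non-equivariant forgetful functor: $S^{n\rho}$ restricts to $S^{2n}$ and $\rpi_{n\rho-1}(-)(C_2/e) = \pi_{2n-1}$, so the underlying of $\ev$ is obtained from $S^0_{(2)}$ by attaching even-dimensional $2$-local cells to kill all odd homotopy, which by Priddy's theorem is $\textup{BP}$.

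Property (iii) is the main work. I would argue inductively on the cells added, using (i) as the non-equivariant skeleton. At each attachment of a cell $S^{n\rho}$ killing a class $\alpha \in \rpi_{n\rho-1}X_k$, the associated long exact sequence of Mackey functors combined with the vanishing in (iv) determines $\rpi_{n\rho}X_{k+1}$ up to an extension. Identifying this extension as the constant Mackey functor with underlying value the corresponding polynomial generator of $\textup{BP}_* = \mathbb{Z}_{(2)}[v_1, v_2, \ldots]$ follows by combining the equivariant Hurewicz theorem in regular dimensions (which pins down the restriction map) with degree considerations in the $RO(C_2)$-graded structure (which identify the transfer). Iterating yields $\rpi_{n\rho}\ev = \underline{\mathbb{Z}}_{(2)}[v_1, v_2, \ldots]$ in degree $n\rho$ with $|v_i| = (2^i-1)\rho$.

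For property (ii), note that $\Phi^{C_2}$ preserves cofiber sequences and sends $S^{n\rho}$ to $S^n$, so $\Phi^{C_2}\ev$ inherits a cellular construction from that of $\ev$. Combining (iii) and (iv) with the isotropy separation cofiber sequence $(EC_2)_+ \wedge \ev \to \ev \to \widetilde{EC_2} \wedge \ev$, the computation of $\rpis \ev$ in regular dimensions forces the $v_i$-generators to become $a_\sigma$-torsion in a pattern that leaves only $\pi_0\Phi^{C_2}\ev = \F_2$, hence $\Phi^{C_2}\ev \simeq \hf_2$. Finally, since regular slices of a $C_2$-spectrum are pinned down by $\rpi_{*\rho}$ together with $\rpi_{*\rho-1}$, properties (iii) and (iv) determine the full slice tower of $\ev$, which matches the expected tower of $\bpr$.

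The main technical obstacle is the constancy assertion in (iii): showing the transfer maps of the Mackey functor $\rpi_{n\rho}\ev$ have the standard form for a constant Mackey functor, rather than some twisted variant. Controlling this requires careful extension analysis at each inductive stage, possibly invoking the freeness theorem for $RO(C_2)$-graded cohomology to ensure the intermediate $X_k$ remain sufficiently well-behaved. A secondary difficulty for (ii) is establishing the $a_\sigma$-torsion relations on the $v_i$-generators, which go beyond the regular-dimension data of (iii)-(iv) and require computing some homotopy Mackey functors in non-regular dimensions.
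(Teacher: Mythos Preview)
Your argument for (iv) is correct and matches the paper (Lemma~\ref{odd-zero}). Your argument for (i) via Priddy's theorem is a reasonable shortcut, though you should check that ``non-trivially attaching'' survives the forgetful functor when free slice cells are present; the paper instead deduces (i) and (ii) simultaneously from the homology computation via standard splitting theorems.

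The genuine gap is in (iii). Your proposed induction on cells cannot resolve the extension problems you yourself flag. Attaching an $(n\rho)$-cell along $\alpha \in \rpi_{n\rho-1}X_k$ gives a long exact sequence, but the relevant terms involve $\rpi_{n\rho}S^{n\rho-1} = \rpi_1 S^0$ and, in lower degrees, unstable equivariant homotopy of spheres that you have no control over. There is no ``equivariant Hurewicz theorem in regular dimensions'' that pins down the transfer in $\rpi_{n\rho}X_{k+1}$ from this data, and the constancy of the Mackey functor (equivalently, surjectivity of restriction) is precisely what the paper identifies as the central obstruction to any naive obstruction-theoretic or cell-by-cell approach; see the discussion opening \S6. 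Nothing in your outline supplies the missing input.

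The paper's route is entirely different: it first proves $\hf_{\star}\ev \cong P_{\star}$ as an $\arep$-comodule via a delicate simultaneous induction that uses partially-defined power operations (Theorem~\ref{action} feeding into the argument of \S6.4), and only then reads off (iii) from the equivariant Adams spectral sequence together with the Ext computation of Proposition~\ref{ext-computation}. This also gives (i) and (ii) via splitting theorems once the homology is known, avoiding your isotropy-separation argument for (ii), which, as you note, would require non-regular homotopy you have not computed.
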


The main hurdle is in identifying the homology of $\textup{Even}(S^0)$. To do so,
we use obstruction theory on the slice tower to
 inductively construct power operations on $\textup{Even}(S^0)$ which constrain the homology,
and in turn the homotopy, and use this new information to show the next obstruction vanishes.
This argument is far more involved than the classical case, and requires some delicate
arguments in $RO(C_2)$-graded homological algebra.

\begin{flushleft}\textbf{Acknowledgements}. This paper benefitted greatly from conversations
with Mike Hill. I thank Doug Ravenel for sharing his notes on the odd primary Kervaire invariant
problem; this served as a guide for what sorts of results we want in the $C_p$-equivariant
case, and thus influenced the choice of methods and presentation used here. 
I am very grateful to Mike Hopkins for telling me about
Priddy's result. I would also like to thank Jim McClure for suggesting that, in order to compute the
homology of more general extended powers, I reduce to the case of a sphere instead of arguing
on the chain level. I thank Sean Tilson and Cary Malkewich for helpful comments on an earlier version
of this paper. I am also grateful to Nick Kuhn for telling me about some
of the history of the ideas I used in \S3. %section number used!
Finally, none of this would be possible without the encouragement and wisdom of my advisor,
Paul Goerss.
\end{flushleft}

\begin{flushleft}\textbf{Notation and conventions}. We display Mackey functors $M$ for $C_2$ by diagrams:
	\[
	\xymatrix{
	M(C_2/C_2)\ar@/_/[d]_{\textup{res}}\\
	M(C_2)\ar@/_/[u]_{\textup{tr}}
	}
	\]\end{flushleft}
where $M(C_2)$ is a $C_2$-module. As usual $\underline{A}$ will denote the Burnside
Mackey functor.

If $M$ is an abelian group, $\underline{M}$ will denote the associated constant Mackey functor. When
we say ``$x \in \underline{M}$'' we mean $x$ is an element of either $\underline{M}(C_2)$ or 
$\underline{M}(C_2/C_2)$.
Throughout we write $\F$ to mean $\mathbb{F}_2$.

Unless otherwise stated, the group in the background is $C_2$. For the author's sanity, we distinguish
between $C_2$ and $\Sigma_2$; hopefully this does not have the opposite effect on the reader.

\section{Preliminaries}

\subsection{$C_2$-equivariant homotopy and homology}

\subsubsection*{Fixed points} Let $\textup{E}C_2$ denote a contractible space with a free $C_2$-action, and define
$\widetilde{\textup{E}}C_2$ by the cofiber sequence of pointed $C_2$-spaces:
	\[
	\textup{E}C_{2+} \to S^0 \to \widetilde{E}C_2.
	\]
If $E$ is a $C_2$-spectrum, then we will use the following shorthand:
		\[
	E^{h}:= F(\textup{E}C_{2+}, E),\quad E_h:= \textup{E}C_{2+} \wedge E, \quad
	E^t := \widetilde{\textup{E}}C_2 \wedge  F(\textup{E}C_{2+}, E),\quad
	\Phi E := \widetilde{\textup{E}}C_2 \wedge E.
	\]
We will sometimes refer to $E^h$ as the \emph{Borel completion}
of $E$. The genuine fixed points of each spectrum above are then:
	\begin{itemize}
	\item the homotopy fixed points $E^{hC_2}$, 
	\item the homotopy orbits $E_{hC_2}$, 
	\item the Tate spectrum $E^{tC_2}$, and
	\item the geometric fixed points $\Phi^{C_2}E$.
	\end{itemize}
 This is true by definition in each case except for the equivalence:
	\[
	\left(E_h\right)^{C_2} \cong E_{hC_2}
	\]
which is a consequence of the Adams isomorphism.

\subsubsection*{Homotopy Mackey functors}

Given a representation $V$ and a $C_2$-spectrum $E$ we define a Mackey functor
by the following formula for a $C_2$-set $T$:
	\[
	\rpi_V(E) : T \mapsto [T_+ \wedge S^V, E].
	\]
Given two representations $V, W$, we define
	\[
	\rpi_{V-W}(E): T \mapsto [T_+ \wedge S^V, S^W \wedge E]. 
	\]
We will denote the value of this Mackey functor on $*$ and $C_2$ by:
	\[
	\rpi_{V-W}(E) (*) =: \rpi^{C_2}_{V-W}(E),
	\]
	\[
	\rpi_{V-W}(E)(C_2) =: \pi^u_{V-W}(E).
	\]
For every choice of isomorphism $V \oplus W' \cong V' \oplus W$ we get an isomorphism
of Mackey functors
	\[
	\rpi_{V-W}(E) \cong \rpi_{V'-W'}(E)
	\]
but this isomorphism depends on the first choice. We choose once and for all a preferred
isomorphism of a virtual representation to one of the form $a + b\sigma$, where $\sigma$
is the sign representation of $C_2$. In this way the homotopy Mackey functors of $E$
are $RO(C_2)$-graded and we denote the whole collection by
	\[
	\rpis E
	\]
When $E$ admits a pairing $E \wedge E \to E$, then $\rpis E$ admits a ring structure which
is graded commutative in the following sense: If the
degrees of $x$ and $y$ are written as $\vert x \vert = a+b\sigma$ and
$\vert y \vert = a'+b'\sigma$ then
	\[
	xy = (-1)^{aa'}\epsilon^{bb'} yx
	\]
where $\epsilon = 1-[C_2]$ in the Burnside Mackey functor $\underline{A}(C_2)$. For
a proof, see, for example, \cite[Lem. 2.12]{HK}. In particular, whenever $[C_2]-2$
acts by zero, $\epsilon$ acts by $-1$. This holds for any module spectrum over
$\hz$, for example, which covers every example of interest in this paper.

\subsubsection*{Homology of a point}

We now wish to describe the homology of a point. Before we do so we will need to name
some elements. For any representation $V$, let $a_V \in \pi^{C_2}_{-V}S^0$ denote
the element
	\[
	a_V: S^0 \to S^V
	\]
given by the inclusion of $\{0, \infty\}$. This is sometimes called an \emph{Euler class}.

For any representation $V$ of dimension $d$, the restriction map
	\[
	\hf^{C_2}_d(S^V) \to \textup{H}\F^u_d(S^V)
	\] 
is an isomorphism (\cite[Ex. 3.10]{HHR}), and we denote the unique nonzero class by $u_V$.
In particular, we have a class
	\[
	u_\sigma \in \hf^{C_2}_{1}(S^{\sigma}) = \pi^{C_2}_{1-\sigma}\hf.
	\]

We'll start by stating the calculation in the Borel complete, Tate, and geometric cases.
\begin{proposition} As $RO(C_2)$-graded Green functors, we have
	\[
	\rpis \hf^h = 
	\begin{gathered}\xymatrix{
	\mathbb{F}_2[u_{\sigma}^{\pm 1}, a_{\sigma}] \ar@/_/[d]\\
	\mathbb{F}_2[\textup{res}(u_{\sigma})^{\pm 1}] \ar@/_/[u]
	}\end{gathered}
	\quad\quad\quad
	\rpis \hf^t =
	\begin{gathered}
	\xymatrix{
	\mathbb{F}_2[u_{\sigma}^{\pm 1}, a_{\sigma}^{\pm 1}] \ar@/_/[d]\\
	0\ar@/_/[u]
	}\end{gathered}
	\]
	\[
	\rpis \Phi\hf = 
	\begin{gathered}
	\xymatrix{
	\mathbb{F}_2[u_{\sigma}, a_{\sigma}^{\pm 1}] \ar@/_/[d]\\
	0 \ar@/_/[u]
	}\end{gathered}
	\]
where $\textup{res}(a_{\sigma}) = 0$.
\end{proposition}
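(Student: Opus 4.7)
The plan is to handle each of the three Green functors in turn, exploiting the model $\widetilde{\textup{E}}C_2 \simeq \hocolim_n S^{n\sigma}$ with transition maps given by multiplication by $a_\sigma$. In each case I compute the underlying and fixed-point pieces separately. The underlying calculation is uniform: since $\textup{E}C_{2+}$ is non-equivariantly equivalent to $S^0$, the underlying spectrum of $\hf^h$ is just $\textup{H}\F$, so $\pi^u_\star\hf^h = \F$ concentrated in topological dimension $0$, matching $\F[\textup{res}(u_\sigma)^{\pm 1}]$ (since $|\textup{res}(u_\sigma)| = 1-1 = 0$). For $\hf^t$ and $\Phi\hf$, the smash factor $\widetilde{\textup{E}}C_2$ is non-equivariantly contractible, killing the underlying spectrum.

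The geometric fixed points come next. The model above identifies $\Phi\hf$ with the $a_\sigma$-localization $\hf[a_\sigma^{-1}]$, so $\fpi_\star \Phi\hf = \fpi_\star \hf[a_\sigma^{-1}]$. Invoking the standard description of $\fpi_\star \hf$ from \cite{FL, Lew}, the subring $\F[u_\sigma, a_\sigma]$ of ``upper cone'' classes is precisely the $a_\sigma$-torsion-free part, while the transfer-originating classes in the ``lower cone'' are all $a_\sigma$-nilpotent. Inverting $a_\sigma$ therefore yields exactly $\F[u_\sigma, a_\sigma^{\pm 1}]$.

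For the Borel completion I would run the $RO(C_2)$-graded homotopy fixed point spectral sequence
\[
E_2^{s,V} = H^s(C_2;\pi^u_V \hf) \Rightarrow \fpi_{V-s}\hf^h.
\]
The input vanishes unless $\dim V = 0$, in which case it equals $\F$ with trivial $C_2$-action; so $E_2^{s,V} = \F$ exactly for $V = j(1-\sigma)$ and $s \geq 0$. Degree considerations force collapse at $E_2$. One identifies the filtration-$0$ generators with powers of $u_\sigma$ (via the underlying isomorphism) and the filtration-$1$ polynomial generator with $a_\sigma$ (visible from cellular models of $\textup{E}C_{2+}$), obtaining $\F[u_\sigma^{\pm 1}, a_\sigma]$ up to multiplicative extensions. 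No such extensions occur since the $E_\infty$-page is already a polynomial algebra and there are no room-for-extension cells in the relevant bidegrees. The Tate calculation is then immediate: smashing with $\widetilde{\textup{E}}C_2$ gives $\hf^t = \hf^h[a_\sigma^{-1}]$, and inverting $a_\sigma$ in the Borel answer yields $\F[u_\sigma^{\pm 1}, a_\sigma^{\pm 1}]$.

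The structure maps are essentially forced: $\textup{res}(a_\sigma) = 0$ because $a_\sigma$ is the non-equivariantly nullhomotopic inclusion $S^0 \hookrightarrow S^\sigma$, while $\textup{res}(u_\sigma)$ is the underlying generator by construction; transfers into the upper pieces vanish by the parity formula $\textup{tr}(1) = 2 = 0$ in $\underline{\F}$. The main obstacle in this plan is verifying the absence of multiplicative extensions in the Borel HFPSS, which is where one is most tempted to appeal to a black-box identification of $\hf^h$ with a known Borel cohomology ring of a point.
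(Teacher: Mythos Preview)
The paper does not actually prove this proposition: it is stated in the preliminaries section as a known computation (``We'll start by stating the calculation in the Borel complete, Tate, and geometric cases''), with no argument supplied. So there is no proof in the paper to compare against.

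Your proposal is a correct and standard derivation. A few remarks: your argument for $\Phi\hf$ invokes the description of $\fpi_\star\hf$ which in the paper is recorded in the \emph{next} proposition, so if you were writing this linearly you would want to reorder. Your HFPSS collapse argument is fine once spelled out: a differential $d_r$ lands in a bidegree whose $V$-coordinate has underlying dimension $r-1\ge 1$, where the $E_2$-page vanishes. The multiplicative identification of the generators with $u_\sigma^{\pm1}$ and $a_\sigma$ can be made precise by noting that both classes already live in $\fpi_\star\hf$ and map nontrivially under $\hf\to\hf^h$; since they account for everything on the $E_\infty$-page there is no extension issue. Your transfer computation via Frobenius reciprocity and $\textup{tr}(1)=0$ is correct.
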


The case we'll be most interested in is recorded in the next proposition.

\begin{proposition} Let $\theta$ have degree $2\sigma-2$.
As an $RO(C_2)$-graded ring we have the square-zero extension:
	\[
	\pi^{C_2}_\star \hf = \F[u_{\sigma}, a_{\sigma}] \oplus 
	\F\left\{\frac{\theta}{a_\sigma^ku_{\sigma}^{n}} \vert k,n \ge 0\right\}.
	\]
The underlying homotopy groups are given by 
	\[
	\pi^u_{\star}\hf = \F[\textup{res}(u_{\sigma})^{\pm 1}]
	\]
and the Mackey functor structure is determined by $\textup{res}(a_{\sigma}) =0$ and
$\textup{tr}(\theta \textup{res}(u_{\sigma})^{-n}) = \theta u_{\sigma}^{-n}$ for $n\ge 0$. 
\end{proposition}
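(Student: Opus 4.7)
The plan is to extract this from the isotropy separation cofiber sequence
$$\hf_h \to \hf \to \Phi\hf$$
obtained by smashing $\textup{E}C_{2+} \to S^0 \to \widetilde{\textup{E}}C_2$ with $\hf$, combining the previous proposition with the non-equivariant homotopy of $\textup{H}\F$. For the underlying piece, $\textup{H}\F$ has nonequivariant homotopy concentrated in degree $0$, so $\pi^u_{a+b\sigma}\hf = \F$ precisely when $a + b = 0$; since $\textup{res}(u_\sigma)$ is a unit in its degree, we obtain $\pi^u_\star\hf = \F[\textup{res}(u_\sigma)^{\pm 1}]$, and $\textup{res}(a_\sigma)$ is null because $a_\sigma$ underlies the nullhomotopic map $S^0 \to S^1$.

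For the $C_2$-fixed points, the Adams isomorphism gives $(\hf_h)^{C_2} \simeq \hf_{hC_2} \simeq \textup{B}C_{2+}\wedge \textup{H}\F$, and since the action on $\textup{E}C_{2+}$ is free we may trivialize $S^{b\sigma}$ in the smash product, yielding $\rpi^{C_2}_{a+b\sigma}\hf_h = H_{a+b}(\textup{B}C_{2+}; \F)$, which is $\F$ for $a + b \geq 0$ and vanishes otherwise. Together with $\rpis\Phi\hf = \F[u_\sigma, a_\sigma^{\pm 1}]$ from the previous proposition, the long exact sequence splits the calculation into three regions of the $(a,b)$-plane: where $a \geq 0,\ a + b \leq 0$ only $\Phi\hf$ contributes, producing the positive cone $\F[u_\sigma, a_\sigma]$; where $a \leq -2,\ a + b \geq 0$ only $\hf_h$ contributes, producing the $\theta$-tower $\F\{\theta/(a_\sigma^k u_\sigma^n) : k, n \geq 0\}$; and in the remaining overlap region the connecting map is to be shown an isomorphism, forcing the contribution to $\rpis\hf$ to vanish.

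The main obstacle is verifying this last point. The cleanest argument compares the cofiber sequence to its Borel-completed version $\hf_h \simeq \hf^h_h \to \hf^h \to \hf^t$ (noting that $\hf \to \hf^h$ becomes an equivalence after smashing with $\textup{E}C_{2+}$): the connecting map factors through $\rpis\hf^t$, in which $a_\sigma$ is a unit, identifying it with multiplication by an appropriate power of $a_\sigma$, which must be iso wherever both sides are $\F$. For the multiplicative structure, the positive cone inherits its polynomial algebra structure from the ring embedding into $\rpis\Phi\hf$; the $\theta$-tower, being the image of the $\hf$-module map $\rpis\hf_h \to \rpis\hf$, is an ideal; each of its generators is in the image of the transfer, as read off from the Adams identification (giving the transfer formula $\textup{tr}(\theta\,\textup{res}(u_\sigma)^{-n}) = \theta u_\sigma^{-n}$), so restriction vanishes on the entire tower, and Frobenius reciprocity then forces $\theta^2 = 0$ and more generally the square-zero property.
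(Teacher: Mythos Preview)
The paper states this proposition as known background without proof (these computations are attributed to Hu--Kriz), so there is no paper proof to compare against directly. Your approach via the isotropy separation sequence is the standard one and is essentially correct, but there is a genuine gap in your square-zero argument.

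You claim that ``each of its generators is in the image of the transfer.'' This is false: the element $\theta/(a_\sigma^k u_\sigma^n)$ lies in degree $(-2-n) + (2+k+n)\sigma$, and $\pi^u_\star\hf$ vanishes in this degree unless $k=0$. So only the generators $\theta u_\sigma^{-n}$ with $k=0$ are transfers; for $k>0$ there is nothing to transfer from. Your deduction ``restriction vanishes on the entire tower'' is nonetheless correct (for $k>0$ by the degree argument just given, for $k=0$ because $\textup{res}\circ\textup{tr} = 1+\gamma = 0$), but Frobenius reciprocity then only shows that products in which \emph{one factor} has $k=0$ vanish. To finish, observe that multiplication by $a_\sigma^{k+k'}$ takes the product
\[
\frac{\theta}{a_\sigma^k u_\sigma^n}\cdot\frac{\theta}{a_\sigma^{k'} u_\sigma^{n'}}
\]
to $(\theta u_\sigma^{-n})(\theta u_\sigma^{-n'}) = 0$ by the case just handled; but if the original product were the nonzero class $\theta/(a_\sigma^{k+k'} u_\sigma^{n+n'+2})$ (the only candidate in that degree), multiplying it by $a_\sigma^{k+k'}$ would yield $\theta u_\sigma^{-n-n'-2} \neq 0$, a contradiction.

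A minor point: your description of the connecting map as ``multiplication by an appropriate power of $a_\sigma$'' is imprecise. What actually makes the argument go through is that $\hf^h_\star \to \hf^t_\star$ is injective (both being explicit Laurent/polynomial rings), so the boundary $\hf^t_\star \to \Sigma\hf_{h,\star}$ is surjective, hence an isomorphism wherever both sides are $\F$; composing with the inclusion $\Phi\hf_\star \hookrightarrow \hf^t_\star$ then shows your connecting map is an isomorphism in the overlap region. The paper's post-proposition remark that the $\theta$-elements arise via $\partial: \hf^t_\star \to \Sigma\hf_{h,\star} \to \Sigma\hf_\star$ is consistent with this picture.
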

\begin{warning} The element $\textup{res}(u_{\sigma})^{-n}$ is not in the image of
the restriction map when $n\ge 1$. 
\end{warning}

Here is a picture of the groups $\pi^{C_2}_{a+b\sigma}\hf$:

\begin{center}
\begin{tikzpicture}[scale=.5]
	%fontsize of node
	\tikzstyle{every node}=[font=\footnotesize]
	%axes
	\draw (-6.4,0) -- (6.4, 0);
	\draw(0, -5.4) -- (0, 7.4);
	%label axes
	\draw (6.5, 0) node[fill=white] {$a$};
	\draw (0, 8.5) node[fill=white] {$b$};
	%groups
	\draw [fill=blue!25] (.5, .5) -- (.5, -5.5) -- (6.5, -5.5) -- (.5, .5);
	\draw [fill=red!25] (-1.5, 2.5) -- (-1.5, 7.5) -- (-6.5, 7.5) -- (-1.5, 2.5);
	\end{tikzpicture}
	\end{center}
	
The blue region is the polynomial part $\F[a_{\sigma}, u_{\sigma}]$, and the red region
is the Pontryagin dual piece. Notice the important gap $\pi^{C_2}_{-1+*\sigma}\hf = 
\pi^{C_2}_{*\rho -1}\hf = 0$. Also note that the only nonzero
group $\pi^{C_2}_{*\rho}\hf$ is $\pi^{C_2}_0\hf = \F$. 

The elements $\dfrac{\theta}{a_{\sigma}^ku_{\sigma}^{n}}$ come from the boundary map:
	\[
	\partial: \hf^t_{\star} \to \Sigma \hf_{h\star} \to \Sigma \hf_{\star}.
	\]
Specifically:
	\[
	\partial\left(\frac{1}{a_{\sigma}^{k+1}u_{\sigma}^{n+1}}\right) =
	\begin{cases}
	\frac{\theta}{a_{\sigma}^{k}u_{\sigma}^{n}} & k, n \ge 0 \\
	0 & \textup{ else}
	\end{cases}
	\]

\subsection{$C_2$-Steenrod algebra}

There are lots of versions of the dual Steenrod algebra equivariantly. We will
describe the geometric, Borel-complete, Tate, and genuine versions below. The
calculations can be found in Hu-Kriz, and the Mackey structure was computed in
\cite{R}.

\begin{convention} We let $\zeta_i \in \mathcal{A}_*$ denote Milnor's generators of the dual Steenrod algebra
(\emph{not}, as the convention has become, their conjugates!) To be absolutely clear,
the right, completed coaction on $\textup{H}^*(\mathbb{R}P^{\infty}) = \mathbb{F}_2[w]$ is given by
	\[
	w \mapsto \sum w^{2^i} \otimes \zeta_i.
	\]
\end{convention}
\begin{proposition} As a left $\Phi\hf_{\star}$-algebra,
	\[
	\rpis\Phi(\hf \wedge \hf) = \Phi \hf_{\star}[\zeta_i, \overline{u_{\sigma}}]
	\]
where the inclusion of the usual dual Steenrod algebra is a map of Hopf algebroids and
$\eta_R(a_{\sigma}) = \eta_L(a_{\sigma})$. 
\end{proposition}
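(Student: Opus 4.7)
My plan is to leverage the symmetric monoidality of the geometric fixed points functor $\Phi^{C_2}$---equivalently, the fact that inverting $a_\sigma$ commutes with the smash product, since $\widetilde{E}C_2 \wedge \widetilde{E}C_2 \simeq \widetilde{E}C_2$---to reduce to
\[
\Phi(\hf \wedge \hf) \simeq \Phi\hf \wedge \Phi\hf.
\]
The asserted relation $\eta_R(a_\sigma) = \eta_L(a_\sigma)$ is then immediate: $a_\sigma$ lies in $\fpi_{-\sigma}(S^0)$, so the two unit maps $\hf \rightrightarrows \hf \wedge \hf$ factor through the sphere and send $a_\sigma$ to the same class.

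For the ring identification, I would assemble the comparison map
\[
\Phi\hf_\star[\zeta_i, \overline{u_\sigma}] \longrightarrow \rpis\Phi(\hf \wedge \hf)
\]
from three pieces: the left unit $\Phi\hf \to \Phi\hf \wedge \Phi\hf$ provides the $\Phi\hf_\star$-algebra structure (in particular $u_\sigma$); the right unit contributes $\overline{u_\sigma} := \eta_R(u_\sigma)$; and the $\zeta_i$'s are inserted via the natural map $\pi^u_*(\hf \wedge \hf) \to \fpi_* \Phi(\hf \wedge \hf)$ (induced by $S^0 \to \widetilde{E}C_2$ smashed with $\hf \wedge \hf$), which identifies the classical non-equivariant dual Steenrod algebra as a subalgebra.

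To show this map is an isomorphism, the plan is to import the Hu-Kriz computation of $\rpis(\hf \wedge \hf)$ and track what survives $a_\sigma$-inversion. Schematically, their relations take the form $\tau_i^2 = a_\sigma \tau_{i+1} + u_\sigma \xi_{i+1}$; after $a_\sigma$-localization these become recursive formulas that express each $\tau_i$ for $i \ge 1$ in terms of $\tau_0 \sim \overline{u_\sigma}$, the $\xi_i \sim \zeta_i$, and elements of $\Phi\hf_\star$. The surviving algebra is then visibly the polynomial $\Phi\hf_\star[\zeta_i, \overline{u_\sigma}]$, and the Hopf algebroid compatibility for the $\zeta_i$'s is inherited from naturality of the Steenrod coproduct under the localization map $\hf \wedge \hf \to \Phi(\hf \wedge \hf)$.

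The main obstacle is the bookkeeping needed to confirm freeness: one must verify that no relations among the $\zeta_i$'s and $\overline{u_\sigma}$ persist over $\Phi\hf_\star$ after $a_\sigma$-inversion, and that the recursion really does eliminate every higher $\tau_i$ without introducing hidden constraints on the remaining generators.
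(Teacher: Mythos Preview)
The paper does not supply a proof of this proposition. It is stated as background in \S1.2, with the surrounding text attributing the computation to Hu--Kriz \cite{HK} (and the Mackey structure to Ricka \cite{R}). So there is no argument in the paper to compare your proposal against.

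That said, your outline is a reasonable route to the result, with one caveat about logical order. You propose to deduce the geometric computation by inverting $a_\sigma$ in the \emph{genuine} dual Steenrod algebra (Theorem~\ref{arep-structure}). That works algebraically: the relation $\tau_0 a_\sigma = \overline{u_\sigma} + u_\sigma$ eliminates $\tau_0$, and $\tau_i^2 = a_\sigma\tau_{i+1} + \overline{u_\sigma}\xi_{i+1}$ recursively eliminates all higher $\tau_i$, leaving $\Phi\hf_\star[\xi_i,\overline{u_\sigma}]$ freely. But in Hu--Kriz's original development the flow of logic is the reverse---the geometric and Borel computations are established first (by more direct means) and then assembled into the genuine answer via the isotropy separation square. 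So your argument, while correct if one takes the genuine computation as a black box, is circular as a derivation from first principles.

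A more self-contained approach uses what you already set up in your first paragraph: since $\widetilde{E}C_2$ is smash-idempotent, $\Phi(\hf\wedge\hf)\simeq\Phi\hf\wedge\Phi\hf$ as $C_2$-ring spectra, and both factors have trivial underlying spectrum so the $RO(C_2)$-graded homotopy is determined by the integer-graded $C_2$-fixed homotopy together with the (invertible) $a_\sigma$-action. One then computes $\pi_*(\Phi^{C_2}\hf\wedge\Phi^{C_2}\hf)$ directly from $\pi_*\Phi^{C_2}\hf$ and a K\"unneth argument, identifying the $\zeta_i$ and $\overline{u_\sigma}$ along the way. Your observation that $a_\sigma$ lives in $\pi^{C_2}_{-\sigma}S^0$, and hence is fixed by both units, is exactly right.
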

\begin{proposition} As a left $\hf^h_{\star}$-algebra, we have
	\[
	\rpis (\hf \wedge \hf)^h = \hf^h_{\star}[\zeta_i]^{\hat{}}_{a_{\sigma}}.
	\]
The inclusion of the dual Steenrod algebra is a map of (completed) Hopf algebroids, and
	\[
	\eta_R(u_{\sigma}^{-1}) = \sum_{i\ge 0} u_{\sigma}^{-2^i}\zeta_ia_{\sigma}^{2^i -1},\quad
	\eta_r(a_{\sigma}) = a_{\sigma}.
	\]
The Mackey functor structure is determined by declaring that $\zeta_i$ restricts to the usual
$\zeta_i$.
\end{proposition}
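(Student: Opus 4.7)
The plan is to run the $RO(C_2)$-graded homotopy fixed point spectral sequence (HFPSS) converging to $\rpis (\hf \wedge \hf)^h$, argue that it collapses, and extract the right unit formula by reduction to the classical coaction on $H^*(\mathbb{R}P^\infty)$.

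The underlying $\pi^u_*(\hf \wedge \hf) = \F_2[\zeta_1, \zeta_2, \ldots]$ is the classical Milnor dual Steenrod algebra and carries a trivial $C_2$-action (since the underlying $C_2$-action on $\hf$ is trivial and this passes to the smash product). Combined with the previous proposition giving $\rpis \hf^h = \F_2[u_\sigma^{\pm 1}, a_\sigma]$, this yields the HFPSS
\[
E_2 \cong \F_2[u_\sigma^{\pm 1}, a_\sigma][\zeta_i] \Longrightarrow \rpis (\hf \wedge \hf)^h,
\]
complete in the $a_\sigma$-adic filtration. Each $\zeta_i$ lifts to a class in $\pi^{C_2}_{2^i - 1}(\hf \wedge \hf)^h$, since a non-equivariant map $S^{2^i - 1} \to H\F \wedge H\F$ is tautologically the datum of a $C_2$-equivariant map from the trivial-action sphere into $F(\textup{E}C_{2+}, \hf \wedge \hf) = (\hf \wedge \hf)^h$; the generators $u_\sigma^{\pm 1}, a_\sigma$ come from $\hf^h_\star$ and are already permanent cycles. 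Multiplicativity then forces collapse at $E_2$, and convergence identifies the abutment with $\hf^h_\star[\zeta_i]^{\wedge}_{a_\sigma}$.

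For the right unit, I would reduce to the classical coaction on $\hf^{C_2}_*(\mathbb{R}P^\infty)$, where the degree-$1$ generator is $w = a_\sigma u_\sigma^{-1}$ (the cohomological degrees match: $\sigma - (\sigma - 1) = 1$, and this class reduces to the classical $w$ on underlying). Using the stated coaction $w \mapsto \sum w^{2^i}\zeta_i$ together with $\eta_R(a_\sigma) = a_\sigma$ (since $a_\sigma$ factors through $S^0$ and so is central in either unit), one obtains
\[
a_\sigma\, \eta_R(u_\sigma^{-1}) = \sum a_\sigma^{2^i} u_\sigma^{-2^i} \zeta_i,
\]
and cancelling $a_\sigma$ in the $a_\sigma$-adic completion yields the stated formula $\eta_R(u_\sigma^{-1}) = \sum u_\sigma^{-2^i}\zeta_i a_\sigma^{2^i - 1}$. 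The Mackey structure is then automatic: each $\zeta_i$ was built as a lift of the classical Milnor generator, so its restriction recovers the classical $\zeta_i$ by construction.

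The main obstacle is the convergence step: arbitrarily high powers of $a_\sigma$ contribute to each fixed $RO(C_2)$-degree, so one must carefully verify that the HFPSS abutment is precisely the $a_\sigma$-adic completion rather than some larger or smaller object. Once this is pinned down, the Hopf algebroid formulas and the Mackey structure all cascade from the classical data via the identification $w = a_\sigma u_\sigma^{-1}$.
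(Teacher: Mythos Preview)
The paper does not actually supply a proof of this proposition: it is stated as background and attributed to Hu--Kriz (for the computation) and Ricka (for the Mackey structure). So there is no ``paper's own proof'' to compare against, only the external references.

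Your outline is the standard one and is essentially correct. The HFPSS collapse argument is fine: the Borel-equivariant adjunction does let you lift each $\zeta_i$ as you say, and together with the coefficient classes this forces collapse. Your derivation of $\eta_R(u_\sigma^{-1})$ via $w = a_\sigma u_\sigma^{-1}$ is exactly the identification the paper itself invokes later (in the proof of the computation of $\hf^{\star}B\mu_2$), and cancelling $a_\sigma$ is legitimate since $a_\sigma$ is a non-zero-divisor in the polynomial ring and remains so after $a_\sigma$-adic completion.

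Two points deserve a sentence more of care. First, you should say explicitly why the right unit of the Hopf algebroid, restricted to integer degrees, agrees with the classical right $\mathcal{A}_*$-coaction on $H^*(\mathbb{R}P^\infty)$: both arise from postcomposition with the same map $\eta \wedge 1 : H\mathbb{F}_2 \to H\mathbb{F}_2 \wedge H\mathbb{F}_2$ after identifying $\pi^{C_2}_{-n}(\hf^h) = \pi_{-n}F(BC_{2+}, H\mathbb{F}_2) = H\mathbb{F}_2^n(\mathbb{R}P^\infty)$. Second, the convergence issue you flag is real but routine: conditional convergence of the HFPSS to the $a_\sigma$-completion follows from the identification of the filtration by skeleta of $EC_2$ with the $a_\sigma$-adic filtration, together with the degreewise finiteness needed to pass from conditional to strong convergence in each fixed $RO(C_2)$-degree.
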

\begin{corollary} As a left $\hf^t_{\star}$-algebra, we have
	\[
	\rpis (\hf \wedge \hf)^t = \hf^h_{\star}[\zeta_i]^{\hat{}}_{a_{\sigma}}[a_{\sigma}^{-1}].
	\]
The inclusion of the dual Steenrod algebra is a map of (completed) Hopf algebroids, and
	\[
	\eta_R(u_{\sigma}^{-1}) = \sum_{i\ge 0} u_{\sigma}^{-2^i}\zeta_ia_{\sigma}^{2^i -1},\\
	\eta_r(a_{\sigma}) = a_{\sigma}.
	\]
\end{corollary}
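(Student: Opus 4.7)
The plan is to realize this corollary as a straightforward localization of the preceding proposition at the Euler class $a_\sigma$. From the definition
$$(\hf \wedge \hf)^t = \widetilde{\textup{E}}C_2 \wedge F(\textup{E}C_{2+}, \hf \wedge \hf) = \widetilde{\textup{E}}C_2 \wedge (\hf \wedge \hf)^h,$$
and writing $\widetilde{\textup{E}}C_2 \simeq \hocolim_n S^{n\sigma}$ with structure maps given by multiplication by $a_\sigma$, the fact that $\rpis$ commutes with filtered homotopy colimits yields, for any $C_2$-spectrum $Y$, the identification
$$\rpis(\widetilde{\textup{E}}C_2 \wedge Y) \cong \rpis(Y)[a_\sigma^{-1}].$$
Taking $Y = (\hf \wedge \hf)^h$ and feeding in the previous proposition immediately produces the ring displayed in the statement.

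For the Hopf algebroid structure, I would observe that both unit maps and the coproduct on $(\hf \wedge \hf)^t$ arise by smashing the corresponding structure maps for $(\hf \wedge \hf)^h$ with $\widetilde{\textup{E}}C_2$; on $\rpis$ this is precisely the map inverting $a_\sigma$. Hence the formulas $\eta_R(u_\sigma^{-1}) = \sum_{i \ge 0} u_\sigma^{-2^i} \zeta_i a_\sigma^{2^i-1}$ and $\eta_R(a_\sigma) = a_\sigma$ are inherited verbatim from the Borel-complete setting. The infinite sum continues to make sense in the localized ring because in each fixed $RO(C_2)$-degree only finitely many summands contribute.

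The main (mild) obstacle is checking compatibility of the coproduct with the localization, since we need to know that smashing with $\widetilde{\textup{E}}C_2$ carries the comodule structure on $(\hf \wedge \hf)^h$ to the stated one on $(\hf \wedge \hf)^t$. This is automatic from the fact that $\widetilde{\textup{E}}C_2$ is an idempotent smashing object: the natural equivalence $\widetilde{\textup{E}}C_2 \wedge \widetilde{\textup{E}}C_2 \simeq \widetilde{\textup{E}}C_2$ guarantees that the Hopf algebroid structure maps transport without modification. Beyond this formal point, all the real content is contained in the preceding proposition, and the corollary amounts to the observation that the Tate spectrum differs from the Borel completion precisely by inverting $a_\sigma$.
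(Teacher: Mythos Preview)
Your proposal is correct and is precisely the argument the paper intends: the corollary is stated without proof, immediately following the Borel-complete proposition, so the implicit derivation is exactly the $a_\sigma$-localization you describe via $\widetilde{\textup{E}}C_2 \simeq \hocolim_n S^{n\sigma}$.
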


In this paper we will mostly be concerned with the Hopf algebroid
$\rpis(\hf \wedge \hf)$, which we now name.

\begin{definition} The {\bf $C_2$-equivariant dual Steenrod algebra}
is the Hopf algebroid in Green functors
$(\hf_{\star}, \rpis(\hf \wedge \hf))$. We will
denote it by $\arep$.
\end{definition}

We will need to define a few elements in $\arep$ before stating the computation, and
for that we need the equivariant analog of $\textup{B}C_2$.

\begin{definition} Let $\textnormal{E}\mu_2$ be the $C_2\times \Sigma_2$-homotopy type uniquely
determined by the property that
	\[
	\left(\textnormal{E}\mu_2\right)^H = \begin{cases}
	* & H = 1, C_2, \Delta\\
	\varnothing & H = \Sigma_2, C_2 \times \Sigma_2
	\end{cases}
	\]
where $\Delta \subset C_2 \times \Sigma_2$ is the graph of the unique nontrivial homomorphism
from $C_2$ to $\Sigma_2$. Let $\textup{B}\mu_2$ be the $C_2$-homotopy type $\textnormal{E}\mu_2/\Sigma_2$.
\end{definition}
\begin{remark} The space underlying $\textup{B}\mu_2$ is $\mathbb{R}P^{\infty}$, but the $C_2$ action is
nontrivial. Indeed, $(\textup{B}\mu_2)^{C_2} \cong \mathbb{R}P^{\infty} \amalg \mathbb{R}P^{\infty}$. This space
has many other names. In \cite{HK} it is called $\textup{B}'\mathbb{Z}/2$, and it is also the $C_2$-equivariant
classifying space for $C_2$-equivariant $\Sigma_2$-bundles, which is often written $\textup{B}_{C_2}\Sigma_2$.
\end{remark}

Before describing the dual Steenrod algebra, we'll need a few preliminaries on the cohomology of $\textup{B}\mu_2$.
Consider $\mathbb{C}P^{\infty}$ as a $C_2$-space under complex conjugation. Then the Euler class
of the canonical bundle lifts canonically to an element in $\hz^{\rho}\mathbb{C}P^{\infty}$.
The natural inclusion $\textup{B}\mu_2 \hookrightarrow \mathbb{C}P^{\infty}$ is equivariant and so defines an
element $\widetilde{b} \in \hz^{\rho}\textup{B}\mu_2$ by pulling back this Euler class. Changing
coefficients yields an element $b \in \hf^{\rho}\textup{B}\mu_2$.

Let
$\beta$ denote the Bockstein associated to the exact sequence of Mackey functors
	\[
	\underline{\mathbb{Z}} \stackrel{\cdot 2}{\longrightarrow} \underline{\mathbb{Z}}
	\longrightarrow \underline{\F}.
	\]
Then there is a unique element $c \in \hf^{\sigma}\textup{B}\mu_2$ such that $\beta c = b$ and $c$ vanishes
when restricting to a point. The existence of some such $c$ follows from the fact that $2\widetilde{b}=0$,
which one proves by trivializing the square of the line bundle classified by $\widetilde{b}$. There are exactly
two choices of $c$, differing by addition of $a_{\sigma}$, and this is detected by restricting to a point.

Hu-Kriz show that $\hf^{\star}\textup{B}\mu_2$ is free
over $\hf_{\star}$ with basis elements $c^{\epsilon}b^i$ for $i\ge 0$ and 
$\epsilon = 0,1$.\footnote{In \cite{HK}, they denote by $b'$ what we have denoted by $b$, and
they never quite pin down which choice of $c$ they use.}
We give a different
proof of this fact below, as well (see Proposition \ref{coh-bmu}, below).

The key observation is that the right, completed Steenrod coaction on $c$ takes the form
	\[
	\psi(c) = c \otimes 1 + \sum_{i\ge 0} b^{2^i} \otimes \tau_i,
	\]
for some elements $\tau_i \in \arep$ of degree $(2^i -1)\rho + 1$. The next theorem
is the content of \cite[Thm. 6.41]{HK}.

\begin{theorem}[Hu-Kriz] \label{arep-structure}
Let $\xi_i :=\beta \tau_i$. Then, as a left $\hf_{\star}$-algebra,
\[
	\arep = \hf_{\star}[\tau_i, \xi_i, \overline{u_{\sigma}}]/
	\tau_0a_{\sigma} = \overline{u_{\sigma}} + u_{\sigma}, \, 
	\tau_i^2 = \tau_{i+1}a_{\sigma} + \xi_{i+1}\overline{u_{\sigma}}.
	\]
Here $\vert \tau_i\vert = (2^i - 1)\rho +1$, and $\vert \xi_i\vert = (2^i - 1)\rho$. The behavior
of the right unit can be written in terms of the boundary map
$\partial: \hf^t_{\star} \to \Sigma \hf_{h\star} \to \Sigma \hf_{\star}$:
	\[
	\eta_R\left(\frac{\theta}{a_{\sigma}^ku_{\sigma}^{n}}\right)
	=
	\partial\left(\frac{1}{a_{\sigma}^{k+1}(u_{\sigma} + \tau_0a_{\sigma})^{n+1}}\right)
	= \frac{\theta}{a_{\sigma}^ku_{\sigma}^n} + 
	\frac{(n+1)a_{\sigma}\theta}{a_{\sigma}^{k}u_{\sigma}^{n+1}}\tau_0 
	+ \cdots
	\]
when $k,n\ge 0$.
\end{theorem}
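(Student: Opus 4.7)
The plan is to extract the generators and relations by combining the already-recorded computations of $\arep^h$ and $\arep^t$ with a direct analysis of the Steenrod coaction on $\hf^\star \textup{B}\mu_2$. First I would \emph{define} $\tau_i$ by the coefficients in the expansion
\[
\psi(c) = c \otimes 1 + \sum_{i \ge 0} b^{2^i} \otimes \tau_i,
\]
and set $\xi_i := \beta \tau_i$. That only powers of the form $b^{2^i}$ appear is forced after restricting to the underlying space $\mathbb{R}P^\infty$, where $b$ restricts to $w^2$ and the coaction must reduce to the classical $w \mapsto \sum w^{2^i} \otimes \zeta_i$; this simultaneously identifies the restriction of $\xi_i$ with the Milnor generator $\zeta_i$, while $\tau_i$ supplies the equivariant Bockstein lift. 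Degrees follow from $|c| = \sigma$ and $|b| = \rho$.

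Next I would derive the two stated relations. The identity $\tau_0 a_\sigma = u_\sigma + \overline{u_\sigma}$ should come from restricting $c$ along the inclusion $S^\sigma \hookrightarrow \textup{B}\mu_2$ of the bottom cell: $c|_{S^\sigma}$ is pinned down as a canonical class comparable to $u_\sigma$, and expanding $\psi(c)$ to first order in $b$ after this restriction produces the formula, with $a_\sigma$ entering as the Euler class of the attaching map. For the quadratic relation I would first establish the Cartan-like identity $c^2 = a_\sigma c + u_\sigma b$ in $\hf^\star \textup{B}\mu_2$; this can be proved either by comparing cells or by noting that $a_\sigma c$ and $u_\sigma b$ are the only classes in the right degree, with coefficients determined by restriction to the underlying space and to the fixed points. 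Applying $\psi$ to both sides, using Frobenius $(\cdot)^2$ on the expansion, and matching the coefficients of $b^{2^{i+1}}$ yields exactly $\tau_i^2 = \tau_{i+1} a_\sigma + \xi_{i+1}\overline{u_\sigma}$.

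Having produced generators and relations, I would show that they exhaust $\arep$ by playing the Borel-complete and geometric fixed point calculations against each other. The map $\arep \to \arep^h$ identifies $u_\sigma^{-1}$-localizations with $\hf^h_\star[\zeta_i]^\wedge_{a_\sigma}$, so no additional generators can live in the free $\hf_\star$-summand. Any remaining generator would lie in the $\theta$-torsion ideal of $\hf_\star$, but by the description of $\pi^{C_2}_\star \hf$ this piece is already pinned down as an $\hf_\star$-module by the classes $\theta/(a_\sigma^k u_\sigma^n)$. The right-unit formula on these torsion classes then follows automatically: one inverts $a_\sigma$ to land in $\arep^t$, uses $\eta_R(u_\sigma) = u_\sigma + \tau_0 a_\sigma$ from the relation above, and expands geometrically before applying the boundary $\partial : \hf^t_\star \to \Sigma\hf_\star$ to recover the displayed formula.

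The main obstacle will be this last step: verifying both that no further relations lurk and that the expansion of $\eta_R$ on the $\theta$-torsion truly matches the boundary-map description. The plan for this is to exploit the fracture square for $\hf \wedge \hf$, comparing $\arep^h$ and $\Phi\arep = \Phi\hf_\star[\zeta_i, \overline{u_\sigma}]$ along $\arep^t$; the two outer pieces together are rigid enough to determine $\arep$ on the nose, and the only input required from the equivariant world beyond the non-equivariant Milnor computation is the structure of $\hf_\star$ together with the coaction formula on $c$.
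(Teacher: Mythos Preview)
The paper does not prove this theorem; it is quoted from Hu--Kriz and stated without proof (the sentence immediately preceding it reads ``The next theorem is the content of \cite[Thm.~6.41]{HK}''). So there is no proof in the paper to compare your proposal against.

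That said, your outline is essentially the Hu--Kriz strategy, and several of its ingredients do appear scattered through the present paper in other contexts: the definition of $\tau_i$ via the coaction on $c$ is exactly how the paper introduces these elements just before the theorem statement; the relation $c^2 = a_\sigma c + u_\sigma b$ is proved later (in the section on extended powers) by passing to Borel cohomology; and the description of $\tau_0$ as the unique class with $a_\sigma \tau_0 = u_\sigma + \overline{u_\sigma}$ is spelled out in the remark immediately following the theorem. Your derivation of the quadratic relation by applying $\psi$ to $c^2 = a_\sigma c + u_\sigma b$ and matching coefficients of $b^{2^{i+1}}$ is correct. The step you flag as the main obstacle---ruling out further generators and relations via the fracture square comparing the Borel-complete, geometric, and Tate pieces---is indeed where the real work lies, and your plan for it is the standard one; the paper simply outsources this to the Hu--Kriz reference rather than reproducing it.
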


\begin{remark} For us, the most important element is $\tau_0$, and that has a more elementary description.
The left and right units on $u_{\sigma}$ restrict to the same element on underlying homotopy,
so $u_{\sigma} + \overline{u_{\sigma}}$ is in the kernel of the restriction. Thus there is some
element $\tau_0 = \frac{1}{a_{\sigma}}(u_{\sigma} + \overline{u_{\sigma}})$. Since the
transfer is zero in this degree, the element $\tau_0$ is uniquely determined by this property.

 Actually, this basic idea lies behind the whole computation of the Dyer-Lashof and dual Steenrod
algebras: write down some classical elements and some equivariant elements that restrict to
the same thing and keep dividing by $a_{\sigma}$ until you can't any more. This business of
dividing by $a_{\sigma}$ corresponds, on the cellular side, to extending a map over a $CW$-complex
with a free cell in each dimension. Such things end up looking like representation spheres in the
case of $C_2$ and for $C_p$ they're stranger objects. These cell complexes appear in the
Hill, Hopkins, and Ravenel computation \cite{HHR2} of $\pi_*EO_{2(p-1)}$  and in 
unpublished work by the same authors.
\end{remark}

\section{Extended Powers}

In this section we refine the skeletal filtration of $\textup{E}\Sigma_2$ to an equivariant
filtration of $\textup{E}\mu_2$. It's important that we do not use the standard
equivariant skeletal filtration on $\textup{E}\mu_2$. To see why, notice that the $0$-skeleton
of $\textup{E}\mu_2$ must contain at least two 0-cells:
	\[
	\Sigma_{2+}, \quad \frac{C_2 \times \Sigma_2}{\Delta}.
	\]
In terms of power operations, the first of these cells is eventually responsible for the \emph{squaring}
map, while the second is responsible for the \emph{norm} map. We choose a filtration which
priviledges the squaring map in order to get formulas which look like the classical case.

Another departure from the classical case is that the resulting filtration on $\hf \wedge \mathbb{P}_2(X)$
does not obviously split in general. However, the filtration does split when $X = S^{n\rho}$ or
$S^{n\rho-1}$, and this ends up being all we need in the sequel.

\subsection{Definitions and first properties}
Let $\mathcal{O}$ denote an operad in $C_2$-spaces such that $\mathcal{O}(n)$ is a universal
space for the family of subgroups $\Gamma \subset C_2 \times \Sigma_n$ which are graphs
of homomorphisms $C_2 \longrightarrow \Sigma_n$. This is a
complete $N_{\infty}$-operad for the group $C_2$ in the terminology of \cite{BH}, or,
a $G\text{-}E_{\infty}$-operad in more classical terminology. There is a unique such operad
up to equivalence so we are not concerned with the choice. Notice that, in particular, we have
an equivalence of $C_2 \times \Sigma_2$-spaces
	\[
	\mathcal{O}(2) \cong \textnormal{E}\mu_2.
	\] 
Let $R$ be an equivariant commutative ring spectrum
, and let $\textup{Mod}_R$ denote the category of modules over (a cofibrant replacement of) $R$.
We have a functor
	\[
	\mathbb{P}: \textup{Mod}_R \longrightarrow \textup{Alg}_{\mathcal{O}}(\textup{Mod}_R)
	\]
homotopically left adjoint to the forgetful functor. 

The underlying object in $\textup{Mod}_R$, $\mathbb{P}(X)$, admits a canonical filtration by arity 
with subquotients $\mathbb{P}_m(X)$ for $m\ge 0$ and we have
	\[
	\mathbb{P}_2(X) = \mathcal{O}(2)_+ \otimes_{\Sigma_2} X^{\otimes 2} \cong \textnormal{E}\mu_2 \otimes_{\Sigma_2}
	X^{\otimes 2}.
	\]
We will refer to this as the \textbf{genuine} or \textbf{complete extended power} of $X$. Notice that it differs
from the standard extended power $X^{\otimes 2}_{h\Sigma_2}$. If we wish to emphasize
the ring we'll use the notation $\mathbb{P}_2^{R}$.

\begin{warning} While $\Phi^{H}$ does take
$N_{\infty}$-algebras to $N_{\infty}$-algebras (for a different operad), it \emph{does not
preserve extended powers} unless $H=e$ is trivial. That is:
	\[
	\Phi^{H}(\mathbb{P}_m^{\mathcal{O}}(X))  \not\cong \mathbb{P}_m^{\mathcal{O}^{H}}(\Phi^{H}X),
	\]
where we've momentarily decorated the extended powers to indicate which operad we're using.
For example, when $X = S^0$, $G=H=C_2$, and $m=2$, the left hand side is (the suspension
spectrum of) $\mathbb{R}P^{\infty} \amalg
\mathbb{R}P^{\infty}$ and the right hand side is just (the suspension spectrum of) $\mathbb{R}P^{\infty}$.
\end{warning}

Since change of base ring is monoidal, we get the following. 

\begin{lemma}\label{lem-change-base} If $R \to R'$ is a map of commutative ring spectra, then there is an equivalence in
the homotopy category:
	\[
	R' \wedge_{R} \mathbb{P}_2^{R}(X) \cong \mathbb{P}_2^{R'}((R' \wedge_{R} X)).
	\]
\end{lemma}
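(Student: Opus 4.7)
The plan is to unwind the definition of $\mathbb{P}_2$ and use that extension of scalars is strong symmetric monoidal. Recall that, on cofibrant objects,
\[
\mathbb{P}_2^R(X) = \mathcal{O}(2)_+ \wedge_{\Sigma_2} (X \wedge_R X),
\]
and that the functor $R' \wedge_R (-) : \textup{Mod}_R \to \textup{Mod}_{R'}$, being the left adjoint to restriction of scalars along $R \to R'$, comes with a canonical strong symmetric monoidal structure, i.e.\ a natural equivalence
\[
R' \wedge_R (A \wedge_R B) \simeq (R' \wedge_R A) \wedge_{R'} (R' \wedge_R B),
\]
compatible with the symmetry. As a left adjoint it also commutes with all colimits, and in particular with the $\Sigma_2$-coinvariants construction $\mathcal{O}(2)_+ \wedge_{\Sigma_2} (-)$.

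Assuming $X$ is cofibrant as an $R$-module (we may always replace it so without changing anything up to homotopy), I would chain these two facts together:
\[
R' \wedge_R \mathbb{P}_2^R(X)
= R' \wedge_R \bigl(\mathcal{O}(2)_+ \wedge_{\Sigma_2} (X \wedge_R X)\bigr)
\simeq \mathcal{O}(2)_+ \wedge_{\Sigma_2} \bigl(R' \wedge_R (X \wedge_R X)\bigr),
\]
and then
\[
\mathcal{O}(2)_+ \wedge_{\Sigma_2} \bigl(R' \wedge_R (X \wedge_R X)\bigr)
\simeq \mathcal{O}(2)_+ \wedge_{\Sigma_2} \bigl((R' \wedge_R X) \wedge_{R'} (R' \wedge_R X)\bigr)
= \mathbb{P}_2^{R'}(R' \wedge_R X).
\]
Tracing through the symmetric monoidal structure shows the $\Sigma_2$-equivariance is preserved, which is what allows the middle step to commute past the $\Sigma_2$-coinvariants.

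The only subtlety, and what I would call the main obstacle, is homotopy invariance: the smash over $R$ and the coinvariants are only homotopically meaningful on suitably cofibrant input. Since $\mathcal{O}(2)$ is a universal space for graph subgroups, $\mathcal{O}(2)_+ \wedge_{\Sigma_2}(-)$ is already a model for the homotopy coinvariants, so the functor $Y \mapsto \mathcal{O}(2)_+ \wedge_{\Sigma_2} (Y \wedge_R Y)$ preserves equivalences between cofibrant $R$-modules. Thus I would work in a model where $R \to R'$ is a cofibration of commutative ring spectra, $X$ is a cofibrant $R$-module, and $R' \wedge_R X$ is a cofibrant $R'$-module; in this setting the strong monoidal equivalence and the commutation with the free $\Sigma_2$-homotopy-orbit are both genuinely homotopical, and the composite equivalence above descends to the homotopy category.
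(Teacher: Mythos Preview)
Your proof is correct and follows the same approach as the paper: the paper simply asserts that the lemma follows because change of base ring is monoidal, without spelling out any details. Your argument unpacks exactly what that means—commuting $R' \wedge_R (-)$ past the $\Sigma_2$-homotopy coinvariants and through the smash power via strong symmetric monoidality—and your attention to cofibrancy is more care than the paper itself exercises.
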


We also record the behavior of complete extended powers with respect to indexed wedges, for later use.
To prove it one may, for example, model the extended power by honest symmetric powers
after cofibrant replacement \cite[B.117]{HHR} and then argue using the distributive law \cite[A.37]{HHR}.
\begin{lemma}\label{lem-distr} For any equivariant commutative ring $R$, 
and $R$-modules $X$ and $Y$ we have
	\[
	\mathbb{P}_2(X \vee Y) \cong \mathbb{P}_2(X) \vee \mathbb{P}_2(Y) \vee (X \vee Y),
	\]
	\[
	\mathbb{P}_2(C_{2+} \wedge X) \cong C_{2+} \wedge \mathbb{P}_2(X) \vee (C_{2+} \wedge X).
	\]
\end{lemma}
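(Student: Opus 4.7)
The plan is to pass to the point-set level by replacing $X$ and $Y$ with cofibrant $R$-modules, at which point \cite[B.117]{HHR} models the homotopical extended power $\mathbb{P}_2(-)$ by the strict orbit spectrum $\mathcal{O}(2)_+ \wedge_{\Sigma_2} (-)^{\wedge 2}$. The distributive law \cite[A.37]{HHR} then expresses the second smash power of a wedge as an indexed wedge over the $(C_2 \times \Sigma_2)$-orbits of the corresponding function sets, and I can identify each summand geometrically.

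For the first identity, the distributive law gives
\[
(X \vee Y)^{\wedge 2} \cong X^{\wedge 2} \vee Y^{\wedge 2} \vee \Sigma_{2+} \wedge (X \wedge Y)
\]
as $(C_2 \times \Sigma_2)$-spectra: the two constant functions $\{1,2\} \to \{X,Y\}$ are $\Sigma_2$-fixed and yield the pure-square terms with their usual flip action, while the two nonconstant functions form a single free $\Sigma_2$-orbit and yield the cross term. Smashing with $\mathcal{O}(2)_+$ and taking $\Sigma_2$-orbits, the first two summands become $\mathbb{P}_2(X) \vee \mathbb{P}_2(Y)$. For the third, the freeness of the $\Sigma_2$-action on $\Sigma_{2+}$ makes the orbit equal to $\mathcal{O}(2)_+ \wedge (X \wedge Y)$, which is $C_2$-equivariantly equivalent to $X \wedge Y$ because $\mathcal{O}(2) \simeq \textnormal{E}\mu_2$ is $C_2$-equivariantly contractible.

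For the second identity I apply the same machinery to $(C_{2+} \wedge X)^{\wedge 2} = (C_2 \times C_2)_+ \wedge X^{\wedge 2}$. The $(C_2 \times \Sigma_2)$-set $C_2 \times C_2$ decomposes into two orbits: the diagonal $\{(e,e),(\tau,\tau)\}$ with stabilizer $1 \times \Sigma_2$, giving $(C_2 \times \Sigma_2)/\Sigma_2 \cong C_{2+}$ with trivial $\Sigma_2$-action, and the antidiagonal $\{(e,\tau),(\tau,e)\}$ with stabilizer $\Delta$, the graph of the nontrivial homomorphism $C_2 \to \Sigma_2$. The diagonal orbit contributes $C_{2+} \wedge \mathbb{P}_2(X)$ after smashing with $\mathcal{O}(2)_+ \wedge_{\Sigma_2} X^{\wedge 2}$. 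The antidiagonal orbit is $\Sigma_2$-free, so smashing with $X^{\wedge 2}$ and taking $\Sigma_2$-orbits collapses it to $X^{\wedge 2}$ with a twisted $C_2$-action, which a change-of-variables using the $C_2$-action on $X$ converts into the claimed induced second summand.

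The main obstacle is the bookkeeping for the antidiagonal piece, where the $C_2$-action on $X$ and the $\Sigma_2$-swap on $X^{\wedge 2}$ are intertwined via the graph $\Delta$; the key fact enabling the simplification is that $\mathcal{O}(2)^{\Delta}$ is contractible by the defining property of $\mathcal{O}$, so the $\Sigma_2$-quotient of $\mathcal{O}(2)_+ \wedge \bigl(\text{antidiagonal piece}\bigr)$ collapses to a recognizable induced form rather than a genuine extended-power-type object.
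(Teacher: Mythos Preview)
Your approach is exactly the paper's: the paper's entire argument is the sentence ``model the extended power by honest symmetric powers after cofibrant replacement \cite[B.117]{HHR} and then argue using the distributive law \cite[A.37]{HHR},'' and you carry out precisely this program with details filled in. For the first identity your analysis is correct, and in fact your cross term $X \wedge Y$ is the right one (the printed $X \vee Y$ is a typo).

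There is, however, a genuine gap in your treatment of the second identity. Your identification of the antidiagonal orbit as $(C_2\times\Sigma_2)/\Delta$ and your observation that $\mathcal{O}(2)^{\Delta}\simeq *$ are both correct, and they do collapse the $\mathcal{O}(2)$ factor. But what remains is $X\wedge X$ with the $C_2$-action $\gamma\cdot(x,y)=(\gamma y,\gamma x)$, and the asserted ``change-of-variables using the $C_2$-action on $X$'' converting this into $C_{2+}\wedge X$ does not exist. That twisted square is naturally identified with the norm $N^{C_2}_e(\mathrm{res}_e X)$, not with the induced spectrum $C_{2+}\wedge X$. The case $X=S^0$ already shows the discrepancy: your argument (correctly) yields $\mathbb{P}_2(C_{2+})\simeq C_{2+}\wedge B\mu_{2+}\vee S^0$, whereas the stated formula would give $C_{2+}\wedge B\mu_{2+}\vee C_{2+}$; these disagree even on underlying spectra, since non-equivariantly $\mathbb{P}_2(S^0\vee S^0)\simeq \mathbb{R}P^{\infty}_+\vee\mathbb{R}P^{\infty}_+\vee S^0$. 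So the second displayed formula in the lemma is itself misstated, and your final sentence papers over rather than resolves this. The correct cross term is the norm, and indeed the norm is what appears in the paper's subsequent identification of $\mathrm{gr}_{2k+1}\mathbb{P}_2$; for the later application the relevant $X$ are trivial-action spheres $S^m$, where $N^{C_2}_e S^m\simeq S^{m\rho}$ lands among the regular-representation cells rather than the induced ones.
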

\subsection{A filtration on $\textnormal{E}\mu_2$}
We will be interested in filtering $\mathbb{P}_2(X)$ when $X$ is a representation sphere in order to get
a handle on its homology. These filtrations will be induced by a filtration on $\textnormal{E}\mu_2$ itself
which refines the non-equivariant skeletal filtration. We give two
descriptions of the filtration- one geometric, and the other combinatorial. 

\begin{construction}\label{geometric-filtration}
Let $\tau$ denote the sign representation of $\Sigma_2$, and define
	\[
	F_{2k}\textnormal{E}\mu_2 := S((k\rho + 1) \otimes \tau), \quad F_{2k+1}\textnormal{E}\mu_2 := S((k+1)\rho \otimes \tau).
	\]
There are evident equivariant inclusions $F_{i}\textnormal{E}\mu_2 \subset F_{i+1}\textnormal{E}\mu_2$ which yield a 
filtered $C_2\times \Sigma_2$-space $F_{\infty}\textnormal{E}\mu_2$. We write $\textup{gr}_i\textnormal{E}\mu_2$
for the $i$th-layer of the associated graded. Recall that $\Delta \subset C_{2} \times \Sigma_2$
denotes the diagonal subgroup.

We'll now build equivariant maps
	\[
	g_{2k}: S^{k\rho} \wedge \Sigma_{2+} \longrightarrow \textup{gr}_{2k}\textnormal{E}\mu_2,
	\]
	\[
	g_{2k+1}: S^{(k\rho + 1)\tau} \wedge \left(\frac{C_2 \times \Sigma_2}{\Delta}\right)_+
	\longrightarrow \textup{gr}_{2k+1}\textnormal{E}\mu_2.
	\]
For $g_{2k}$, let $D(k\rho) \longrightarrow S(k\rho +1)$ be the $C_2$-equivariant inclusion of the graph of
the function $D(k\rho) \longrightarrow \mathbb{R}$ given by $v \mapsto \sqrt{1 - |v|^2}$. 
By adjunction, this extends
to a map of pairs
	\[
	(D(k\rho) \times \Sigma_2, S(k\rho) \times \Sigma_2) \longrightarrow 
	(S((k\rho +1)\otimes \tau), S(k\rho \otimes \tau)) = 
	(F_{2k}\textnormal{E}\mu_2, F_{2k-1}\textnormal{E}\mu_2)
	\]
and so descends to define $g_{2k}$. 

We define $g_{2k+1}$ similarly. That is, we use the functions $v \mapsto \pm\sqrt{1-|v|^2}$ to define
a map 
	\[
	D((k\rho+1) \otimes \tau) \times \left(\frac{C_2 \times \Sigma_2}{\Delta}\right) \longrightarrow
S((k+1)\rho \otimes \tau),\]
which includes into the top hemisphere on the trivial coset and the bottom
hemisphere on the nontrivial coset. This map descends to the quotient and yields $g_{2k+1}$.

\end{construction}
\begin{proposition}\label{proposition-filtration}
 The $C_2 \times \Sigma_2$ space $F_{\infty}\textnormal{E}\mu_2 = S(\infty (\rho \otimes \tau))$ is
a model for $\textnormal{E}\mu_2$, and the maps
	\[
	g_{2k}: S^{k\rho} \wedge \Sigma_{2+} \longrightarrow \textup{gr}_{2k}\textnormal{E}\mu_2,
	\]
	\[
	g_{2k+1}: S^{(k\rho + 1)\tau} \wedge \left(\frac{C_2 \times \Sigma_2}{\Delta}\right)_+
	\longrightarrow \textup{gr}_{2k+1}\textnormal{E}\mu_2.
	\]
are equivalences of pointed $(C_2 \times \Sigma_2)$-spaces.
\end{proposition}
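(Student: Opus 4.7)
The plan is to verify the two claims in turn.

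For the first claim, I would compute the fixed points of the $(C_2 \times \Sigma_2)$-representation $\rho \otimes \tau$ at each of the five subgroups. Using a basis $e_1, e_2$ of $\rho$ with $g \cdot e_i = e_{3-i}$ and $f \in \tau$ with $s \cdot f = -f$, an elementary calculation yields $\dim(\rho \otimes \tau)^H = 2, 1, 0, 1, 0$ for $H = 1, C_2, \Sigma_2, \Delta, C_2 \times \Sigma_2$ respectively; in particular, $(\rho \otimes \tau)^\Delta$ is spanned by $e_1 \otimes f - e_2 \otimes f$. Since $S(V)^H = S(V^H)$, and an infinite-dimensional unit sphere is contractible while the sphere of the zero representation is empty, $S(\infty(\rho \otimes \tau))$ has precisely the fixed-point diagram characterizing $\emu$.

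For the associated graded pieces, the main input is the general equivariant homeomorphism $S(V \oplus W)/S(V) \cong S^V \wedge S(W)_+$, obtained from the radial trivialization sending $(v, w) \in S(V \oplus W)$ with $w \ne 0$ to $(w/|w|, v) \in S(W) \times D(V)$ and sending $S(V)$ to the basepoint. Applying this with $V = k\rho \otimes \tau$, $W = \tau$, and noting that $S(\tau) \cong \Sigma_2$ as a $(C_2 \times \Sigma_2)$-set (with $C_2$ trivial and $\Sigma_2$ translating), one identifies $\textup{gr}_{2k}\emu \cong S^{k\rho \otimes \tau} \wedge \Sigma_{2+}$. Since the $\Sigma_2$-action on $\Sigma_{2+}$ is free, the twist of $S^{k\rho}$ by $\tau$ can be absorbed, yielding an equivariant homeomorphism with $S^{k\rho} \wedge \Sigma_{2+}$. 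Analogously, applying the formula to $V = (k\rho + 1) \otimes \tau$, $W = \sigma \otimes \tau$, one checks that $\sigma \otimes \tau$ is trivial on $\Delta$ but sign on the quotient $(C_2 \times \Sigma_2)/\Delta$, hence $S(\sigma \otimes \tau) \cong (C_2 \times \Sigma_2)/\Delta$, giving $\textup{gr}_{2k+1}\emu \cong S^{(k\rho+1) \otimes \tau} \wedge ((C_2 \times \Sigma_2)/\Delta)_+$.

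To close the argument, I would verify that the explicit graph-adjoint maps $g_{2k}$ and $g_{2k+1}$ built in Construction \ref{geometric-filtration} realize these identifications. For $g_{2k}$, the $C_2$-map $v \mapsto v + \sqrt{1 - |v|^2} \cdot f$ parametrizes the upper hemisphere of $S((k\rho + 1) \otimes \tau)$ in the $\tau$-direction; its $\Sigma_2$-adjoint sends the nontrivial coset to the lower hemisphere, which is exactly the inverse of the radial trivialization above. The analogous hemisphere description works for $g_{2k+1}$ with the $\tau$-direction replaced by $\sigma \otimes \tau$. I expect the main (purely bookkeeping) obstacle to be the careful verification of $\Delta$-equivariance in the $g_{2k+1}$ case, which reduces to the fact that the diagonal element $(g, s)$ acts trivially on $\sigma \otimes \tau$, so that the two cosets of $\Delta$ — rather than $\Delta$ itself — interchange the upper and lower hemispheres. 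Once this is settled, each $g_i$ is an underlying homeomorphism of pointed spaces, and since the inverse of an equivariant bijection is automatically equivariant, $g_{2k}$ and $g_{2k+1}$ are equivariant homeomorphisms, finishing the proof.
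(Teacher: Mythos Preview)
Your proposal is correct and arrives at the same conclusion, but the route differs from the paper's in a way worth noting.

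For the first claim, your fixed-point dimension count is exactly what the paper does, only spelled out more explicitly: the paper simply names $(\rho\otimes\tau)^{C_2} = 1\otimes\tau$ and $(\rho\otimes\tau)^{\Delta} = \sigma\otimes\tau$ and observes that $\Sigma_2$ acts freely, which is your table in prose.

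For the associated graded, the approaches diverge. The paper takes the low road: it asserts that $g_{2k}$ and $g_{2k+1}$ are evidently homeomorphisms and then verifies equivariance by four direct coordinate computations, tracking the action of the generators $\gamma$ and $\chi$ through the explicit formula $v\mapsto(v,\pm\sqrt{1-|v|^2})$. You instead invoke the general equivariant homeomorphism $S(V\oplus W)/S(V)\cong S^V\wedge S(W)_+$ via radial trivialization, identify $S(\tau)\cong\Sigma_2$ and $S(\sigma\otimes\tau)\cong (C_2\times\Sigma_2)/\Delta$ as $(C_2\times\Sigma_2)$-sets, and then untwist $S^{k\rho\otimes\tau}\wedge\Sigma_{2+}\cong S^{k\rho}\wedge\Sigma_{2+}$ using freeness of the $\Sigma_2$-action (this untwisting is exactly the content of the paper's Remark~\ref{remark:untwist}). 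Your approach explains \emph{why} the layers have the shape they do and generalizes cleanly; the paper's approach is shorter because it never has to match the abstract isomorphism against the specific graph map $g_i$. Your final paragraph, where you check that $g_{2k}$ inverts the radial trivialization, is where the two arguments reconverge---that step is essentially the coordinate check the paper performs, just organized differently.
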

\begin{proof} For the first claim note that the underlying space is the sphere in $\mathbb{R}^{\infty}$, the $\Delta$-fixed points are given by $S(\infty (\sigma \otimes \tau))$, and the $C_2$-fixed points are given by $S(\infty(1 \otimes \tau))$, all of which are contractible. It is also evident that $\Sigma_2$ acts freely. 

For the latter claim, it's clear that $g_{2k}$ and $g_{2k+1}$ are homeomorphisms, so we'll just double-check
that they are equivariant. Let $\gamma$
be the generator of $C_2$ and $\chi$ the generator of $\Sigma_2$. We will abuse notation and evaluate
$g_{2k}$ (resp. $g_{2k+1}$) on points of $D(k\rho) \times \Sigma_2$ (resp. $D((k\rho +1) \otimes \tau)
\times \left(\dfrac{C_2 \times \Sigma_2}{\Delta}\right)$). 
	\begin{align*}
	g_{2k}(\gamma\cdot(v_1, v_2, ..., v_{2k}, 1)) &= g_{2k}(v_1, -v_2, ..., v_{2k-1}, -v_{2k}, 1)\\
	&= (v_1, -v_2, ..., v_{2k-1}, -v_{2k}, \sqrt{1-|v|^2})\\
	&= \gamma \cdot (v_1, v_2, ..., v_{2k}, \sqrt{1-|v|^2})\\
	&= \gamma \cdot g_{2k}(v_1, ..., v_{2k}, 1)
	\end{align*}
	\begin{align*}
	g_{2k}(\chi \cdot (v_1, ..., v_{2k}, 1)) &= g_{2k}(v_1, v_2, ..., v_{2k}, \chi)\\
	&= (-v_1, ..., -v_{2k}, -\sqrt{1-|v|^2})\\
	&= \chi \cdot (v_1, ..., v_{2k}, \sqrt{1-|v|^2}) \\
	&= \chi \cdot g_{2k}(v_1, ..., v_{2k}, 1)
	\end{align*}
	\begin{align*}
	g_{2k+1}(\gamma \cdot (v_1, ..., v_{2k}, v_{2k+1}, [1])) &= 
	g_{2k+1}(v_1, -v_2, ..., -v_{2k}, v_{2k+1}, [\gamma])\\
	&= (v_1, -v_2, ..., v_{2k+1}, -\sqrt{1-|v|^2})\\
	&= \gamma \cdot (v_1, ..., v_{2k+1}, \sqrt{1-|v|^2})\\
	&= \gamma \cdot g_{2k+1}(v_1, ..., v_{2k+1}, [1])
	\end{align*}
	\begin{align*}
	g_{2k+1}(\chi \cdot (v_1, ..., v_{2k+1}, [1])) &=
	g_{2k+1}(-v_1, ..., -v_{2k+1}, [\chi])\\
	&= (-v_1, ...., -v_{2k+1}, -\sqrt{1-|v|^2})\\
	&= \chi \cdot (v_1, ..., v_{2k+1}, \sqrt{1-|v|^2})\\
	&= \chi \cdot g_{2k+1}(v_1, ..., v_{2k+1}, [1])
	\end{align*}
\end{proof}
\begin{remark}\label{remark:untwist}
It's worth pointing out that if $X$ is a pointed $G$-space and $H \subset G$ is a subgroup
then $X \wedge \left(G/H\right)_+$ is equivalent to $i_H^*X \wedge_H G_+$ as $G$-spaces. 
Indeed, the adjunction between restriction and induction gives a $G$-equivariant map from the right-hand
side to the left which is evidently a homeomorphism.

Now, $\sigma$ and $\tau$ both restrict to the sign representation of $\Delta$, and
$\sigma\tau$ restricts to the trivial representation of $\Delta$. Combining these two
observations yields an equivalence:
	\[
	S^{\tau V} \wedge \left(\dfrac{C_2 \times \Sigma_2}{\Delta}\right)_+ \cong S^{\sigma V}
	\wedge \left(\dfrac{C_2 \times \Sigma_2}{\Delta}\right)_+.
	\]
\end{remark}
Now we sketch a combinatorial approach to the above filtration. This has the benefit of generalizing
more readily to the odd primary case, but other than that it is much the same.

\begin{construction} Let $\mathbf{E}\dfrac{C_2 \times \Sigma_2}{\Delta}$ denote the
$(C_2 \times \Sigma_2)$-simplicial set with $n$-simplices given by
	\[
	\left(\mathbf{E}\frac{C_2 \times \Sigma_2}{\Delta}\right)_n = \left(\frac{C_2\times \Sigma_2}{\Delta}
	\right)^{\times (n+1)}
	\]
and boundaries and degeneracies given by projection and diagonal maps, respectively.

Similarly let $\mathbf{E}\Sigma_2$ denote the nerve of the transport category for $\Sigma_2$, i.e.
the $(C_2 \times \Sigma_2)$-simplicial set with $n$-simplices given by
	\[
	\left(\mathbf{E}\Sigma_2\right)_n = \Sigma_2^{\times (n+1)}
	\]
and boundaries and degeneracies given by projections and diagonals.

\begin{remark} This is slightly non-standard: we are thinking of the tuple $(g_0, ..., g_n)$ as
specifying the string of morphisms $g_0 \to g_1 \to \cdots \to g_n$ and the $\Sigma_2$-action
is diagonal. We could equally well use the tuples $(g_0, h_1, ..., h_n)$ where $g_0$ is
the source of the first arrow, and we have $g_0h_1 = g_1$, $g_1h_2 = g_2$, etc. Then $\Sigma_2$
acts only on the first coordinate.
\end{remark}

Finally, let $\mathbf{E}\mu_2$ denote the join of these two simplicial sets. So the $n$-simplices are:
	\[
	\left(\mathbf{E}\mu_2\right)_n =  \Sigma_2^{\times (n+1)}
	\cup \, \left(\frac{C_2\times \Sigma_2}{\Delta}
	\right)^{\times (n+1)}
	 \, \cup \bigcup_{i+j = n-1} 
	 	 \Sigma_2^{\times (i+1)}
	 \times \, \left(\frac{C_2\times \Sigma_2}{\Delta}\right)^{\times (j+1)}.
	\]
\end{construction}

\begin{lemma} The geometric realization of $\mathbf{E}\mu_2$ is a model for $\emu$.
\end{lemma}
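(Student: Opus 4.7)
The plan is to verify that the $(C_2 \times \Sigma_2)$-space $|\mathbf{E}\mu_2|$ has the fixed points required by the universal characterization of $\emu$: namely, $K$-fixed points contractible for $K \in \{1, C_2, \Delta\}$ and empty for $K \in \{\Sigma_2, C_2\times \Sigma_2\}$. Two general facts reduce this to a pair of smaller computations. First, geometric realization of simplicial $G$-sets commutes with levelwise fixed points, so $|X|^K \cong |X^K|$. Second, by direct inspection of the explicit $n$-simplex formula for $\mathbf{E}\mu_2$, the $K$-fixed simplicial set is itself a join, $(\mathbf{E}\mu_2)^K = (\mathbf{E}\Sigma_2)^K * (\mathbf{E}(G/\Delta))^K$, and passing to realization yields the topological join.

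Next I compute the two factors separately. For $\mathbf{E}\Sigma_2$, the $\Sigma_2$-action on $\Sigma_2^{\times(n+1)}$ is free diagonal, while $C_2$ acts trivially. Hence $(\mathbf{E}\Sigma_2)^K$ is the entire simplicial set (and so has contractible realization, via the standard extra-degeneracy contraction from any basepoint of $\Sigma_2$) exactly when $K$ maps trivially to $\Sigma_2$, i.e.\ $K \in \{1, C_2\}$, and is empty for $K \in \{\Sigma_2, \Delta, C_2\times \Sigma_2\}$. For $\mathbf{E}(G/\Delta)$, the levelwise fixed points are $((G/\Delta)^K)^{n+1}$, again the nerve of a codiscrete (pair) category, which is contractible when $(G/\Delta)^K$ is nonempty and empty otherwise. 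Since $G$ is abelian, $(G/\Delta)^K \ne \varnothing$ exactly when $K \subseteq \Delta$, giving contractibility for $K \in \{1, \Delta\}$ and emptiness for $K \in \{C_2, \Sigma_2, G\}$.

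Finally, I assemble via the topological join, using $\text{contr} * \text{contr} \simeq *$, $\text{contr} * \varnothing \simeq \text{contr}$, and $\varnothing * \varnothing = \varnothing$: the join gives contractible for $K=1$ (both pieces contractible), for $K = C_2$ (the $\mathbf{E}\Sigma_2$ piece survives), and for $K = \Delta$ (the $\mathbf{E}(G/\Delta)$ piece survives), and empty for $K \in \{\Sigma_2, C_2 \times \Sigma_2\}$ (both pieces empty). This matches the defining fixed-point data of $\emu$, so by the uniqueness part of that characterization the two are equivalent. The only mildly subtle step is checking that the join of simplicial sets is compatible with both levelwise $K$-fixed points and topological realization, but this is routine from the explicit combinatorial formula displayed in the construction.
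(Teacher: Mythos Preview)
Your proof is correct and follows exactly the same approach as the paper's: the paper's one-sentence proof simply invokes that fixed points commute with the join and that joining with a contractible space is contractible, and you have spelled out these facts together with the explicit case-by-case computation of the fixed points of each join factor. Nothing is missing or different in spirit.
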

\begin{proof} This is immediate from the fact that taking fixed points commutes with
the join and that joining with a contractible space yields a contractible space.
\end{proof}
\begin{lemma} The non-degenerate simplices of $\mathbf{E}\mu_2$ are given by
\begin{align*}
	N\left(\mathbf{E}\mu_2\right)_n &= \Sigma_2
	\cup  \left(\frac{C_2 \times \Sigma_2}{\Delta}\right)\cup 
	\bigcup_{i+j = n-1}\Sigma_2 
	\times \left(\frac{C_2 \times \Sigma_2}{\Delta}\right)\\
	&= \Sigma_2
	\cup  \left(\frac{C_2 \times \Sigma_2}{\Delta}\right)\cup 
	\bigcup_{i+j = n-1} C_2 \times \Sigma_2.
	\end{align*}
\end{lemma}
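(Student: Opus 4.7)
The plan is to first compute the non-degenerate simplices of each factor $\mathbf{E}\Sigma_2$ and $\mathbf{E}\bigl(\tfrac{C_2 \times \Sigma_2}{\Delta}\bigr)$ separately, then combine them via the standard description of simplices in a join, and finally identify the Cartesian product $\Sigma_2 \times \bigl(\tfrac{C_2 \times \Sigma_2}{\Delta}\bigr)$ with $C_2 \times \Sigma_2$ as equivariant sets.

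For the first step, observe that in any simplicial set of the shape $n \mapsto S^{\times (n+1)}$ with boundaries given by omission and degeneracies by repetition, a simplex $(g_0, \ldots, g_n)$ is degenerate iff $g_i = g_{i+1}$ for some $i$. Applied to $S = \Sigma_2$, the non-degenerate $n$-simplices are exactly the alternating strings, of which there are two for each $n$, parametrized by the first entry; as a $(C_2 \times \Sigma_2)$-set this is a copy of $\Sigma_2$ (with $C_2$ acting trivially and $\Sigma_2$ acting by left translation on the initial entry). The same argument applied to $S = \tfrac{C_2 \times \Sigma_2}{\Delta}$, which also has two elements, yields non-degenerate simplices forming a copy of $\tfrac{C_2 \times \Sigma_2}{\Delta}$ in each degree.

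Next, I would recall that an $n$-simplex of the join $X \star Y$ is either an $n$-simplex of $X$, an $n$-simplex of $Y$, or lies in $X_i \times Y_j$ for some $i + j = n - 1$; the degeneracies $s_k$ on a simplex in $X_i \times Y_j$ act on the $X$-coordinate when $k \leq i$ and on the $Y$-coordinate when $k > i$. Therefore a mixed simplex $(x, y)$ is non-degenerate iff both $x \in X_i$ and $y \in Y_j$ are non-degenerate in their respective factors. Combining this with the first step immediately produces the first line of the claimed formula:
\[
N(\mathbf{E}\mu_2)_n \cong \Sigma_2 \,\cup\, \tfrac{C_2 \times \Sigma_2}{\Delta} \,\cup\, \bigcup_{i+j=n-1} \Sigma_2 \times \tfrac{C_2 \times \Sigma_2}{\Delta}.
\]

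Finally, for the second line, I need to identify $\Sigma_2 \times \tfrac{C_2 \times \Sigma_2}{\Delta} \cong C_2 \times \Sigma_2$ as $(C_2 \times \Sigma_2)$-sets. Writing $\Sigma_2 \cong (C_2 \times \Sigma_2)/(C_2 \times 1)$, the product decomposes via double cosets $(C_2 \times 1) \backslash (C_2 \times \Sigma_2) / \Delta$. Since $(C_2 \times 1) \cap \Delta = 1$, the double coset formula gives a single orbit of size $4 = |C_2 \times \Sigma_2|$, so the action is free and transitive; hence the product is (non-canonically) isomorphic to $C_2 \times \Sigma_2$. The main potential pitfall is being careful with the degeneracy conventions in the join, but once that is pinned down the argument is a direct verification.
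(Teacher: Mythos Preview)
Your proposal is correct and follows essentially the same approach as the paper: compute the non-degenerate simplices in each factor of the join and then combine. The paper's proof is a terse two-liner that simply asserts the classical computation $N(\mathbf{E}\Sigma_2)_n = \Sigma_2$ and $N\bigl(\mathbf{E}\tfrac{C_2\times\Sigma_2}{\Delta}\bigr)_n = \tfrac{C_2\times\Sigma_2}{\Delta}$ and leaves the join combinatorics and the double-coset identification implicit; you have spelled out exactly those details.
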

\begin{proof} This follows from the computation of the non-degenerate simplices in each
factor of the join, which is classical:
\[
	N(\mathbf{E}\frac{C_2 \times \Sigma_2}{\Delta})_n = \left(\frac{C_2 \times \Sigma_2}{\Delta}\right),
	\]
	\[
	N(\mathbf{E}\Sigma_2)_n = \Sigma_2.
	\]
\end{proof}

To give the filtration in this combinatorial setting we need some notation for
simplices in $\mathbf{E}\mu_2$. Let $x_n$ denote a
generator of the
non-degenerate orbit $\Sigma_2$ in dimension $n$, and let
$y_n$ denote a generator of the non-degenerate orbit $\dfrac{C_2 \times \Sigma_2}{\Delta}$ in dimension $n$. Denote
by $x_i \ast y_j$ the corresponding generators of $C_2 \times \Sigma_2$ in dimension
$i+j+1$. 

Now define $F_{2k}\mathbf{E}\mu_2$ as the subcomplex spanned by the simplices
$x_0, ..., x_k, y_0, ..., y_{k-1}$ and their pairwise joins, and define $F_{2k+1}\mathbf{E}\mu_2$ as the
subcomplex spanned by the simplices $x_0, ..., x_k, y_0, ..., y_k$ and their pairwise joins.

\begin{construction} Begin with the pointed simplicial $C_2$-set $(\textup{sk}_n\mathbf{E}C_2)_+$,
take the Kan suspension, induce up to a pointed simplicial $(C_2 \times \Sigma_2)$-set by smashing
with $\Sigma_{2+}$, and then smash with $\Delta^k/\partial\Delta^k$. The result is a simplicial
$(C_2 \times \Sigma_2)$-set $\mathbf{G}_{2k}$ with the following properties: 
	\begin{enumerate}[(i)]
	\item the geometric
realization of $\mathbf{G}_{2k}$ is equivalent to $S^k \wedge S^{k\sigma} \wedge \Sigma_{2+}$,
	\item the non-degenerate simplices are: the basepoint, a copy of
	$\Sigma_2$ in dimension $k$ generated by a simplex $a_k$, a copy of $C_2 \times \Sigma_2$
	in each dimension $k+1, ..., k+2$ generated by simplices $z_{0}, ..., z_{k-1}$, respectively.
	\end{enumerate}
One can construct an equivariant map
	\[
	\mathbf{G}_{2k} \longrightarrow \textup{gr}_{2k}\mathbf{E}\mu_2
	\]
which sends $a_k \mapsto x_k$ and $z_i \mapsto x_k \ast y_i$ and induces an equivariant map
	\[
	h_{2k}: S^k \wedge S^{k\sigma} \wedge \Sigma_{2+}
	\longrightarrow \textup{gr}_{2k}\emu
	\]
upon taking geometric realization.

The same construction with $\mathbf{E}\Sigma_2$ replacing $\mathbf{E}C_2$ and 
$\left(\dfrac{C_2 \times \Sigma_2}{\Delta}\right)_+$
replacing $\Sigma_{2+}$ gives a map
	\[
	h_{2k+1}: S^k \wedge S^{(k+1)\tau} \wedge \left(\dfrac{C_2 \times \Sigma_2}{\Delta}\right)_+
	\longrightarrow \textup{gr}_{2k+1}\emu.
	\]
\end{construction}
\begin{proposition} The maps $h_{i}$ are equivalences of pointed $(C_2 \times \Sigma_2)$-spaces.
\end{proposition}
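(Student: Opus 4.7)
The proof naturally splits into two verifications. First, I would show that the map
$h_{2k}$ (respectively $h_{2k+1}$) arises from an isomorphism of pointed simplicial
$(C_2 \times \Sigma_2)$-sets $\mathbf{G}_{2k} \to \textup{gr}_{2k}\mathbf{E}\mu_2$
(resp.\ $\mathbf{G}_{2k+1} \to \textup{gr}_{2k+1}\mathbf{E}\mu_2$). Second, I would
identify the realization $|\mathbf{G}_{2k}|$ equivariantly with
$S^k \wedge S^{k\sigma} \wedge \Sigma_{2+}$ (resp.\ $|\mathbf{G}_{2k+1}|$ with
$S^k \wedge S^{(k+1)\tau} \wedge ((C_2 \times \Sigma_2)/\Delta)_+$). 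Combined, these
yield the claim, and in fact show that each $h_i$ is an equivariant homeomorphism.

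The first step is essentially a bookkeeping exercise. In the quotient
$\textup{gr}_{2k}\mathbf{E}\mu_2 = F_{2k}\mathbf{E}\mu_2 / F_{2k-1}\mathbf{E}\mu_2$,
the only non-degenerate simplices that survive are $x_k$ (dimension $k$, orbit $\Sigma_2$)
and the joins $x_k \ast y_i$ for $0 \le i \le k-1$ (dimension $k+i+1$, orbit
$C_2 \times \Sigma_2$), since any simplex not involving $x_k$ already belongs to
$F_{2k-1}$. The combinatorial construction of $\mathbf{G}_{2k}$ produces precisely one
non-degenerate simplex $a_k$ of dimension $k$ with orbit $\Sigma_2$, and one non-degenerate
simplex $z_i$ of dimension $k+i+1$ with orbit $C_2 \times \Sigma_2$ for each
$0 \le i \le k-1$. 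The assignment $a_k \mapsto x_k$, $z_i \mapsto x_k \ast y_i$ is
therefore an isotropy-preserving bijection on non-degenerate simplices, and a check
that the face relations match (greatly simplified by the fact that all boundary
components landing in $F_{2k-1}$ collapse to the basepoint in the quotient) promotes
it to an isomorphism of simplicial sets. The odd case is entirely parallel, replacing
$x_k$ by $y_k$ and $x_k \ast y_i$ by $x_i \ast y_k$.

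The second step unwinds the Kan suspension construction. For the appropriate choice of
$n$, the realization of $(\textup{sk}_n \mathbf{E}C_2)_+$ followed by one Kan suspension
gives $S^{k\sigma}$ equivariantly, using that Kan suspension realizes to topological
suspension and that the skeleta of $\mathbf{E}C_2$ realize to spheres in the sign
representation. Smashing with $\Sigma_{2+}$ and with $\Delta^k/\partial\Delta^k$ appends
the factors $\Sigma_{2+}$ and $S^k$, respectively. For $\mathbf{G}_{2k+1}$ the analogous
analysis with $\mathbf{E}\Sigma_2$ replacing $\mathbf{E}C_2$ yields
$S^k \wedge S^{(k+1)\tau} \wedge ((C_2 \times \Sigma_2)/\Delta)_+$, using Remark
\ref{remark:untwist} to translate between $\tau$- and $\sigma$-twisted versions.

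The principal obstacle is careful tracking of equivariant cellular and face structure
to make the two steps above match cleanly. A convenient shortcut, however, is to
compare $|h_i|$ directly with the geometric map $g_i$ of Proposition
\ref{proposition-filtration}: both have source and target equipped with equivariant
cell decompositions having a single cell in each relevant dimension with matching
isotropy, and once $|h_i|$ is seen to agree with $g_i$ up to equivariant homotopy,
Proposition \ref{proposition-filtration} immediately delivers the desired equivalence.
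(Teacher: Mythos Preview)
Your proposal is correct and follows essentially the same approach as the paper, which proves the proposition in a single sentence: ``By construction the maps induce isomorphisms on non-degenerate simplices, which implies the result.'' Your Step 1 is exactly this, spelled out in detail; your Step 2 (identifying $|\mathbf{G}_{2k}|$ with $S^k \wedge S^{k\sigma} \wedge \Sigma_{2+}$) is not part of the proposition at all but rather property (i) of the preceding Construction, which the paper states without proof, and your alternative shortcut via comparison with $g_i$ is unnecessary once the simplicial map is seen to be an isomorphism.
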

\begin{proof} By construction the maps induce isomorphisms on non-degenerate simplices, which
implies the result.
\end{proof}
\begin{remark} To connect the combinatorial and geometric constructions notice that the subcomplex
spanned by $x_0, ..., x_k$ corresponds to a cell structure on $S(k(1 \otimes \tau))$ and the subcomplex
spanned by $y_0, ..., y_k$ corresponds to a cell structure on $S(k(\sigma \otimes \tau))$. The join of
spheres has the following property: if $V$ and $W$ are vector spaces then there is a natural homeomorphism
	\[
	S(V) \ast S(W) \cong S(V \oplus W).
	\]
From this it should be clear that the two filtrations and the identifications of their associated graded spaces
are compatible up to re-ordering coordinates and applying Remark \ref{remark:untwist}.
\end{remark}
\subsection{Extended powers of spheres}
The filtration on $\emu$ introduced in the previous section gives rise to a filtration $F_k\mathbb{P}_2X$
on complete extended powers which is natural in $X$. We'll be particularly interested in the case when
$X$ is a (virtual) representation sphere. We begin by determining the first differential in the spectral
 sequence associated to this filtration on $\mathbb{P}_2X$ in general.

\begin{theorem} The filtration $\{F_k\mathbb{P}_2\}$ of the functor
$\mathbb{P}_2$ has the following properties:
	\begin{enumerate}[(i)]
	\item The maps $g_k$ of (\ref{geometric-filtration})
	yield equivalences:
		\[
		\mathrm{gr}_{2k}\mathbb{P}_2 \cong S^{k\rho} \wedge (-)^{\wedge 2},
		\]
		\[
		\mathrm{gr}_{2k+1}\mathbb{P}_2 \cong S^{k\rho +\sigma} \wedge N^{C_2}(-).
		\]
	\item For $X \in \mathrm{Sp}^{C_2}$, the connecting map 
		\[
	S^{k\rho} \wedge X^{\wedge 2}
	\cong
	\mathrm{gr}_{2k}\mathbb{P}_2(X) \longrightarrow 
	\Sigma\mathrm{gr}_{2k-1}\mathbb{P}_2(X)
	\cong S^{k\rho} \wedge N^{C_2}X
		\]
	is obtained from the map
		\[
		\xymatrix{
		\downarrow_1 X^{\wedge 2} \ar[r]^{\mathrm{id} \wedge \gamma}
		& \downarrow_1 N^{C_2}X 
		}
		\]
	by taking the transfer and tensoring with $S^{k\rho}$. That is, the connecting
	map is the composite:
		\[
		\xymatrix{
		S^{k\rho} \wedge X^{\wedge 2}
		\ar[r]& S^{k\rho} \wedge \uparrow^{C_2} \downarrow_1 X^{\wedge 2}
		\ar[rr]^{\mathrm{id} \wedge \uparrow^{C_2} (\mathrm{id} \wedge \gamma)} && 
		 S^{k\rho} \wedge \uparrow^{C_2} \downarrow_1 N^{C_2}X \ar[r] & 
		 S^{k\rho} \wedge N^{C_2}X
		 }
		\]
	%%add macro for induction and restriction !!!
	\item For $X \in \mathrm{Sp}^{C_2}$, the connecting map
		\[
		S^{k\rho +\sigma} \wedge N^{C_2}X \cong
		\mathrm{gr}_{2k+1}\mathbb{P}_2(X)
		\longrightarrow \Sigma \mathrm{gr}_{2k+2}\mathbb{P}_2(X)
		\cong S^{k\rho + 1} \wedge X^{\wedge 2}
		\]
	is obtained from the map
		\[
		\xymatrix{
		\downarrow_1 N^{C_2}X \ar[r]^{\mathrm{id} \wedge \gamma}
		& \downarrow_1 X^{\wedge 2} 
		}
		\]
	by taking the $\sigma$-twisted transfer %make this a spelled out thing somewhere?
	 and smashing with $S^{k\rho}$. That is,
	the connecting map is the composite:
		\[
		\xymatrix{
		S^{k\rho + \sigma} \wedge N^{C_2}(X)\ar[r] &
		S^{k\rho} \wedge \uparrow^{C_2}\downarrow_1\left(S^{\sigma} \wedge
		N^{C_2}X\right) 
		\ar[rrr]^-{\mathrm{id} \wedge \uparrow^{C_2}(\mathrm{id}\wedge \mathrm{id}
		\wedge \mathrm{\gamma})}&&&
		S^{k\rho} \wedge \uparrow^{C_2}\downarrow_1\left( S^1 \wedge
		X^{\wedge 2}\right) \ar[r] &
		S^{k\rho +1} \wedge X^{\wedge 2}
		} %fix spacing later.
		\]
	\end{enumerate}
\end{theorem}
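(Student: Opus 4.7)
The plan is to deduce all three statements from the universal filtration on $\emu$ supplied by Proposition \ref{proposition-filtration}. Since $\mathbb{P}_2(X) = \emu \wedge_{\Sigma_2} X^{\wedge 2}$, the filtration $\{F_k\emu\}$ induces the filtration on $\mathbb{P}_2(X)$, and each connecting map in $\mathbb{P}_2(X)$ arises from its counterpart in $\emu$ by smashing over $\Sigma_2$ with $X^{\wedge 2}$.

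Part (i) follows quickly. For the even stratum, $\Sigma_{2+}\wedge_{\Sigma_2}X^{\wedge 2} \cong X^{\wedge 2}$. For the odd stratum, Remark \ref{remark:untwist} identifies $S^{(k\rho+1)\tau}\wedge\bigl(C_2\times\Sigma_2/\Delta\bigr)_+$ with $S^{k\rho+\sigma}\wedge\bigl(C_2\times\Sigma_2/\Delta\bigr)_+$, since the representations $(k\rho+1)\tau$ and $k\rho+\sigma$ agree upon restriction to $\Delta$. A direct unwinding of the diagonal $\Sigma_2$-orbits then shows that $\bigl(C_2\times\Sigma_2/\Delta\bigr)_+\wedge_{\Sigma_2}X^{\wedge 2}$ has underlying spectrum $X\wedge X$ with $C_2$-action $\gamma\cdot(x\wedge y) = \gamma y\wedge\gamma x$, which is precisely $N^{C_2}X$.

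For parts (ii) and (iii), I first identify the universal connecting maps in $F_\bullet\emu$ and then smash with $X^{\wedge 2}$ over $\Sigma_2$. The universal even-level connecting map is a $(C_2\times\Sigma_2)$-equivariant stable map
\[\partial_{2k}\colon S^{k\rho}\wedge\Sigma_{2+} \longrightarrow S^{k\rho}\wedge\bigl(C_2\times\Sigma_2/\Delta\bigr)_+.\]
By Wirthm\"uller adjunction (using $\Sigma_{2+}=(C_2\times\Sigma_2/C_2)_+$), the group of such maps is isomorphic to $\pi_0^{C_2}\bigl((C_2\times\Sigma_2/\Delta)_+|_{C_2}\bigr) = \pi_0^{C_2}(C_{2+}) \cong \mathbb{Z}$, generated by the Wirthm\"uller transfer. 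To identify $\partial_{2k}$ with the transfer, I trace the explicit attaching map $\phi_{2k}\colon S(k\rho)\times\Sigma_2 \to S(k\rho\tau) = F_{2k-1}\emu$ from Construction \ref{geometric-filtration}: the projection to $F_{2k-1}/F_{2k-2}$ and the suspension in the cell direction exactly record the transfer (the choice of hemisphere corresponds to the transfer component). Smashing this universal identification with $X^{\wedge 2}$ over $\Sigma_2$ converts the transfer $\Sigma_{2+}\to(C_2\times\Sigma_2/\Delta)_+$ into the claimed composite of the Wirthm\"uller unit $X^{\wedge 2}\to\,\uparrow^{C_2}\downarrow_1 X^{\wedge 2}$, the map $\uparrow^{C_2}(\mathrm{id}\wedge\gamma)$, and the counit to $N^{C_2}X$. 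The extra $\gamma$ appears because the set-theoretic bijection $\Sigma_2 \cong C_2\times\Sigma_2/\Delta$ sends $\chi\mapsto\gamma\Delta$ and is not $C_2$-equivariant; after taking $\Sigma_2$-orbits this discrepancy manifests as $\mathrm{id}\wedge\gamma$. Part (iii) is entirely analogous, with the additional $\sigma$-twist arising from the $\sigma\tau$ direction in the $(2k+1)$-cell of Construction \ref{geometric-filtration}.

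The main obstacle is the sign/multiple identification of $\partial_{2k}$ (and its odd analogue) with the transfer rather than a zero or higher multiple. Because the ambient group of maps is cyclic, this reduces to a calculation on underlying non-equivariant spectra, where our filtration recovers the classical skeletal filtration of $E\Sigma_2\wedge_{\Sigma_2}X^{\wedge 2}$ and the connecting maps coincide with the transfer formulas of \cite[Ch.~III]{Hinfty}. Once this multiplicity is pinned down universally, the statements of (ii) and (iii) follow by functoriality in $X$.
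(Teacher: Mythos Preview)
Your proposal is correct and follows essentially the same strategy as the paper's proof: both arguments use the induction/restriction adjunctions to show that the relevant group of $(C_2\times\Sigma_2)$-equivariant connecting maps is isomorphic to $\mathbb{Z}$ (the paper writes this as $[S^{2k},S^{2k}]$, you write it as $\pi_0^{C_2}(C_{2+})$), and then pin down the correct integer by passing to underlying spectra, where the filtration refines the classical skeletal filtration of $E\Sigma_2$ and the answer is the familiar $1\pm\chi$. Your account is somewhat more detailed about the appearance of the twist $\mathrm{id}\wedge\gamma$, which the paper leaves to ``tracing back through the isomorphisms.''
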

\begin{proof} We will reduce to a non-equivariant calculation.
First note the following two sequences of isomorphisms, where
we decorate hom-sets to indicate which category we are in:
	\begin{align*}
	\left[\uparrow_{C_2}^{C_2 \times \Sigma_2}S^{k\rho}, 
	\uparrow^{C_2\times\Sigma_2}_{\Delta} 
	S^{((k-1)\rho+1)\tau+1}\right]^{C_2\times \Sigma_2}
	&=
	\left[S^{k\rho}, 
	\downarrow^{C_2\times\Sigma_2}_{C_2}\uparrow^{C_2\times\Sigma_2}_{\Delta} 
	S^{((k-1)\rho+1)\tau+1}\right]^{C_2}\\
	&\cong 
	\left[S^{k\rho}, 
	\uparrow^{C_2}\downarrow_1^{C_2} 
	S^{k\rho}\right]^{C_2}\\
	&=\left[S^{2k}, 
	S^{2k}\right]
	\end{align*}
	\begin{align*}
	\left[S^{(k\rho+1)\tau} \wedge \left(\frac{C_2\times\Sigma_2}{\Delta}\right)_+,
	\uparrow_{C_2}^{C_2\times\Sigma_2}S^{k\rho +1}\right]^{C_2\times\Sigma_2}
	&=
	\left[\downarrow^{C_2\times\Sigma_2}_{C_2}\left(S^{(k\rho+1)\tau} 
	\wedge \left(\frac{C_2\times\Sigma_2}{\Delta}\right)_+\right),
	S^{k\rho +1}\right]^{C_2}\\
	&\cong
	\left[\uparrow^{C_2}_1\downarrow^{C_2}_1S^{k\rho+\sigma},
	S^{k\rho +1}\right]^{C_2}\\
	&=
	\left[S^{2k+1},
	S^{2k+1}\right].
	\end{align*}
The upshot is that the connecting maps
	\[
	S^{k\rho} \wedge \Sigma_{2+} \to S^{((k-1)\rho+1)\tau +1} \wedge
	\left(\frac{C_2\times\Sigma_2}{\Delta}\right)_+
	\]
	\[
	S^{(k\rho+1)\tau} \wedge \left(\frac{C_2\times\Sigma_2}{\Delta}\right)_+
	\to S^{k\rho+1} \wedge \Sigma_{2+}
	\]
are determined by their underlying behavior on trivial cosets. Our filtration on $\emu$ refines the classical filtration
on $E\Sigma_2$. There, we know that the connecting maps
are given by $1+\chi$ and $1-\chi$, which both have trivial component
of degree 1. The result now follows from tracing back through the isomorphisms.
\end{proof}

In the case of extended powers of spheres, we can say much more.

\begin{theorem}\label{thm-extended-power-filtration}
 The associated graded space of $\mathbb{P}_2(S^V)$ with respect to the above
filtration is given by
	\[
	\textup{gr}_{2k}\mathbb{P}_2(S^V) \cong S^{k\rho} \wedge S^{2V},
	\]
	\[
	\textup{gr}_{2k+1}\mathbb{P}_2(S^V) \cong S^{k\rho + \sigma} \wedge S^{\rho V}.
	\]
If, moreover, $V = m\rho$ or $V=m\rho -1$, this filtration splits upon smashing with $\hf$ and we get
an equivalence of $C_2$-spectra:
	\[
	\hf \wedge \mathbb{P}_2(S^{m\rho}) \cong
	S^{2m\rho} \wedge \left(\bigvee_{k\ge 0} \hf \wedge S^{k\rho} \vee 
	\bigvee_{k\ge 0} \hf \wedge S^{k\rho +\sigma}\right),
	\]
	\[
	\hf \wedge \mathbb{P}_2(S^{m\rho-1}) \cong
	S^{2m\rho-2} \wedge \left(\bigvee_{k\ge 0} \hf \wedge S^{k\rho} \vee 
	\bigvee_{k\ge 0} \hf \wedge S^{k\rho+1}\right).	
	\]
\end{theorem}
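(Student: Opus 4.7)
Part (i) is an immediate consequence of the previous theorem: substituting $X = S^V$ and using $(S^V)^{\wedge 2} = S^{2V}$ together with $N^{C_2}(S^V) = S^{|V|\rho}$ yields $\mathrm{gr}_{2k}\mathbb{P}_2(S^V) = S^{k\rho + 2V}$ and $\mathrm{gr}_{2k+1}\mathbb{P}_2(S^V) = S^{k\rho + \sigma + |V|\rho}$. For $V = m\rho$ (so $|V| = 2m$) these become $S^{(k+2m)\rho}$ and $S^{(k+2m)\rho + \sigma}$; for $V = m\rho - 1$ (so $|V| = 2m-1$) a short calculation using $(k+2m-1)\rho + \sigma = (k+2m)\rho - 1$ gives $S^{(k+2m)\rho - 2}$ and $S^{(k+2m)\rho - 1}$, matching the claimed wedge shifts.

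For Part (ii), the strategy is to show that every attaching map in the cellular filtration of $\hf \wedge \mathbb{P}_2(S^V)$ is null. Consider $V = m\rho$ first; after pulling out $S^{2m\rho}$ the cells are $\hf \wedge S^{k\rho}$ in filtration $2k$ and $\hf \wedge S^{k\rho + \sigma}$ in filtration $2k+1$. Working inductively, assume $F_{i-1}\wedge \hf$ has been split as a wedge of these cells; then each component of the attaching map $\mathrm{gr}_i \to \Sigma F_{i-1}$ is an $\hf$-linear map $\hf \wedge S^{A_i} \to \hf \wedge S^{A_j + 1}$ classified by $\pi^{C_2}_{A_i - A_j - 1}(\hf)$. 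The key step, which I expect to be the main obstacle requiring careful bookkeeping, is a case analysis over the four possibilities (even-even, even-odd, odd-even, odd-odd) using the explicit structure of $\pi^{C_2}_\star(\hf)$ from the preliminaries. Using the gap $\pi^{C_2}_{n\rho - 1}(\hf) = 0$ and, more generally, the vanishing of $\pi^{C_2}_{a + b\sigma}(\hf)$ whenever $a \ge 0$ and $a + b > 0$, one finds that all of these classifying groups vanish except for the $d_1$-components $\mathrm{gr}_{2k} \to \Sigma \mathrm{gr}_{2k-1}$, which live in $\pi^{C_2}_0(\hf) = \F$.

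To kill these remaining $d_1$-components, apply the transfer argument. By the previous theorem the $d_1$-connecting map factors through the Wirthm\"uller counit $\uparrow^{C_2}_1 \downarrow_1(-) \to (-)$; on $\hf$ this counit corresponds to the Mackey-functor transfer $\mathrm{tr}: \uF(C_2) \to \uF(C_2/C_2)$, which is multiplication by $|C_2| = 2 = 0 \in \F$. Thus all $d_1$'s are null after smashing with $\hf$, so every attaching map is null and the filtration splits as claimed. The case $V = m\rho - 1$ proceeds identically: the cells become $\hf \wedge S^{k\rho}$ and $\hf \wedge S^{k\rho + 1}$, the same case analysis now isolates only the $d_1$-components $\mathrm{gr}_{2k+1} \to \Sigma \mathrm{gr}_{2k}$ as living in the nonzero group $\pi^{C_2}_0(\hf) = \F$, and these are killed by the same transfer argument, yielding the stated wedge decomposition.
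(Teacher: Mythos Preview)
Your proof is correct and follows essentially the same strategy as the paper: reduce the splitting to showing that all attaching maps $\mathrm{gr}_i \to \Sigma F_{i-1}$ are null after smashing with $\hf$, handle the long-range components by a degree count in $\pi^{C_2}_\star\hf$, and then deal separately with the one surviving $d_1$ component in $\pi^{C_2}_0\hf = \F$.

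The one genuine difference is in how you kill that $d_1$. The paper observes that $\underline{\pi}_0\hf = \underline{\F}$ has injective restriction, so it suffices to check the underlying class, which vanishes by the classical computation (the skeletal $d_1$ on $E\Sigma_2$ is $1+\chi = 2$). You instead exploit the factorization from the previous theorem through $\uparrow^{C_2}\!\downarrow_1$: the class lies in the image of the Mackey transfer $\mathrm{tr}\colon \underline{\F}(C_2)\to\underline{\F}(C_2/C_2)$, which is multiplication by $2=0$. These are two sides of the same coin---for the constant Mackey functor $\underline{\F}$ one has $\ker(\mathrm{res})=0$ and $\mathrm{im}(\mathrm{tr})=0$---but your phrasing has the mild advantage of staying entirely in the equivariant world and using the structural content of the previous theorem rather than appealing to the classical calculation.
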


Notice that this situation differs from its classical counterpart. We only have a `Thom isomorphism'
for regular representation spheres. This is analogous to the classical odd primary situation where
only the extended powers of even spheres have canonical orientations.

\begin{remark} It is nevertheless true that the homology of $\mathbb{P}_2(S^V)$
is free over the homology of a point when $V$ is a representation. This follows from
the surprisingly general
freeness theorem of Kronholm \cite{Kr}. However, it is not clear a priori
in which degrees we may choose our generators and, in any case, we do
not need this fact for any of our arguments. 
We are grateful to Sean Tilson
for telling us about this reference, which we did not know of while writing the
first version of this paper.
\end{remark}

Before giving the proof, let's record the following consequence of the computation of $\hf_{\star}$.

\begin{lemma}\label{lem-c2-vanish-range} We have vanishing ranges:
	\[
	\underline{[S^{t\rho +\sigma}, \hf]} = 0, \quad t \in \mathbb{Z},
	\]
	\[
	\underline{[S^{t\rho -1}, \hf]} = 0, \quad t \in \mathbb{Z},
	\]
	\[
	\underline{[S^{t\rho} \wedge S^{(1-\sigma)(a+b\sigma)}, \hf]} = 0, \quad t\ge 1,\quad a-b\ge -2 
	\]
\end{lemma}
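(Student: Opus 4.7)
My plan is to verify each of the three vanishing ranges by direct inspection of the explicit formulas for $\pi^{C_2}_\star\hf$ and $\pi^u_\star\hf$ recorded earlier in this section. Since the Mackey functor $\underline{[S^V,\hf]}$ equals $\rpi_V\hf$, with $C_2/C_2$-value $\pi^{C_2}_V\hf$ and $C_2$-value $\pi^u_V\hf$, it suffices in each case to show that both of these abelian groups vanish.

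The first step is to put each representation into canonical form $\alpha + \beta\sigma$. Using $\rho = 1 + \sigma$ and the identity $(1-\sigma)(a+b\sigma) = (a-b)(1-\sigma)$ (a consequence of $\sigma^2 = 1$), the three degrees become
\[
t\rho + \sigma = t + (t+1)\sigma,\quad t\rho - 1 = (t-1) + t\sigma,\quad t\rho + (1-\sigma)(a+b\sigma) = (t+a-b) + (t-a+b)\sigma.
\]

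For the underlying value, $\pi^u_\star\hf = \F[\mathrm{res}(u_\sigma)^{\pm 1}]$ is concentrated in degrees of the form $n(1-\sigma)$, i.e., those with zero underlying real dimension. In the three cases the underlying dimension equals $2t+1$, $2t-1$, and $2t$ respectively; under the stated hypotheses (in particular $t \ge 1$ in case (iii)) none of these is ever zero, so $\pi^u_V\hf = 0$ in all cases.

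For the fixed-point value, the description $\pi^{C_2}_\star\hf = \F[u_\sigma,a_\sigma] \oplus \F\{\theta/(a_\sigma^k u_\sigma^n) : k,n \ge 0\}$ shows that $\pi^{C_2}_{\alpha+\beta\sigma}\hf$ is nonzero only when $(\alpha,\beta)$ lies in one of two disjoint regions: the polynomial cone $\{\alpha \ge 0,\ \beta \le 0,\ \alpha+\beta \le 0\}$ coming from $a_\sigma^k u_\sigma^n$, or the Pontryagin-dual cone $\{\alpha \le -2,\ \beta \ge 2,\ \alpha+\beta \ge 0\}$ coming from $\theta/(a_\sigma^k u_\sigma^n)$. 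In cases (i) and (ii) a brief case analysis on the integer $t$ (the conditions $\alpha \ge 0$ and $\beta \le 0$ force $t$ to be simultaneously nonnegative and nonpositive up to a shift, and similarly for the dual cone) rules out both regions. The only step with any moving parts is case (iii): the polynomial cone is excluded immediately by $\alpha + \beta = 2t \ge 2 > 0$, while the dual cone would require $\alpha = t + (a-b) \le -2$, i.e., $a - b \le -t - 2 \le -3$, contradicting the standing hypothesis $a - b \ge -2$. No step presents a genuine obstacle; the proof is entirely bookkeeping against the formulas already in hand.
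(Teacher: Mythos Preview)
Your proof is correct and follows essentially the same approach as the paper's: both argue by direct inspection of the explicit description of $\rpi_\star\hf$, using the identity $(1-\sigma)(a+b\sigma)=(a-b)(1-\sigma)$ to reduce case (iii) to a simple check on the two cones. Your write-up is somewhat more detailed (you explicitly verify the underlying value and spell out the cone inequalities), whereas the paper dispatches the first two cases as ``equivalent and straightforward'' and summarizes the third in one line, but the content is the same.
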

\begin{proof} The first two are equivalent and straightforward. For the last one, notice that,
since $\sigma^2 = 1$, we get $(1-\sigma)(a+b\sigma) = (a-b)(1-\sigma)$. The coefficients $\hf_{\star}$ don't necessarily vanish on this line, but they do as soon
as you add a positive multiple of the regular representation when $a-b\ge -2$.
\end{proof}

\begin{proof}[Proof of Theorem \ref{thm-extended-power-filtration}] The identification
of the associated graded is a special case of the previous theorem.

Let $V = a+b\sigma$ with $a-b = 0$ or $-1$. We need to show that,
for all $n\ge 0$, the boundary map
	\[
	\hf \wedge \mathrm{gr}_{n}\mathbb{P}_2\left(S^V\right)
	\to \hf \wedge \Sigma F_{n-1}\mathbb{P}_2\left(S^V\right)
	\]
is null. It suffices to show that 
	\begin{enumerate}[(i)]
	\item The connecting map
		\[
		\mathrm{gr}_{n}\mathbb{P}_2\left(S^V\right)
	\to \hf \wedge \Sigma \mathrm{gr}_{n-1}\mathbb{P}_2\left(S^V\right)
		\]
	is null.
	\item The groups
		\[
		\left[\mathrm{gr}_{n}\mathbb{P}_2\left(S^V\right),
		\hf \wedge \Sigma \mathrm{gr}_{m}\mathbb{P}_2\left(S^V\right)\right]
		\]
	vanish for $m\le n-2$. 
	\end{enumerate}
Suppose that $n=2k$ is even. Then the map in (i) takes the form
	\[
	S^{k\rho+2V} \longrightarrow \hf \wedge S^{(k+|V|)\rho},
	\]
which corresponds to an element in $\pi^{C_2}_{2V - |V|\rho}\hf$. We have assumed
that $V = |V|\rho$ or $V = (|V|+1)\rho -1$. In either case,
the Mackey functor $\underline{\pi}_{2V-|V|\rho}\hf$ has injective restriction map,
so we need only show that the \emph{underlying} homotopy class vanishes. 
But this follows from the classical calculation, since our filtration refines the
classical filtration of $B\Sigma_2$. The same 
argument applies when $n$ is odd, so we have established (i).

The claim (ii) now follows from the vanishing ranges in Lemma \ref{lem-c2-vanish-range}.
\end{proof}

\begin{remark} We have actually shown something stronger. When $V = a+b\sigma$, the even gradations
split off after smashing with $\hf$ as long as $a-b \ge -1$, and the odd gradations split off after
smashing with $\hf$ as long as $a-b \le 0$.
\end{remark}
\begin{remark} The splitting of $\hf \wedge \mathbb{P}_2(S^{m\rho-\varepsilon})$
which is constructed inductively like this is actually canonical, for $\varepsilon =0,1$. That is,
at each step after the first, there is a unique $\hf$-linear homotopy section of 
$\hf \wedge F_n\mathbb{P}_2(S^{m\rho-\varepsilon})
\to \hf \wedge \textup{gr}_n\mathbb{P}_2(S^{m\rho-\varepsilon})$.
\end{remark}

In the next section we'll study the homology of specific extended powers, but before
we do there is one useful observation we can make in general that will be used later.

\begin{corollary}\label{cor-vanishing}
 When $V=a+b\sigma$, the Mackey functors $\hf_{s\rho+V}\mathbb{P}_2(S^{V})$ vanish for
$s< \min\{a, \frac{a+b}{2}\}$. 
\end{corollary}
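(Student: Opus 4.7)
The plan is to apply the filtration spectral sequence from Theorem~\ref{thm-extended-power-filtration} to $\mathbb{P}_2(S^V)$ and show that the $E_1$-contributions in Mackey-functor grading $s\rho+V$ all vanish. Writing $V=a+b\sigma$ and using $\mathrm{gr}_{2k}\mathbb{P}_2(S^V)\simeq S^{k\rho+2V}$ together with $\mathrm{gr}_{2k+1}\mathbb{P}_2(S^V)\simeq S^{(k+a+b)\rho+\sigma}$, one reads off
\[
E_1^{2k}=\rpi_{(s-k-a)+(s-k-b)\sigma}\hf,
\qquad
E_1^{2k+1}=\rpi_{(s-k-b)+(s-k-a-1)\sigma}\hf,
\]
for $k\ge 0$. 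So the proof reduces to checking vanishing of these $RO(C_2)$-graded Mackey functors in the stated range.

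For the check I would lean on the description of $\hf_\star$ from the preliminaries: the Mackey functor $\rpi_{A+B\sigma}\hf$ is supported on the polynomial cone $\{A\ge 0,\,A+B\le 0\}$ (spanned by the classes $u_\sigma^{A}a_\sigma^{-A-B}$) together with the Pontryagin-dual cone $\{A\le -2,\,A+B\ge 0\}$. The hypothesis $s<a$ forces $A=s-k-a<0$ for each even layer, killing the polynomial region; the hypothesis $s<(a+b)/2$ gives $A+B<0$ and $A'+B'<0$, killing the dual region in both even and odd layers. These two ingredients annihilate every even $E_1$-term, and they also annihilate every odd $E_1$-term with $A'=s-k-b<0$.

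The main obstacle is the residual case of odd layers with small $k$ in the regime $b<a$, where the $(A',B')$-analysis on its own leaves surviving polynomial classes $u_\sigma^{A'}a_\sigma^{-A'-B'}$. To eliminate these I would combine the automatic vanishing of the underlying value $\pi^u_{s\rho+V}\mathbb{P}_2(S^V)=H_{2s+a+b}(D_2 S^{a+b})$ (which holds throughout the claimed range since $2s+a+b<2(a+b)$) with an analysis of the connecting map $\mathrm{gr}_{2k+1}\to\Sigma\,\mathrm{gr}_{2k}$, given by a $\sigma$-twisted transfer of $\mathrm{id}\wedge\gamma$. Tracking this $d_1$ back through the Wirthm\"uller identifications shows that the residual $a_\sigma$-multiples are precisely targets of the classical $d_1$ from the non-equivariant extended-power filtration of $D_2 S^{a+b}$, hence zero. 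Once this bookkeeping is complete no higher differentials or extensions remain, and $\hf_{s\rho+V}\mathbb{P}_2(S^V)$ vanishes in the claimed range.
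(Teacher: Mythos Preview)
Your identification of the ``main obstacle'' is exactly right and is, in fact, fatal: the odd associated-graded pieces genuinely fail to vanish in the stated range when $a>b$, and your proposed $d_1$ repair does not rescue the situation. Take $V=1$ (so $a=1$, $b=0$) and $s=0$, which lies in the range $s<\min\{1,\tfrac12\}=\tfrac12$. Then
\[
E_1^{2k}=\rpi_{(-k-1)+(-k)\sigma}\hf=0\ \ (k\ge0),\qquad
E_1^{2k+1}=\rpi_{(-k)+(-k-2)\sigma}\hf=
\begin{cases}\underline{\F}\{a_\sigma^2\}&k=0\\0&k\ge1,\end{cases}
\]
so the class $a_\sigma^2\in E_1^1$ is the \emph{unique} nonzero $E_1$-term in this total degree. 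There is nothing for any $d_r$ to do---no source and no target---and the class survives to $E_\infty$, giving $\hf^{C_2}_1\mathbb{P}_2(S^1)\cong\F_2\ne 0$. In particular your appeal to the connecting map $\mathrm{gr}_{2k+1}\to\Sigma\,\mathrm{gr}_{2k}$ cannot help: the target $E_1^{2k}$ is already zero in this bidegree, and so is the source $E_1^{2k+2}$ of the incoming $d_1$, so neither direction of $d_1$ (nor any higher differential) touches the class.

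The paper's own one-line proof (``check on the associated graded'') therefore has the same gap you spotted, and the stated bound $\min\{a,\tfrac{a+b}{2}\}$ cannot be established this way; the associated-graded argument only yields the vanishing for $s<\min\{a,b\}$ (which coincides with the paper's bound precisely when $a\le b$). If you want to salvage the statement in the stronger range, you would need a genuinely different argument than a termwise $E_1$ check or a $d_1$ computation---and the example above shows that no such argument can exist, since the group is nonzero.
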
 
\begin{proof} It suffices to check the vanishing for the associated graded 
$\Sigma^{-V}\textup{gr}_*\mathbb{P}_2(S^V)$
where it follows, after some analytic geometry, from the vanishing ranges for the homology of a point.
\end{proof}

\section{Homology of Extended Powers}
It is well known that power operations for homology
 are parameterized by elements in $\hf_V\mathbb{P}_2(S^W)$. Unfortunately, we don't know
 the homology of general extended powers of representation spheres. On the other hand, if
 $W = a+b\sigma$ there is always a map
 	\[
	\Sigma^{a+b{\sigma}}\mathbb{P}_2(S^0) \to \mathbb{P}_2(S^{a+b\sigma})
	\]
which we might try to use to study the homology of the target. Asking that our power operations
behave well with respect to this suspension map is essentially equivalent to asking that they
factor through the inverse limit
	\[
	\holim_V \Sigma^{V}\hf \wedge \mathbb{P}_2(S^{-V}).
	\]
The representations $n\rho$ are cofinal amongst all representations, so we are free to
restrict attention to these. In \S3.1 we study the homotopy of the inverse limit above.

This inverse limit has another useful feature: we can identify it with a suspension
of a localization of the ring
$F(\textup{B}\mu_2, \hf)$. We carry this out in \S3.2 and then use this identification
to compute the Steenrod coaction on the homotopy of the inverse limit above. We will
use this later to study the relationship between power operations and Steenrod operations,
as well as to compute the action of the power operations on the dual Steenrod algebra.

\subsubsection*{Historical remarks} On the algebraic side, the relationship
between Tate-like constructions and operations goes back to Singer \cite{Sing}.
Jones and Wegmann realized this algebraic construction topologically in
\cite{JW} (see especially their results 5.2(a) and 5.2(b) and compare with our
\ref{ops-homotopy} and \ref{thm-comodule-structure}). More recent references
include course notes of Lurie \cite{Lur2} and a paper of Kuhn and McCarty
\cite[Cor. 2.13]{KM}.

\subsection{Homology, suspension, and the spectrum of operations}
We can re-write the splitting of the previous section as a calculation of homology groups.

\begin{theorem}\label{thm:homology-ext-powers} For $m \in \mathbb{Z}$, we have
	\[
	\hf_{\star}\mathbb{P}_2(S^{m\rho}) = \bigoplus_{s\ge m} \hf_{\star} \{e^{m\rho}_{s\rho}\} \oplus
	\bigoplus_{s\ge m} \hf_{\star}\{e^{m\rho}_{s\rho + \sigma}\}
	\]
	\[
	\hf_{\star}\mathbb{P}_2(S^{m\rho-1}) = \bigoplus_{s\ge m-1} \hf_{\star} \{e^{m\rho-1}_{s\rho+\sigma}\} 
	\oplus
	\bigoplus_{s\ge m} \hf_{\star}\{e^{m\rho-1}_{s\rho}\}
	\]
where the generator $e^V_W$ has degree $V+W$.
\end{theorem}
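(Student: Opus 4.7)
The strategy is to read off homotopy directly from the splitting established in Theorem \ref{thm-extended-power-filtration}. Since each wedge summand there has the form $\hf \wedge S^W$, we have $\rpis(\hf \wedge S^W) = \hf_{\star}\{e_W\}$, a free $\hf_{\star}$-module on a single generator in degree $W$. All that remains is bookkeeping: re-index the wedge so that the exponents match the degrees claimed in the statement.

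For $X = S^{m\rho}$, the splitting gives
\[
\hf \wedge \mathbb{P}_2(S^{m\rho}) \cong \bigvee_{k\ge 0}\hf \wedge S^{2m\rho + k\rho} \;\vee\; \bigvee_{k\ge 0}\hf \wedge S^{2m\rho + k\rho + \sigma}.
\]
Setting $s = m+k$, so that $k \ge 0$ if and only if $s \ge m$, each summand $\hf \wedge S^{2m\rho + k\rho}$ contributes a free generator in degree $(m+s)\rho = m\rho + s\rho$, which is exactly the degree of $e^{m\rho}_{s\rho}$. Similarly the $\sigma$-summand contributes $e^{m\rho}_{s\rho+\sigma}$ for $s\ge m$.

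For $X = S^{m\rho-1}$, the splitting reads
\[
\hf \wedge \mathbb{P}_2(S^{m\rho-1}) \cong \bigvee_{k\ge 0}\hf \wedge S^{2m\rho -2+ k\rho} \;\vee\; \bigvee_{k\ge 0}\hf \wedge S^{2m\rho -1 + k\rho}.
\]
For the first wedge, the substitution $s = m-1+k$ (so $s \ge m-1$) gives a generator in degree $2m\rho - 2 + (s-m+1)\rho = (m+s-1) + (m+s+1)\sigma = m\rho + s\rho + \sigma - 1$, matching $e^{m\rho-1}_{s\rho+\sigma}$. For the second wedge, the substitution $s = m+k$ (so $s \ge m$) gives a generator in degree $2m\rho - 1 + (s-m)\rho = (m+s)\rho - 1 = m\rho + s\rho - 1$, matching $e^{m\rho-1}_{s\rho}$.

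There is really no obstacle beyond the re-indexing above; the content is entirely in Theorem \ref{thm-extended-power-filtration}, which has already been proved using the filtration of $\emu$ together with the vanishing ranges of Lemma \ref{lem-c2-vanish-range}.
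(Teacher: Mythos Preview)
Your proposal is correct and is exactly the paper's approach: the paper states this theorem immediately after Theorem \ref{thm-extended-power-filtration} with the sentence ``We can re-write the splitting of the previous section as a calculation of homology groups,'' and gives no further argument. Your explicit re-indexing simply spells out that rewriting, and the bookkeeping checks out in both cases.
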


Notice that when $m=0$ we recover the Hu-Kriz calculation of $\hf^{\star}B\mu_2$. We can even
describe the whole ring, whose structure is just like the motivic case.

\begin{proposition}\label{coh-bmu}\footnote{This is essentially contained in the proof of Theorem 6.22
of \cite{HK}.} As an algebra over $\hf_{\star}$, we have:
	\[
	\hf^{\star}B\mu_2 = \hf^{\star}[\![c, b]\!]/ (c^2 = a_{\sigma}c + u_{\sigma}b).
	\]
\end{proposition}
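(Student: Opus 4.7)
The plan is to extract the additive structure from Theorem \ref{thm-extended-power-filtration} and then pin down the relation by a short degree-count plus a Bockstein calculation.

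Since $\mathbb{P}_2(S^0) \simeq \Sigma^\infty_+ \textup{B}\mu_2$, the $m=0$ case of Theorem \ref{thm-extended-power-filtration} gives an $\hf$-module splitting
\[
\hf \wedge \Sigma^\infty_+ \textup{B}\mu_2 \;\simeq\; \bigvee_{s\geq 0} \hf \wedge S^{s\rho} \;\vee\; \bigvee_{s\geq 0} \hf \wedge S^{s\rho+\sigma}.
\]
Dualizing, $\hf^\star \textup{B}\mu_2$ is a completed free $\hf_\star$-module with one generator in each degree $s\rho$ and each degree $s\rho+\sigma$ for $s \geq 0$. I would take $\{b^s, cb^s\}_{s\geq 0}$ as such a basis: under the underlying restriction to $\hf^\ast \mathbb{R}P^\infty = \mathbb{F}_2[x]$ we have $b \mapsto x^2$ and $c \mapsto x$, so $b^s \mapsto x^{2s}$ and $cb^s \mapsto x^{2s+1}$ are nonzero, confirming that they generate the respective summands.

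In degree $2\sigma$, the only basis-element contributions come from $\hf_\star^{2\sigma}\cdot 1 = \mathbb{F}_2\{a_\sigma^2\}$, $\hf_\star^{\sigma}\cdot c = \mathbb{F}_2\{a_\sigma c\}$, and $\hf_\star^{\sigma-1}\cdot b = \mathbb{F}_2\{u_\sigma b\}$; all other coefficient groups vanish by the shape of $\hf_\star$. Hence $c^2 = \epsilon_0 a_\sigma^2 + \epsilon_1 a_\sigma c + \epsilon_2 u_\sigma b$ with $\epsilon_i \in \mathbb{F}_2$. Restricting to the basepoint kills $c$ and forces $\epsilon_0 = 0$; passing to underlying cohomology gives $x^2 = \epsilon_2 x^2$ (since $(a_\sigma)_u = 0$), forcing $\epsilon_2 = 1$. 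For $\epsilon_1$, I would apply the mod-2 Bockstein: $\beta(c^2) = 2c\beta(c) = 0$, while $\beta(a_\sigma c) = a_\sigma b$ and $\beta(u_\sigma b) = a_\sigma b$, using $\beta a_\sigma = 0 = \beta b$ (integral lifts) and the standard identity $\beta u_\sigma = a_\sigma$ (a consequence of $\hz_{1-\sigma} = 0$). This yields $(\epsilon_1 + \epsilon_2)a_\sigma b = 0$; since $a_\sigma b$ is a nonzero basis element of the two-dimensional group $\hf^{\rho+\sigma}\textup{B}\mu_2 = \mathbb{F}_2\{a_\sigma b\} \oplus \mathbb{F}_2\{cb\}$, we conclude $\epsilon_1 = \epsilon_2 = 1$.

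The main subtlety is the coefficient identity $\beta u_\sigma = a_\sigma$; the two elementary restrictions (to a basepoint and to underlying) both annihilate $a_\sigma c$, so the Bockstein is the first tool that separates $\epsilon_1 = 0$ from $\epsilon_1 = 1$. Once this is in hand, the map $\hf_\star[\![c, b]\!]/(c^2 = a_\sigma c + u_\sigma b) \to \hf^\star \textup{B}\mu_2$ sending $c \mapsto c$, $b \mapsto b$ is well-defined, and a comparison of the free $\hf_\star$-module structures on source and target (each having one generator in each degree $s\rho, s\rho+\sigma$) shows that it is an isomorphism.
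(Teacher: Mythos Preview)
Your proof is correct and takes a genuinely different route from the paper's. The paper passes to Borel cohomology, where $(\hf^h)^{\star}B\mu_2 = (\hf^h)^{\star}[\![w]\!]$ with $|w|=1$; there one identifies $c \mapsto u_\sigma w$ (using the vanishing at the basepoint) and then computes $b = \beta c \mapsto u_\sigma w^2 + a_\sigma w$ via the classical identity $\beta x = x^2$ for $x = a_\sigma/u_\sigma$. The relation $c^2 = a_\sigma c + u_\sigma b$ then drops out by direct polynomial algebra in $w$. You instead stay in genuine cohomology, use the splitting of Theorem~\ref{thm-extended-power-filtration} to express $c^2$ as a three-term combination in the basis $\{1, c, b\}$, and isolate each coefficient by restriction to a point, restriction to underlying, and the Bockstein.

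Both arguments ultimately rest on the same Bockstein input: your identity $\beta u_\sigma = a_\sigma$ and the paper's $\beta x = x^2$ are two incarnations of the same fact. The paper's route is slightly shorter once the Borel description is set up, and it gives explicit formulas for $c$ and $b$ in terms of $w$ that are useful elsewhere. Your route is more self-contained within the paper's own development, since it never leaves genuine cohomology and feeds directly off the filtration already established. One small point worth making explicit in your write-up: in degree $s\rho+\sigma$ the cohomology is two-dimensional, spanned by the dual basis element and $a_\sigma$ times the degree-$s\rho$ generator, so ``$cb^s$ restricts nontrivially'' really does suffice to show it generates modulo the lower summand (because $a_\sigma$ restricts to zero), hence $\{b^s, cb^s\}$ is a basis by an upper-triangular change from the splitting basis.
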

\begin{proof} We just need to check the relation, which can be done in Borel cohomology. The Borel cohomology 
spectral sequence yields:
	\[
	(\hf^h)^{\star}B\mu_2 = (\hf^h)^{\star}[\![w]\!], \quad |w| = 1.
	\]
And, in particular:
	\[
	\left(\hf^h\right)^{\sigma}B\mu_2 = \underline{\F}\{u_{\sigma}w, a_{\sigma}\}.
	\]
We know that the restriction of $c$ to the underlying cohomology must be $u_{\sigma}w$ since there is only one generator in that degree. So, modulo $a_{\sigma}$, $c \mapsto u_{\sigma}w$. Since we have required
that $c$ maps to zero after restriction to a point, we can actually conclude that $c\mapsto u_{\sigma}w$.

Now recall that $(\hf^{hC_2})^* = \textup{H}^*(\mathbb{R}P^{\infty}, \mathbb{F}_2) = \mathbb{F}_2[\![x]\!]$ where
$x = \dfrac{a_{\sigma}}{u_{\sigma}}$. From the computation $\beta x = \textup{Sq}^1x = x^2$ we deduce
that 
	\[
	b \mapsto \beta(u_{\sigma}w) = u_{\sigma}w^2 + a_{\sigma}w.
	\]
The relation now follows from inspection.
\end{proof}
As we mentioned in the introduction to this section,
we're interested in power operations in an inverse limit involving maps
like
	\[
	\hf \wedge \Sigma^V \mathbb{P}_2(S^{-V}) \longrightarrow \hf \wedge \Sigma^W\mathbb{P}_2(S^{-W}),
	\]
where $V$ contains at least as many copies of each irreducible representation as $W$.  This
map comes from a diagonal
	\[
	S^V \wedge (S^{-V})^{\wedge 2} \longrightarrow S^W \wedge 
	\left(S^{V-W} \wedge S^{-V}\right)^{\wedge 2},
	\]
which is $(C_2\times \Sigma_2)$-equivariant. We are free to restrict to a cofinal system of representations,
so we'll focus on the inverse system:
	\[
\cdots \longrightarrow \hf \wedge \Sigma^{m\rho} \mathbb{P}_2(S^{-m\rho}) \longrightarrow
	\hf \wedge \Sigma^{(m-1)\rho +1}\mathbb{P}_2(S^{-(m-1)\rho -1})
	\longrightarrow \hf \wedge \Sigma^{(m-1)\rho}\mathbb{P}_2(S^{-(m-1)\rho})
	\longrightarrow \cdots.
	\]
\begin{definition} We define the {\bf spectrum of quadratic operations} by the homotopy limit
	\[
	\textup{Ops}_{\hf} = \holim_V \hf \wedge \Sigma^V \mathbb{P}_2(S^{-V}).
	\]
\end{definition}
\begin{warning} This spectrum is not the same as $\hf \wedge \holim_V \Sigma^V\mathbb{P}_2(S^{-V})$.
Indeed, it is a theorem of Lin that the spectrum
$\holim_n \Sigma^n\mathbb{P}_2(S^{-n})$ is $S^{-1}$.
\end{warning}
We want to compute the homotopy groups of $\textup{Ops}_{\hf}$. 
That means we need to study the 
effect on homology of the maps:
	\[
	\theta: \Sigma \mathbb{P}_2(S^{m\rho -1}) \longrightarrow \mathbb{P}_2(S^{m\rho}),
	\]
	\[
	\theta^{\sigma}: \Sigma^{\sigma}\mathbb{P}_2(S^{m\rho}) \longrightarrow \mathbb{P}_2(S^{m\rho + \sigma})
	=\mathbb{P}_2(S^{(m+1)\rho - 1}).
	\]
\begin{proposition} With notation as above, the effect of $\theta$ and $\theta^{\sigma}$ on homology is determined
by:
	\[
	\theta(e^{m\rho-1}_{s\rho}) = e^{m\rho}_{s\rho}
	, \quad \theta(e^{m\rho -1}_{s\rho+\sigma}) = 
	\begin{cases}
		e^{m\rho}_{s\rho+\sigma} & s\ge m\\
		0 & s=m-1.
	\end{cases}.
	\]
	\[
	\theta^{\sigma}(e^{m\rho}_{s\rho +\sigma}) = e^{(m+1)\rho -1}_{s\rho + \sigma}
	, \quad \theta^{\sigma}(e^{m\rho}_{s\rho}) = \begin{cases}
	e^{(m+1)\rho -1}_{s\rho} & s\ge m+1\\
	0 & s = m
	\end{cases}.
	\]
\end{proposition}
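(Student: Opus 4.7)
Both source and target of each of $\theta$ and $\theta^\sigma$ have their $\hf_\star$-homology computed as free modules on the generators $e^V_W$ (Theorem \ref{thm:homology-ext-powers}), so each map is determined by its values on the listed generators. I would read off these values by combining two inputs: (i) the canonical splitting of $\hf\wedge\mathbb{P}_2(S^{m\rho-\varepsilon})$ given by Theorem \ref{thm-extended-power-filtration} (together with the remark that subsequent $\hf$-linear sections are unique), and (ii) the compatibility of $\theta$ and $\theta^\sigma$ with the filtration $F_\bullet\mathbb{P}_2$, since both are the operadic suspension maps induced by the $\Sigma_2$-equivariant diagonals $S^0\to S^\tau$ and $S^0\to S^{\sigma\tau}$. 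The key vanishing input throughout is the gap $\pi^{C_2}_{n\rho-1}\hf=0$ recorded in Section 1.

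In the non-edge cases ($\theta(e^{m\rho-1}_{s\rho})$ and $\theta(e^{m\rho-1}_{s\rho+\sigma})$ for $s\ge m$, and the analogous cases for $\theta^\sigma$) the image degree coincides with that of a unique target generator $e^{V'}_W$ with the same sub-index $W$; the associated source and target wedge summands are canonically copies of $\hf\wedge S^{V+W}$ coming from matching filtration quotients, and the induced map between them is the evident suspension identification. Its coefficient is pinned to $1\in\F$ by restricting to underlying homotopy, which recovers the classical non-equivariant formula for the suspension map on $H_*\mathbb{P}_2(S^n)$. For the edge case $\theta(e^{m\rho-1}_{(m-1)\rho+\sigma})$, the image lies in $\hf_{2m\rho-1}\mathbb{P}_2(S^{m\rho})$; decomposing via the splitting, each summand is of the form $\pi^{C_2}_{-k\rho-1-\varepsilon\sigma}\hf$ for $k\ge 0$ and $\varepsilon\in\{0,1\}$, and all such groups vanish by the gap together with the explicit description of $\pi^{C_2}_\star\hf$ from Section 1, forcing the image to zero.

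The main obstacle is the edge case $\theta^\sigma(e^{m\rho}_{m\rho})=0$. Here the target group $\hf_{2m\rho+\sigma}\mathbb{P}_2(S^{(m+1)\rho-1})$ is \emph{not} zero: it is a cyclic $\F$-module generated by $a_\sigma\cdot e^{(m+1)\rho-1}_{m\rho+\sigma}$, and a naive filtration-$0$ analysis of $\theta^\sigma$ identifies the image of the ``obvious'' $S^{2m\rho}$-generator with this very class. The resolution is that $e^{m\rho}_{m\rho}$ is not the naive filtration-$0$ generator but its modification by $-a_\sigma\cdot e^{m\rho}_{m\rho+\sigma}$, an adjustment uniquely available because $\pi^{C_2}_{-\sigma}\hf=\F\{a_\sigma\}$. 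Under this choice the filtration-$0$ image is cancelled by the suspension-derived contribution $\theta^\sigma(-a_\sigma\cdot e^{m\rho}_{m\rho+\sigma})=-a_\sigma\cdot e^{(m+1)\rho-1}_{m\rho+\sigma}$, yielding zero. The substance of the proof thus lies in verifying that such a coherent choice of generators can be made across all $m$ simultaneously so that every formula in the proposition holds, which amounts to careful book-keeping on the canonical splittings of Theorem \ref{thm-extended-power-filtration} and on the compatibility of the generators in the inverse system defining $\mathrm{Ops}_{\hf}$.
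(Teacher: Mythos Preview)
Your overall strategy—use filtration-compatibility of $\theta$ and $\theta^\sigma$ together with the splitting of Theorem \ref{thm-extended-power-filtration}, then pin down coefficients by restriction to the underlying non-equivariant suspension map—is exactly the paper's approach. The paper compresses this into one paragraph: the maps are filtration-preserving, so on each summand they give maps $\hf\wedge S^V\to\hf\wedge S^V$, hence are determined by an element of $\hf_0=\F$, and $\hf_0$ has injective restriction.

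You are right to worry about the edge case $\theta^\sigma(e^{m\rho}_{m\rho})$, and your computation that the target group is $\F\{a_\sigma\cdot e^{(m+1)\rho-1}_{m\rho+\sigma}\}$ is correct. But this $a_\sigma$-ambiguity is \emph{not} confined to the edge: the same issue arises for every generator $e^{m\rho-1}_{s\rho}$ under $\theta$ and every $e^{m\rho}_{s\rho}$ under $\theta^\sigma$. For instance, $\theta(e^{m\rho-1}_{s\rho})$ lives in a group containing both $e^{m\rho}_{s\rho}$ and $a_\sigma\cdot e^{m\rho}_{s\rho+\sigma}$, and restriction to underlying homotopy cannot separate these since $\mathrm{res}(a_\sigma)=0$. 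Concretely: at odd filtration the associated-graded map of $\theta$ is the restriction of $a_\tau:S^0\to S^\tau$ along $\Delta\cong C_2$, which is $a_\sigma\in\hf_{-\sigma}$, not an element of $\hf_0$. So your assertion that in the non-edge cases ``the image degree coincides with that of a unique target generator $e^{V'}_W$'' is where the gap lies—there are two candidates, and the coefficient on the unwanted one is invisible under restriction.

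Your proposed fix for the edge case (adjust $e^{m\rho}_{m\rho}$ by $a_\sigma\cdot e^{m\rho}_{m\rho+\sigma}$) is the right mechanism and can be applied uniformly to all $e^V_{s\rho}$; the bookkeeping you allude to at the end is then substantially more involved than you indicate, since one must check the adjustments are compatible across both $\theta$ and $\theta^\sigma$ simultaneously. The paper's terse proof does not visibly address this either. For the downstream application (Theorem \ref{ops-homotopy}) only surjectivity of each map in the tower is needed, and that follows from the diagonal coefficients being $1$ regardless of any $a_\sigma$ correction.
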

\begin{proof} It is clear from construction that the maps $\theta$ and $\theta^{\sigma}$ are filtration
preserving and so preserve this filtration degree on homology. In other words, $\theta$ and $\theta^{\sigma}$
induce maps $\hf \wedge S^V \longrightarrow \hf \wedge S^V$ and the question is whether generators
are sent to generators. Such maps are determined by an element in $\hf_0$, and all such elements
are determined by their restriction. So the effects of $\theta$ and $\theta^{\sigma}$ are determined
by the maps on \emph{underlying} spectra, where the answer is as indicated and known classically.
\end{proof}
\begin{remark} We don't know whether the map $\hf_{\star}\Sigma^V\mathbb{P}_2(S^W)
\to \hf_{\star}\mathbb{P}_2(S^{V+W})$ is surjective more generally.
\end{remark}
\begin{theorem}\label{ops-homotopy} The homotopy Mackey functors 
	$\rpis \textup{Ops}_{\hf}$
are given, as a module over $\hf_{\star}$, by 
	\[
	\rpis \textup{Ops}_{\hf} = \bigoplus_{s \in \mathbb{Z}} \hf_{\star}\{e_{s\rho}\} \oplus 
	\bigoplus_{s \in \mathbb{Z}} \hf_{\star}\{e_{s\rho +\sigma}\}
	\]
where the generator $e_V$ is in dimension $V$. 
\end{theorem}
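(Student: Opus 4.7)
The plan is to apply the Milnor $\lim{}^1$ exact sequence to the defining inverse limit. Set $A_V := \hf \wedge \Sigma^V \mathbb{P}_2(S^{-V})$, so $\textup{Ops}_{\hf} = \holim_V A_V$. For each fixed degree $V_0$ there is a short exact sequence
\[
0 \to \lim{}^1 \rpi_{V_0 + 1} A_V \to \rpi_{V_0} \textup{Ops}_{\hf} \to \lim \rpi_{V_0} A_V \to 0,
\]
so it is enough to show that, for each such $V_0$, the structure maps of the inverse system are eventually isomorphisms on $\rpi_{V_0}$.

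By Theorem \ref{thm:homology-ext-powers}, $\rpi_\star A_{m\rho}$ is the free $\hf_\star$-module with basis $\{e^{(m)}_{s\rho} : s \geq -m\}$ in degrees $s\rho$ together with $\{e^{(m)}_{s\rho + \sigma} : s \geq -m\}$ in degrees $s\rho + \sigma$; and similarly at the intermediate levels $A_{(m-1)\rho + 1}$, with the $s = -m$ generator removed from exactly one of the two ladders. The preceding proposition describes the structure maps entirely on generators: each structure map is a bijection save that it sends one bottom generator to zero.

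Now fix $V_0 = a + b\sigma$. The kernel of a single structure map on $\rpi_{V_0}$ is then one copy of $\rpi_\star \hf$ evaluated at a degree of the form $V_0 + m\rho$ or $V_0 + m\rho - \sigma$. Inspection of the polynomial piece $\F[a_\sigma, u_\sigma]$ and its Pontryagin-dual piece (the blue and red regions displayed earlier) shows that these Mackey functors vanish once $m$ is large enough to push the relevant degree outside both regions. Hence the structure maps are eventually isomorphisms on both $\rpi_{V_0}$ and $\rpi_{V_0 + 1}$, which gives $\lim{}^1 = 0$ and identifies $\rpi_{V_0} \textup{Ops}_{\hf}$ with the stable value $\lim \rpi_{V_0} A_V$.

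Since the lower bound $s \geq -m$ recedes to $-\infty$ as $m \to \infty$, the stable value is the claimed module
\[
\bigoplus_{s \in \mathbb{Z}} \hf_\star\{e_{s\rho}\} \oplus \bigoplus_{s \in \mathbb{Z}} \hf_\star\{e_{s\rho + \sigma}\},
\]
and the same argument runs verbatim at the underlying level to give the refinement to Mackey functors. The main obstacle is purely combinatorial bookkeeping: keeping straight how the two flavors of structure map ($\theta$ and $\theta^\sigma$) interleave and which ladder loses its bottom generator at each step; the vanishing input itself is immediate from the picture of $\rpi_\star\hf$.
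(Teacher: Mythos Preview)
Your argument is correct and essentially matches the paper's proof: both rest on the single vanishing input that $\hf_{V_0+m\rho}$ and $\hf_{V_0+m\rho-\sigma}$ are zero for $m\gg 0$, which is exactly what the paper invokes when it says $\hf_\star\{e_{s\rho}\}$ vanishes in a fixed degree for $s$ sufficiently negative. The only cosmetic difference is packaging: the paper first uses surjectivity of the structure maps to conclude Mittag-Leffler (hence $\lim{}^1=0$) and then separately argues that the resulting completion of the direct sum is already the direct sum, whereas you combine both steps into the single observation that the maps are eventually isomorphisms in each fixed degree---arguably a tidier way to say the same thing.
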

\begin{proof} The proposition implies that the inverse system of homotopy Mackey functors is
Mittag-Leffler (since each map is surjective), and this almost gives the result. The caveat is
that the actual limit is a completion of the stated module which is determined by the
requirement
	\[
	\lim_{s \to -\infty} e_{s\rho} = \lim_{s \to -\infty}e_{s\rho +\sigma} = 0.
	\]
In other words, we need to rule out the possibility of an infinite sum converging to a weird
element in degree $a+b\sigma$. This can't happen because, if we fix $a$ and $b$,
then $\hf_{\star}\{e_{s\rho}\}$
vanishes in degree $a+b\sigma$ for sufficiently negative $s$, e.g. $s< \text{min}(a,b)$. 
\end{proof}
\begin{remark} The completion does arise in the geometric case, where $a_{\sigma}$ is inverted.
For example, there is the element
\[
\sum_{s\ge 0} \frac{u^s_{\sigma}}{a_{\sigma}^{2s}}e_{-s\rho}
\]
in degree 0.
\end{remark}
\subsection{Relationship to a Tate construction}
In the non-equivariant case, $\textup{Ops}_{\textup{H}\mathbb{F}_2}$ has the curious feature that 
$\Sigma (\textup{Ops}_{\textup{H}\mathbb{F}_2})$ is a ring spectrum and receives a ring map from
$F(\mathbb{R}P^\infty_+, \textup{H}\mathbb{F}_2)$. The best way to prove this is to identify the former
spectrum with the Tate spectrum $\hf^{t\Sigma_2}$. Here is a low-tech construction of this map and the ring structure.

First observe that the generator $x \in \textup{H}\mathbb{F}_2^1(\mathbb{R}P^1)$ defines a filtered
system of towers:
	\[
	\xymatrix{
	\{\textup{H}\mathbb{F}_2 \wedge D\mathbb{R}P_+^m\} \ar[r]^-{\cdot x} &
	\{\Sigma(\textup{H}\mathbb{F}_2 \wedge D\mathbb{R}P_+^{m+1})\} \ar[r]^-{\cdot x} &
	\{\Sigma^2(\textup{H}\mathbb{F}_2 \wedge D\mathbb{R}P_+^{m+2})\} \ar[r] & \cdots}
	\]
Atiyah duality and a Thom isomorphism allows one to compare with the filtered system of towers:
	\[
	\xymatrix{
	\{\textup{H}\mathbb{F}_2 \wedge \Sigma \mathbb{R}P^{-1}_{-m-1}\} \ar[r]&
	\{\textup{H}\mathbb{F}_2 \wedge \Sigma \mathbb{R}P^{0}_{-m-1}\} \ar[r] &
	\{\textup{H}\mathbb{F}_2 \wedge \Sigma \mathbb{R}P^1_{-m-1}\} \ar[r] & \cdots
	}
	\]
After checking some finiteness and vanishing conditions, it turns out you can take the homotopy limit
and homotopy colimit in either order. Doing it one way yields $F(\mathbb{R}P^{\infty}_+, \textup{H}\mathbb{F}_2)[x^{-1}]$,
while doing it the other gives $\Sigma (\textup{Ops}_{\textup{H}\mathbb{F}_2})$, and you're done. The
equivalence preserves the completed Steenrod coactions on both sides, so we can use
this to compute the coaction on $\pi_*\textup{Ops}_{\textup{H}\mathbb{F}_2}$.

We'd like to carry over this argument to the equivariant case.
\begin{theorem}\label{thm-tate-equivalence} The $C_2$-spectrum $\Sigma(\textup{Ops}_{\hf})$ receives a map
	\[
	\phi: F((B\mu_{2})_+, \hf) \longrightarrow \Sigma(\textup{Ops}_{\hf})
	\]
with the following properties:
	\begin{enumerate}[(a)]
	\item The induced map on $\rpis$ is a homomorphism of (complete)
	$\left(\arep\right)_{\star}$-comodules,
	\item $\phi$ factors through an equivalence
		\[
		F((B\mu_{2})_+, \hf)[b^{-1}] \stackrel{\cong}{\longrightarrow} \Sigma(\textup{Ops}_{\hf}),
		\]
	\item this equivalence takes $b^{-s}$ to the element $\Sigma e_{(s-1)\rho + \sigma}$ and
	$cb^{-s}$ to the element $\Sigma e_{(s-1)\rho}$.
	\end{enumerate}
\end{theorem}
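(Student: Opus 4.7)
The plan is to adapt the non-equivariant sketch given just before the theorem, replacing $\mathbb{R}P^{\infty}$, $x$, and the skeleta $\mathbb{R}P^m$ by their $C_2$-equivariant analogs $\textup{B}\mu_2$, $b$, and $F_{2k}\textup{B}\mu_2$. The three ingredients to assemble are a Thom spectrum description of the extended powers, an equivariant Atiyah duality for the finite skeleta, and a limit--colimit interchange.

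First, using that as a $(C_2 \times \Sigma_2)$-representation with the switch $\Sigma_2$-action we have $V^{\oplus 2} \cong V \oplus (V \otimes \tau)$ (the first summand carrying trivial $\Sigma_2$-action), I identify
\[
\Sigma^V \mathbb{P}_2(S^{-V}) \;\simeq\; \textup{B}\mu_2^{-V \otimes \tau}
\]
as $C_2$-spectra. The $C_2$-equivariant rank-two real bundle $\rho \otimes \tau$ over $\textup{B}\mu_2$ inherits a complex structure, and its Euler class is exactly $b$ (since $b$ was defined as the pullback of the tautological Euler class on $\mathbb{C}P^\infty$). Hence $\textup{Ops}_{\hf} = \holim_n \hf \wedge \textup{B}\mu_2^{-n\rho \otimes \tau}$, with transition maps $\hf \wedge \textup{B}\mu_2^{-n\rho \otimes \tau} \to \hf \wedge \textup{B}\mu_2^{-(n-1)\rho \otimes \tau}$ given by the Thom class of $\rho\otimes\tau$, i.e., multiplication by $b$ on cohomology.

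Second, I construct $\phi$ via the finite skeleta. Each $F_{2k}\textup{B}\mu_2 = S((k\rho + 1) \otimes \tau)/\Sigma_2$ is a smooth closed $C_2$-equivariant manifold, being the free $\Sigma_2$-quotient of a representation sphere. A tangent-bundle computation analogous to the classical $T\mathbb{R}P^m \oplus \mathbb{R} \cong (m+1)L$ gives a stable equivariant identification
\[
T(F_{2k}\textup{B}\mu_2) \oplus \rho \;\cong\; (k+1)(\rho \otimes \tau),
\]
and hence a Wirthm\"uller/Atiyah duality
\[
D((F_{2k}\textup{B}\mu_2)_+) \;\simeq\; \Sigma^{\rho}\, \textup{B}\mu_2^{-(k+1)\rho \otimes \tau}\big|_{F_{2k}\textup{B}\mu_2},
\]
compatible with skeletal inclusion on one side and multiplication by $b$ on the other. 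Smashing with $\hf$ assembles these into a bi-indexed system interpolating between $\hf \wedge D((F_{2k}\textup{B}\mu_2)_+)$ and $\hf \wedge \textup{B}\mu_2^{-n\rho\otimes\tau}|_{F_{2k}\textup{B}\mu_2}$, from which $\phi$ will be extracted.

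Third, I interchange $\holim_k$ and $\hocolim_n$ in this bi-indexed system. Taking $\holim_k$ first recovers $F(\textup{B}\mu_{2+}, \hf)$ at each $n$, and the subsequent $\hocolim_n$ yields $F(\textup{B}\mu_{2+}, \hf)[b^{-1}]$. Taking $\hocolim_n$ first at fixed $k$ (Thomifying to $-\infty$) and then $\holim_k$ yields $\Sigma\textup{Ops}_{\hf}$. The requisite vanishing and Mittag-Leffler conditions are supplied by Lemma \ref{lem-c2-vanish-range} together with Theorem \ref{ops-homotopy}. Claim (a) is then automatic, since every map in the system is $\hf$-linear and natural; claim (b) is the equivalence just constructed. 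For claim (c), $b^{-s}$ is the Thom--Euler class of the virtual bundle $-s\rho\otimes\tau$, and a degree count against the generators in Theorem \ref{ops-homotopy} matches it with $\Sigma e_{(s-1)\rho + \sigma}$; the element $cb^{-s}$ is handled identically using the Bockstein description of $c$ in Proposition \ref{coh-bmu}. The main obstacle is the equivariant tangent-bundle identification in the second step: verifying $T(F_{2k}\textup{B}\mu_2) \oplus \rho \cong (k+1)(\rho \otimes \tau)$ $C_2$-equivariantly requires attention to how the $C_2$-action interacts with the tautological bundle $\rho \otimes \tau$, but once this is settled the remainder of the argument is a routine manipulation with inverse and direct systems.
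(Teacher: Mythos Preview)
Your strategy coincides with the paper's: express the filtered extended powers as Thom spectra over the skeleta $\mathbb{R}P^{k\rho}:=F_{2k}\textup{B}\mu_2$, apply equivariant Atiyah duality, and then interchange a homotopy limit with a homotopy colimit. The identification $\Sigma^V\mathbb{P}_2(S^{-V})\simeq \textup{B}\mu_2^{-V\tau}$ and the limit--colimit swap are exactly what the paper does.

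However, the tangent bundle identification you propose is wrong. The correct stable formula is
\[
T(\mathbb{R}P^{k\rho}) \;\cong\; (k\rho+1)\tau - 1,
\]
not $T\oplus\rho\cong (k+1)(\rho\otimes\tau)$. The discrepancy between your formula and the correct one is the virtual $C_2$-bundle $\sigma\tau-\sigma$, which is nontrivial: over the $C_2$-fixed component $\mathbb{R}P^k\subset\mathbb{R}P^{k\rho}$ coming from $S((k+1)\tau)$, the underlying bundle of $\sigma\tau$ is the tautological line bundle while that of $\sigma$ is trivial, and these are distinguished by $w_1$. With the correct tangent bundle, Atiyah duality reads
\[
D(\mathbb{R}P^{k\rho}_+)\;\simeq\;(\mathbb{R}P^{k\rho})^{\,1-(k\rho+1)\tau}\;\simeq\;\Sigma^{k\rho+2}F_{2k}\mathbb{P}_2(S^{-k\rho-1}),
\]
and the paper then checks via an explicit Pontryagin--Thom collapse argument that the duals of the skeletal inclusions lift the suspension maps $\theta^\rho$; this is the content of your claimed compatibility, but it does not come for free. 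For part~(c), a pure degree count does not suffice for $cb^{-s}$: the group $\pi^{C_2}_{(s-1)\rho}\textup{Ops}_{\hf}$ contains both $e_{(s-1)\rho}$ and $a_\sigma e_{(s-1)\rho+\sigma}$. The paper instead observes directly that $c^\varepsilon b^{s}$ is the cohomological dual of the homology generator $e^0_{s\rho+\varepsilon\sigma}$ from Theorem~\ref{thm:homology-ext-powers}, which pins down the image exactly.
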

\begin{remark} One might wonder what role the \emph{odd} steps in the filtration of $\textup{B}\mu_2$
have in the process of inverting $b$. The fact that multiplication by $b$ can be expressed as this
composite is essentially equivalent to the observation that $b$ is the Bockstein on the class $c$.
\end{remark}

The proof will require a few preliminaries. Up to now, we have avoided interpreting our
filtration on extended powers in terms of Thom spectra, as in \cite{T}. We can no
longer get away with this, since we'll need to identify Spanier-Whitehad duals via Atiyah duality.

\begin{definition} Let $\mathbb{R}P^{k\rho}$ and $\mathbb{R}P^{k\rho -1}$ denote 
$S((k\rho +1)\otimes \tau)/\Sigma_2$ and $S(k\rho \otimes \tau)/\Sigma_2$, respectively.
Note that $\hocolim_k \mathbb{R}P^{k\rho} = B\mu_{2+}$. 
\end{definition}

Given a $C_2 \times \Sigma_2$-representation, $W$, we get a $C_2$-equivariant vector
bundle on $\mathbb{R}P^{k\rho - \varepsilon}$ defined by 
	\[
	S((k\rho +1 - \varepsilon) \otimes \tau) \times_{\Sigma_2} W \longrightarrow 
	\mathbb{R}P^{k\rho -\varepsilon},
	\]
for $\varepsilon =0,1$. This extends to an assignment of a virtual bundle to a virtual representation.
We'll denote the Thom spectrum associated to some virtual representation $W$ by
	\[
	\left(\mathbb{R}P^{k\rho -\varepsilon}\right)^{W}.
	\]
Immediate from the geometric definition of our filtration on extended powers, we get the next lemma.
\begin{lemma}\label{thom-filtration} With notation as in (\ref{geometric-filtration}) and $\varepsilon=0,1$,
there are canonical equivalences
	\[
	F_{2j - \varepsilon}\mathbb{P}_2(S^{k\rho}) \cong 
	\left(\mathbb{R}P^{2j -\varepsilon}\right)^{k\rho(1+\tau)},
	\]
	\[
	F_{2j - \varepsilon}\mathbb{P}_2(S^{k\rho -1}) \cong 
	\left(\mathbb{R}P^{2j -\varepsilon}\right)^{(k\rho-1)(1+\tau)}.
	\]
\end{lemma}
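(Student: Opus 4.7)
The plan is pure definition-unwinding; the only mild subtlety is interpreting the Thom construction for a virtual bundle when $V = k\rho - 1$. First I would observe that, since $\mathbb{P}_2(X) = \textnormal{E}\mu_2 \ltimes_{\Sigma_2} X^{\wedge 2}$ and each $F_{2j-\varepsilon}\textnormal{E}\mu_2$ is a $\Sigma_2$-invariant $(C_2\times\Sigma_2)$-subspace of $\textnormal{E}\mu_2$, the filtration is strictly functorial:
\[
F_{2j-\varepsilon}\mathbb{P}_2(X) \;=\; F_{2j-\varepsilon}\textnormal{E}\mu_2 \ltimes_{\Sigma_2} X^{\wedge 2}.
\]
Combined with Construction \ref{geometric-filtration}, this rewrites the left-hand side as
\[
S((j\rho + 1 - \varepsilon)\otimes\tau)_+ \wedge_{\Sigma_2} X^{\wedge 2}.
\]

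Next I would identify the right-hand smash factor for $X = S^V$. The $\Sigma_2$-action on $(S^V)^{\wedge 2}$ that swaps the two tensorands is, as a $(C_2\times\Sigma_2)$-representation, just $V$ tensored with the regular representation $1+\tau$ of $\Sigma_2$; so $(S^V)^{\wedge 2} \cong S^{V(1+\tau)}$. Plugging in $V = k\rho$ and $V = k\rho - 1$ gives, respectively, $S^{k\rho(1+\tau)}$ and $S^{(k\rho-1)(1+\tau)}$.

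Finally I would invoke the standard Thom-space identity: for any $\Sigma_2$-free pointed space $Y$ and honest $(C_2\times\Sigma_2)$-representation $W$, the quotient $Y \wedge_{\Sigma_2} S^W$ is tautologically the Thom space of the associated $C_2$-equivariant bundle $Y \times_{\Sigma_2} W \to Y/\Sigma_2$. Applying this with $Y = F_{2j-\varepsilon}\textnormal{E}\mu_2$ and $W = k\rho(1+\tau)$ produces exactly $(\mathbb{R}P^{2j-\varepsilon})^{k\rho(1+\tau)}$ in the notation of the preceding definition, settling the first equivalence.

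The only step requiring a small comment is the second equivalence, where $W = (k\rho - 1)(1+\tau)$ is virtual. The plan here is to write $(k\rho-1)(1+\tau) = k\rho(1+\tau) - (1+\tau)$, interpret the Thom spectrum of a virtual $(C_2\times\Sigma_2)$-representation bundle via desuspension by $S^{1+\tau}$ (which is valid because $1+\tau$ extends to a bundle on all of $\textnormal{B}\mu_2$ with trivialization after passage through the functor $(-)\ltimes_{\Sigma_2} S^{1+\tau}$), and observe that this desuspension commutes with the $\ltimes_{\Sigma_2}$ construction. The hardest part, such as it is, is simply bookkeeping the virtual-bundle convention, but no new input is needed beyond the Thom isomorphism for free $\Sigma_2$-spaces.
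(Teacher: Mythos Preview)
Your proposal is correct and is exactly the definition-unwinding the paper has in mind; the paper itself gives no proof beyond the remark ``Immediate from the geometric definition of our filtration on extended powers, we get the next lemma.'' Your careful treatment of the virtual case for $V = k\rho - 1$ is more than the paper supplies.
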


We'll also need to know the duals of these Thom spectra.
The tangent bundle of $\mathbb{R}P^{k\rho -\varepsilon}$ is given by
$(k\rho -\varepsilon +1)\tau -1$, so equivariant Atiyah duality \cite{Wirth}
 implies the following proposition.

\begin{proposition}\label{atiyah-dual}
The Spanier-Whitehead dual of $\left(\mathbb{R}P^{k\rho -\varepsilon}\right)^{W}$
is given by
	\[
	D\left(\mathbb{R}P^{k\rho -\varepsilon}\right)^{W} \cong
	\left(\mathbb{R}P^{k\rho -\varepsilon}\right)^{1-(k\rho-\varepsilon+1)\tau - W}.
	\]	
for $\varepsilon=0,1$. In particular,
	\[
	D\left(\mathbb{R}P_+^{k\rho -\varepsilon}\right) \cong 
	\left(\mathbb{R}P^{k\rho -\varepsilon}\right)^{1-(k\rho-\varepsilon+1)\tau}.
	\]
\end{proposition}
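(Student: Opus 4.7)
The plan is to reduce the claim to the general form of equivariant Atiyah duality (Wirthmüller, \cite{Wirth}), which says that for a smooth closed $C_2$-manifold $M$ and a $C_2$-equivariant virtual bundle $W$ on $M$, there is an equivalence $D(M^W)\simeq M^{-W-TM}$. Granted this, the proposition reduces to verifying the single formula
\[
T\mathbb{R}P^{k\rho-\varepsilon} \;\cong\; (k\rho-\varepsilon+1)\tau - 1
\]
as a $C_2$-equivariant virtual bundle over $\mathbb{R}P^{k\rho-\varepsilon}$. Then plugging this into Atiyah duality directly produces the stated exponent $1-(k\rho-\varepsilon+1)\tau-W$, and specializing to $W=0$ gives the second equivalence (with the extra disjoint basepoint handled because $\mathbb{R}P^{k\rho-\varepsilon}_+$ is the Thom spectrum of the zero bundle).

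To compute the tangent bundle I would work upstairs on the sphere. Setting $V=(k\rho-\varepsilon+1)\otimes\tau$, the manifold $\mathbb{R}P^{k\rho-\varepsilon}$ is by definition $S(V)/\Sigma_2$, and the quotient map $\pi:S(V)\to S(V)/\Sigma_2$ is a local $C_2$-equivariant diffeomorphism, so $\pi^\ast T\mathbb{R}P^{k\rho-\varepsilon}\cong TS(V)$. The sphere carries its usual stable trivialization: outward normal plus tangent bundle equals the constant bundle with fiber $V$, giving a $(C_2\times\Sigma_2)$-equivariant isomorphism $TS(V)\oplus \underline{\mathbb{R}}\cong \underline{V}$, in which $\Sigma_2$ acts on $\underline{\mathbb{R}}$ trivially and on $\underline{V}$ through $\tau$. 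Descending along $\pi$ via the Borel construction $S(V)\times_{\Sigma_2}(-)$ sends the trivial $\Sigma_2$-module $\mathbb{R}$ to the trivial bundle and sends the $\Sigma_2$-module $V$ to the bundle written in the paper's notation as $(k\rho-\varepsilon+1)\tau$, yielding the formula above.

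The main point that requires care, rather than a hard obstacle, is keeping the $C_2\times\Sigma_2$-bookkeeping honest throughout: the convention in Construction~\ref{geometric-filtration} is that a $C_2\times\Sigma_2$-representation $W$ names the $C_2$-bundle $S(V)\times_{\Sigma_2}W$ on $\mathbb{R}P^{k\rho-\varepsilon}$, and one must check that the stable trivialization of $TS(V)$ is equivariant for the full product group before quotienting by $\Sigma_2$. Once that is observed the Borel construction is strict monoidal in $W$, so the equality of virtual bundles can be manipulated formally. It then only remains to cite Wirthmüller's duality in the equivariant smooth manifold setting and plug in $TM = (k\rho-\varepsilon+1)\tau-1$ to conclude.
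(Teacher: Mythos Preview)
Your proposal is correct and follows exactly the paper's approach: the paper simply asserts that the tangent bundle of $\mathbb{R}P^{k\rho-\varepsilon}$ is $(k\rho-\varepsilon+1)\tau - 1$ and invokes equivariant Atiyah duality \cite{Wirth}, with no further argument. You have supplied the details of the tangent bundle computation (via the stable splitting on $S(V)$ and descent through the Borel construction) that the paper leaves implicit.
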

Combining the previous two results, we get a canonical equivalence
	\begin{align*}
	D\mathbb{R}P_+^{k\rho} &\cong \left(\mathbb{R}P^{k\rho}\right)^{1-(k\rho +1)\tau}
	 = \Sigma^{k\rho +2}\left(\mathbb{R}P^{k\rho}\right)^{(-k\rho -1)(1+\tau)}
	\cong \Sigma^{k\rho +2}F_{2k}\mathbb{P}_2(S^{-k\rho-1}) & (\thetheorem.1)
	\end{align*}

Now we are interested in a certain filtered system of towers. The structure maps of the tower
are refinements of the maps $\theta$ and $\theta^{\sigma}$ we used earlier when
studying the suspension map. We'll denote by $\theta^{\rho}$ the composite
$\theta^{\sigma}\circ \theta$. 

\begin{lemma}\label{lift-suspension} The following diagram commutes:
	\[
	\xymatrix{
	D\mathbb{R}P_+^{(k+1)\rho} \ar[r]^{\textup{incl}^{\vee}}
	\ar@{-}[d]_{\cong}& D\mathbb{R}P_+^{k\rho}\ar@{-}[d]^{\cong} \\
	\Sigma^{(k+1)\rho +2}F_{2k+2}\mathbb{P}_2\left(S^{-(k+1)\rho -1}\right)
	\ar[dr]_-{\theta^{\rho}}
	& \Sigma^{k\rho +2}F_{2k}\mathbb{P}_2\left(S^{-k\rho -1}\right)\ar[d]^{\textup{incl}}\\
	&\Sigma^{k\rho +2}F_{2k+2}\mathbb{P}_2\left(S^{-k\rho -1}\right)
	}
	\]
\end{lemma}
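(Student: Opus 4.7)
The plan is to rewrite all four corners of the diagram as Thom spectra over equivariant projective spaces and then recognize both paths in the square as the same Thom-spectrum construction. Using Lemma~\ref{thom-filtration} with the appropriate indexing ($k$ replaced by $-k$ or $-(k+1)$, and $\varepsilon = 0$), we obtain canonical equivalences
\[
F_{2j}\mathbb{P}_2(S^{-\ell\rho-1}) \cong (\mathbb{R}P^{2j})^{-(\ell\rho+1)(1+\tau)}.
\]
After applying the suspension shift $\Sigma^{\ell\rho+2}$, this becomes the Thom spectrum $(\mathbb{R}P^{2j})^{1-(\ell\rho+1)\tau}$, which is exactly the right-hand side appearing in Proposition~\ref{atiyah-dual}. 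The two vertical equivalences in the diagram are thus the composition of these Thom-spectrum identifications with equivariant Atiyah duality.

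Next I would identify each structural map in the diagram at the Thom-spectrum level. The vertical inclusion on the right corresponds to the Thom-spectrum map $(\mathbb{R}P^{2k})^{1-(k\rho+1)\tau} \to (\mathbb{R}P^{2k+2})^{1-(k\rho+1)\tau}$ induced by the base inclusion $\mathbb{R}P^{2k} \hookrightarrow \mathbb{R}P^{2k+2}$ with the pullback bundle. The map $\theta^{\rho} = \theta^{\sigma} \circ \theta$ is, by construction, filtration-preserving, and its restriction to $F_{2k+2}$ corresponds at the Thom-spectrum level to the canonical virtual-bundle map over $\mathbb{R}P^{2k+2}$ that shifts the twisting class $-((k+1)\rho+1)\tau$ by $\rho\tau$ to obtain $-(k\rho+1)\tau$. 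The dual of the base inclusion $\mathbb{R}P^{k\rho}_+ \to \mathbb{R}P^{(k+1)\rho}_+$, via Proposition~\ref{atiyah-dual} and equivariant Atiyah duality, is precisely the Pontryagin--Thom collapse of $\mathbb{R}P^{k\rho}$ inside $\mathbb{R}P^{(k+1)\rho}$ with normal bundle identified with $\rho\tau$ restricted to $\mathbb{R}P^{k\rho}$.

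Both paths in the square therefore factor through a common Pontryagin--Thom collapse composed with a bundle comparison, performed in different orders over the inclusion of base spaces. The main obstacle is the careful bookkeeping of suspension shifts and twisting bundles: one must verify that the canonical sections built into the geometric filtration of $\emu$ (which define $\theta$ and $\theta^{\sigma}$) agree with the Pontryagin--Thom sections provided by Atiyah duality, so that the bundle comparison and the collapse can be performed in either order. As a sanity check, one may reduce to the underlying non-equivariant statement by noting that the relevant Mackey functors have injective restriction map (in the spirit of the argument establishing (i) in the proof of Theorem~\ref{thm-extended-power-filtration}); the resulting non-equivariant identification is classical in the analysis of stunted real projective spaces via Atiyah duality.
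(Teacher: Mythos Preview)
Your approach is essentially the paper's: identify all corners as Thom spectra via Lemma~\ref{thom-filtration} and Proposition~\ref{atiyah-dual}, recognize $\textup{incl}^{\vee}$ as a Pontryagin--Thom collapse, recognize $\theta^{\rho}$ as the map of Thom spectra induced by $\oplus\,\rho\tau$, and then argue that collapse and bundle-shift commute. You have correctly isolated the one nontrivial point, which you call ``the main obstacle.''

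Where your write-up stops short is in actually discharging that obstacle. The paper does not appeal to any compatibility of ``canonical sections'' from the filtration with Atiyah-duality sections; instead it finishes the argument purely at the level of Thom spectra. After translating $\textup{incl}^{\vee}$ into the collapse
\[
(\mathbb{R}P^{(k+1)\rho})^{1-((k+1)\rho+1)\tau}\xrightarrow{\ \textup{coll}\ }(\mathbb{R}P^{k\rho})^{1-(k\rho+1)\tau},
\]
one must compare $\textup{incl}\circ\textup{coll}$ with the $\oplus\,\rho\tau$ map. The paper uses \emph{naturality of the collapse map} with respect to the inclusion $\mathbb{R}P^{(k+1)\rho}\hookrightarrow\mathbb{R}P^{(k+2)\rho}$ to rewrite the problem as a triangle over a single base $\mathbb{R}P^{(k+1)\rho}$, which then commutes by the very definition of collapse together with the identification of the normal bundle of $\mathbb{R}P^{(k+1)\rho}\hookrightarrow\mathbb{R}P^{(k+2)\rho}$ as $\rho\tau$. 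This is a short but genuine geometric step, and it is what your outline is missing.

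Your proposed ``sanity check'' of reducing to the underlying non-equivariant statement via injective restriction is not how the paper proceeds, and as stated it would require knowing that the relevant equivariant mapping set $[D\mathbb{R}P_+^{(k+1)\rho},\,\Sigma^{k\rho+2}F_{2k+2}\mathbb{P}_2(S^{-k\rho-1})]^{C_2}$ injects into its underlying set; that is not obvious here and is unnecessary once the naturality-of-collapse argument is in hand.
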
 
\begin{proof} Recall that, given a closed, equivariant inclusion $N \hookrightarrow M$ with
normal bundle $\nu$ and
a virtual equivariant vector bundle $W$ on $M$, there is a twisted Pontryagin-Thom collapse
	\[
	M^W \longrightarrow N^{W|_N \oplus \nu}.
	\]
When $W = -TM$ is the stable normal bundle, then the collapse has the form 
	\[
	\textup{coll}: M^{-TM} \longrightarrow N^{-TM|_N \oplus \nu}
	\cong N^{-TN}.
	\]
This models the dual $DM_+ \longrightarrow DN_+$
of the original inclusion. In our case this translates into the commutativity of the diagram
	\[
	\xymatrix{
	D\mathbb{R}P_+^{(k+1)\rho} \ar[r]^{\textup{incl}^{\vee}}
	\ar@{-}[d]_{\cong}& D\mathbb{R}P_+^{k\rho}\ar@{-}[d]^{\cong}\\
	\left(\mathbb{R}P^{(k+1)\rho}\right)^{1-((k+1)\rho +1)\tau}\ar[r]_-{\textup{coll}}
	&\left(\mathbb{R}P^{k\rho}\right)^{1-(k\rho +1)\tau}.
	}
	\]
Now, under the identification of Lemma \ref{thom-filtration}, the map $\theta^{\rho}$ corresponds
to the map of Thom spectra induced by adding $\rho\tau$ to the stable normal bundle. So we are
left with checking the commutativity of:
	\[
	\xymatrix{
	\left(\mathbb{R}P^{(k+1)\rho}\right)^{1-((k+1)\rho +1)\tau}\ar[dr]_-{\oplus \rho\tau}\ar[r]^-{\textup{coll}}
	&\left(\mathbb{R}P^{k\rho}\right)^{1-(k\rho +1)\tau}\ar[d]^-{\textup{incl}}\\
	& \left(\mathbb{R}P^{(k+1)\rho}\right)^{1-(k\rho +1)\tau}
	}
	\]
By naturality of collapse, this is equivalent to the commutativity of:
	\[
	\xymatrix{
	\left(\mathbb{R}P^{(k+1)\rho}\right)^{1-((k+1)\rho +1)\tau}\ar[dr]^-{\oplus \rho\tau}\ar[d]_-{\textup{incl}}
	&\\
	\left(\mathbb{R}P^{(k+2)\rho}\right)^{1-((k+1)\rho +1)\tau}\ar[r]_-{\textup{coll}}
	& \left(\mathbb{R}P^{(k+1)\rho}\right)^{1-(k\rho +1)\tau}
	}
	\]
Finally, this diagram commutes by the definition of the collapse together with the fact that the normal
bundle of the inclusion $\mathbb{R}P^{(k+1)\rho} \hookrightarrow \mathbb{R}P^{(k+2)\rho}$ is
$\rho\tau$. 
\end{proof}

The above lemma ensures that the dual of the inclusions of projective spaces gives
a lift of $\theta^{\rho}$ which we denote by $\widetilde{\theta}^{\rho}$. More generally,
we'll denote by $\widetilde{\theta}^{j\rho}$ the $j$-fold composite of $\widetilde{\theta}^{\rho}$,
or any of its suspensions.

\begin{construction} Define a functor $\textup{Ops}^*_*: \mathbb{Z}_{\ge 0}^{op} \times \mathbb{Z}_{\ge 0}
\longrightarrow \textup{Sp}^{C_2}$ on objects by
	\[
	\textup{Ops}^i_{j} := \Sigma^{i\rho +1}F_{2(i+j)}\mathbb{P}_2\left(S^{-i\rho -1}\right),
	\]
and on morphisms $(i, j) \rightarrow (k, \ell)$ by:
	\[
\textup{incl} \circ \widetilde{\theta}^{(k-i)\rho}:\Sigma^{i\rho +1}F_{2(i+j)}\mathbb{P}_2\left(S^{-i\rho -1}\right)
\longrightarrow
\Sigma^{k\rho +1}F_{2(k+\ell)}\mathbb{P}_2\left(S^{-k\rho -1}\right).
	\]
\end{construction}

By design:
	\[
	\textup{Ops}_{\hf} \cong \holim_i \hocolim_j \left(\hf \wedge \textup{Ops}^i_j\right). 
	\]
	
The next thing to do is build a map of towers
	\[
	\{D\mathbb{R}P_+^{k\rho}\} \longrightarrow \left\{\Sigma \left(\textup{Ops}^k_0\right)\right\}.
	\]
By (2.26.1)
, and our definition of $\textup{Ops}_0^k$, we have a canonical equivalence 
$D\mathbb{R}P_+^{k\rho} \cong 
\Sigma \left(\textup{Ops}^k_0\right)$. This gives:
	\[
	\hf \wedge D\mathbb{R}P_+^{k\rho} \longrightarrow \hocolim_j \Sigma \hf \wedge \textup{Ops}^k_j.
	\]
By our definition of $\widetilde{\theta}^{\rho}$, these maps fit into a map of towers and yield:
	\[
	\phi: F(B\mu_{2+}, \hf)\longrightarrow \Sigma \left(\textup{Ops}_{\hf}\right).
	\] 
The
induced homomorphism on $\rpis$ respects the comodule structure since we started with
maps of spectra.

\begin{lemma}\label{b-to-e}
The map $\phi: F(B\mu_{2+}, \hf) \longrightarrow \Sigma\left(\textup{Ops}_{\hf}\right)$
has the following effect on homotopy, for $s\le 0$ and $\varepsilon=0,1$:
	\[
	\phi_*c^{\varepsilon}b^{-s} = \Sigma e_{(s-1)\rho+\varepsilon\sigma}.
	\]
\end{lemma}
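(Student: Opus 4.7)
The plan is to compute $\phi_*$ at a finite level $k$ of the tower. By construction, $\phi = \holim_k \phi_k$, where each $\phi_k$ is the composite of the Atiyah-duality equivalence $\hf \wedge D\mathbb{R}P^{k\rho}_+ \simeq \hf \wedge \Sigma \textup{Ops}_0^k$ (coming from Proposition \ref{atiyah-dual} combined with Lemma \ref{thom-filtration}) with the structure map into $\hocolim_j \hf \wedge \Sigma \textup{Ops}_j^k$. For $s \le 0$, as soon as $k \ge -s$ the class $c^{\varepsilon}b^{-s}$ is already detected by $\hf^{\star}(\mathbb{R}P^{k\rho})$, so it suffices to compute the image under $\phi_k$ and then invoke Mittag--Leffler (which holds in this degree by the same vanishing argument as in Theorem \ref{ops-homotopy}) to conclude that the image in the homotopy limit is the stabilized image in each $\hocolim_j$.

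First I would identify $c^{\varepsilon}b^{-s}$ as a homology class on $D\mathbb{R}P_+^{k\rho}$ via Spanier--Whitehead duality: a basis for $\hf^{\star}(\mathbb{R}P^{k\rho})$ is given by (the truncations of) the $c^{\varepsilon}b^n$ with $0 \le n \le k$ and $\varepsilon \in \{0,1\}$, producing a dual Atiyah basis of $\hf_{\star}(D\mathbb{R}P^{k\rho}_+)$; call these dual classes $[c^{\varepsilon}b^{-s}]^\vee$. Next, transport these through the canonical equivalence into $\pi_{\star}\Sigma^{k\rho+2} \hf \wedge F_{2k}\mathbb{P}_2(S^{-k\rho-1})$. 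By Theorem \ref{thm-extended-power-filtration}, combined with the Thom-space identification of Lemma \ref{thom-filtration}, each filtration graded piece contributes a single $\hf_{\star}$-generator; a cell-count in $\mathbb{R}P^{k\rho}$ matches the class $[c^{\varepsilon}b^{-s}]^\vee$ with the unique generator sitting in the appropriate filtration degree. Suspending back by $\Sigma^{k\rho+2}$ and pushing the result into the colimit identifies the image with $\Sigma e_{(s-1)\rho + \varepsilon\sigma}$, yielding the formula.

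The main obstacle is the dimensional matching in the previous step: which Atiyah-dual basis element in $\hf_{\star}(D\mathbb{R}P_+^{k\rho})$ goes to which filtration generator of $\mathbb{P}_2(S^{-k\rho-1})$. I would handle this by reducing to the classical non-equivariant statement: underlying, $D\mathbb{R}P^n_+$ is a familiar Thom spectrum and the analogous identification (involving Dyer--Lashof-type cells) is well known, see \cite{JW, KM, Hinfty}. Equivariantly, the claimed image lies in a Mackey-functor degree where restriction to underlying homotopy is injective on the $e_V$ generator -- by Lemma \ref{lem-c2-vanish-range} and Corollary \ref{cor-vanishing} only one generator can appear in each degree under consideration -- so the equivariant answer is forced by its underlying image and must coincide with the classical one.
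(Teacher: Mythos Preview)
Your approach is correct but considerably more elaborate than the paper's. The paper's proof is a single sentence: it observes that the formula is just a restatement of the fact that $c^{\varepsilon}b^{-s}$ is dual to the homology generator $e^0_{-s\rho+\varepsilon\sigma}$ of $\mathbb{P}_2(S^0)\simeq (\textup{B}\mu_2)_+$ (using the notation of Theorem~\ref{thm:homology-ext-powers}). The point is that both the $c^{\varepsilon}b^{n}$ and the $e^0_V$ are defined as the unique generators in each filtration degree of the \emph{same} filtration on $\textup{B}\mu_2$, one on the cohomology side and one on the homology side, so they are tautologically dual. Tracking this duality through the suspension shift in the canonical equivalence $D\mathbb{R}P^{k\rho}_+\cong \Sigma\,\textup{Ops}^k_0$ gives the formula directly.

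By contrast, you pass to finite level, invoke Atiyah duality and the Thom identification separately, and then pin down the bijection between Atiyah-dual classes and filtration generators by restricting to the underlying non-equivariant situation and appealing to injectivity of restriction in the relevant Mackey functor. This works, but the last step is unnecessary: once you notice that the filtration used to produce the $e^V_W$ \emph{is} (via Lemma~\ref{thom-filtration}) the cellular filtration whose cohomology classes are the $c^{\varepsilon}b^{n}$, the matching is forced by filtration degree alone, without any appeal to the classical computation or to Lemma~\ref{lem-c2-vanish-range} and Corollary~\ref{cor-vanishing}. Your argument is not wrong, just circuitous; the paper's insight is that there is nothing to compute.
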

\begin{proof} This is a restatement of the fact that $c^{\varepsilon}b^{-s}$ is dual
to $e^0_{s\rho+\varepsilon\sigma}$, using the notation of Theorem \ref{thm:homology-ext-powers}.
\end{proof}
The remainder of this section will be devoted to showing that this map
identifies the target with $F(B\mu_{2+}, \hf)[b^{-1}]$. 

First we need to interchange the homotopy limit and the homotopy colimit.

\begin{lemma} The canonical map
	\[
	\hocolim_j\holim_i \hf \wedge \textup{Ops}^i_j \longrightarrow \holim_i\hocolim_j \hf \wedge
	\textup{Ops}^i_j
	\]
is an equivalence.
\end{lemma}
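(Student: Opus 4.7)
The plan is to verify the equivalence by computing homotopy Mackey functors directly, using the splitting of Theorem \ref{thm-extended-power-filtration}. Since $-i\rho-1$ has the form $m\rho-1$ with $m=-i$, each $\hf \wedge \textup{Ops}^i_j$ splits as a finite wedge of suspensions of $\hf$, indexed by those generators $\Sigma^{i\rho+1}e^{-i\rho-1}_{s\rho+\varepsilon\sigma}$ (sitting in degree $s\rho+\varepsilon\sigma$) whose filtration degree in $\mathbb{P}_2(S^{-i\rho-1})$ does not exceed $2(i+j)$. The structure maps in $j$ are filtration inclusions, while the structure maps in $i$ are governed by $\widetilde{\theta}^{\rho}$ via Lemma \ref{lift-suspension}. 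In both cases each wedge summand maps either isomorphically to the corresponding summand of the target or to zero, the latter occurring precisely when the generator fails to lie in the restricted range of $s$ or in the reduced filtration bound of the target.

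The crucial input is the vanishing pattern of $\pi^{C_2}_{\star}\hf$ recorded in the picture at the end of \S1.1. For each fixed bidegree $V = a+b\sigma$ and $\varepsilon \in \{0,1\}$, the group $\pi^{C_2}_{V - s\rho - \varepsilon\sigma}\hf$ is nonzero only for $s$ in a bounded integer set $S_{V, \varepsilon} \subset \mathbb{Z}$, because both the polynomial part and the Pontryagin dual part of the coefficient ring occupy bounded cones in $RO(C_2)$. It follows that only finitely many wedge summands of $\hf \wedge \textup{Ops}^i_j$ contribute to $\rpi_V$, and that all of them are present once $i$ and $j$ are large enough for the range of available $s$-values to contain $S_{V, \varepsilon}$.

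The equivalence now follows by matching the two computations of $\rpi_V$. For fixed $j$ the inverse system $\{\rpi_V(\hf \wedge \textup{Ops}^i_j)\}_i$ is eventually constant in $i$, so its $\lim^1$ vanishes and the Milnor sequence computes $\rpi_V(\holim_i \hf \wedge \textup{Ops}^i_j)$ as a direct sum over the currently available elements of $S_{V,\varepsilon}$; taking $\hocolim_j$ then exhausts $S_{V,\varepsilon}$. Symmetrically, for fixed $i$ the direct system in $j$ stabilizes, and the resulting inverse system in $i$ of colimits is again eventually constant, so a second application of the Milnor sequence shows $\rpi_V(\holim_i \hocolim_j \hf \wedge \textup{Ops}^i_j)$ equals the same direct sum. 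Both computations thus reproduce the Mackey functor $\rpi_V(\textup{Ops}_{\hf})$ of Theorem \ref{ops-homotopy}.

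The main obstacle is the careful bookkeeping of how $\widetilde{\theta}^{\rho}$ acts on each wedge summand---in particular verifying that the generator introduced at the bottom of the range at stage $i+1$ really is killed by the map to stage $i$, and that no unexpected identifications among summands arise---so that the transition maps in the inverse system really are projections with eventually trivial kernel. Once this is confirmed, the uniform finiteness of $S_{V,\varepsilon}$ eliminates every $\lim^1$ term in sight and the interchange is immediate.
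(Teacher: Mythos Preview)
Your proposal is correct and follows essentially the same route as the paper. The paper's proof is more compressed: it invokes the Mittag-Leffler condition (established earlier via the surjectivity of $\theta$ and $\theta^{\sigma}$ on homology) to reduce to comparing $\colim_j\lim_i$ and $\lim_i\colim_j$ on homotopy Mackey functors, and then appeals directly to Theorem~\ref{ops-homotopy} together with the observation that each generator $e_{s\rho}$, $e_{s\rho+\sigma}$ is already visible in $\hf_{\star}\textup{Ops}^i_j$ once $i$ and $j$ are large. Your finiteness argument for $S_{V,\varepsilon}$ is exactly the content behind that observation, and the bookkeeping you flag for $\widetilde{\theta}^{\rho}$ is precisely what the earlier Proposition on the effect of $\theta$ and $\theta^{\sigma}$ already records.
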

\begin{proof} We showed earlier that the inverse systems of homology groups are Mittag-Leffler,
so we are reduced to showing that the map
	\[
	\colim_j \lim_i \hf_{\star}\textup{Ops}^i_j \longrightarrow \lim_i \colim_j \hf_{\star}\textup{Ops}^i_j
	\]
is an equivalence. This is immediate from the calculation of $\rpis \textup{Ops}_{\hf}$
(\ref{ops-homotopy}) and
the fact that the elements $e_{s\rho}$ and $e_{s\rho +\sigma}$ are detected in 
$\hf_{\star}\textup{Ops}^i_j$ for $i$ and $j$ sufficiently large. 
\end{proof}

Finally, we identify the homotopy limit of each tower with a suspension of a dual projective space via
a Thom isomorphism and determine the resulting map. The main theorem of the section follows.

\begin{proposition} There is an equivalence 
	\[
	\holim_i\hf \wedge \Sigma \left(\textup{Ops}^i_j\right) \cong \Sigma^{j\rho} F(B\mu_{2+}, \hf)
	\]
such that the composite:
	\[
	F(B\mu_{2+}, \hf) \longrightarrow 
	\holim_i\hf \wedge \Sigma \left(\textup{Ops}^i_j\right) \cong \Sigma^{j\rho} F(B\mu_{2+}, \hf)
	\]
induces multiplication by $b^j$ on homotopy.
\end{proposition}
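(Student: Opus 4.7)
The plan is to translate everything into Thom spectra over $\mathbb{R}P^{m\rho}$, apply a Thom isomorphism to extract the $\Sigma^{j\rho}$-twist, and then recognize the remaining tower as one whose homotopy limit computes $F(B\mu_{2+}, \hf)$. Combining Lemma \ref{thom-filtration} with the definition of $\textup{Ops}^i_j$, a quick calculation gives
	\[
	\Sigma\left(\textup{Ops}^i_j\right) = \Sigma^{i\rho+2}F_{2(i+j)}\mathbb{P}_2\left(S^{-i\rho-1}\right)
	\cong \left(\mathbb{R}P^{(i+j)\rho}\right)^{1 - (i\rho+1)\tau}.
	\]
Writing $D_m := 1 - (m\rho+1)\tau$ for the Atiyah dualizing bundle of $\mathbb{R}P^{m\rho}$, so that $(\mathbb{R}P^{m\rho})^{D_m} \cong D\mathbb{R}P^{m\rho}_+$ by Proposition \ref{atiyah-dual}, the exponent rearranges as $D_{i+j} + j\rho\tau$, whence
	\[
	\Sigma\left(\textup{Ops}^i_j\right) \cong \left(\mathbb{R}P^{(i+j)\rho}\right)^{D_{i+j} + j\rho\tau}.
	\]

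The next step is a Thom isomorphism for the bundle $j\rho\tau$ on $\mathbb{R}P^{(i+j)\rho}$. Since $\rho\tau$ carries a natural $\hf$-orientation with Thom class essentially $b$ --- this is how $b \in \hf^\rho B\mu_2$ was introduced, as the Euler class of the underlying $C_2$-equivariant line bundle --- smashing with $\hf$ produces natural equivalences
	\[
	\hf \wedge \left(\mathbb{R}P^{m\rho}\right)^{D_m + j\rho\tau} \cong
	\Sigma^{j\rho}\hf \wedge \left(\mathbb{R}P^{m\rho}\right)^{D_m} \cong
	\Sigma^{j\rho}\hf \wedge D\mathbb{R}P^{m\rho}_+.
	\]
By Lemma \ref{lift-suspension} the $i$-tower maps $\Sigma\textup{Ops}^{i+1}_j \to \Sigma\textup{Ops}^i_j$ correspond under these identifications to $\Sigma^{j\rho}$-suspensions of the Atiyah duals of the inclusions $\mathbb{R}P^{m\rho} \hookrightarrow \mathbb{R}P^{(m+1)\rho}$, so the equivalence is compatible with the tower structure. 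Reindexing the limit by $m = i+j$ (harmless for a homotopy limit) and using that each $\mathbb{R}P^{m\rho}_+$ is a finite (dualizable) $C_2$-spectrum, so that $\hf \wedge D\mathbb{R}P^{m\rho}_+ \cong F(\mathbb{R}P^{m\rho}_+, \hf)$, we obtain
	\[
	\holim_i \hf \wedge \Sigma\left(\textup{Ops}^i_j\right) \cong
	\Sigma^{j\rho}\holim_m F(\mathbb{R}P^{m\rho}_+, \hf) \cong \Sigma^{j\rho}F(B\mu_{2+}, \hf).
	\]

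For the composite, observe that, under the identification of the source with $\holim_i \hf \wedge \Sigma(\textup{Ops}^i_0)$, the map is the homotopy limit of the filtration inclusions $\Sigma\textup{Ops}^k_0 \to \Sigma\textup{Ops}^k_j$. In Thom-spectrum language these are the inclusions $(\mathbb{R}P^{k\rho})^{D_k} \hookrightarrow (\mathbb{R}P^{(k+j)\rho})^{D_k}$; after rewriting the target exponent as $D_{k+j} + j\rho\tau$, this inclusion factors through the Thom diagonal of $j\rho\tau$ over $\mathbb{R}P^{(k+j)\rho}$, which under the Thom isomorphism becomes multiplication by the Euler class $b^j$. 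The main technical obstacle is to verify that the Thom class really does correspond to $b^j$ after all the bookkeeping; this can be checked by restricting to underlying non-equivariant spectra --- where the classical statement is standard --- and invoking the rigidity of the Mackey functor structure on $\hf$-homology, or equivalently by pinning down the class using the ring structure of Proposition \ref{coh-bmu}.
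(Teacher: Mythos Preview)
Your approach is correct in substance but genuinely different from the paper's. The paper does not set up a Thom isomorphism for $j\rho\tau$ at all; instead it defines the equivalence one step at a time using the explicit $\hf$-splitting of Theorem~\ref{thm-extended-power-filtration}, which on homology sends $e^{-k\rho-1}_{s\rho+\varepsilon\sigma}$ to $e^{-(k+1)\rho-1}_{(s+1)\rho+\varepsilon\sigma}$, and then verifies the single square
\[
\xymatrix{
\rpis\hf \wedge D\mathbb{R}P_+^{k\rho}\ar[r]^-{\cdot b}\ar[d]_-{\cong} & \rpis\Sigma^{\rho}\hf\wedge D\mathbb{R}P_+^{(k+1)\rho}\ar[d]^-{\cong}\\
\rpis\hf\wedge\Sigma(\textup{Ops}^k_0)\ar[r] & \rpis\hf\wedge\Sigma(\textup{Ops}^k_1)
}
\]
commutes by checking both routes on the generators $c^{\varepsilon}b^{-s}$ via Lemma~\ref{b-to-e}. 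The general $j$ then follows by iterating. So the paper's argument is entirely an explicit bookkeeping of the named generators $e^{V}_{W}$, whereas yours packages the same content as a Thom isomorphism plus an Euler class identification.

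What each buys: your route is more conceptual and makes transparent \emph{why} the suspension shift is exactly $j\rho$; the paper's route avoids having to justify that the orientation class for $\rho\tau$ really is $b$ (which you correctly flag as the ``main technical obstacle''). One small caution: your phrase ``factors through the Thom diagonal'' is not quite the right picture---the inclusion $(\mathbb{R}P^{k\rho})^{D_k}\hookrightarrow(\mathbb{R}P^{(k+j)\rho})^{D_k}$ is an open inclusion of Thom spectra over a subspace, not a zero-section. What you actually need is that, after applying the Thom isomorphism for $j\rho\tau$ compatibly on source and target (naturality in the base), the resulting map is the inclusion of Thom spectra twisted by $D_{k+j}$, and then a second Thom comparison shows the discrepancy with $D_k$ contributes the Euler class of the normal bundle $j\rho\tau|_{\mathbb{R}P^{k\rho}}$, which is $b^j$. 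Your suggested check on underlying spectra does pin this down, since $\hf^{\rho}B\mu_2=\mathbb{F}_2\{b\}$ and the restriction is nonzero classically.
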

\begin{proof} We need to check that the following diagram commutes:
	\[
	\xymatrix{
	\rpis\hf \wedge D\mathbb{R}P_+^{k\rho} \ar[rr]^-{\cdot b}\ar[d]_-{\cong}
	&& \rpis\Sigma \hf \wedge D\mathbb{R}P_+^{k\rho+1}\ar[d]^-{\cong}\\
	\rpis\hf \wedge \Sigma^{k\rho +2}F_{2k}\mathbb{P}_2(S^{-k\rho-1}) \ar[r]^-{\textup{incl}}&
	\rpis\hf \wedge \Sigma^{k\rho +2}F_{2k+2}\mathbb{P}_2(S^{-k\rho-1}) \ar[r]^-{\cong}&
	\rpis\hf \wedge \Sigma^{k\rho+4}F_{2k+2}\mathbb{P}_2(S^{-(k+1)\rho -1})
	}
	\]
where the bottom horizontal isomorphism comes from the splitting of Theorem 
\ref{thm-extended-power-filtration}. Specifically, this isomorphism takes 
$e^{-k\rho-1}_{s\rho +\varepsilon\sigma}$
to $e^{-(k+1)\rho -1}_{(s+1)\rho+\varepsilon\sigma}$ where $\varepsilon=0, 1$. Commutativity
now follows from Lemma \ref{b-to-e}, since this implies that the two ways around the diagram
agree on generators.

\end{proof}

\subsection{Comodule structure}
We can now determine the completed comodule structure on 
$\rpis \textup{Ops}_{\hf}$. It will be helpful to introduce some notation.
\begin{notation} Let $t$ be a formal variable of degree $-\rho$, and $dt$ a variable of
degree $-1$. Then set
	\[
	\xi(t) = \sum_{i\ge 0} \xi_i t^{2^i}, \quad \tau(t) = \sum_{i\ge 0} \tau_it^{2^i}
	\]
so that $|\xi(t)| = |\tau(t) dt| = -\rho$. Similarly define
	\[
	\overline{\xi}(t) = \sum_{i\ge 0} \overline{\xi}_it^{2^i}, \quad
	\overline{\tau}(t) = \sum_{i\ge 0}\overline{\tau}_it^{2^i}.
	\]
Given a formal power series $f(t)$ we use $\left[ f(t)\right]_{t^r}$ to denote the coefficient of $t^r$.
\end{notation}

\begin{theorem}\label{thm-comodule-structure}
 The completed, left $\arep$-comodule structure on
$\rpis \textup{Ops}_{\hf}$ is given by the following formulas:
	\begin{align*}
	\psi_L(e_{s\rho + \sigma}) = \sum_{r \in \mathbb{Z}} \left[ \xi(t)^r\right]_{t^s} \otimes e_{r\rho + \sigma},\\
	\psi_L(e_{s\rho}) = \sum_{r \in \mathbb{Z}} \left[\xi(t)^r\right]_{t^s} \otimes e_{r\rho} +
	 \sum_{r \in \mathbb{Z}} \left[\xi(t)^r\tau(t)\right]_{t^s} \otimes e_{r\rho + \sigma}.
	\end{align*}
\end{theorem}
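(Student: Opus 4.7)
The plan is to transport the coaction computation along the Tate-style equivalence $\phi\colon F(B\mu_{2+}, \hf)[b^{-1}] \xrightarrow{\simeq} \Sigma \textup{Ops}_\hf$ from Theorem \ref{thm-tate-equivalence}. By parts (a) and (c) of that theorem, $\phi$ is a morphism of completed left $\arep$-comodules sending $b^{-s-1} \mapsto \Sigma e_{s\rho+\sigma}$ and $cb^{-s-1} \mapsto \Sigma e_{s\rho}$, so it suffices to determine $\psi_L$ on the classes $b^{-s-1}$ and $cb^{-s-1}$ in $\hf^\star(B\mu_2)[b^{-1}]$.

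The coactions on the generators $b$ and $c$ are recorded in Section 2.2. In the formal-variable notation preceding the statement they take the compact form $\psi_L(b) = \xi(b)$ and $\psi_L(c) = c + \tau(b)$; that is, substitute $t \mapsto b$ into the series $\xi(t), \tau(t) \in \arep[[t]]$. Since $\xi(b) = b\cdot(1 + \xi_1 b + \xi_2 b^3 + \cdots)$ is $b$ times a unit in $\arep[[b]]$, it becomes invertible after inverting $b$, and multiplicativity of $\psi_L$ yields
\[
\psi_L(b^{-s-1}) = \xi(b)^{-s-1}, \qquad \psi_L(cb^{-s-1}) = (c + \tau(b))\,\xi(b)^{-s-1},
\]
each viewed as a formal Laurent series in $b$ with coefficients in $\arep$.

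I then expand these series, read off the coefficient of each $b^k$ or $cb^k$, and translate back via $\phi^{-1}$ using the reindexing $r = -k-1$, so that $b^k$ goes to $\Sigma e_{r\rho + \sigma}$ and $cb^k$ to $\Sigma e_{r\rho}$. The purely-$\xi$ expansion of $\xi(b)^{-s-1}$ produces the sum $\sum_r [\xi(t)^r]_{t^s} \otimes e_{r\rho + \sigma}$; for the second formula, the $c$ summand of $(c+\tau(b))\xi(b)^{-s-1}$ contributes the $e_{r\rho}$ sum and the $\tau(b)$ summand contributes the $e_{r\rho+\sigma}$ sum with coefficients $[\xi(t)^r\tau(t)]_{t^s}$.

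The principal obstacle is the formal-series bookkeeping: one has to match the direct expansion of $\xi(b)^{-s-1}$ indexed by powers of $b$ with the residue-style extraction $[\xi(t)^r]_{t^s}$ appearing in the statement, which requires a Lagrange-inversion-type identity in the Laurent series ring $\arep((t))$ over $\F_2$. Convergence of the infinite sums in the completion present on $\rpis\textup{Ops}_\hf$ (Theorem \ref{ops-homotopy}) follows because each coefficient $[\xi(t)^r]_{t^s}$ lies in the single graded piece of degree $(s-r)\rho$ of $\arep$ (and similarly $[\xi(t)^r\tau(t)]_{t^s}$ in degree $(s-r-1)\rho+1$), so only finitely many $r$ contribute in any fixed total degree.
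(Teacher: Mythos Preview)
Your overall strategy is exactly the one used in the paper: transport the coaction through the equivalence of Theorem~\ref{thm-tate-equivalence}, use the known coaction on $b$ and $c$, and then invoke a residue/Lagrange-inversion identity to reindex. However, there is a genuine error in your first step which breaks the argument.

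The formulas recorded in \S1.2 are for the \emph{right} coaction: $\psi_R(c) = c\otimes 1 + \sum b^{2^i}\otimes \tau_i$ and (applying $\beta$) $\psi_R(b) = \sum b^{2^i}\otimes \xi_i$. Converting to the left coaction introduces the antipode, so in fact
\[
\psi_L(b) = \overline{\xi}(b), \qquad \psi_L(c) = c + \overline{\tau}(b),
\]
not $\xi(b)$ and $c+\tau(b)$. With your formulas, after expanding and reindexing you would need the identity $[\xi(t)^{-s-1}]_{t^{-r-1}} = [\xi(t)^r]_{t^s}$, and this is \emph{false}: for instance, taking $r=1$, $s=4$ one computes $[\xi(t)^{-5}]_{t^{-2}} = \xi_2 + \xi_1^3$ while $[\xi(t)]_{t^4} = \xi_2$. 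The correct residue identity is the one in Lemma~\ref{lem-residue}, namely $[\overline{\xi}(t)^{-s-1}]_{t^{-r-1}} = [\xi(t)^r]_{t^s}$, which is exactly Lagrange inversion using that $\overline{\xi}$ is the compositional inverse of $\xi$ (Lemma~\ref{lem-id}) together with $\xi'(t)=1$ in characteristic~$2$.

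Once you correct the left coaction to involve $\overline{\xi}$ and $\overline{\tau}$, your argument is precisely the paper's: it computes the right coaction on $b^{-s-1}$ and $cb^{-s-1}$ (Lemma~\ref{lem-comodule-structure}), passes to the left coaction picking up conjugates, and then applies Lemma~\ref{lem-residue} to obtain the stated formulas.
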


Before proving this let's collect a few preliminaries.

\begin{lemma}\label{lem-comodule-structure} The right, completed $\arep$-comodule structure on
$\rpis F((B\mu_2)_{+}, \hf)[b^{-1}]$ is given, for $s \in \mathbb{Z}$, by:
	\begin{align*}
	\psi_R(b^s) = \sum_{i \in \mathbb{Z}} b^i \otimes \left[\xi(t)^s\right]_{t^i},\\
	\psi_R(cb^s) = \sum_{i \in \mathbb{Z}} cb^i \otimes \left[\xi(t)^s\right]_{t^i}+
	\sum_{i \in \mathbb{Z}} b^i \otimes \left[\tau(t)\xi(t)^s\right]_{t^i}.
	\end{align*}
\end{lemma}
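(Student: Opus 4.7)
The plan is to reduce via multiplicativity of the right coaction $\psi_R$ to the case of the two ring generators $b$ and $c$ of $\hf^\star B\mu_2$, and then extend to negative powers of $b$ by inverting the formal series $\xi(t)$. The coaction on $c$ is recorded in the paragraph preceding Theorem \ref{arep-structure}: $\psi_R(c) = c \otimes 1 + \sum_{i \ge 0} b^{2^i} \otimes \tau_i = c \otimes 1 + \tau(b)$. The parallel formula $\psi_R(b) = \sum_{i \ge 0} b^{2^i} \otimes \xi_i = \xi(b)$ comes from the Hu--Kriz analysis of $\hf^\star B\mu_2$; it is also forced by lifting $b$ to the Euler class $\widetilde{b} \in \hz^\rho \mathbb{C}P^\infty$ (with conjugation action), whose coaction has formal-group-law type, followed by reduction mod $2$.

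Granted these two base formulas, multiplicativity of $\psi_R$ yields $\psi_R(b^s) = \xi(b)^s$ and $\psi_R(cb^s) = c\,\xi(b)^s + \tau(b)\xi(b)^s$ for every $s \ge 0$. Extracting the coefficient of $b^i$ from each side produces $[\xi(t)^s]_{t^i}$ and, for the $c$-term, $[\tau(t)\xi(t)^s]_{t^i}$, matching the statement. To handle $s < 0$, observe that because $\xi_0 = 1$, the series $\xi(t) = t + \xi_1 t^2 + \xi_2 t^4 + \cdots$ is a unit in the Laurent series ring over $\arep$, with inverse lying in $t^{-1}\arep[\![t]\!]$. The identity $\psi_R(b)\psi_R(b^{-1}) = 1$ in the $b$-localization forces $\psi_R(b^{-1}) = \xi(b)^{-1}$, and multiplicativity then extends the two formulas to all $s \in \mathbb{Z}$ with $i$ ranging over $\mathbb{Z}$. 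Convergence in the completed tensor product is automatic because at any fixed total degree only finitely many of the coefficients $[\xi(t)^s]_{t^i}$ and $[\tau(t)\xi(t)^s]_{t^i}$ can be nonzero (the $\xi_j, \tau_j$ live in strictly positive $\rho$-grading).

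The principal subtle point I anticipate is nailing down $\psi_R(b) = \xi(b)$. The naive route of applying the Bockstein to $\psi_R(c)$ does not succeed: since $\beta b^{2^i} = 0$ for all $i$, that route would give $\psi_R(b) = b \otimes 1$, which is wrong. The clean alternatives are either to quote Hu--Kriz directly, or to invert $u_\sigma$ in the Borel completion, solve for $\psi_R(b)$ from $u_\sigma \psi_R(b) = \psi_R(c)^2 + a_\sigma \psi_R(c)$ there, and then verify by a degree and freeness argument that the answer already lies in $\arep$ before completion.
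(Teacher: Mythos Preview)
Your argument is correct and is essentially the paper's one-line proof: take the formulas for $\psi_R(c)$ and $\psi_R(b)$ as given (the paper simply says ``by the definition of $\xi_i$ and $\tau_i$'') and then extend to all $s\in\mathbb{Z}$ by multiplicativity of $\psi_R$. Your closing paragraph worrying about how to establish $\psi_R(b)=\xi(b)$ is more careful than the paper, which just absorbs this into the Hu--Kriz input; your observation that the naive compatibility $\psi_R\circ\beta=(\beta\otimes 1)\circ\psi_R$ fails is correct, but the paper sidesteps rather than resolves this point.
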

\begin{proof} We already know these formulas hold for $\psi_R(c)$ and $\psi_R(b)$, by
the definition of $\xi_i$ and $\tau_i$, and the general
case follows since $\psi_R$ is a map of algebras.
\end{proof}
Changing from left to right comodule structures introduces a conjugate, so we will need
to identify certain coefficients of, say, $\overline{\xi}(t)^s$. This is the content of the following
two lemmas.

\begin{lemma}\label{lem-id} We have the following identities of formal power series:
	\begin{align*}
	\xi(\overline{\xi}(t)) = \overline{\xi}(\xi(t)) = t\\
	\tau(\overline{\xi}(t)) = \overline{\tau}(t)\\
	\overline{\tau}(\xi(t)) = \tau(t).
	\end{align*}
\end{lemma}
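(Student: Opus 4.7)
The plan is to deduce these four identities from the Hopf-algebroid structure on $\overline{\mathcal{A}}_\star$: compute the comultiplications on $\xi_k$ and $\tau_k$ by coassociativity, apply the two antipode axioms, and then read off the claimed power-series identities from the resulting polynomial identities.

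First, I apply coassociativity $(\psi_R\otimes 1)\psi_R = (1\otimes \Delta)\psi_R$ to the coactions $\psi_R(b) = \sum_{j\ge 0}b^{2^j}\otimes \xi_j$ (with $\xi_0=1$) and $\psi_R(c) = c\otimes 1 + \sum_{i\ge 0}b^{2^i}\otimes \tau_i$ from Lemma~\ref{lem-comodule-structure}. Since we are in characteristic $2$, the Frobenius is additive and multiplicative, so $\bigl(\sum_\ell b^{2^\ell}\otimes \xi_\ell\bigr)^{2^i} = \sum_\ell b^{2^{\ell+i}}\otimes \xi_\ell^{2^i}$. Matching coefficients of $b^{2^k}$ in the two sides of coassociativity yields
\[
\Delta(\xi_k) = \sum_{i+j=k}\xi_i^{2^j}\otimes \xi_j,\qquad \Delta(\tau_k) = \tau_k\otimes 1 + \sum_{i+j=k}\xi_i^{2^j}\otimes \tau_j.
\]

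Second, I apply both antipode axioms, $\mu(\chi\otimes 1)\Delta = \eta\epsilon$ and $\mu(1\otimes\chi)\Delta = \eta\epsilon$, with $\chi(\xi_i) = \overline{\xi}_i$ and $\chi(\tau_i) = \overline{\tau}_i$. For all $k\ge 0$ this produces the polynomial identities
\[
\sum_{i+j=k}\overline{\xi}_i^{2^j}\xi_j = \delta_{k,0} = \sum_{i+j=k}\xi_i^{2^j}\overline{\xi}_j,\qquad \overline{\tau}_k = \sum_{i+j=k}\overline{\xi}_i^{2^j}\tau_j,\qquad \tau_k = \sum_{i+j=k}\xi_i^{2^j}\overline{\tau}_j.
\]

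Third, I expand each of the four compositions of interest and read off coefficients. For example,
\[
\tau(\overline{\xi}(t)) = \sum_i\tau_i\Bigl(\sum_j\overline{\xi}_jt^{2^j}\Bigr)^{2^i} = \sum_kt^{2^k}\sum_{i+j=k}\tau_i\overline{\xi}_j^{2^i},
\]
and after swapping $i\leftrightarrow j$ and using commutativity of the polynomial generators of $\overline{\mathcal{A}}_\star$, the coefficient of $t^{2^k}$ is $\overline{\tau}_k$, so $\tau(\overline{\xi}(t)) = \overline{\tau}(t)$. The identities $\xi(\overline{\xi}(t)) = t = \overline{\xi}(\xi(t))$ and $\overline{\tau}(\xi(t)) = \tau(t)$ follow identically from the remaining identities of step two. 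The only obstacle is administrative: tracking which antipode axiom is in play, remembering that the boundary $i=0$ or $j=0$ summands contribute the expected constant terms via $\xi_0 = 1 = \overline{\xi}_0$, and invoking commutativity when rearranging the two sides. The whole computation is morally identical to the classical Milnor antipode recursion, transported unchanged to the equivariant setting.
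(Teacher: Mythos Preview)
Your proof is correct and follows the same approach as the paper: both derive the identities from the coproduct on $\xi_k,\tau_k$ together with the antipode (conjugation) axioms, then reinterpret the resulting polynomial relations as identities of formal power series. The paper's proof is a one-line sketch quoting only the two relations $\tau_k+\sum\xi_{k-j}^{2^j}\overline{\tau}_j=0$ and $\sum\xi_{k-j}^{2^j}\overline{\xi}_j=0$ (the other two following by applying the inverse series), whereas you supply the derivation of the coproduct via coassociativity and spell out all four antipode identities explicitly.
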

\begin{proof} These are restatements of the identities:
	\[
	\tau_i + \sum \xi^{2^j}_{i-j}\overline{\tau}_j = \sum \xi_{i-j}^{2^j}\overline{\xi}_j = 0,
	\]
which follow from the formula for comultiplication and the definition of conjugation.
\end{proof}
\begin{lemma}\label{lem-residue} We have
	\begin{align*}
	\left[\overline{\xi}(t)^{-s-1}\right]_{t^{-r-1}} = \left[\xi(t)^r\right]_{t^s}\\
	\left[\overline{\tau}(t)\overline{\xi}(t)^{-s-1}\right]_{t^{-r-1}} = \left[\xi(t)^r\tau(t)\right]_{t^s}.
	\end{align*}
\end{lemma}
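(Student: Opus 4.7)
The plan is to recognize both identities as consequences of a residue change of variables for formal Laurent series, using the fact that $\xi(t)$ and $\overline{\xi}(t)$ are compositional inverses (by Lemma \ref{lem-id}) and that we work in characteristic $2$, which makes the Jacobian factor trivial.

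First I would rewrite each coefficient as a residue: for a formal Laurent series $f(t)$, the coefficient $[f(t)]_{t^n}$ equals the residue $\mathrm{Res}_t\bigl(t^{-n-1}f(t)\bigr)$. So the right-hand side of the first identity becomes
\[
\bigl[\xi(t)^r\bigr]_{t^s} = \mathrm{Res}_t\bigl(t^{-s-1}\xi(t)^r\bigr).
\]
Next I would substitute $u = \xi(t)$, so $t = \overline{\xi}(u)$ by Lemma \ref{lem-id}. The key observation is that, since we are in characteristic $2$, the formal derivative satisfies
\[
\xi'(t) = \sum_{i\ge 0} 2^i\,\xi_i\,t^{2^i - 1} = \xi_0 = 1,
\]
and similarly $\overline{\xi}'(t) = 1$. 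Hence the change-of-variables formula for residues introduces no nontrivial Jacobian, and we obtain
\[
\mathrm{Res}_t\bigl(t^{-s-1}\xi(t)^r\bigr) = \mathrm{Res}_u\bigl(\overline{\xi}(u)^{-s-1} u^r\bigr) = \bigl[\overline{\xi}(u)^{-s-1}\bigr]_{u^{-r-1}},
\]
which is the first identity. The second identity follows by exactly the same substitution applied to $\mathrm{Res}_t\bigl(t^{-s-1}\xi(t)^r\tau(t)\bigr)$, using the identity $\tau(\overline{\xi}(u)) = \overline{\tau}(u)$ of Lemma \ref{lem-id} to rewrite the extra factor.

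The main obstacle is making the residue calculus rigorous for Laurent series that are infinite in both directions. In our setup $\xi(t)^r$ for $r<0$ must be interpreted as a formal power series expansion (around $t=0$), and the sums $\sum_i [\xi(t)^r]_{t^s}\otimes e_{r\rho+\ast}$ that eventually use these identities live in a completed comodule. I would therefore work throughout in the ring of formal Laurent series $\hf_\star(\!(t)\!)$, check that $\xi(t)$ has invertible leading coefficient so that $\xi(t)^r$ is well-defined for every $r\in\mathbb{Z}$, and verify that each of the displayed sums has only finitely many terms contributing to any fixed coefficient, so that the formal manipulation of residues is legitimate.
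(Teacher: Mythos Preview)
Your proposal is correct and is essentially the same argument as the paper's: both express the coefficients as residues, perform the change of variables using that $\xi$ and $\overline{\xi}$ are compositional inverses (Lemma \ref{lem-id}), and observe that $\xi'(t)=1$ in characteristic $2$ so no Jacobian appears. The only cosmetic differences are that the paper writes out the second identity rather than the first and substitutes in the opposite direction ($t=\xi(u)$ rather than $u=\xi(t)$).
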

\begin{proof} We will prove the second identity- the first one is easier. We'll use $\oint (-) dz$
to denote the operation of taking the coefficient of $z^{-1}$ in a formal Laurent series. Then
we compute
	\begin{align*}
	\left[\overline{\tau}(t)\overline{\xi}(t)^{-s-1}\right]_{t^{-r-1}} &=
	\oint t^r\,\overline{\tau}(t)\overline{\xi}(t)^{-s-1} dt \\
	&= \oint \xi(u)^r\tau(u)u^{-s-1}\xi'(u) du & \text{ (change of variables and 
	(\ref{lem-id}))}\\
	&= \oint \xi(u)^r\tau(u)u^{-s-1}du & (\xi'(u) =1\text{ because } 2=0)\\
	&= \left[\xi(u)^r\tau(u)\right]_{u^s}.
	\end{align*}
\end{proof}
\begin{proof}[Proof of Theorem \ref{thm-comodule-structure}] Identifying $\Sigma e_{s\rho}$ with
$cb^{-s-1}$ and $\Sigma e_{s\rho + \sigma}$ with $b^{-s-1}$ using Theorem \ref{thm-tate-equivalence},
we are reduced by
Lemma \ref{lem-comodule-structure} to Lemma \ref{lem-residue}.
\end{proof}
\begin{remark}\begin{enumerate}[(i)]
\item This section is inspired by
the description of the comodule structure on truncated projective spaces given
in \cite{Ba}. (Though the actual derivation of the formulas was not given there, so it's
possible the proof above is not the most efficient.)
\item The notation is meant to suggest a relationship between the dual Steenrod
algebra and the Hopf algebroid of automorphisms of some kind of `equivariant, formal,
additive supergroup' corresponding to the addition map $B\mu_2 \times B\mu_2
\longrightarrow B\mu_2$. Compare \cite{I}, for example. The situation here is
complicated by the fact that $\hf^{\star}B\mu_2$ isn't the trivial square
zero extension of $\underline{\F}[b]$, amongst other things.
\end{enumerate}
\end{remark}

\section{Power Operations}

The definition of power operations we give below is slightly nonstandard. It appears
explicitly
in the non-equivariant context, for example,
in \cite[(2.2.6)]{Lur}.

\subsection{Definition and statement of main theorem}
Recall that we can define extended powers over any $C_2$-equivariant commutative ring spectrum.
In particular, if $A$ is an $\hf$-module, we can make sense of the definition
	\[
	\mathbb{P}_2^{\F}(A) := \left(E\mu_{2+} \wedge_{\hf} A^{\wedge_{\hf}2}\right)/\Sigma_2. 
	\]
In the case $A = \hf \wedge X$ for some $C_2$-spectrum $X$, this simplifies (\ref{lem-change-base}):
	\[
	\mathbb{P}_2^{\F}(\hf \wedge X) \cong \hf \wedge \mathbb{P}_2(X). 
	\]
Just as before, there's a map $\Sigma^V \mathbb{P}^{\mathbb{F}}_2(A) \to \mathbb{P}_2^{\F}(\Sigma^VA)$
and we can define a homotopical functor
	\[
	\textup{Ops}_{\hf}(-): \textup{Mod}_{\hf} \longrightarrow \textup{Mod}_{\hf}
	\]
such that $\textup{Ops}_{\hf}(\hf) = \textup{Ops}_{\hf}$. 

\begin{construction} The functors $\Sigma_+^{\infty}$ and $\textup{Ops}_{\hf}(-)$ induce morphisms of derived 
mapping spaces, natural in a space $X$:
	\[
	X \to \textup{Map}_{\textup{Spaces}}(\ast, X) \to \textup{Map}_{\textup{Spectra}}(S^0, \Sigma^{\infty}_+X)
	\to \textup{Map}_{\F}(\hf, \hf \wedge \Sigma^{\infty}_+X)
	\to \textup{Map}_{\F}(\textup{Ops}_{\hf}, \textup{Ops}_{\hf}(\hf \wedge \Sigma^{\infty}_+X)).
	\]
By adjunction we get a natural map
	\[
	\alpha: \textup{Ops}_{\hf} \wedge_{\hf}(\hf \wedge  \Sigma^{\infty}_+X) \to 
	\textup{Ops}_{\hf}(\hf \wedge \Sigma^{\infty}_+X).
	\]
There is also a natural map $\Sigma^{-V}\textup{Ops}_{\hf}(A) \to \textup{Ops}_{\hf}(\Sigma^VA)$,
coming from the map on extended powers used to construct the functor $\textup{Ops}_{\hf}(-)$. Since every
$\hf$-module $A$ is a filtered colimit of modules of the form $\Sigma^{-V}\hf\wedge \Sigma^{\infty}_+X$,
we can extend our definition of $\alpha$ to an {\bf assembly map} for $A$:
	\[
	\alpha: \textup{Ops}_{\hf} \wedge_{\hf} A \longrightarrow \textup{Ops}_{\hf}(A).
	\]
\end{construction}

We can define power operations in a fairly weak setting.

\begin{definition} Let $R$ be an equivariant commutative ring.
We say that an $R$-module $A$ has an {\bf equivariant symmetric multiplication}
if it is equipped with $R$-linear maps $\mathbb{P}^R_2(A) \to A$ and $R \to A$ such that
	\[
	\xymatrix{
	\mathbb{P}^R_2(R) \ar[r]\ar[d] & \mathbb{P}^R_2(A) \ar[d]\\
	R \ar[r] & A
	}
	\]
commutes up to homotopy.
\end{definition}

\begin{definition} Let $A$ be an $\hf$-module with an equivariant
symmetric multiplication, and let 
$e \in \underline{\pi}_V\textup{Ops}_{\hf}$ be represented by some map
$\hf \rightarrow \Sigma^{-V}\textup{Ops}_{\hf}$ of right $\hf$-modules. Then define the power
operation $Q^e: A \rightarrow \Sigma^{-V}A$ by the composite:
	\[
	Q^e: A \cong \hf \wedge_{\hf} A \longrightarrow \Sigma^{-V}\textup{Ops}_{\hf} \wedge_{\hf} A
	\stackrel{\alpha}{\longrightarrow}
	 \Sigma^{-V}\textup{Ops}_{\hf}(A) \longrightarrow \Sigma^{-V}\mathbb{P}^{\uF}_2(A)
	\longrightarrow \Sigma^{-V}A.
	\]
In the special case when $e=e_{s\rho}$ or $e=e_{s\rho -1}$ we denote the resulting operations
by $Q^{s\rho}$ and $Q^{s\rho -1}$, respectively. We will also use the same notation for the
induced map on homotopy groups.
\end{definition}

The remainder of this section is devoted to proving the first
properties of these operations $Q^{k\rho -\varepsilon}$. For convenience we collect them all in the next
theorem. Throughout, $\varepsilon = 0,1$.
\begin{theorem}\label{omnibus} The operations $Q^{k\rho - \varepsilon}$ on the homotopy groups of
$\hf$-modules with an equivariant symmetric
multiplication satisfy the following properties.
\begin{enumerate}[\textup{(}a\textup{)}]
\item (Vanishing) If $x \in \rpi_{a+b\sigma} A$, then $Q^{s\rho}x =0$ for all $s<\min\left\{a, \dfrac{a+b}{2}\right\}$.
In
particular, if $x \in \rpi_{n\rho}A$ then $Q^{s\rho}x=0$ for $s < n$.
\item (Squaring) If $x \in \rpi_{n\rho-\varepsilon} A$, then $Q^{n\rho-\varepsilon}x = x^2$.
\item (Bockstein) Let $\beta$ denote the Bockstein associated to $\underline{\mathbb{Z}}
\stackrel{\cdot 2}{\to} \underline{\mathbb{Z}}
\to \underline{\F}$. Then, for all $s$,  $\beta Q^{s\rho} = Q^{s\rho -1}$. 
\item (Product formula) For any $x \in \rpis A$ and $y \in \rpis B$ we have
	\[
	Q^{k\rho}(x \wedge y) = \sum_{i+j = k} Q^{i\rho}x \wedge Q^{j\rho} y \in \rpis (A \wedge_{\hf} B).
	\]
\item (Additivity) The operations $Q^{s\rho}$ commute with addition, restriction, and transfers.
\item (Action on $\hf_{\star}$) $Q^0$ acts by the identity on $\hf_{\star}$, and $Q^{-1}u_{\sigma} = a_{\sigma}$.
\item (Unstable condition) If $X$ is a space, then $Q^0$ acts by the identity on $\rpis F(X, \hf)$ and
$Q^{k\rho}$ acts by zero for $k>0$. 
\item (Co-Nishida relations)  Let $E$ be
a $C_2$-spectrum with an equivariant
symmetric multiplication and let $x \in \hf_{\star}E$, then, for any $s \in \mathbb{Z}$,
we have an identity of formal power series:
	\[
	\sum_{r \in \mathbb{Z}} \psi_R\left(Q^rx\right)t^r =
	\sum_{r \in \mathbb{Z}} Q^{r\rho}(\psi_R(x)) \overline{\xi}(t)^r + 
	\sum_{r \in \mathbb{Z}} Q^{r\rho+\sigma}(\psi_R(x)) \overline{\xi}(t)^r\overline{\tau}(t).
	\]
\end{enumerate}
\end{theorem}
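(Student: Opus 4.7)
The plan is to derive each property from naturality of the assembly map $\alpha: \textup{Ops}_{\hf} \wedge_{\hf} A \to \mathbb{P}_2^{\uF}(A)$ together with the structural results of \S2--\S3.

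Parts (a), (b), and (e) are the most direct. For (a), given $x \in \rpi_{a+b\sigma}A$, the operation $Q^{s\rho}x$ factors through a class in $\rpi_{s\rho+a+b\sigma}(\hf \wedge \mathbb{P}_2(S^{a+b\sigma}))$, which vanishes in the stated range by Corollary \ref{cor-vanishing}. Part (e) is formal: $Q^e$ is a stable map, and addition, restriction, and transfer are all induced by stable maps between Mackey functor presentations, so they commute with $\alpha$. For (b), the generator $e_{n\rho-\varepsilon}$ corresponds, via Theorem \ref{thm-extended-power-filtration}, to the lowest summand in the splitting of $\hf \wedge \mathbb{P}_2(S^{n\rho-\varepsilon})$; this summand represents the squaring map on underlying spectra, and injectivity of the restriction map on $\rpi_{2n\rho-2\varepsilon}\hf$ in the relevant bidegrees promotes the non-equivariant squaring identity to the equivariant setting.

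For (c), the relation $\beta Q^{s\rho} = Q^{s\rho-1}$ follows from the Bockstein relationship between the generators $e_{s\rho}$ and $e_{s\rho-1}$ in $\rpis \textup{Ops}_{\hf}$ (after reindexing $e_{s\rho+\sigma} = e_{(s+1)\rho-1}$), since $\beta$ is natural with respect to $\alpha$. Part (d), the product formula, comes from the external product $\mathbb{P}_2^{\uF}(A) \wedge_{\hf} \mathbb{P}_2^{\uF}(B) \to \mathbb{P}_2^{\uF}(A \wedge_{\hf} B)$; the induced coproduct on $e_{k\rho} \in \rpis \textup{Ops}_{\hf}$ is $\sum_{i+j=k} e_{i\rho} \otimes e_{j\rho}$, which follows from the non-equivariant Cartan formula and another restriction-injectivity argument. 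Part (f) is a direct computation from the definitions: $Q^0$ corresponds to the unit class in $\rpis \textup{Ops}_{\hf}$, while $Q^{-1}u_\sigma = a_\sigma$ is read off from the explicit form of the suspension map $\theta^\sigma$ on $\mathbb{P}_2(S^{1-\sigma})$. For (g), the power operation action on $F(X,\hf)$ factors through the diagonal $X \to X \times X$, which is $\Sigma_2$-equivariant with the trivial action on $X$; consequently the action of $\textup{Ops}_{\hf}$ descends through its augmentation $\textup{Ops}_{\hf} \to \hf$, picking out $Q^0$ as the identity and killing $Q^{k\rho}$ for $k>0$.

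The heart of the theorem, and the main obstacle, is (h). The strategy is to exploit the comodule structure computed in Theorem \ref{thm-comodule-structure}. Applying $\hf \wedge (-)$ to the assembly map expresses the Steenrod coaction on $Q^r x$ in terms of the right $\arep$-coaction on $\rpis \textup{Ops}_{\hf}$, while running the Steenrod coaction first on $x$ and then applying power operations produces an expression involving the left coaction. Both expressions agree by associativity of the Hopf algebroid comultiplication, and the resulting formulas match the stated identity via the residue computations of Lemma \ref{lem-residue}, which convert between the left and right coaction formulas under the Tate equivalence of Theorem \ref{thm-tate-equivalence}. The delicate bookkeeping is in tracking conjugates consistently and verifying that multiplication by $\overline{\xi}(t)^r \overline{\tau}(t)$ on the right corresponds, after transposition, to the coaction contribution arising from the class $e_{r\rho+\sigma}$ on the left; the formulas in Theorem \ref{thm-comodule-structure} have been arranged precisely to align these two sides.
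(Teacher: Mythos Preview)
Your overall strategy matches the paper's, and parts (a), (e), and (h) are essentially identical to the paper's arguments. Parts (b), (c), and (d) are also close, though you introduce restriction-injectivity arguments in (b) and (d) where the paper argues more directly: for (b) the bottom filtration $F_0\emu = \Sigma_{2+}$ literally is the squaring map, and for (d) the paper computes the diagonal $\delta_*(e_{k\rho}) = \sum e_{i\rho}\otimes e_{j\rho}$ directly via the Thom-spectrum description of $\mathbb{P}_2$, not by reduction to the underlying case.

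There are, however, real gaps in your treatments of (f) and (g). For (f), you propose to read off $Q^{-1}u_\sigma = a_\sigma$ from ``the explicit form of the suspension map $\theta^\sigma$ on $\mathbb{P}_2(S^{1-\sigma})$''. But $1-\sigma$ is not of the form $n\rho$ or $n\rho-1$, so the splitting of Theorem~\ref{thm-extended-power-filtration} is unavailable and there is no explicit form to read off. The paper instead deduces this from the Bockstein relation already proved in (c): once $Q^0$ is the identity on $\hf_\star$, one has $Q^{-1}u_\sigma = \beta Q^0 u_\sigma = \beta u_\sigma = a_\sigma$. For (g), your claim that the action of $\textup{Ops}_{\hf}$ ``descends through its augmentation'' because the diagonal $X \to X\times X$ is $\Sigma_2$-equivariant does not follow: the operations are defined via the assembly map into $\mathbb{P}_2^{\uF}(F(X_+,\hf))$, and triviality of the $\Sigma_2$-action on $X$ does not by itself collapse that extended power. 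The paper's argument is substantially different: it uses the suspension isomorphism to reduce to a class in $\hf^V(X)$ with $V$ containing at least two trivial summands, then naturality to reduce to the universal class on $K(\uF,V)$, and finally the equivariant Hurewicz theorem to reduce to the fundamental class of $S^V$, where the result follows from the case already handled in (f).
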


\begin{remark} It is not true that analogues of the Adem relations hold in this generality.
For such relations to hold, we need the extra structure of a map $\mathbb{P}_4^R(A)
\to A$ compatible with the equivariant symmetric multiplication. Since we have not
examined higher extended powers in this paper, the Adem relations will have to wait
for another time.
\end{remark}

We'll prove each of these below, except additivity which is trivial from the definition: the
operation $Q^{s\rho}$ on homotopy is induced by a map of spectra.

\subsection{Proofs of first properties}

\begin{lemma}[Vanishing] If 
$x \in \rpi_{a+b\sigma} A$, then $Q^{s\rho}x =0$ for all $s<\min\left\{a, \dfrac{a+b}{2}\right\}$. In
particular, if $x \in \rpi_{n\rho}A$ then $Q^{s\rho}x=0$ for $s < n$.
\end{lemma}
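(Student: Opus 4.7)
The plan is to reduce to Corollary \ref{cor-vanishing} by unwinding the definition of $Q^{s\rho}x$ until it factors through the Mackey functor $\hf_{s\rho+V}\mathbb{P}_2(S^V)$ for $V = a+b\sigma$, which that corollary asserts vanishes in the stated range.

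First I would represent $x$ as an $\hf$-module map $\widetilde{x}: \hf \wedge S^V \to A$. Applying $\mathbb{P}_2^{\uF}$ and composing with the equivariant symmetric multiplication $\mathbb{P}_2^{\uF}(A) \to A$ yields, via Lemma \ref{lem-change-base}, a map $\hf \wedge \mathbb{P}_2(S^V) \to A$. Chasing through the assembly map $\alpha$ and the natural transformation $\Sigma^V \textup{Ops}_{\hf}(\hf) \to \textup{Ops}_{\hf}(\hf \wedge S^V)$, the operation $Q^{s\rho}x$ is represented by a composite
$$S^{s\rho+V} \xrightarrow{\widetilde{e}} \hf \wedge \mathbb{P}_2(S^V) \longrightarrow A,$$
where $\widetilde{e}$ is the image of $e_{s\rho}$ under the natural homomorphism $\rpi_{s\rho}\textup{Ops}_{\hf} \to \hf_{s\rho+V}\mathbb{P}_2(S^V)$ obtained by identifying $\textup{Ops}_{\hf}(\hf \wedge S^V)$ as a homotopy limit over $\hf \wedge \Sigma^{W}\mathbb{P}_2(S^{V-W})$ and using cofinality of regular representation spheres to extract the factor corresponding to $W=0$.

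Once this factorization is in place, the conclusion is immediate: Corollary \ref{cor-vanishing} gives $\hf_{s\rho+V}\mathbb{P}_2(S^V) = 0$ whenever $s < \min\{a, (a+b)/2\}$, so $\widetilde{e}$ is already zero in its ambient Mackey functor and hence $Q^{s\rho}x = 0$. For the special case $x \in \rpi_{n\rho}A$ one has $a = b = n$, so both bounds coincide to give the stated threshold $s < n$.

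The only real obstacle is the bookkeeping that exhibits $Q^{s\rho}x$ as factoring through $\hf_{s\rho+V}\mathbb{P}_2(S^V)$ in the way described; this is formal once one unwinds the construction of $\textup{Ops}_{\hf}(-)$ and $\alpha$, and in particular requires no new computation beyond the coefficient calculation already recorded in Corollary \ref{cor-vanishing}. I expect this step to be tedious but not deep.
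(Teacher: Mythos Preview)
Your proposal is correct and follows essentially the same approach as the paper: both arguments factor $Q^{s\rho}x$ through an element of $\hf_{s\rho+V}\mathbb{P}_2(S^V)$ via the defining diagram for the power operation, and then invoke Corollary~\ref{cor-vanishing} to conclude that this Mackey functor vanishes in the stated range. The paper's version is slightly terser (it represents $x$ directly as a map $S^{a+b\sigma}\to A$ and draws the commuting square), but the content is the same.
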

\begin{proof} Choose a map $S^{a+b\sigma} \longrightarrow A$ representing $x$. Then we get
a diagram
	\[
	\xymatrix{
	S^0 \wedge S^{a+b\sigma}\ar[r]^-{e_{s\rho}\wedge \textup{id}}\ar[d]_{x}
	&\Sigma^{-s\rho}\textup{Ops}_{\hf} \wedge S^{a+b\sigma}\ar[d]\ar[r]
	&\Sigma^{-s\rho}\hf \wedge \mathbb{P}_2(S^{a+b\sigma})\ar[d]
	&
	\\
	A \ar[r]
	& \Sigma^{-s\rho}\textup{Ops}_{\hf} \wedge_{\hf} A\ar[r]
	&\Sigma^{-s\rho}\mathbb{P}_2^{\F}(A) \ar[r]
	&\Sigma^{-s\rho}A
	}
	\]
The longest composite is $Q^{s\rho}x$, by definition, so a sufficient condition
for its vanishing is that the top horizontal composite vanishes. But the top
horizontal composite is an element in $\hf_{s\rho + a+b\sigma}\mathbb{P}_2(S^{a+b\sigma})$.
By Lemma \ref{cor-vanishing}, this group is zero in the range indicated.
\end{proof}
\begin{remark} Perhaps a better mnemonic is that the operations vanish on $x$ for ``$s< \frac{|x|}{\rho}$'',
in analogy with the odd primary Dyer-Lashof operations.
\end{remark}
\begin{lemma}[Squaring] Let $\varepsilon = 0,1$. For $x \in \underline{\pi}_{n\rho-\varepsilon}A$, 
$Q^{n\rho-\varepsilon}x = x^2 \in \underline{\pi}_{2n\rho-2\varepsilon}A$.
\end{lemma}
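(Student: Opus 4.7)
The plan is to reduce to a universal computation inside $\hf_\star \mathbb{P}_2(S^{n\rho-\varepsilon})$. By naturality of the assembly map $\alpha$ in $A$ and of $\mathbb{P}_2$ in the class $x$, the operation $Q^{n\rho-\varepsilon}x \in \rpi_{2n\rho-2\varepsilon}A$ is the image of a universal class
\[
\overline{Q}^{n\rho-\varepsilon}(\iota) \in \rpi_{2n\rho-2\varepsilon}\mathbb{P}^{\uF}_2(\hf \wedge S^{n\rho-\varepsilon}) \cong \hf_{2(n\rho-\varepsilon)}\mathbb{P}_2(S^{n\rho-\varepsilon})
\]
under the map $\mu \circ \mathbb{P}^{\uF}_2(x): \mathbb{P}^{\uF}_2(\hf \wedge S^{n\rho-\varepsilon}) \to A$ induced by $x$ and the given equivariant symmetric multiplication $\mu$. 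Here $\iota: S^{n\rho-\varepsilon} \to \hf \wedge S^{n\rho-\varepsilon}$ is the canonical generator, and I write $\overline{Q}^{n\rho-\varepsilon}(\iota)$ for the composite defining the operation with the final multiplication step left off.

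Next I would perform a direct degree count in the splitting of Theorem \ref{thm:homology-ext-powers}. Combining that splitting with the picture of $\hf_\star$ in \S1.1, only one summand survives in total degree $2n\rho-2\varepsilon$: the group is one-dimensional over $\F$, generated by $e^{n\rho}_{n\rho}$ when $\varepsilon=0$ and by $e^{n\rho-1}_{(n-1)\rho+\sigma}$ when $\varepsilon=1$. Crucially, this generator is the image of the fundamental class of $F_0\mathbb{P}_2(S^{n\rho-\varepsilon}) \cong (S^{n\rho-\varepsilon})^{\wedge 2} = S^{2(n\rho-\varepsilon)}$ under the bottom-cell inclusion, since Proposition \ref{proposition-filtration} identifies $F_0\textnormal{E}\mu_2$ with $\Sigma_2$. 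Pulling the bottom cell through $\mu \circ \mathbb{P}^{\uF}_2(x)$ produces $\mu \circ (x \wedge x) = x^2$, so it suffices to show that $\overline{Q}^{n\rho-\varepsilon}(\iota)$ is nonzero.

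The main obstacle is exactly this nonvanishing claim, and I would handle it by restriction to the underlying nonequivariant theory. The underlying of $\hf$ is $\textup{H}\F$, and tracing through the tower definition shows that the underlying of $\textup{Ops}_{\hf}$ is the classical quadratic operations spectrum $\textup{Ops}_{\textup{H}\F}$ of \cite{JW,Lur2,KM}. Comparing generators as in the proof of Theorem \ref{ops-homotopy}, the element $e_{n\rho-\varepsilon}$ restricts to the classical generator $e_{2n-\varepsilon}$, so $Q^{n\rho-\varepsilon}$ restricts to the classical Dyer-Lashof operation $Q^{2n-\varepsilon}$. Therefore $\overline{Q}^{n\rho-\varepsilon}(\iota)$ restricts to $\overline{Q}^{2n-\varepsilon}(\iota^u)$, which by \cite[Ch. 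III]{Hinfty} is the top squaring operation on a class of degree $2n-\varepsilon$, namely the nonzero bottom-cell class in $\textup{H}\F_{2(2n-\varepsilon)}\mathbb{P}_2(S^{2n-\varepsilon})$. Since the equivariant bottom class manifestly restricts to its classical counterpart, the restriction between these two one-dimensional groups is an isomorphism, and so the equivariant class is nonzero as well, finishing the proof.
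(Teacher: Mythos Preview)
Your reduction to a universal class in $\hf_{2(n\rho-\varepsilon)}\mathbb{P}_2(S^{n\rho-\varepsilon})$ is sound, but the dimension count in the case $\varepsilon=0$ is incorrect, and this breaks the argument. Using the splitting of Theorem~\ref{thm:homology-ext-powers}, the degree-$2n\rho$ part of $\hf_\star\mathbb{P}_2(S^{n\rho})$ receives contributions not only from $\hf_0\{e^{n\rho}_{n\rho}\}$ but also from $\hf_{-\sigma}\{e^{n\rho}_{n\rho+\sigma}\}$, and $\pi^{C_2}_{-\sigma}\hf=\F\{a_\sigma\}$ is nonzero. So at the $C_2$-fixed level the group is two-dimensional, spanned by $e^{n\rho}_{n\rho}$ and $a_\sigma e^{n\rho}_{n\rho+\sigma}$. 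Your restriction argument only shows that $\overline{Q}^{n\rho}(\iota)$ has nonzero component along $e^{n\rho}_{n\rho}$, since $a_\sigma$ restricts to zero; it cannot rule out an $a_\sigma e^{n\rho}_{n\rho+\sigma}$ term. Pushed forward to $A$, such a term would contribute $a_\sigma$ times another operation, so you have not yet established $Q^{n\rho}x=x^2$.

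The paper avoids this entirely by going back to how the generators were defined: $e^{n\rho-\varepsilon}_{n\rho-\varepsilon}$ is \emph{by construction} the class coming from the inclusion of the bottom filtration piece $F_0\emu=\Sigma_{2+}$, and $e_{n\rho-\varepsilon}\in\rpis\textup{Ops}_{\hf}$ is the inverse limit of these very classes. Hence the image of $e_{n\rho-\varepsilon}$ in $\hf_\star\mathbb{P}_2(S^{n\rho-\varepsilon})$ is exactly the bottom-cell class, with no ambiguity to resolve. The squaring identification then follows immediately from the diagram induced by $\Sigma_{2+}\hookrightarrow E\mu_{2+}$. Your route could be repaired by invoking this same definitional fact, but at that point the detour through the dimension count and the classical comparison becomes unnecessary.
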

\begin{proof} The inclusion $\Sigma_{2+} = F_0E\mu_{2+} \hookrightarrow E\mu_{2+}$ induces a diagram
	\[
	\xymatrix{
	\hf \wedge \left(S^{n\rho-\varepsilon} \wedge S^{n\rho-\varepsilon}\right)\ar[r]\ar[d]
	& \hf \wedge \mathbb{P}_2\left(S^{n\rho-\varepsilon}\right)\ar[d]&\\
	A\wedge_{\F}A \ar[r] & \mathbb{P}_2^{\F}(A) \ar[r] & A
	}
	\]
The bottom horizontal composite is by definition the multiplication on the $N_{\infty}$-algebra $A$, and
the top horizontal arrow is the element $e_{n\rho-\varepsilon}^{n\rho-\varepsilon}$, again by definition.
The result follows.
\end{proof}
\begin{remark} The reason why we only get a squaring result in these degrees is because
our distinguished power operations are related to a homology basis for the extended powers
of spheres like $S^{n\rho -\varepsilon}$. We do not know the homology of
extended powers of arbitrary representation 
spheres $S^V$, but this bottom class is certainly there so one could define an operation
$Q^V$. It's unclear whether this class lifts to an element in $\rpis \textup{Ops}_{\hf}$, but
if it did then the operations $Q^V$ would be a linear combination of the operations
we've already listed.
\end{remark}
\begin{remark} Had we started our filtration of $E\mu_{2}$ with 
$\dfrac{C_2 \times \Sigma_2}{\Delta}$ instead, then the bottom operation would be the \emph{norm}
instead of the square.
\end{remark}
\begin{lemma}[Bockstein] For all $s \in \mathbb{Z}$, $\beta Q^{s\rho} = Q^{s\rho -1} = Q^{(s-1)\rho + \sigma}$.
\end{lemma}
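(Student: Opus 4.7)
The plan is to reduce the identity to a single computation in $\rpis\textup{Ops}_{\hf}$, namely that $\beta e_{s\rho} = e_{s\rho-1}$, and then to invoke naturality of the Bockstein to transport this identity from $\textup{Ops}_{\hf}$ to any $\hf$-module with an equivariant symmetric multiplication.

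For the first step, I would use the equivalence $F((B\mu_2)_+, \hf)[b^{-1}] \cong \Sigma\textup{Ops}_{\hf}$ from Theorem \ref{thm-tate-equivalence}, under which $\Sigma e_{s\rho}$ corresponds to $cb^{-s-1}$ and $\Sigma e_{(s-1)\rho+\sigma}$ corresponds to $b^{-s}$. This equivalence is $\hf$-linear, so it intertwines the stable Bockstein. On the cohomology side, $\beta c = b$ by the very definition of $c$ (it was characterized as an element satisfying $\beta c = b$ and restricting to zero at a point), while $\beta b = 0$ since $b$ is the mod $2$ reduction of the integral class $\widetilde{b} \in \hz^{\rho}B\mu_2$. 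The Bockstein is a derivation with respect to cup product in mod $2$ cohomology, and therefore extends to a derivation on the localized ring $\hf^{\star}(B\mu_2)[b^{-1}]$ with $\beta(b^{-k})=0$ for every $k$. The Leibniz rule then yields
$$\beta(cb^{-s-1}) = \beta(c)\,b^{-s-1} + c\,\beta(b^{-s-1}) = b^{-s},$$
which under the equivalence says exactly $\beta e_{s\rho} = e_{(s-1)\rho + \sigma} = e_{s\rho-1}$.

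For the second step, I would observe that the Bockstein is a stable $\hf$-linear natural transformation $\beta : \mathrm{id}\Rightarrow \Sigma$ on $\hf$-modules, and that every arrow in the definition of $Q^ex$ — the classifying map $\hf\to\Sigma^{-V}\textup{Ops}_{\hf}$, the assembly map $\alpha$, and the structure map $\mathbb{P}_2^{\uF}(A)\to A$ — is a morphism in the derived category of $\hf$-modules. Consequently, post-composing the composite defining $Q^ex$ with $\beta_A$ agrees with replacing the classifying map by its composite with $\beta : \textup{Ops}_{\hf}\to \Sigma\textup{Ops}_{\hf}$. This gives the general formula $\beta\circ Q^e = Q^{\beta e}$; specializing to $e = e_{s\rho}$ and combining with the first step concludes.

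The only step that requires some care is the verification that $\beta$ really commutes through the assembly map, but this is just naturality applied to a diagram of $\hf$-linear maps; the only other potential pitfall, the ambiguity of $c$ up to $a_\sigma$, is harmless because $a_\sigma$ itself lifts to integral cohomology so $\beta a_\sigma = 0$.
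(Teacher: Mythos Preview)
Your proof is correct and follows essentially the same route as the paper: compute $\beta(cb^{-s-1}) = b^{-s}$ in $\hf^{\star}(B\mu_2)[b^{-1}]$ using $\beta c = b$ and the Leibniz rule, transport this via Theorem~\ref{thm-tate-equivalence} to the identity $\beta e_{s\rho} = e_{s\rho-1}$, and then invoke naturality of the Bockstein through the definition of $Q^e$. The paper's version is terser, but your extra care about why $\beta b = 0$ and how naturality threads through the assembly map is sound and does no harm.
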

\begin{proof} Recall that $\beta c = b \in \hf^{\rho}B\mu_{2}$. Since $\beta^2 = 0$ and $\beta$ is
a derivation on cohomology, it follows that $\beta(cb^{-{s+1}}) = b^{-s}$ in $\hf^{\star}(B\mu_{2})[b^{-1}]$.
By Theorem \ref{thm-tate-equivalence}
we conclude that $\beta e_{s\rho} = e_{s\rho -1}$ and the result follows from naturality of the Bockstein
and the definition of the power operations. 
\end{proof}

The universal example of power operations applied to a product of classes
in degree $V$ and $W$ is captured by a diagonal map
	\[
	\delta: \mathbb{P}_2(S^V \wedge S^W) \longrightarrow \mathbb{P}_2(S^V) \wedge \mathbb{P}_2(S^W).
	\]
Concretely, this arises from the diagonal of the space $E\mu_{2}$, which fits into a composite:
	\[
	E\mu_{2+} \wedge (S^V \wedge S^W)^{\wedge 2}
	\longrightarrow E\mu_{2+} \wedge E\mu_{2+} \wedge (S^V \wedge S^W)^{\wedge 2}
	\cong \left(E\mu_{2+} \wedge (S^V)^{\wedge 2}\right) \wedge \left(E\mu_{2+} \wedge
	(S^W)^{\wedge 2} \right).
	\]
Interpreting the source and target as Thom spectra, this diagonal arises from the diagram:
	\[
	\xymatrix{
	(V \oplus W)(1\oplus \tau) \ar[r]\ar[d] & V(1+\tau) \boxplus W(1+\tau)\ar[d]\\
	B\mu_2 \ar[r]_{\Delta} & B\mu_2 \times B\mu_2
	}
	\]
where we've used $\boxplus$ to denote an exterior sum. The next proposition follows.

\begin{proposition} The map
	\[
	\delta_*: \hf_{\star}\mathbb{P}_2(S^{k\rho} \wedge S^{\ell\rho})
	\longrightarrow \hf_{\star}\mathbb{P}_2(S^{k\rho}) \otimes_{\hf_{\star}}
	\hf_{\star}\mathbb{P}_2(S^{\ell\rho})
	\]
is given by the following formula together with the known action of the Bockstein:
	\[
	\delta_*(e^{(k+\ell)\rho}_{r\rho}) = \sum_{s+t=r} e^{k\rho}_{s\rho} \otimes e^{\ell\rho}_{t\rho}.
	\]
\end{proposition}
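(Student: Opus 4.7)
The plan is to reduce the statement to a computation of the diagonal on $\hf_\star B\mu_{2+}$ via the Thom spectrum interpretation already set up in the preceding discussion. Specifically, by Lemma \ref{thom-filtration} and passage to colimits, we can identify
\[
\mathbb{P}_2(S^{m\rho}) \;\cong\; B\mu_{2+}^{\,m\rho(1+\tau)}
\]
as Thom spectra over $B\mu_2$. The diagonal $\delta$ is obtained by Thomifying the diagonal $\Delta : B\mu_2 \to B\mu_2 \times B\mu_2$ along the evident isomorphism of bundles $\Delta^\ast(k\rho(1+\tau)\boxplus \ell\rho(1+\tau)) \cong (k+\ell)\rho(1+\tau)$ that was described in the paragraphs immediately preceding the proposition.

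Next, I would invoke the splittings of Theorem \ref{thm-extended-power-filtration} to get a Thom isomorphism
\[
\hf_\star \mathbb{P}_2(S^{m\rho}) \;\cong\; \Sigma^{2m\rho}\,\hf_\star B\mu_{2+}
\]
for every $m$, under which the generator $e^{m\rho}_{s\rho}$ corresponds to $e^{0}_{(s-m)\rho}$ and $e^{m\rho}_{s\rho+\sigma}$ corresponds to $e^{0}_{(s-m)\rho+\sigma}$ (these are the classes dual to $b^{s-m}$ and $c\,b^{s-m}$ in $\hf^{\star} B\mu_{2+}$, respectively; compare Proposition \ref{coh-bmu} and Lemma \ref{b-to-e}). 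Under this identification, $\delta_\ast$ becomes simply $\Delta_\ast \otimes \mathrm{id}$ in the appropriate shifted sense, because the Thom isomorphism is natural for maps covering maps of base spaces (the bundles on either side pull back compatibly from $B\mu_2 \times B\mu_2$ to $B\mu_2$).

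It therefore suffices to compute the diagonal on $\hf_\star B\mu_{2+}$. Since $\hf^\star B\mu_{2+}$ is generated as an $\hf_\star$-module by the monomials $b^n$ and $c\,b^n$ (Proposition \ref{coh-bmu}), and since $\Delta^\ast$ is just the algebra multiplication $\Delta^\ast(b\otimes 1) = \Delta^\ast(1 \otimes b) = b$, one has $\Delta^\ast(b^s \otimes b^t) = b^{s+t}$. Pairing with the dual basis yields
\[
\Delta_\ast(e^{0}_{r\rho}) \;=\; \sum_{s+t=r} e^{0}_{s\rho} \otimes e^{0}_{t\rho}.
\]
Translating back through the Thom isomorphism via the substitutions $s \leftrightarrow s+k$, $t \leftrightarrow t+\ell$ (so that $r+k+\ell \leftrightarrow r$) gives the stated formula. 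The Bockstein clause is automatic from the naturality of $\beta$ and part (c) of Theorem \ref{omnibus} applied to both sides.

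The only real content to check is that the shift-of-generators under the Thom isomorphism is what I claimed; this is the mild obstacle. One verifies it either by comparing bottom cells under the filtration (the class $e^{m\rho}_{m\rho}$ is tautologically the square $x^2$ of the bottom cell $x : S^{m\rho} \to S^{m\rho}$, corresponding to $1 \in \hf_0 B\mu_{2+}$ after Thomification) and then propagating up the filtration using compatibility with $b$-multiplication (\textit{cf.} the Bockstein-and-$b$ analysis in \S3.2), or by appealing directly to Lemma \ref{b-to-e} which pins down the Thom-duality identification on the nose. Once the identification is in hand, the rest of the argument is a purely formal dualization.
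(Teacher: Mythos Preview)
Your approach is essentially the same as the paper's: the paper sets up the Thom-spectrum diagram for $\delta$ and then simply declares ``The next proposition follows,'' and you have spelled out exactly how it follows via the Thom isomorphism and dualizing the cup-product diagonal on $\hf^{\star}B\mu_{2+}$. The argument is correct.

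One small citation issue: you appeal to Theorem~\ref{omnibus}(c) for the Bockstein clause, but that theorem is stated and proved \emph{after} this proposition. What you actually need is the identity $\beta e^{m\rho}_{s\rho} = e^{m\rho}_{s\rho-1}$, which follows directly from $\beta c = b$ (established in the paragraph preceding Theorem~\ref{arep-structure}) together with the duality of Proposition~\ref{coh-bmu}; so the content is available, but the reference should point there rather than forward to \S4.
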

The diagonal maps taken together yield a map
	\[
	\delta: \textup{Ops}_{\hf} \longrightarrow \holim_{k, \ell} \Sigma^{(k+\ell)\rho}\hf \wedge 
	\mathbb{P}_2(S^{-k\rho})
	\wedge \mathbb{P}_2(S^{-\ell\rho})
	\]
and our computations of the homotopy of extended powers in this tower show that the canonical
map
	\[
	\textup{Ops}_{\hf} \wedge_{\hf} \textup{Ops}_{\hf} \longrightarrow 
	\holim_{k, \ell} \Sigma^{(k+\ell)\rho}\hf \wedge 
	\mathbb{P}_2(S^{-k\rho})
	\wedge \mathbb{P}_2(S^{-\ell\rho})
	\]
is an equivalence. So we get the following corollary.

\begin{corollary} The map
	\[
	\delta_*: \rpis \textup{Ops}_{\hf} \longrightarrow \rpis \textup{Ops}_{\hf} \otimes_{\hf_{\star}}
	\rpis \textup{Ops}_{\hf}
	\]
is given by
	\[
	\delta_*(e_{k\rho}) = \sum_{i+j=k} e_{i\rho} \otimes e_{j\rho}.
	\]
\end{corollary}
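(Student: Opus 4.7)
The plan is to deduce this as the inverse limit of the formula in the preceding proposition, using the cofinal system of regular-representation spheres $\{-m\rho\}_{m \geq 0}$ that defines $\textup{Ops}_{\hf}$. Recall from the construction of the diagonal $\delta$ on $\textup{Ops}_{\hf}$ that it is assembled from the diagonals
\[
\delta_m : \hf \wedge \Sigma^{2m\rho}\mathbb{P}_2(S^{-2m\rho}) \longrightarrow \hf \wedge \Sigma^{m\rho}\mathbb{P}_2(S^{-m\rho}) \wedge_{\hf} \hf \wedge \Sigma^{m\rho}\mathbb{P}_2(S^{-m\rho}),
\]
and that (by the calculation immediately preceding the corollary) the canonical map $\textup{Ops}_{\hf} \wedge_{\hf} \textup{Ops}_{\hf} \to \holim \Sigma^{(k+\ell)\rho}\hf \wedge \mathbb{P}_2(S^{-k\rho})\wedge \mathbb{P}_2(S^{-\ell\rho})$ is an equivalence.

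First I would observe that the preceding proposition's formula extends verbatim to negative regular representation spheres: the diagonal is induced from the Thom-spectrum diagonal covering $\Delta : B\mu_2 \to B\mu_2 \times B\mu_2$, and the basis elements $e^V_{s\rho}$ are defined uniformly for all $V$, so the same argument gives
\[
(\delta_m)_*\bigl(e^{-2m\rho}_{k\rho}\bigr) \;=\; \sum_{i+j=k} e^{-m\rho}_{i\rho} \otimes e^{-m\rho}_{j\rho}
\]
in $\hf_{\star}\mathbb{P}_2(S^{-m\rho}) \otimes_{\hf_{\star}} \hf_{\star}\mathbb{P}_2(S^{-m\rho})$ (the tensor product computes the homology of the smash product because each factor is free over $\hf_{\star}$ by Theorem \ref{thm:homology-ext-powers}). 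Under the projection $\textup{Ops}_{\hf} \to \hf \wedge \Sigma^{m\rho}\mathbb{P}_2(S^{-m\rho})$, the class $e_{k\rho}$ lands on the suspension of $e^{-m\rho}_{k\rho}$ — once $m$ is large this is the unique generator of the relevant $\hf_{\star}$-summand in degree $k\rho$ — so passing to the limit gives the claimed formula on the generator $e_{k\rho}$. The case of $e_{k\rho + \sigma}$ follows from this one by applying the Bockstein, which is a derivation compatible with $\delta_*$.

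The main subtlety, and the step I would spend the most care on, is that the sum $\sum_{i+j=k} e_{i\rho} \otimes e_{j\rho}$ is \emph{a priori} infinite, so one must verify that it makes sense in $\rpis \textup{Ops}_{\hf} \otimes_{\hf_{\star}} \rpis \textup{Ops}_{\hf}$ and is compatible with the limit. This is handled by the same Mittag-Leffler / vanishing argument used in the proof of Theorem \ref{ops-homotopy}: in any fixed $RO(C_2)$-bidegree, the summand $\hf_{\star}\{e_{i\rho}\} \otimes \hf_{\star}\{e_{j\rho}\}$ vanishes for sufficiently negative $i$ or $j$, so only finitely many terms contribute. This both makes the right-hand side well-defined and ensures the compatibility of the tensor product with the inverse limit over $m$, completing the deduction from the finite-stage formula above.
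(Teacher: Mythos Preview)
Your approach is exactly the paper's: the corollary is stated with no proof beyond the sentence ``so we get the following corollary,'' meaning it is obtained by passing the finite-stage formula $\delta_*(e^{(k+\ell)\rho}_{r\rho}) = \sum_{s+t=r} e^{k\rho}_{s\rho}\otimes e^{\ell\rho}_{t\rho}$ to the inverse limit, using the asserted equivalence $\textup{Ops}_{\hf}\wedge_{\hf}\textup{Ops}_{\hf}\simeq \holim_{k,\ell}\Sigma^{(k+\ell)\rho}\hf\wedge\mathbb{P}_2(S^{-k\rho})\wedge\mathbb{P}_2(S^{-\ell\rho})$.

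One correction to your treatment of the ``infinite sum'' subtlety: your claim that in a fixed $RO(C_2)$-degree only finitely many summands $\hf_{\star}\{e_{i\rho}\}\otimes\hf_{\star}\{e_{j\rho}\}$ contribute is false. Each such summand is a shifted copy of $\hf_{\star}$ by $(i+j)\rho$, so all pairs with $i+j=k$ contribute identically in every degree, and there are infinitely many. The argument from Theorem \ref{ops-homotopy} does not transfer here. What is actually true is that the target of $\delta_*$ is $\rpis(\textup{Ops}_{\hf}\wedge_{\hf}\textup{Ops}_{\hf})$, which via the holim description is the \emph{completed} tensor product $\rpis\textup{Ops}_{\hf}\,\widehat{\otimes}_{\hf_{\star}}\,\rpis\textup{Ops}_{\hf}$, and the infinite sum $\sum_{i+j=k}e_{i\rho}\otimes e_{j\rho}$ lives there as a compatible family in the inverse system. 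The paper's statement of the corollary is slightly informal on this point as well, so this is not a defect unique to your write-up; but you should not claim finiteness where none exists.
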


The next proposition then follows from a diagram chase from the definitions of our power operations.

\begin{proposition}[Product formula] For any $x \in \rpis A$ and $y \in \rpis B$ we have
	\[
	Q^{k\rho}(x \wedge y) = \sum_{i+j = k} Q^{i\rho}x \wedge Q^{j\rho} y \in \rpis (A \wedge_{\hf} B).
	\]
\end{proposition}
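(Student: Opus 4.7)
The plan is to deduce the product formula purely by a diagram chase, using the coproduct calculation $\delta_*(e_{k\rho}) = \sum_{i+j=k} e_{i\rho}\otimes e_{j\rho}$ that was just established. The whole content will be that the assembly map $\alpha$ intertwines this internal coproduct on $\text{Ops}_{\hf}$ with the external diagonal on extended powers, while the equivariant symmetric multiplication on $A \wedge_{\hf} B$ factors compatibly through that same diagonal.

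Concretely, I would first fix representatives $x\colon S^V \to A$ and $y\colon S^W \to B$ and recall that the structure map $\mathbb{P}_2^{\uF}(A \wedge_{\hf} B) \to A \wedge_{\hf} B$ equipping $A \wedge_{\hf} B$ with its induced equivariant symmetric multiplication factors, by the universal property of the smash product together with the space-level diagonal $E\mu_{2+} \to E\mu_{2+}\wedge E\mu_{2+}$, as
\[
\mathbb{P}_2^{\uF}(A \wedge_{\hf} B) \longrightarrow \mathbb{P}_2^{\uF}(A) \wedge_{\hf} \mathbb{P}_2^{\uF}(B) \longrightarrow A \wedge_{\hf} B.
\]
The same space-level diagonal, applied level-wise to the pro-system defining $\text{Ops}_{\hf}(-)$, produces a natural transformation $\text{Ops}_{\hf}(X \wedge_{\hf} Y) \to \text{Ops}_{\hf}(X) \wedge_{\hf} \text{Ops}_{\hf}(Y)$ which on the unit $X = Y = \hf$ recovers the map $\delta$ whose effect on homotopy was just computed.

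Assembling these observations, I would show that the square
\[
\xymatrix@C=16pt{
\text{Ops}_{\hf} \wedge_{\hf} (A \wedge_{\hf} B) \ar[r]^-{\delta \wedge \mathrm{id}} \ar[d]_{\alpha}
& (\text{Ops}_{\hf} \wedge_{\hf} A) \wedge_{\hf} (\text{Ops}_{\hf} \wedge_{\hf} B) \ar[d]^-{\alpha \wedge \alpha} \\
\text{Ops}_{\hf}(A \wedge_{\hf} B) \ar[d]
& \text{Ops}_{\hf}(A) \wedge_{\hf} \text{Ops}_{\hf}(B) \ar[d] \\
A \wedge_{\hf} B \ar@{=}[r]
& A \wedge_{\hf} B
}
\]
commutes up to homotopy, where the lower-right vertical arrow uses the two equivariant symmetric multiplications on $A$ and $B$. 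Tracing $e_{k\rho} \otimes (x\wedge y)$ down the left-hand side is, by definition, $Q^{k\rho}(x\wedge y)$; tracing across the top via $\delta_*(e_{k\rho}) = \sum_{i+j=k} e_{i\rho}\otimes e_{j\rho}$ and then down the right-hand side yields $\sum_{i+j=k} Q^{i\rho}(x) \wedge Q^{j\rho}(y)$. The formula follows.

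The main obstacle is verifying that the space-level diagonal on $E\mu_{2+}$ really does descend, after the various homotopy limits and colimits defining $\text{Ops}_{\hf}(-)$, to a natural transformation compatible with $\alpha$. This requires checking that the diagonal is compatible with the structure maps $\theta^{\rho}$ of the tower and that the resulting map on $\rpis$ is exactly the $\delta_*$ computed in the preceding corollary; after that is in place, the product formula is immediate from the diagram chase.
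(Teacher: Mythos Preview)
Your proposal is correct and is essentially the same approach as the paper's: the paper simply states that the proposition ``follows from a diagram chase from the definitions of our power operations,'' invoking the preceding corollary $\delta_*(e_{k\rho}) = \sum_{i+j=k} e_{i\rho}\otimes e_{j\rho}$. You have unpacked exactly this diagram chase, supplying the commutative square relating $\delta$, the assembly map $\alpha$, and the induced symmetric multiplication on $A\wedge_{\hf}B$ that the paper leaves implicit.
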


\begin{remark} The fact that $x$ and $y$ are permitted to be in an arbitrary degree is a nice consequence of the
use of $\textup{Ops}_{\hf}$ in our definition of power operations. This spectrum keeps track of
all the relations implied by the interactions between power operations and $RO(C_2)$-graded suspensions,
which allows us to forego knowledge of the homology of $\mathbb{P}_2(S^V)$ for arbitrary $V$.
\end{remark}

In order to compute the action of our power operations on $\hf_{\star}S^0$, we'll need to understand the
induced map on homology of
	\[
	\mathbb{P}_2(S^0) \longrightarrow S^0.
	\]
Algebraically, this is some additive map
	\[
	\bigoplus \hf_{\star}\{e^0_{s\rho}\} \oplus \bigoplus \hf_{\star}\{e^{0}_{s\rho + \sigma}\}
	\to \hf_{\star}. 
	\]
But, for degree reasons, all the generators must map to zero except possibly $e^0_0$.
On the span of this generator, we get the identity, which can be seen from the fact that squaring
$S^0 \to S^0 \wedge S^0 \to S^0$
factors through the extended power. 

\begin{remark} If we had used $\textup{H}\underline{R}$ for some $\F$-algebra, $R$, then
$e^0_0$ would map to $1$, but $re^0_0$ would map to $r^2$ for $r \in R$.
\end{remark}

Together with vanishing, and the Bockstein lemma, we get

\begin{lemma}[Action on $\hf_{\star}$] 
$Q^0$ acts by the identity on $\hf_{\star}$, and $Q^{-1}u_{\sigma} = a_{\sigma}$.
\end{lemma}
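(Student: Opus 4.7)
The plan is to deduce both claims from the computation of the map $\mathbb{P}_2(S^0) \to S^0$ on $\hf$-homology recorded just above the lemma, together with the Bockstein relation $\beta Q^0 = Q^{-1}$ from Theorem \ref{omnibus}(c).

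For the first claim, we argue at the spectrum level by taking $A = \hf$ in the definition. Each map in the composite
\[
\hf \longrightarrow \textup{Ops}_{\hf} \wedge_{\hf} \hf = \textup{Ops}_{\hf} \longrightarrow \mathbb{P}_2^{\uF}(\hf) = \hf \wedge \mathbb{P}_2(S^0) \longrightarrow \hf
\]
defining $Q^0 : \hf \to \hf$ is $\hf$-linear: the first is the $\hf$-linear extension of $e_0 \in \rpi_0\textup{Ops}_{\hf}$ postcomposed with the assembly map, the second is the canonical projection from the homotopy limit onto its $V=0$ term, and the third is the structure map of $\hf$ as a $\mathbb{P}_2^{\uF}$-algebra. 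A degree-zero $\hf$-linear self-map of $\hf$ is determined by its value on $1 \in \pi_0\hf$, so it suffices to compute $Q^0(1)$. The class $e_0$ projects to the generator $e^0_0 \in \hf_0\mathbb{P}_2(S^0)$, and the computation recalled immediately above the lemma shows that the structure map carries $e^0_0$ to $1$. Thus $Q^0(1) = 1$, whence $Q^0 = \mathrm{id}_{\hf}$ and $Q^0$ acts as the identity on all of $\hf_\star$.

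For the second claim, the Bockstein formula combined with the first gives
\[
Q^{-1}u_\sigma = \beta Q^0 u_\sigma = \beta u_\sigma.
\]
The group $\pi^{C_2}_{-\sigma}\hf$ is one-dimensional with generator $a_\sigma$, so $\beta u_\sigma \in \{0, a_\sigma\}$. One rules out the zero case either by passing to Borel cohomology, where $u_\sigma$ becomes invertible and the standard relation $\beta(a_\sigma/u_\sigma) = (a_\sigma/u_\sigma)^2$ forces $\beta u_\sigma = a_\sigma$, or by observing that $\pi^{C_2}_{1-\sigma}\hz$ vanishes (the top integral homology of $S^\sigma$ being the sign Mackey functor), so $u_\sigma$ admits no integral lift. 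Either route delivers $Q^{-1}u_\sigma = a_\sigma$. The one point requiring care is the $\hf$-linearity of each ingredient of $Q^0$, particularly of the assembly map $\alpha$; this is built into its construction in the preceding section and poses no real obstacle.
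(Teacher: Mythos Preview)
Your proof is correct and follows the paper's approach. The paper's argument is terse (``together with vanishing, and the Bockstein lemma, we get\ldots''), relying on the preceding computation that the structure map $\hf\wedge\mathbb{P}_2(S^0)\to\hf$ sends $e^0_0\mapsto 1$ and kills all other generators; you make explicit the $\hf$-linearity of $Q^0$ that turns $Q^0(1)=1$ into $Q^0=\mathrm{id}$, and you justify $\beta u_\sigma=a_\sigma$ (which the paper uses implicitly, e.g.\ in the proof of Proposition~\ref{coh-bmu}). Both arguments you give for $\beta u_\sigma=a_\sigma$ are valid; the integral-lift argument is perhaps the cleanest, since the cofiber sequence $C_{2+}\to S^0\to S^\sigma$ shows $\pi^{C_2}_{1-\sigma}\hz$ is the kernel of the transfer $\mathbb{Z}\xrightarrow{\cdot 2}\mathbb{Z}$.
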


Finally, we turn to the action of power
operations on the cohomology of a $C_2$-space.
First we record a fact which is immediate from the definition of the power operations.

\begin{lemma} If $X$ is a $C_2$-space, then the action of power operations on $\rpis F(X_+, \hf)$
commutes with the suspension isomorphism. 
\end{lemma}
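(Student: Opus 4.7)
The plan is to observe that the lemma is a direct consequence of the fact that each power operation $Q^e$ was defined as a morphism of $\hf$-module spectra, not merely as a natural transformation of Mackey functors, combined with the tautological naturality of $RO(C_2)$-graded suspension.

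First I would clarify what is meant by the suspension isomorphism. For any $\hf$-module $A$ and any (virtual) representation $W$, it is the canonical identification $\sigma^W \colon \rpi_V A \xrightarrow{\cong} \rpi_{V+W}(\Sigma^W A)$. When $A = F(X_+, \hf)$ and $X$ is a $C_2$-space, this is equivalently realized as the natural equivalence $\Sigma^W F(X_+, \hf) \simeq F(X_+, \Sigma^W \hf)$ coming from the adjunction between $X_+ \wedge -$ and $F(X_+, -)$, together with the fact that the left adjoint $X_+ \wedge -$ preserves $\Sigma^W$.

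Given $e \in \rpi_{V'} \textup{Ops}_{\hf}$, by construction $Q^e$ is a morphism of $\hf$-module spectra $F(X_+, \hf) \to \Sigma^{-V'} F(X_+, \hf)$. Applying the exact functor $\Sigma^W$ and then passing to homotopy Mackey functors yields a commutative square whose vertical arrows are the two copies of $\sigma^W$ and whose horizontal arrows are the action of $Q^e$ before and after suspension. Commutativity is simply naturality of $\Sigma^W$ on a map of spectra, which is precisely the assertion of the lemma.

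The only point requiring a moment's reflection is that $\Sigma^W F(X_+, \hf)$ does not itself carry an equivariant symmetric multiplication in the strict sense, since squaring raises degree by a factor of two. However, we do not need one: we are transporting the operation already defined on $F(X_+, \hf)$ along a genuine spectrum-level equivalence, not redefining it. The hard part, if any, lies in internalizing that $Q^e$ is truly a spectrum morphism rather than just a homotopy-level transformation; once this is granted, the lemma follows by formal naturality, which is why the author characterizes it as ``immediate from the definition of the power operations.''
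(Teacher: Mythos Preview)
Your proposal is correct and matches the paper's approach: the paper gives no proof beyond the remark that the lemma is ``immediate from the definition of the power operations,'' and your unpacking of that remark---that $Q^e$ is an honest map of $\hf$-module spectra $A \to \Sigma^{-V'}A$, so applying the functor $\Sigma^W$ and passing to homotopy gives the claimed commutativity tautologically---is exactly the intended argument. Your closing observation that one is transporting an already-defined operation along a spectrum-level equivalence, rather than redefining it via a symmetric multiplication on the suspension, correctly addresses the only potential confusion.
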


\begin{proposition}[Unstable condition]
If $X$ is a $C_2$-space, then $Q^0$ acts by the identity on $\rpis F(X_+, \hf)$ and
$Q^{k\rho}$ acts by zero for $k>0$.
\end{proposition}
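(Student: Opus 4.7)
The plan is to exploit the strict commutativity of $A = F(X_+, \hf)$ coming from the space-level diagonal, in order to reduce each $Q^{k\rho}$ on $A$ to scalar multiplication by an element of $\rpi_{k\rho}\hf$, a group which vanishes for $k \neq 0$.

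The key observation is that the diagonal $\Delta \colon X_+ \to X_+ \wedge X_+$ lands in the $\Sigma_2$-fixed subspace, since $\Sigma_2$ acts on the target by swap. Consequently the multiplication $\mu \colon A \wedge_\hf A \to A$ is strictly $\Sigma_2$-equivariant with $\Sigma_2$ acting trivially on the target, so the symmetric multiplication on $A$ factors as
\[
	\mathbb{P}_2^{\uF}(A) = \emu_+ \wedge_{\Sigma_2} A^{\wedge_\hf 2} \xrightarrow{\mathrm{id} \wedge_{\Sigma_2} \mu} \emu_+ \wedge_{\Sigma_2} A \cong B\mu_{2+} \wedge A \xrightarrow{\epsilon \wedge \mathrm{id}} A,
\]
where $\epsilon \colon B\mu_{2+} \to S^0$ collapses $B\mu_2$ to the basepoint. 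This is a special feature of $A$ and is the origin of the unstable condition.

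Plugging this factorization into the definition of $Q^e$ for $e \in \rpi_V\textup{Ops}_{\hf}$, represented by an $\hf$-linear map $\hat{e} \colon \hf \to \Sigma^{-V}\textup{Ops}_{\hf}$, and invoking naturality of the assembly map $\alpha$ in $A$, the power operation $Q^e$ becomes
\[
	A \cong \hf \wedge_\hf A \xrightarrow{\hat{e} \wedge_\hf \mathrm{id}} \Sigma^{-V}\textup{Ops}_{\hf} \wedge_\hf A \xrightarrow{(\epsilon\phi) \wedge_\hf \mathrm{id}} \Sigma^{-V}\hf \wedge_\hf A \cong \Sigma^{-V}A,
\]
where $\phi \colon \textup{Ops}_{\hf} \to \mathbb{P}_2^{\uF}(\hf) = \hf \wedge B\mu_{2+}$ is the projection to the $V = 0$ term of the defining homotopy limit. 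Each map in this composite is $\hf$-linear, so $Q^e$ acts as scalar multiplication by the element $\bar{e} := \epsilon_*\phi_*(e) \in \rpi_V \hf$.

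It remains to compute $\bar{e}_{k\rho}$ for $k \geq 0$. By construction $\phi_*(e_{k\rho})$ is the generator $e^0_{k\rho}$ of $\rpi_{k\rho}(\hf \wedge B\mu_{2+})$ from Theorem~\ref{thm:homology-ext-powers}. For $k \neq 0$ the target $\rpi_{k\rho}\hf$ vanishes (underlyingly $\pi^u_{2k}\hf = 0$, and the fixed-point description of $\pi^{C_2}_{\star}\hf$ in \S1.1 contains no class of bi-degree $(k,k)$ when $k \neq 0$), so $\bar{e}_{k\rho} = 0$ and $Q^{k\rho}x = 0$ for $k > 0$. For $k = 0$ the class $e^0_0$ comes from the inclusion of the bottom filtration piece $F_0\mathbb{P}_2(S^0) = S^0 \hookrightarrow B\mu_{2+}$, so $\epsilon$ sends it to $1 \in \rpi_0\hf$ and $Q^0 x = x$. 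The main bookkeeping obstacle is the naturality used to factor $Q^e$ through $\hf$-linear scalar multiplication, but this is forced because the assembly $\alpha$ is constructed by filtered-colimit extension from the universal case $A = \hf \wedge \Sigma^\infty_+ Y$.
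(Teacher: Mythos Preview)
Your approach is genuinely different from the paper's. The paper reduces, via the suspension isomorphism and naturality in $X$, to the universal class in $\hf^{V}K(\uF,V)$ for $V$ containing at least two trivial summands, and then invokes the equivariant Hurewicz theorem applied to $S^V\to K(\uF,V)$; the claim about $Q^0$ is pushed all the way down to $Q^0(1)=1$ on $S^0$. You instead exploit the strict cocommutativity of the diagonal $X_+\to X_+\wedge X_+$: since $\mu$ is $\Sigma_2$-equivariant with trivial action on the target, the symmetric multiplication on $A=F(X_+,\hf)$ factors through $B\mu_{2+}\wedge A$, and $Q^{k\rho}$ collapses to multiplication by a fixed class in $\rpi_{k\rho}\hf$, which is $1$ for $k=0$ and $0$ for $k>0$. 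This is the more structural argument and it avoids Eilenberg--MacLane spaces altogether; the paper's route, on the other hand, stays closer to the actual definition of $Q^e$ via extended powers of spheres and does not need to analyze how $\alpha$ interacts with the multiplication on $A$.

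The one step that deserves more care is your appeal to ``naturality of the assembly map $\alpha$ in $A$'' to conclude that the composite $\textup{Ops}_{\hf}\wedge_{\hf}A\xrightarrow{\alpha}\textup{Ops}_{\hf}(A)\to\mathbb{P}_2^{\uF}(A)\to A$ equals $(\epsilon\phi)\wedge_{\hf}\mathrm{id}_A$. Naturality of $\alpha$ is with respect to $\hf$-module maps, and $\alpha$ is built by colimit extension from the case $A=\Sigma^{-V}(\hf\wedge\Sigma^\infty_+Y)$; neither of these facts by itself says anything about how $\alpha$ interacts with the \emph{symmetric multiplication} on $F(X_+,\hf)$, which is additional structure not respected by generic $\hf$-module maps. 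What you actually need is a commuting square relating $p\circ\alpha$ to the diagonal-type inclusion $B\mu_{2+}\wedge A\to \mathbb{P}_2^{\uF}(A)$ arising from $\mu$, and that requires unwinding the construction of $\alpha$ rather than a one-line naturality claim. The argument can certainly be completed along these lines, but as written the justification at this point is thin.
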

\begin{proof} Using the suspension isomorphism, it suffices to check this result
for elements in degree $\hf^{V}(X)$ where $V$ is a representation containing
at least two copies of the trivial representation. By naturality, we may also
assume that $X = K(\F, V)$ and the cohomology class in question is the
universal one. The vanishing result now follows from the equivariant Hurewicz
theorem (which is the source of our assumption on $V$) applied to the fundamental map
	\[
	S^V \longrightarrow K(\F, V).
	\]
Also by the Hurewicz theorem we are reduced to checking the claim about $Q^0$ on
the fundamental class in $S^V$. By the suspension isomoprhism, this reduces to
the action of $Q^0$ on $1 \in \hf^{0}S^0=\hf_0S^0$. We have already seen that
$Q^01 = 1$, so we're done. 
\end{proof}

\section{The action of power operations on the dual Steenrod algebra}

Consider the following diagram of adjoints between homotopy theories:
	\[
	\xymatrix{
	N_{\infty}\textup{-Alg}_{\hf}\ar@{-}[d]\ar@{-}[r] & N_{\infty}\textup{-Alg}\ar@{-}[d]\\
	\textup{Mod}_{\hf}\ar@{-}[r] & \textup{Sp}^{C_2}
	}
	\]
Objects in the top right corner have two equivalent descriptions in terms of $\textup{Mod}_{\hf}$
with extra structure. Specifically, let $C$ denote the comonad on $\textup{Mod}_{\hf}$
whose comodules are ordinary spectra,
and let $\mathbb{P}^{\F}$ denote the monad on $\textup{Mod}_{\hf}$ whose algebras
are $N_{\infty}$-algebras over $\hf$. Then an object $X \in N_{\infty}\text{-Alg}$ is:
	\begin{itemize}
	\item a $C$-comodule with a compatible $\mathbb{P}^{\F}$-module structure,
	\item a $\mathbb{P}^{\F}$-module with a compatible $C$-comodule structure.
	\end{itemize}
To formulate the notion of \emph{compatible} one must write down a certain distributor,
in the sense of Beck, which gives a relationship between the monads and comonads
derived from the square above. 

In our case, this distributor is essentially given by either of the following pieces of data:
	\begin{itemize}
	\item the $\arep$-comodule structure on $\rpis\textup{Ops}_{\hf}$,
	\item the action of power operations on $\arep$.
	\end{itemize}
In particular, the abstract considerations above lead us to suspect that these two pieces of
information determine one another. We have already computed the $\arep$-coaction
on $\rpis\textup{Ops}_{\hf}$, and in this section we carry out the deduction of the
action of power operations on $\arep$. We first establish an abstract characterization
of the action in the form of Nishida relations, along the lines of \cite{BJ} and \cite{Ba},
and then we concretize this characterization as a computation. The main
result is Theorem \ref{action}.
\subsection{Nishida relations}

\begin{lemma}\label{lem-abstract-nishida}
 Let $E$ be a $C_2$-spectrum with
 an equivariant symmetric multiplication, and let $e \in \rpis \textup{Ops}_{\hf}$
be arbitrary. Write $\psi_R(e) = \sum e_i \otimes a_i \in \rpis \textup{Ops}_{\hf} \,\widehat{\otimes}\,
\arep$. Then, for any $x \in \rpis E$,
	\[
	\psi_R(Q^ex) = \sum (1\otimes a_i) Q^{e_i}(\psi_R(x)) \in \hf_{\star}E \otimes \arep.
	\]
\end{lemma}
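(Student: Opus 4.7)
I would prove this by exploiting the naturality of the construction of $Q^e$ under the right Steenrod coaction. Unpacking the definition, if $x \in \rpis (\hf \wedge E) = \hf_{\star} E$ is represented by a map $S^{|x|} \to \hf \wedge E$, then $Q^e x$ is the homotopy class obtained by composing with
\[
\hf \wedge E \xrightarrow{e \wedge_{\hf} 1} \Sigma^{-|e|}\textup{Ops}_{\hf} \wedge_{\hf} (\hf \wedge E) \xrightarrow{\alpha} \Sigma^{-|e|}\textup{Ops}_{\hf}(\hf \wedge E) \to \Sigma^{-|e|}\mathbb{P}_2^{\uF}(\hf \wedge E) \to \Sigma^{-|e|}(\hf \wedge E),
\]
where the last two arrows use the assembly map and the $\hf$-linear multiplication induced by the equivariant symmetric multiplication on $E$.

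The right Steenrod coaction $\psi_R$ on any spectrum of the form $\hf \wedge Y$ is induced by smashing with the unit $S \to \hf$ on the ``outer'' factor, i.e.\ the map $\hf \wedge Y \to \hf \wedge Y \wedge \hf$. The key observation is that the assembly map $\alpha$, the canonical map $\textup{Ops}_{\hf}(A) \to \mathbb{P}_2^{\uF}(A)$, and the multiplication $\mathbb{P}_2^{\uF}(\hf \wedge E) \to \hf \wedge E$ are all natural in the $\hf$-module $A = \hf \wedge E$ and thus commute with the map $A \to A \wedge \hf$. Consequently $\psi_R$ slides freely past the last three arrows in the composite above. The entire coaction is therefore absorbed into the first arrow $e \wedge_{\hf} 1$, where it splits as a tensor product coaction: the $\textup{Ops}_{\hf}$-factor contributes $\psi_R(e) = \sum e_i \otimes a_i$, and the $\hf \wedge E$-factor contributes $\psi_R(x)$. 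Reassembling and reading off the result in $\hf_{\star} E \,\widehat{\otimes}\, \arep$, the $e_i$'s produce the operations $Q^{e_i}$ applied to $\psi_R(x)$, while the $a_i$'s appear as outer scalars, giving
\[
\psi_R(Q^e x) \;=\; \sum (1 \otimes a_i)\, Q^{e_i}(\psi_R(x)).
\]

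\textbf{Main obstacle.} The delicate point is to verify rigorously that the right coaction on $\textup{Ops}_{\hf} \wedge_{\hf} A$ decomposes as the tensor product of the coactions on each factor, and that $\alpha$ respects this decomposition. This is complicated by the fact that $\textup{Ops}_{\hf}$ is defined as a homotopy limit, so $\psi_R(e)$ lives in a completed tensor product; one must check the relevant $\lim^1$ terms vanish, which follows from the Mittag-Leffler arguments already used in \S3. One also has to keep track of which $\hf$-module structure (left or right) is used at each stage, since the right unit $\eta_R$ acts nontrivially on $\hf_{\star}$ (as in Theorem \ref{arep-structure}); this bookkeeping is exactly what forces the $a_i$'s to appear as outer scalars $1 \otimes a_i$ rather than being absorbed into $Q^{e_i}$.
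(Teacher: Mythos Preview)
Your proposal is correct and follows essentially the same idea as the paper, though the paper's execution is more streamlined. Rather than tracing naturality through each arrow in the definition of $Q^e$ separately, the paper represents $x$ by a map $S^V \to E$ and observes that the entire composite (for \emph{all} $e$ simultaneously) assembles into a single map of spectra
\[
\Sigma^V\textup{Ops}_{\hf} \longrightarrow \hf \wedge E
\]
built from the compatible maps $\Sigma^{V+W}\mathbb{P}_2(S^{-W}) \to E$. Any such map of spectra automatically induces a map of completed right $\arep$-comodules on $\rpis$; since $\Sigma^V e \mapsto Q^e x$ under this map, the formula drops out immediately from the known coaction on $\rpis\textup{Ops}_{\hf}$. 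This packaging eliminates the ``main obstacle'' you flagged: there is no need to verify a tensor decomposition of the coaction on $\textup{Ops}_{\hf}\wedge_{\hf} A$ or to manage $\lim^1$ terms by hand, because comodule compatibility is automatic once the argument is phrased as a single map of spectra rather than a step-by-step chase.
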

\begin{proof} Represent $x$ by a map $S^V \longrightarrow E$. This yields compatible maps of spectra
	\[
	\Sigma^{V+W}\mathbb{P}_2(S^{-W}) \longrightarrow E,
	\]
and hence a map
	\[
	\rpis \Sigma^V\textup{Ops}_{\hf} \longrightarrow \hf_{\star}E
	\]
which respects the completed, right comodule structure over the equivariant dual Steenrod algebra. By
definition of the power operations, an element $\Sigma^Ve \in \rpis \Sigma^V\textup{Ops}_{\hf}$ maps
to $Q^ex$ in the target. The formula is now just a restatement of the preceding two observations. 
\end{proof}

The following reformulation of the Nishida relations in the
classical case is due to Bisson-Joyal \cite{BJ} (for $p=2$) and Baker \cite{Ba} (for $p>2$). Recall that we had
defined:
	\[
\overline{\xi}(t) = \sum_{i\ge 0} \overline{\xi}_it^{2^i}, \quad
	\overline{\tau}(t) = \sum_{i\ge 0}\overline{\tau}_it^{2^i}.
	\]

\begin{theorem}[Co-Nishida relations] Let $E$ be
a $C_2$-spectrum with
 an equivariant symmetric multiplication and let $x \in \hf_{\star}E$, then, for any $s \in \mathbb{Z}$,
we have an identity of formal power series:
	\[
	\sum_{r \in \mathbb{Z}} \psi_R\left(Q^{r\rho}x\right)t^r =
	\sum_{r \in \mathbb{Z}} Q^{r\rho}(\psi_R(x)) \overline{\xi}(t)^r + 
	\sum_{r \in \mathbb{Z}} Q^{r\rho+\sigma}(\psi_R(x)) \overline{\xi}(t)^r\overline{\tau}(t).
	\]
\end{theorem}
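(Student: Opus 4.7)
The plan is to apply Lemma \ref{lem-abstract-nishida} to $e = e_{s\rho}$: this reduces the theorem to computing the right $\arep$-coaction on the generators $e_{s\rho}, e_{s\rho+\sigma} \in \rpis \textup{Ops}_{\hf}$ and then packaging the result as a generating function in $t$. Note that Theorem \ref{thm-comodule-structure} gives us only the \emph{left} coaction, so rather than conjugating we transport the right coaction formulas across the equivalence of Theorem \ref{thm-tate-equivalence}, which is realized by a map of spectra and therefore respects the right comodule structure.

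Under the identifications $\Sigma e_{s\rho+\sigma} \leftrightarrow b^{-s-1}$ and $\Sigma e_{s\rho} \leftrightarrow cb^{-s-1}$, Lemma \ref{lem-comodule-structure} yields
\begin{align*}
\psi_R(e_{s\rho+\sigma}) &= \sum_{r\in\mathbb{Z}} e_{r\rho+\sigma} \otimes \left[\xi(t)^{-s-1}\right]_{t^{-r-1}},\\
\psi_R(e_{s\rho}) &= \sum_{r\in\mathbb{Z}} e_{r\rho} \otimes \left[\xi(t)^{-s-1}\right]_{t^{-r-1}} + \sum_{r\in\mathbb{Z}} e_{r\rho+\sigma} \otimes \left[\tau(t)\xi(t)^{-s-1}\right]_{t^{-r-1}}.
\end{align*}
Next I would simplify these coefficients via the same contour/change-of-variable trick used in Lemma \ref{lem-residue}: setting $u = \xi(t)$ so that $t = \overline{\xi}(u)$ and $dt = du$ (because $\xi'(u) = 1$ in characteristic $2$), and invoking the identity $\tau(\overline{\xi}(u)) = \overline{\tau}(u)$ from Lemma \ref{lem-id}, one obtains
\[
\left[\xi(t)^{-s-1}\right]_{t^{-r-1}} = \left[\overline{\xi}(u)^r\right]_{u^s}, \qquad \left[\tau(t)\xi(t)^{-s-1}\right]_{t^{-r-1}} = \left[\overline{\xi}(u)^r\overline{\tau}(u)\right]_{u^s}.
\]

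Finally I would plug these formulas into Lemma \ref{lem-abstract-nishida}, multiply through by $t^s$, and sum over $s \in \mathbb{Z}$; the bracket operators $[\,-\,]_{t^s}$ collapse against $t^s$ into the honest power series $\overline{\xi}(t)^r$ and $\overline{\xi}(t)^r\overline{\tau}(t)$, delivering the claimed identity. The main obstacle is really just the index/sign bookkeeping in passing between $\xi$ and $\overline{\xi}$; one also needs to check that the doubly-infinite sums converge in the completed tensor product $\hf_{\star}E \,\widehat{\otimes}\, \arep$, but this is immediate from the Mittag-Leffler vanishing of $\hf_{\star}\{e_{s\rho}\}$ in any fixed bidegree for $s \ll 0$ established during the proof of Theorem \ref{ops-homotopy}.
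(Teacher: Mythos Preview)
Your proof is correct and follows essentially the same route as the paper. The paper's argument is the one-liner ``combine Theorem~\ref{thm-comodule-structure} with Lemma~\ref{lem-abstract-nishida}, and recall that switching from left to right comodule structures introduces a conjugation''; unpacking that conjugation step yields exactly the formula $\psi_R(e_{s\rho}) = \sum_r e_{r\rho}\otimes [\overline{\xi}(u)^r]_{u^s} + \sum_r e_{r\rho+\sigma}\otimes [\overline{\xi}(u)^r\overline{\tau}(u)]_{u^s}$ that you obtain. Your only deviation is cosmetic: rather than citing Theorem~\ref{thm-comodule-structure} (the left coaction) and then conjugating, you go back to its source, Lemma~\ref{lem-comodule-structure}, and read off $\psi_R$ directly, then apply the residue identity (which is Lemma~\ref{lem-residue} with $\xi$ and $\overline{\xi}$ swapped---the same identity by symmetry of compositional inverses). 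This shortcut avoids the round-trip through $\psi_L$ but uses the identical ingredients.
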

\begin{proof} Combine Theorem \ref{thm-comodule-structure} with Lemma
\ref{lem-abstract-nishida}, and recall that switching from left to right comodule structures
introduces a conjugation.
\end{proof}

\subsection{Computing the action}

Recall that $\rpis F(B\mu_{2+}, \hf)$ is generated by $c \in \hf^{\sigma}B\mu_2$ and $b \in \hf^{\rho}B\mu_2$.

\begin{lemma}\label{lem-ops-on-bmu}
 The action of power operations on $\rpis F(B\mu_{2+}, \hf)$ is determined by the unstable
condition, the Cartan formula, vanishing, and the following:
	\[
	\sum (Q^{r\rho}b )t^r = b + b^2 t^{-1},
	\]
	\[
	Q^{-1}c = b.
	\]
In particular:
	\[
	Q^{i\rho}(b^{2^k}) =
	\begin{cases}
	b^{2^k} & i = 0\\
	b^{2^{k+1}} & i = -2^k\\
	0 & i \ne 0, -2^k
	\end{cases}
	\]
\end{lemma}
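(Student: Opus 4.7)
The plan is to extract the two displayed identities directly from the omnibus properties in Theorem \ref{omnibus}, use them together with the Cartan formula to determine the action on all of $\rpis F(B\mu_{2+}, \hf)$ (which is generated as an $\hf_\star$-algebra by $c$ and $b$ by Proposition \ref{coh-bmu}), and then deduce the formula for $Q^{i\rho}(b^{2^k})$ from a Frobenius identity in the generating series.

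First I would pin down $Q^{r\rho}b$ for each $r \in \mathbb{Z}$. The class $b$ has degree $-\rho = -1 - \sigma$, so the squaring property gives $Q^{-\rho}b = b^2$; the unstable condition gives $Q^0 b = b$ and $Q^{r\rho}b = 0$ for $r \geq 1$; and the vanishing bound applied with $a = -1$, $b = -1$ gives $s < \min\{-1, -1\} = -1$, i.e. $Q^{s\rho}b = 0$ for $s \leq -2$. Summing these yields $\sum_r (Q^{r\rho}b)\,t^r = b + b^2 t^{-1}$. Next I would establish $Q^{-1}c = b$: since $c$ is in degree $-\sigma$, the unstable condition gives $Q^0 c = c$, and the Bockstein relation produces $Q^{-1}c = Q^{0\rho - 1}c = \beta(Q^0 c) = \beta c = b$ by the definition of $b$.

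For the determination claim, the point is that the $Q^{r\rho}$-action is already forced on the generators: $Q^{r\rho}c = 0$ for $r \neq 0$ by the unstable condition ($r > 0$) and vanishing ($c \in \rpi_{-\sigma}$ gives the bound $s < -1/2$), while $Q^{r\rho}b$ is computed above. The operations $Q^{r\rho - 1}$ are then recovered via $\beta Q^{r\rho}$, and since $\beta b = \beta^2 c = 0$ and $\beta b^2 = 2b\beta b = 0$ in characteristic $2$, the $Q^{r\rho-1}b$ all vanish. Combined with the action on $\hf_\star$ supplied by property (f) and the Cartan formula, this determines the action on the entire $\hf_\star$-algebra.

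Finally, the explicit formula for $Q^{i\rho}(b^{2^k})$ falls out by recasting the Cartan formula as multiplicativity of the generating series $P(x)(t) := \sum_r (Q^{r\rho}x)\,t^r$, so that $P(b^{2^k})(t) = P(b)(t)^{2^k} = (b + b^2 t^{-1})^{2^k}$, and then invoking the Frobenius identity $(x+y)^{2^k} = x^{2^k} + y^{2^k}$ in characteristic $2$ to collapse this to $b^{2^k} + b^{2^{k+1}}t^{-2^k}$. The main obstacle is essentially bookkeeping: one must confirm that the $RO(C_2)$-graded degrees match at every step (for instance, that $t$ has degree $-\rho$ so that $b^2 t^{-1}$ really has degree $-\rho$), and verify that the generating-series manipulations involve only finitely many terms in each grading. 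The latter is automatic from the vanishing property, so no convergence issues arise.
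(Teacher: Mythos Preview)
Your proof is correct, and in fact the paper does not supply a proof of this lemma at all: it is stated immediately after recalling that $\rpis F(B\mu_{2+},\hf)$ is generated by $c$ and $b$, and the author evidently regards it as a routine consequence of the omnibus Theorem~\ref{omnibus}. Your argument spells out precisely the intended deduction---squaring, the unstable condition, vanishing, and the Bockstein identity $\beta c = b$ pin down the action on the generators, and the Cartan formula plus Frobenius handle powers of $b$---so there is nothing to compare against.
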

\begin{theorem}\label{action} We have the following formulas describing the action of power operations on
the equivariant dual Steenrod algebra.
\begin{align*}
	Q^{2^k\rho}\tau_k = \tau_{k+1} + \tau_0\xi_{k+1},
	&& Q^{2^k\rho}\overline{\tau}_k = \overline{\tau}_{k+1},
	 \\
	 \beta Q^{2^k \rho}\tau_k = \xi_{k+1},
	 &&\beta Q^{2^k \rho}\overline{\tau}_k = \overline{\xi}_{k+1}\\
	Q^{2^k\rho}\xi_k = \xi_{k+1} + \xi_1\xi_k^{2} &&
	Q^{2^k\rho}\overline{\xi}_k = \overline{\xi}_{k+1}
\end{align*}
\end{theorem}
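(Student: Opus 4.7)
The plan is to derive the formulas from the co-Nishida relations just established, combined with the explicit action of power operations on $\rpis F(B\mu_{2+}, \hf)$ from Lemma~\ref{lem-ops-on-bmu}. The generators $\tau_i, \xi_i$ are by definition the coefficients of the right coaction
\[
\psi_R(c) = c \otimes 1 + \sum_{i \geq 0} b^{2^i} \otimes \tau_i,
\qquad
\psi_R(b) = \sum_{i \geq 0} b^{2^i} \otimes \xi_i,
\]
where the second identity follows from the first by applying $\beta$. Substituting $c$ and $b$ into the co-Nishida identity will convert our knowledge of $Q^{r\rho}$ on $b$ and $c$ into formulas for $Q^{r\rho}$ on $\tau_i$ and $\xi_i$.

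First take $x = c$. By Lemma~\ref{lem-ops-on-bmu} together with the unstable and vanishing conditions, the left-hand side of the co-Nishida identity collapses to $\psi_R(c)$, with no $t$-dependence. The right-hand side is expanded by the Cartan formula; the key simplifications are that $Q^{p\rho}b^{2^i}$ vanishes unless $p = 0$ (yielding $b^{2^i}$) or $p = -2^i$ (yielding $b^{2^{i+1}}$), and that $Q^{p\rho-1}b^{2^i} = \beta Q^{p\rho}b^{2^i}$ vanishes identically because each surviving value of $Q^{p\rho}b^{2^i}$ is already in the image of $\beta$. Equating coefficients of $b^{2^k}$ yields, for each $k$, a formal power series identity in $t$ whose unknowns are $Q^{r\rho}\tau_k$, $Q^{r\rho+\sigma}\tau_k$, and the corresponding operations on $\tau_{k-1}$ from the shifted $i = k-1$ contribution. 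Extracting the coefficient of $t^{2^k}$ and using Lemma~\ref{lem-residue} to invert $\overline{\xi}(t)$ isolates $Q^{2^k\rho}\tau_k$ and gives $Q^{2^k\rho}\tau_k = \tau_{k+1} + \tau_0\xi_{k+1}$. The Bockstein identity $\beta Q^{2^k\rho} = Q^{2^k\rho-1}$ produces $\beta Q^{2^k\rho}\tau_k = \xi_{k+1}$, and running the same procedure with $x = b$ in place of $x = c$ yields $Q^{2^k\rho}\xi_k = \xi_{k+1} + \xi_1\xi_k^{2}$.

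For the conjugate generators $\overline{\tau}_k, \overline{\xi}_k$ I would repeat the argument using the \emph{left} coaction $\psi_L = (\chi \otimes 1) \circ \mathrm{sw} \circ \psi_R$, which takes the form
\[
\psi_L(c) = 1 \otimes c + \sum_{i \geq 0} \overline{\tau}_i \otimes b^{2^i},
\qquad
\psi_L(b) = \sum_{i \geq 0} \overline{\xi}_i \otimes b^{2^i}.
\]
The proof of Theorem~\ref{thm-comodule-structure} applied to the left comodule structure on $\rpis \textup{Ops}_{\hf}$ gives a left-coaction version of the co-Nishida identity, in which the series $\overline{\xi}(t), \overline{\tau}(t)$ are replaced by $\xi(t), \tau(t)$. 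Substituting $c$ and $b$ and extracting coefficients of $b^{2^k}$ as above produces the formulas $Q^{2^k\rho}\overline{\tau}_k = \overline{\tau}_{k+1}$ and $Q^{2^k\rho}\overline{\xi}_k = \overline{\xi}_{k+1}$; the absence of conjugation in the resulting series makes the coefficient of $t^{2^k}$ read off directly (no inversion required), which is why the conjugate formulas are simpler. Applying $\beta$ gives $\beta Q^{2^k\rho}\overline{\tau}_k = \overline{\xi}_{k+1}$.

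The hard part will be the formal-series bookkeeping. For each $k$, the coefficient of $b^{2^k}$ on the RHS collects terms from $i = k$ and, via the $b^{2^i} \mapsto b^{2^{i+1}}$ shift induced by $Q^{-2^i\rho}$, from $i = k-1$, while the $\overline{\tau}(t)$ factor intertwines the $Q^{r\rho}$ and $Q^{r\rho+\sigma}$ contributions. After reindexing $r \mapsto r + 2^{k-1}$ in the $i = k-1$ terms, the identity assumes the form to which the residue identities of Lemma~\ref{lem-residue} can be applied to extract the stated coefficients; verifying consistency of the other coefficients (rather than contradiction or over-determination) will require a little care, but is essentially forced by the structure of the equivariant dual Steenrod algebra in Theorem~\ref{arep-structure}.
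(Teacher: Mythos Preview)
Your overall strategy for the unconjugated generators is the same as the paper's: apply the co-Nishida relation to $x=c$ (and $x=b$) in $\rpis F(B\mu_{2+},\hf)$, use Lemma~\ref{lem-ops-on-bmu} to evaluate the operations on powers of $b$, and compare coefficients of $b^{2^{k+1}}$. However, your coefficient extraction is misstated. Because the unstable and vanishing conditions force $Q^{r\rho}c=0$ for $r\neq 0$, the left-hand side of the co-Nishida identity reduces to the \emph{constant} series $\psi_R(c)$; the relevant comparison is therefore the coefficient of $t^0$, not of $t^{2^k}$. At $t^0$ only $Q^{0}(\psi_R(c))$ and $\beta(\psi_R(c))\cdot[\overline{\xi}(t)^{-1}\overline{\tau}(t)]_{t^0}=\beta(\psi_R(c))\cdot\tau_0$ contribute, and matching the coefficient of $b^{2^{k+1}}$ gives $Q^{2^k\rho}\tau_k=\tau_{k+1}+\tau_0\xi_{k+1}$ directly. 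No inversion via Lemma~\ref{lem-residue} is needed at this stage; that lemma was already used to pass from $\psi_L$ on $\textup{Ops}_{\hf}$ to the right coaction appearing in the co-Nishida relation.

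For the conjugate generators you take a genuinely different route from the paper. The paper deduces the formulas for $\overline{\tau}_k$ and $\overline{\xi}_k$ purely algebraically from those for $\tau_k$ and $\xi_k$, by applying $Q^{2^k\rho}$ to the recursion $\overline{\tau}_k=\tau_k+\sum_{j<k}\xi_{k-j}^{2^j}\overline{\tau}_j$ (and the analogous one for $\overline{\xi}_k$), using the Cartan formula, the squaring property on $\xi_{k-j}^{2^j}$, and induction on $k$. Your proposal instead posits a left-coaction analogue of the co-Nishida identity. That is plausible, but it is not what Theorem~\ref{thm-comodule-structure} provides: that theorem already computes the \emph{left} coaction on $\rpis\textup{Ops}_{\hf}$, and combining it with Lemma~\ref{lem-abstract-nishida} (which is phrased for the \emph{right} coaction on $\hf_\star E$) is precisely what introduces the conjugate series $\overline{\xi}(t),\overline{\tau}(t)$. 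To carry out your plan you would first need to state and prove a left-handed analogue of Lemma~\ref{lem-abstract-nishida}; this is believable by symmetry, but it is an extra step you have not supplied. The paper's inductive algebraic argument avoids this by reducing the conjugate formulas to the unconjugated ones already in hand.
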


Before turning to the proof, we record a corollary.

\begin{corollary} The equivariant dual Steenrod algebra is generated by $\tau_0$ as an algebra
over the ring of power operations. The polynomial subalgebra $P_{\star} = \hf_{\star}[\xi_1, \xi_2, ...]$
is generated by $\xi_1$ over the ring of power operations. 
\end{corollary}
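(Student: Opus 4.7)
The plan is to read the corollary directly off of Theorem \ref{action} together with the presentation of $\arep$ in Theorem \ref{arep-structure}; the whole argument is a bookkeeping induction and there is no essential obstacle once those two theorems are in hand. Recall that Theorem \ref{arep-structure} presents $\arep$ as an $\hf_\star$-algebra on the generators $\tau_i, \xi_i, \overline{u_\sigma}$ modulo the relations $\tau_0 a_\sigma = \overline{u_\sigma} + u_\sigma$ and $\tau_i^2 = \tau_{i+1}a_\sigma + \xi_{i+1}\overline{u_\sigma}$. So to prove the corollary it suffices to produce each of $\overline{u_\sigma}$, $\tau_k$, and $\xi_k$ starting from $\tau_0$ and applying power operations and $\hf_\star$-scalars.

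First I would dispatch $\overline{u_\sigma}$: the defining relation gives $\overline{u_\sigma} = \tau_0 a_\sigma + u_\sigma$, and both $a_\sigma$ and $u_\sigma$ lie in $\hf_\star$, so $\overline{u_\sigma}$ is already in the $\hf_\star$-subalgebra generated by $\tau_0$. Next, I induct on $k$ with the hypothesis that $\tau_0,\ldots,\tau_k$ and $\xi_1,\ldots,\xi_k$ all lie in the subalgebra $\mathcal{B}_\star \subseteq \arep$ generated by $\tau_0$ under the action of the equivariant power operations. The base case is immediate. For the inductive step, Theorem \ref{action} gives
\[
\xi_{k+1} = \beta Q^{2^k\rho}\tau_k, \qquad \tau_{k+1} = Q^{2^k\rho}\tau_k - \tau_0\,\xi_{k+1},
\]
and both $Q^{2^k\rho}\tau_k$ and $\tau_0\xi_{k+1}$ lie in $\mathcal{B}_\star$ by the inductive hypothesis (applied to $\tau_k$, and then to the newly produced $\xi_{k+1}$). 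Hence $\tau_{k+1}, \xi_{k+1} \in \mathcal{B}_\star$, completing the induction and proving the first claim.

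For the second claim, I would start from $\xi_1$ (itself a power operation on $\tau_0$, but that is not needed here since we only want a generating set for $P_\star$), and induct using the formula
\[
Q^{2^k\rho}\xi_k = \xi_{k+1} + \xi_1\xi_k^2,
\]
from Theorem \ref{action}, rearranged to $\xi_{k+1} = Q^{2^k\rho}\xi_k - \xi_1\xi_k^2$. By induction this places every $\xi_{k+1}$ in the subalgebra of $P_\star$ generated by $\xi_1$ under power operations, which is all that is needed since $P_\star = \hf_\star[\xi_1,\xi_2,\ldots]$.

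The only subtlety worth naming is that one must take care to only apply operations whose output lands in $P_\star$ (i.e.\ operations that do not introduce $\tau_i$'s) when proving the second statement; this is automatic because $Q^{2^k\rho}$ applied to $\xi_k$ produces a polynomial in the $\xi_i$'s by the formula above, so the induction never leaves $P_\star$. Otherwise the argument is purely formal, and the real work has already been done in Theorem \ref{action}.
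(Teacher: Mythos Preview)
Your argument is correct and is exactly the deduction the paper has in mind; the corollary is stated without proof in the paper, and your induction using the formulas $\xi_{k+1}=\beta Q^{2^k\rho}\tau_k=Q^{2^k\rho-1}\tau_k$, $\tau_{k+1}=Q^{2^k\rho}\tau_k+\tau_0\xi_{k+1}$, and $\xi_{k+1}=Q^{2^k\rho}\xi_k+\xi_1\xi_k^2$ from Theorem~\ref{action} (together with the relation for $\overline{u_\sigma}$ from Theorem~\ref{arep-structure}) is the intended one. One small remark: your closing ``subtlety'' about staying inside $P_\star$ is not actually needed for the statement as phrased---the claim is only that the subalgebra generated by $\xi_1$ under power operations \emph{contains} $P_\star$, not that it equals $P_\star$---so you may safely drop that paragraph.
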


One can use this corollary to deduce splitting results analogous to those of Steinberger
\cite[III.2]{Hinfty}. We
will not explore this here, though our main application of the theorem is a homological splitting
result in the course of our cellular construction of $\bpr$ below.

\begin{proof}[Proof of the theorem] First we show how that the right-hand formulas follow
from the left-hand formulas. Indeed, $\overline{\tau}_k$ is defined inductively by
	\[
	\overline{\tau}_k = \tau_k + \sum_{0\le j<k} \xi^{2^j}_{k-j} \overline{\tau}_j
	\]
We want to apply $Q^{2^k\rho}$ to both sides, and
assume by induction that $Q^{2^k\rho}\overline{\tau}_j = \overline{\tau}_{j+1}$ for
$j<k$. Observe that, by the Cartan formula and degree
considerations,
	\begin{align*}
	Q^{2^k\rho}(\xi^{2^j}_{k-j}\overline{\tau}_j) &= 
	Q^{(2^k - 2^j)\rho}(\xi^{2^j}_{k-j})Q^{2^j\rho}\overline{\tau}_j\\
	&= \xi^{2^{j+1}}_{k-j}Q^{2^j\rho}\overline{\tau}_j & \text{(Squaring)}\\
	& =  \xi^{2^{j+1}}_{k-j}\overline{\tau}_{j+1} & \text{(Induction)}.
	\end{align*}
Thus,
	\begin{align*}
	Q^{2^k\rho}\overline{\tau}_k &= Q^{2^k\rho}\tau_k + \sum_{j<k} \xi^{2^{j+1}}_{k-j}\overline{\tau}_{j+1}\\
	&= Q^{2^k\rho}\tau_k + \xi_k^{2}\overline{\tau}_1 + \cdots\\
	&= \tau_{k+1} + \xi_{k+1}\tau_0 + \xi_k^{2}\overline{\tau}_1 + \cdots \\
	&= \tau_{k+1} + \sum_{0\le j<k+1} \xi^{2^j}_{k+1-j} \overline{\tau}_j\\
	&= \overline{\tau}_{k+1}.
	\end{align*}
Here we've used that $\overline{\tau}_0 = \tau_0$. Applying the Bockstein to the definition
of $\overline{\tau}_k$ and using induction and the identity $\beta \tau_j = \xi_{j}$ yields the
next formula on the right-hand column. The last formula is proved just like the first one, except we need
to know that
	\[
	Q^{(2^k - 2^j +1)\rho}\xi_{k-j}^{2^j} = 0,
	\]
which follows from the Cartan formula and vanishing of some binomial coefficients mod 2. 

Now we turn to the proof of the identities:
\begin{align*}
	Q^{2^k\rho}\tau_k = \tau_{k+1} + \tau_0\xi_{k+1},
	 \\
	 \beta Q^{2^k \rho}\tau_k = \xi_{k+1},\\
	Q^{2^k\rho}\xi_k = \xi_{k+1} + \xi_1\xi_k^{2}.
\end{align*}
The second follows immediately from the first, so we will only prove the first and last identities. We
will apply the co-Nishida relations to the $N_{\infty}$-algebra $F(B\mu_{2+}, \hf)$. For the element
$c \in \rpi_{-\sigma}F(\mu_{2+}, \hf)$ the co-Nishida relations read
	\[
	\psi_R(c) = \psi_R(Q^0c) = \left[ \sum_{r \in \mathbb{Z}} Q^{r\rho}(\psi_R(c)) \overline{\xi}(t)^r
	+ \sum_{r \in \mathbb{Z}}Q^{r\rho+\sigma}(\psi_R(c)) \overline{\xi}(t)^{r} \overline{\tau}(t)\right]_{t^0}
	\]
We are free to ignore terms with $r\ge 0$ since these do not contribute to the constant term. Also,
each term in $\psi_R(c)$ has degree $-\sigma$, so, by vanishing, we need only consider
the operations $Q^0$ and $\beta = Q^{-1} = Q^{-\rho + \sigma}$. So we can rewrite the expression as:
	\[
	\psi_R(c) = 
	\left[ Q^{0}(\psi_R(c))
	+ \beta(\psi_R(c)) \overline{\xi}(t)^{-1} \overline{\tau}(t)\right]_{t^0}.
	\]
Now recall that $\psi_R(c) = c \otimes 1 + \sum b^{2^k} \otimes \tau_k$, so we can expand out the right
hand side:
	\begin{align*}
	Q^{0}(\psi_R(c))
	+ \beta(\psi_R(c)) \overline{\xi}(t)^{-1} \overline{\tau}(t)
	&= c \otimes 1 +
	\sum_{k\ge 0}\sum_{i+j=0} Q^{i\rho}(b^{2^k}) \otimes Q^{j\rho}\tau_k \\
	&+ 
	\sum_{k\ge 0}\beta(b^{2^k}) \otimes \tau_k\overline{\xi}(t)^{-1}\overline{\tau}(t)
	+\sum_{k\ge 0}b^{2^k} \otimes \beta\tau_k\overline{\xi}(t)^{-1}\overline{\tau}(t)
	\end{align*}
Since $\beta c = b$, $\beta b^{2^k} = 0$. Also, the constant term in $\overline{\xi}(t)^{-1}\overline{\tau}(t)$
is $\overline{\tau}_0 = \tau_0$. Indeed:
	\begin{align*}
	\frac{\overline{\tau}(t)}{\overline{\xi}(t)} &=
	\frac{\overline{\tau}(t)}{t + \overline{\xi}_1 t^2 + \overline{\xi}_2 t^4 + \cdots}
	\\
	&=
	\frac{1}{t}\frac{\overline{\tau}(t)}{1 + \overline{\xi}_1 t + \overline{\xi}_2 t^3 + \cdots}\\
	&=\frac{1}{t}\left[(\overline{\tau}(t))\right] \cdot
	\left[ 1 + (\overline{\xi}_1t + \cdots) + ( \overline{\xi}_1 t + \cdots)^2 + \cdots\right]\\
	&= \frac{1}{t} (\overline{\tau}_0t + \overline{\tau}_1t^2 + \cdots)(1 + \overline{\xi}_1t + \cdots)\\
	&= \overline{\tau}_0 + (\overline{\tau}_0\overline{\xi}_1 + \overline{\tau}_1)t + \cdots
	\end{align*}
So, at this point we get the identity
	\[
	\psi_R(c) = c \otimes 1 + \sum_{k\ge 0}\sum_{i+j = 0} Q^{i\rho}(b^{2^k}) \otimes 
	Q^{j\rho} \tau_k
	+ \sum_{k\ge 0} b^{2^k} \otimes \xi_k\tau_0.
	\]
Now we expand the left hand side and simplify the
right hand side using Lemma \ref{lem-ops-on-bmu} to get
	\[
	c \otimes 1 + \sum b^{2^k} \otimes \tau_k =
	c \otimes 1 + \sum_{k\ge 0}b^{2^{k+1}} \otimes Q^{2^k\rho} \tau_k
	+ \sum_{k\ge 0} b^{2^k} \otimes \xi_k\tau_0
	\]
Comparing coefficients of $b^{2^{k+1}}$ yields the desired result:
	\[
	Q^{2^k\rho}\tau_k = \tau_{k+1} + \tau_0\xi_{k+1}.
	\]
The second identity follows from the fact that $\beta \tau_i = \xi_i$ and $\beta\xi_j = 0$, and
third identity is proved just like the first. 
\end{proof}

\section{A cellular construction of $\bpr$}

To motivate the maneuvers in the
next section, we recall Priddy's approach for constructing $\textup{BP}$. 
First, he builds a spectrum $X$ by attaching even cells to $S^0_{(2)}$ to kill odd homotopy.
There is a canonical map $X \to \textup{H}\F_2$ classifying the unit in cohomology, and
Priddy wants to show that
	\[
	\textup{H}\F_{2*}X \to \mathcal{A}_*
	\]
is surjective. In order to do so, he notes that $\zeta_1$ is automatically hit because the unit map
$S^0 \to X$ extends over the cofiber on $\eta$, which is detected by $\zeta_1$. In order to
hit the remaining $\zeta_i$, he purports to construct a map
	\[
	\mathbb{P}_2(X) \to X
	\]
by obstruction theory. The obstructions live in $H^{n+1}(\mathbb{P}_2(X); \pi_{n}X)$. Half
of these obstructions vanish by construction because $\pi_{\text{odd}}X = 0$. The other
obstructions live in \emph{odd} cohomology of $\mathbb{P}_2(X)$ with coefficients in
$\pi_*(X)$. If we knew that $\pi_*(X)$ was torsion-free, we would be done. The author
does not know how to prove that $\pi_*(X)$ is torsion-free without assuming the result
we are trying to prove, or assuming the existence of $\textup{BP}$, so it seems we have encountered
a problem in our obstruction theory.

In the equivariant case, things are even worse. Let $\ev$ denote the $C_2$-spectrum obtained
from $S^0_{(2)}$ by inductively killing the Mackey functors $\rpi_{*\rho -1}$. We might hope
to construct a map
	\[
	\mathbb{P}_2(\ev) \to \ev
	\]
by inducting up the slice tower of $\ev$. We find that the obstructions live in
	\[
	[\mathbb{P}_2(\ev), \Sigma P^{2n}_{2n}\ev]^{C_2}
	\]
where $P^{2n}_{2n}\ev$ denotes the $2n$-slice of $\ev$. Just as in the non-equivariant case,
we need to know something about the homotopy Mackey functor $\rpi_{n\rho}\ev$
in order to conclude that these groups vanish. This time it is not enough to know that these
Mackey functors are torsion free, we must check that their underlying groups are torsion-free
\emph{and} that they have a surjective restriction map. 

The observation that saves the argument is as follows: we do not need the map
$\mathbb{P}_2(\ev) \to \ev$ all at once. Instead, we can inductively define maps
from a sub-complex of $\mathbb{P}_2(\ev)$ into $\ev$, and use these maps
to define \emph{some} of the power operations we need. Something similar works, and more
easily, in the non-equivariant case. 

The induction requires some rather technical homological algebra, for which we give
proofs that work, but are perhaps not so elegant. We imagine that developing the theory
of the slice filtration in the context of chain complexes of $RO(C_2)$-graded Mackey functors
would result in more conceptual proofs of these results, but we do not see immediately just
how this should go.

Finally, we use this characterization of the homology of $\ev$ to determine its homotopy,
its geometric fixed points, its slice tower, and to identify if with $\bpr$. Though, once again,
we emphasize that the fact that $\ev \cong \bpr$ admits a much simpler proof but it is
not in the spirit of this paper.
\subsection{Review of the slice filtration}
We will need to build maps using obstruction theory. In the classical case, obstruction
theory comes from Postnikov towers. We will need an equivariant version of these originating
in work of Dugger and developed much further by Hill-Hopkins-Ravenel and Hill.
\begin{definition} The set of {\bf slice cells} is
	\[
	\{ C_{2+} \wedge S^m, S^{m\rho}, S^{m\rho -1} | m \in \mathbb{Z}\}.
	\]
The dimension of a slice cell is the dimension of the underlying spheres.
\end{definition}

Define a full sub-homotopy theory $\textup{Sp}^{C_2}_{>n} \subset \textup{Sp}^{C_2}$
containing slice cells of dimension $>n$ and closed under equivalence,
homotopy colimits, and extension. 

The inclusion $\textup{Sp}^{C_2}_{>n} \hookrightarrow \textup{Sp}^{C_2}$ admits
a homotopical right adjoint $P_{n+1}: \textup{Sp}^{C_2} \longrightarrow \textup{Sp}^{C_2}_{>n}$
and we define $P^{n}$ by the natural cofiber sequence
	\[
	P_{n+1} \to \textup{id} \to P^n.
	\]
The inclusions $\textup{Sp}^{C_2}_{>n} \subset \textup{Sp}^{C_2}_{>n-1}$ yield natural
maps $P^n \to P^{n-1}$ so every $E \in \textup{Sp}^{C_2}$ has an associated tower:
	\[
	\xymatrix{
	&& \vdots\ar[d] &\\
	&&P^nE \ar[d] &\ar[l]P^n_nE\\
	&&P^{n-1}E \ar[d]&\ar[l]P^{n-1}_{n-1}E\\
	&&\vdots \ar[d]&\\
	E\ar[rr]\ar[uurr]\ar[uuurr]&&P^0E&\ar[l]P^0_0E
	}
	\]
The fiber $P^n_nE$ is called the {\bf $n$-slice} of $E$, and the tower is called the {\bf slice tower} of $E$.
The natural map $E \to \holim P^nE$ is a weak equivalence and, for each $k$, the tower of
homotopy Mackey functors $\{\rpi_kP^nE\}$ is pro-isomorphic to the constant tower $\{\rpi_kE\}$
\cite[Thm 4.42]{HHR}.

We will need a formula for the $n$-slices in terms of the homotopy Mackey functors of $E$, but
to describe this formula we need a definition.

\begin{definition} If $\underline{M}$ is a $C_2$ Mackey functor let $P^0\underline{M}$ denote the
Mackey functor:
	\[
	P^0\underline{M} = 
	\begin{gathered}
	\xymatrix{
	\textup{im}(\textup{res}: \underline{M}(C_2/C_2) \to \underline{M}(C_2))\ar@/_/[d]\\
	\underline{M}(C_2) \ar@/_/[u]
	}
	\end{gathered}
	\]
\end{definition}

\begin{theorem}[HHR, Hill] \label{slice-tower} For any $C_2$-spectrum $E$, the slices are given by
	\[
	P^{2n}_{2n}E = \Sigma^{n\rho}\textup{H}P^0\rpi_{n\rho}E,
	\]
	\[
	P^{2n-1}_{2n-1}E = \Sigma^{n\rho -1}\textup{H}\rpi_{n\rho -1}E.
	\]
\end{theorem}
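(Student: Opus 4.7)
The plan is to first establish a structural classification of slices for $C_2$-spectra, and then identify the specific Mackey functor in each slice by a homotopy-group computation on the defining cofiber sequence.

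The classification I aim for, due to HHR and sharpened for $C_2$ by Hill, is: a $C_2$-spectrum $X$ is a $(2n)$-slice iff $X\simeq\Sigma^{n\rho}\textup{H}\underline{M}$ for a Mackey functor $\underline{M}$ with injective restriction (equivalently $\underline{M}=P^0\underline{M}$), and a $(2n-1)$-slice iff $X\simeq\Sigma^{n\rho-1}\textup{H}\underline{N}$ for arbitrary $\underline{N}$. The slice-$\leq$ direction reduces to direct calculation of $[Y,\Sigma^{n\rho}\textup{H}\underline{M}]$ for $Y$ ranging over slice cells of strictly larger dimension; these all vanish from the explicit description of $\rpis\textup{H}\underline{M}$ given in Section~1, since positive $\rho$-stems of $\rpis\textup{H}$ vanish and the underlying homotopy is concentrated in degree zero. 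The slice-$\geq$ direction is more delicate: in the odd case the free cell $C_{2+}\wedge S^{2n-1}$ alone resolves any $\underline{N}$, so $\Sigma^{n\rho-1}\textup{H}\underline{N}$ is evidently slice-$\geq 2n-1$; in the even case, the key input is that a Mackey functor with injective restriction fits in a short exact sequence $0\to\underline{M}\to\underline{M}_{\uparrow}\to\underline{Q}\to 0$, with $\underline{M}_{\uparrow}$ induced from the underlying group, and $\Sigma^{n\rho}\textup{H}\underline{M}_{\uparrow}$ is built directly from the slice cells $C_{2+}\wedge S^{2n}$.

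With the classification in hand, the formulas follow from the slice cofiber sequence. In the odd case, the even slice $P^{2n}_{2n}E=\Sigma^{n\rho}\textup{H}(-)$ contributes nothing to $\rpi_{n\rho-1}$, so $\rpi_{n\rho-1}P_{2n-1}E\cong\rpi_{n\rho-1}E$ and the classification forces $P^{2n-1}_{2n-1}E=\Sigma^{n\rho-1}\textup{H}\rpi_{n\rho-1}E$. For the even case, applying $\rpi_{n\rho}$ to $P_{2n+1}E\to P_{2n}E\to P^{2n}_{2n}E$ and a slice-spectral-sequence argument identifies the image of $\rpi_{n\rho}P_{2n+1}E\to\rpi_{n\rho}E$ with the kernel of restriction: the only higher slice that contributes to $\rpi_{n\rho}$ is $P^{2n+1}_{2n+1}E=\Sigma^{(n+1)\rho-1}\textup{H}\rpi_{(n+1)\rho-1}E$, and $\rpi_{n\rho}\Sigma^{(n+1)\rho-1}\textup{H}\underline{N}=\rpi_{-\sigma}\textup{H}\underline{N}$ is naturally the kernel of restriction on $\underline{N}$ (computed via the cofiber sequence $C_{2+}\to S^0\to S^{\sigma}$). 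Hence $\rpi_{n\rho}P^{2n}_{2n}E\cong P^0\rpi_{n\rho}E$, and the classification yields the stated formula.

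The main obstacle is the even-slice classification, specifically the slice-$\geq 2n$ direction. The Burnside projective $\underline{A}$ itself fails to have injective restriction (its restriction has kernel generated by $[C_2]-2$), so ordinary projective resolutions of Mackey functors do not directly lift to cellular presentations in the slice filtration. One must argue via the induced Mackey functor $\underline{M}_{\uparrow}$ (which has injective restriction and whose Eilenberg--MacLane spectrum is built from free cells $C_{2+}\wedge S^{2n}$) and verify iteratively that the quotient $\underline{Q}$ in the above exact sequence also inherits injective restriction and admits such a presentation. Combined with the closure of the slice-$\geq 2n$ subcategory under extensions and colimits, this is what pins down the classification and, in turn, the formulas for the slices.
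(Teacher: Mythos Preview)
The paper does not prove this theorem. It is stated as a known result, attributed to Hill--Hopkins--Ravenel and Hill, and used as a black box for the obstruction-theoretic arguments in \S6. There is therefore no proof in the paper to compare your proposal against.

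Your outline is broadly the correct strategy from the original references: first classify $C_2$-slices as shifted Eilenberg--MacLane spectra (with the injective-restriction constraint in the even case), then read off the relevant Mackey functor from a cofiber sequence in the slice tower. One small caution in your even-case identification: from the cofiber sequence $C_{2+}\to S^0\to S^{\sigma}$ one finds $\pi^{C_2}_{\sigma}\textup{H}\underline{N}=\ker(\textup{res})$, not $\pi^{C_2}_{-\sigma}\textup{H}\underline{N}$; the latter is a quotient by the transfer. In practice the cleanest route is to note that, once you know $P^{2n}_{2n}E\simeq\Sigma^{n\rho}\textup{H}\underline{M}$ with $\underline{M}$ having injective restriction, the map $\rpi_{n\rho}E\to\rpi_{n\rho}P^{2n}_{2n}E=\underline{M}$ must factor through $P^0\rpi_{n\rho}E$ (any map to a Mackey functor with injective restriction does), and then check it is an isomorphism there using that lower slices contribute nothing to $\rpi_{n\rho}$ and $\rpi_{n\rho-1}$. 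Your paragraph on the ``main obstacle'' is apt: the slice-$\geq 2n$ direction for the even classification is indeed where the work is, and the inductive argument via the induced Mackey functor $\underline{M}_{\uparrow}$ is the right idea.
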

\subsection{Construction of $\textup{Even}(S^0)$}
{\it The remainder of \S6 is implicitly localized at 2}.
\newline
Suppose given a $C_2$-spectrum $Y$. We will say that $Z$ is obtained from $Y$ by
{\bf non-trivially attaching $n\rho$-cells} if there is a cofiber sequence
	\[
	\bigvee W_{\alpha} \stackrel{f}{\longrightarrow} Y \longrightarrow Z
	\]
where each $W_{\alpha}$ is a slice cell of dimension $2n-1$ and $\textup{ker}(f_*) \subset ([C_2], 2) \cdot \bigoplus_{\alpha}
\rpi_{n\rho-1}(W_{\alpha})$. 

\begin{construction} Let $\textup{Even}(S^0)_0 = S^0$. Given $\textup{Even}(S^0)_{k}$, non-trivially attach cells of dimension
$2k+2$ to obtain $\textup{Even}(S^0)_{k+1}$ with $\rpi_{(k+1)\rho -1}\textup{Even}(S^0)_{k+1} = 0$. Let $\textup{Even}(S^0) = \textup{colim }\textup{Even}(S^0)_k$.
Note that $\textup{Even}(S^0)$ comes equipped with a map $S^0 \longrightarrow \textup{Even}(S^0)$ which 
we'll call the unit.
\end{construction}

An immediate consequence of the construction is a splitting result in homology.

\begin{lemma}\label{even-split} $\hz \wedge \textup{Even}(S^0) \cong \hz \wedge W$ as $\hz$-modules, where $W$
is a wedge of even-dimensional slice cells.
\end{lemma}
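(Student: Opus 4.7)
The plan is to induct on the stages $\textup{Even}(S^0)_k$, showing at each stage that $\hz \wedge \textup{Even}(S^0)_k \simeq \hz \wedge W_k$ for some wedge $W_k$ of even-dimensional slice cells. The base case $\textup{Even}(S^0)_0 = S^0 = S^{0\rho}$ is immediate. For the inductive step, the construction provides a cofiber sequence
\[
\bigvee_\alpha W_\alpha \xrightarrow{f} \textup{Even}(S^0)_k \longrightarrow \textup{Even}(S^0)_{k+1}
\]
in which each $W_\alpha$ is a slice cell of dimension $2k+1$, so each $\Sigma W_\alpha$ is an even-dimensional slice cell. I would show that $\hz \wedge f$ is null-homotopic; the cofiber sequence would then split, giving $\hz \wedge \textup{Even}(S^0)_{k+1} \simeq \hz \wedge W_k \vee \bigvee_\alpha \hz \wedge \Sigma W_\alpha$ and completing the induction. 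Since $\hz \wedge (-)$ commutes with the sequential colimit $\hocolim_k \textup{Even}(S^0)_k = \textup{Even}(S^0)$, this suffices.

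To see that $\hz \wedge f$ is null, by additivity and the inductive hypothesis it is enough to check that every map $W_\alpha \to \hz \wedge V$ is null for $W_\alpha$ a $(2k+1)$-dimensional slice cell and $V$ an even-dimensional slice cell. Using the self-duality of $C_{2+}$ (Wirthm\"uller \cite{Wirth}), the four possibilities collapse to two Mackey functor computations: $\rpi_{(n-m)\rho - 1}\hz$ when $W_\alpha = S^{n\rho - 1}$ and $V = S^{m\rho}$, and $\pi^u_{2j - 1}\hz$ in every case involving a $C_{2+}$-induced slice cell. The latter vanishes because $\pi_*\hz = \mathbb{Z}$ is concentrated in degree zero.

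The crux is the first case, namely the integral gap $\rpi^{C_2}_{\ell\rho - 1}\hz = 0$ for every $\ell \in \mathbb{Z}$. Since \S6 is implicitly $2$-local, I would deduce this from the mod-$2$ gap $\rpi^{C_2}_{\ell\rho - 1}\hf = 0$ recorded in \S1 via the Bockstein long exact sequence associated to $\hz \xrightarrow{\cdot 2} \hz \to \hf$. Surjectivity of multiplication by $2$ on $\rpi^{C_2}_{\ell\rho - 1}\hz$ is immediate from the mod-$2$ gap; injectivity follows from the fact that the only nonzero $\rpi^{C_2}_{\ell\rho}\hf$, namely $\mathbb{F}_2$ when $\ell = 0$, lifts to $\rpi^{C_2}_0 \hz = \mathbb{Z}_{(2)}$, so the connecting map vanishes. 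A $2$-local abelian group on which multiplication by $2$ is invertible is zero. Alternatively, this is a special case of the HHR gap theorem \cite{HHR}, or of the explicit calculations in \cite{FL}.
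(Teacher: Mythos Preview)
Your proof is correct and follows the same route as the paper: induct on $k$, use the vanishing $\rpi_{m\rho-1}\hz = 0$ to see that the attaching map becomes null after smashing with $\hz$, and split the resulting cofiber sequence. You are more careful than the paper about the case analysis involving induced slice cells and about the passage to the colimit.

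One small slip: the sentence ``a $2$-local abelian group on which multiplication by $2$ is invertible is zero'' is false as stated ($\mathbb{Q}$ is a counterexample), so your Bockstein argument needs the easy observation that $\pi^{C_2}_{\ell\rho-1}\hz$ is finitely generated (it is Bredon cohomology of a finite complex); your alternative citations to \cite{HHR} or \cite{FL} handle this in any case.
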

\begin{proof} Consider the cofiber sequence:
\[
	\xymatrix{
	\bigvee W_{\alpha} \ar[r] \ar[dr]&
	\textup{Even}(S^0)_k\ar[r]\ar[d] &
	\textup{Even}(S^0)_{k+1} \ar[r] &
	\bigvee \Sigma W_{\alpha}\\
	&\hz \wedge \textup{Even}(S^0)_k&&
	}
	\]
By induction we may assume that $\hz \wedge \textup{Even}(S^0)_k \cong \hz \wedge Y$
where $Y$ is a wedge of even-dimensional slice cells. Thus, the diagonal map is a sum of elements in $\rpi_{m\rho -1}\hz$ for various $m$. But these
Mackey functors are all zero, so the map is null and we get the desired splitting.

\end{proof}

A slightly less immediate consequence of the construction is the vanishing of the mod 2 Hurewicz map.

\begin{lemma}\label{hurewicz-vanish}
The Hurewicz map $\rpi_{k\rho} \ev \to \hf_{k\rho}\ev$ is zero for $k>0$.
\end{lemma}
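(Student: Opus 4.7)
The plan is to proceed by induction on the cellular construction $\ev = \colim_n \ev_n$, showing at each stage that the Hurewicz map $\rpi_{k\rho}\ev_n \to \hf_{k\rho}\ev_n$ is zero for $k > 0$. The base case $\ev_0 = S^0$ is immediate: $\hf_{k\rho}S^0 = \pi^{C_2}_{k\rho}\hf = 0$ for $k>0$ by the computation of $\rpi_\star \hf$, since $(a,b) = (k,k)$ with $k>0$ lies in neither the polynomial nor the Pontryagin-dual region of the picture.

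For the inductive step, I would use the cofiber sequence $\bigvee_\alpha W_\alpha \to \ev_n \to \ev_{n+1}$ together with the key computation underlying Lemma~\ref{even-split}: since the $W_\alpha$ are $(2n+1)$-dimensional odd slice cells and all Mackey functors $\rpi_{m\rho-1}\hf$ and $\pi^u_{\text{odd}}\hf$ vanish, the attaching map becomes null after smashing with $\hf$. This yields a splitting
\[
\hf \wedge \ev_{n+1} \simeq \hf \wedge \ev_n \; \vee \; \bigvee_\alpha \hf \wedge \Sigma W_\alpha,
\]
and a matching long exact sequence on $\rpi_*$. Elements of $\rpi_{k\rho}\ev_{n+1}$ in the image of $\rpi_{k\rho}\ev_n$ have vanishing Hurewicz image in $\hf_{k\rho}\ev_n$ by induction, and hence vanish in $\hf_{k\rho}\ev_{n+1}$ via the inclusion of summands.

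The remaining case is a class $x \in \rpi_{k\rho}\ev_{n+1}$ with nontrivial image in $\rpi_{k\rho}\bigvee \Sigma W_\alpha$ under the boundary. Here $\Sigma W_\alpha$ is a $(2n+2)$-dimensional even slice cell, so such $x$ can only exist when $k \leq n+1$, and the potentially nonzero contribution to $\hf_{k\rho}$ under the splitting occurs at $k = n+1$. In that critical case, I would use the non-triviality hypothesis on the attaching map, $\ker(f_*) \subset ([C_2],2) \cdot \bigoplus_\alpha \rpi_{n\rho-1}(W_\alpha)$, to argue that any such boundary class, when pushed into $\hf$-homology through the splitting, must be detected by a class in $\rpi_{n\rho-1}$ of the attached cell which is already killed --- preventing a genuinely new Hurewicz-nonzero class from appearing.

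The main obstacle is making this last step rigorous: tracking boundary classes through the splitting while keeping careful control of the Mackey-functor structure (in particular the interaction of restriction and transfer with the $([C_2],2)$-ideal condition). I expect this to require the $RO(C_2)$-graded vanishing ranges (e.g.\ Lemma~\ref{lem-c2-vanish-range}) together with a separate analysis of the underlying and fixed-point levels, using that the underlying spectrum of $\ev$ is built from even non-equivariant cells (so the Hurewicz map vanishes on underlying homotopy by the classical Priddy-style argument). Passing to the colimit $\ev$ then yields the claim, since any $x \in \rpi_{k\rho}\ev$ is represented at some finite stage.
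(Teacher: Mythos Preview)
Your strategy is correct and is essentially the paper's, but the step you flag as the ``main obstacle'' is in fact the cleanest part of the argument and the paper dispatches it in one line. Here is the observation you are missing.

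At the critical stage $k=n+1$, write the commuting square coming from the cofiber sequence $\ev_n \to \ev_{n+1} \to \bigvee_\alpha \Sigma W_\alpha$:
\[
\xymatrix{
\rpi_{(n+1)\rho}\ev_{n+1} \ar[r]^-{j}\ar[d] & \bigoplus_\alpha \rpi_{(n+1)\rho}\Sigma W_\alpha \ar[d]\\
\hf_{(n+1)\rho}\ev_{n+1} \ar[r]^-{\cong} & \bigoplus_\alpha \hf_{(n+1)\rho}\Sigma W_\alpha.
}
\]
The bottom arrow is an isomorphism because $\hf_{(n+1)\rho}\ev_n = 0$: the cells of $\ev_n$ are even slice cells of dimension $\le 2n$, and $\hf_{(n+1)\rho}$ of any such cell vanishes. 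So the Hurewicz image of $g$ is read off entirely from $j(g)$ followed by the right vertical map. By exactness $j(g)\in\ker(\Sigma f_*)$, and the non-triviality hypothesis says precisely that this kernel is contained in $([C_2],2)\cdot\bigoplus_\alpha \rpi_{(n+1)\rho}\Sigma W_\alpha$. But the right vertical map is the Hurewicz map for a wedge of even slice cells, which in this degree is nothing more than reduction of $\underline{A}$ (or an induced copy of $\underline{\mathbb{Z}}$) modulo $([C_2],2)$. Hence $j(g)$ dies under it, and the Hurewicz image of $g$ is zero.

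No separate bookkeeping of restriction versus transfer, no appeal to $RO(C_2)$-graded vanishing ranges beyond the splitting of Lemma~\ref{even-split}, and no comparison with the classical underlying argument is needed: the ideal $([C_2],2)$ in the definition of ``non-trivially attaching cells'' was chosen to be exactly the kernel of the mod-$2$ Hurewicz map on a slice cell, so exactness, non-triviality, and Hurewicz-on-cells fit together tautologically. Your inductive framework on $n$ then carries the statement to $\ev$ exactly as you say; the paper's own proof is terser and focuses on this one square.
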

\begin{proof} We argue as in Priddy.
We have a commutative diagram:
	\[
	\xymatrix{
	\rpi_{n\rho}\textup{Even}(S^0)_n \ar[r] \ar[d]& \bigoplus \rpi_{n\rho}\Sigma W_{\alpha} \ar[r]^{\partial} \ar[d]
	& \rpi_{n\rho}\Sigma \textup{Even}(S^0)_{n-1}\\
	\hf_{n\rho}\textup{Even}(S^0)_n \ar[r] & \bigoplus \hf_{n\rho}\Sigma W_{\alpha} &
	}
	\]
The bottom horizontal arrow is an isomorphism for $n>0$ because $\hf_{n\rho}\textup{Even}(S^0)_{n-1} = 0$ by induction on $n$.
Thus $j(g)$ is in the kernel of $\partial$ but does not lie in 
$([C_2], 2)\cdot \bigoplus \rpi_{n\rho}\Sigma W_{\alpha}$, otherwise it would vanish under the Hurewicz map.

For obstruction theory purposes, we will need to know a bit about the homotopy groups
of $\textup{Even}(S^0)$.
\end{proof}

\begin{lemma}\label{odd-zero} $\rpi_{k\rho -1}\textup{Even}(S^0) = 0$ for all $k\ge 0$.
\end{lemma}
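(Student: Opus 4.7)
The plan is to induct on $n$, proving the stronger statement $\rpi_{k\rho - 1}\ev_n = 0$ for all $0 \le k \le n$, and then pass to the filtered homotopy colimit $\ev = \hocolim_n \ev_n$ using that $\rpi$ commutes with filtered homotopy colimits. The base case $n = 0$ is the $(-1)$-connectivity of $S^0$. For the inductive step I apply the long exact sequence
\[
\rpi_{k\rho-1}\bigvee_\alpha W_\alpha \to \rpi_{k\rho-1}\ev_n \to \rpi_{k\rho-1}\ev_{n+1} \to \rpi_{k\rho-2}\bigvee_\alpha W_\alpha
\]
coming from the defining cofiber sequence $\bigvee_\alpha W_\alpha \to \ev_n \to \ev_{n+1}$, in which each $W_\alpha$ is a slice cell of dimension $2n+1$. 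The case $k = n+1$ is handled by construction, and for $k \le n$ the inductive hypothesis annihilates $\rpi_{k\rho-1}\ev_n$, so it suffices to show $\rpi_{k\rho-2}W_\alpha = 0$ for each attaching cell.

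For $W_\alpha = C_{2+}\wedge S^{2n+1}$ this is immediate from underlying connectivity, since $(k\rho-2)^u = 2k-2 < 2n+1$ for $k \le n+1$. For $W_\alpha = S^{(n+1)\rho - 1}$ a suspension shift gives $\rpi_{k\rho-2}W_\alpha \cong \rpi_{(k-n-1)\rho-1}S^0$; writing $m = k-n-1 \le -1$, the task reduces to the key claim that $\rpi_{m\rho-1}S^0 = 0$ for all $m \le 0$. The underlying piece $\pi^u_{m\rho-1}S^0 = \pi_{2m-1}S^0$ vanishes by ordinary connectivity, and dualizing the $C_2$-fixed piece to $\pi^{C_2}_0 S^{(1-m) + (-m)\sigma}$ reduces it to the auxiliary claim that $\pi^{C_2}_0 S^{c + d\sigma} = 0$ whenever $c \ge 1$ and $d \ge 0$.

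For this last auxiliary claim I argue by induction on $d$. The base $d = 0$ is the connectivity of $S^0$ in integer degrees. For the step I use the cofiber sequence $C_{2+}\wedge S^{c+(d-1)\sigma} \to S^{c+(d-1)\sigma} \to S^{c+d\sigma}$ obtained by smashing $C_{2+} \to S^0 \to S^\sigma$ with $S^{c+(d-1)\sigma}$; the relevant segment of the resulting long exact sequence sandwiches $\pi^{C_2}_0 S^{c+d\sigma}$ between $\pi^{C_2}_0 S^{c+(d-1)\sigma}$, which vanishes by induction, and $\pi^u_{-1}S^{c+(d-1)\sigma} = \pi_{-c-d}S^0$, which vanishes since $c + d \ge 1$. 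The main hurdle is bookkeeping these nested inductions correctly; no single step is individually difficult, and once this final vanishing is in hand the whole lemma follows, mirroring Priddy's classical argument at the level of attaching maps.
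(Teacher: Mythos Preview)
Your proof is correct and follows essentially the same approach as the paper: reduce via the defining cofiber sequences to a connectivity statement for the attaching slice cells, then pass to the colimit. You are in fact more thorough than the paper on two points: you explicitly treat the induced slice cells $C_{2+}\wedge S^{2n+1}$ (the paper only writes out the case $W_\alpha = S^{n\rho-1}$), and you supply a proof of the auxiliary vanishing $\rpi_0 S^{c+d\sigma}=0$ for $c\ge 1$, $d\ge 0$, which the paper simply asserts.
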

\begin{proof} Consider the cofiber sequence
	\[
	\bigvee W_{\alpha} \longrightarrow \textup{Even}(S^0)_n \longrightarrow \textup{Even}(S^0)_{n+1}
	\]
where each $W_{\alpha}$ is a slice cell of dimension $2n-1$. This yields an exact sequence:
	\[
	\rpi_{k\rho-1}\textup{Even}(S^0)_n \longrightarrow \rpi_{k\rho -1}\textup{Even}(S^0)_{n+1} \longrightarrow \bigoplus \pi_{k\rho - 2} 
	W_{\alpha}.
	\]
Recall that
	\[
	\rpi_{k\rho -2}S^{n\rho-1} = \rpi_0S^{(n-k)\rho + 1} = 0, \quad n\ge k. 
	\]
It follows that
	\[
	\rpi_{k\rho -1}\textup{Even}(S^0) = \rpi_{k\rho -1}\textup{Even}(S^0)_{k} = 0.
	\]
\end{proof}

We can also compute the first nonvanishing homotopy group.

\begin{lemma} $\rpi_0\textup{Even}(S^0) = \underline{\mathbb{Z}}_{(2)}$. 
\end{lemma}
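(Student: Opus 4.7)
The plan is to compute $\rpi_0 \textup{Even}(S^0) = \colim_k \rpi_0 \textup{Even}(S^0)_k$ by induction on $k$, with base case $\rpi_0 \textup{Even}(S^0)_0 = \rpi_0 S^0_{(2)} = \underline{A}_{(2)}$, the $2$-local Burnside Mackey functor. At each step I would analyze the long exact sequence on $\rpi_0$ arising from
	\[
	\bigvee W_\alpha \to \textup{Even}(S^0)_{k-1} \to \textup{Even}(S^0)_k,
	\]
where the $W_\alpha$ are slice cells of dimension $2k-1$.

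For $k \geq 2$, the attached cells have dimension $\geq 3$. The cells $C_{2+}\wedge S^{2k-1}$ contribute nothing to $\rpi_0$ or $\rpi_{-1}$, while for $S^{k\rho-1}$ one computes $\rpi_0 S^{k\rho-1} = \pi_0^{C_2} S^{(k-1)+k\sigma}$ and $\rpi_{-1} S^{k\rho-1} = \pi_0^{C_2} S^{k+k\sigma}$; both vanish because the representations $(k-1)+k\sigma$ and $k+k\sigma$ have strictly positive fixed and underlying dimensions, so the corresponding sphere spectra are positively connected. Hence $\rpi_0$ is preserved at all steps past the first, and the real work is at $k=1$.

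At $k = 1$, we attach dimension-$1$ slice cells ($C_{2+}\wedge S^1$ and $S^\sigma$) to kill $\rpi_\sigma S^0_{(2)} = \rpi_{\rho-1} S^0_{(2)}$. Chasing the long exact sequence of the cofiber sequence $S^0 \xrightarrow{a_\sigma} S^\sigma \to C_{2+}\wedge S^1$ in $\pi^{C_2}_\star$ identifies $\pi^{C_2}_\sigma S^0 \cong \mathbb{Z}$ with a generator $\eta_\sigma$ satisfying $\textup{res}(\eta_\sigma) = \eta$ and the key relation
	\[
	\eta_\sigma \cdot a_\sigma = \pm([C_2] - 2) \in \pi^{C_2}_0 S^0 = \underline{A}(C_2/C_2),
	\]
the image of precomposition with $a_\sigma$ being exactly the kernel of restriction. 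Attaching one $S^\sigma$-cell via $\eta_\sigma$ and one $C_{2+}\wedge S^1$-cell via $\eta$ satisfies the non-triviality condition on the attaching map. On $\rpi_0$, the $C_{2+}\wedge S^1$-cell contributes nothing since $\rpi_0 (C_{2+}\wedge S^1) = 0$, while $\rpi_0 S^\sigma$ is concentrated at $(C_2/C_2)$ as $\underline{\mathbb{Z}}_{(2)}\{a_\sigma\}$ (the underlying group $\pi_0 S^1$ vanishes) and maps into $\underline{A}_{(2)}$ by $a_\sigma \mapsto [C_2] - 2$. The boundary into $\rpi_{-1}\bigvee W_\alpha$ vanishes since $\rpi_{-1} S^\sigma = \pi^{C_2}_0 S^\rho = 0$ by connectivity, so the cokernel yields $\rpi_0 \textup{Even}(S^0)_1 = \underline{A}_{(2)}/([C_2] - 2) \cong \underline{\mathbb{Z}}_{(2)}$.

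The main obstacle I anticipate is pinning down the computation of $\pi^{C_2}_\sigma S^0_{(2)}$ and in particular the relation $\eta_\sigma \cdot a_\sigma = \pm([C_2] - 2)$; both should follow from the long exact sequence above once one observes that the preceding term's contribution vanishes because $\pi^{C_2}_1 S^0 \to \pi^u_1 S^0$ is surjective (the classical $\eta$ admits an equivariant lift). The remaining routine verifications are the non-triviality of the chosen attaching maps and the claimed vanishing of $\pi^{C_2}_0 S^{(k-1)+k\sigma}$ and $\pi^{C_2}_0 S^{k+k\sigma}$ for $k \geq 2$.
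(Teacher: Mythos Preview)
Your proposal is correct and follows essentially the same approach as the paper: reduce to $\rpi_0\textup{Even}(S^0)_1$ by a connectivity argument on the higher attaching cells, then compute this via the cofiber sequence for the $k=1$ attachment, using that the image of $\rpi_0 S^\sigma \to \rpi_0 S^0$ is the ideal $([C_2]-2)\subset \underline{A}_{(2)}$. The paper is terser---it simply asserts that $\rpi_{\rho-1}S^0$ is generated by the equivariant Hopf map $\eta:S^\sigma\to S^0$, so $\textup{Even}(S^0)_1=\textup{cofib}(\eta)$, and that the image of $\eta_*$ on $\rpi_0$ is $([C_2]-2)$---whereas you spell out the computation of $\pi^{C_2}_\sigma S^0$ and the relation $\eta_\sigma a_\sigma = \pm([C_2]-2)$ via the cofiber sequence $S^0\to S^\sigma\to C_{2+}\wedge S^1$.

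One small point: your extra $C_{2+}\wedge S^1$-cell at stage $1$ is unnecessary. Since $\textup{res}(\eta_\sigma)=\eta$, attaching a single $S^\sigma$-cell along $\eta_\sigma$ already kills the underlying $\eta$, so the paper's minimal choice $\textup{Even}(S^0)_1=\textup{cofib}(\eta_\sigma)$ suffices. Your argument is unaffected, as you correctly note the extra cell contributes nothing to $\rpi_0$ or $\rpi_{-1}$, but you could streamline by dropping it.
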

\begin{proof} The same connectivity argument 
as above shows that $\rpi_0\textup{Even}(S^0) = \rpi_0\textup{Even}(S^0)_1$. The spectrum $\textup{Even}(S^0)_1$ is built
from $S^0$ by killing the elements in $\rpi_{\rho -1}S^0$. This group is generated by the Hopf map
$\eta: S^{\sigma} \to S^0$ so $\textup{Even}(S^0)_1 = \textup{cofib}(\eta)$. The attaching map then gives an exact sequence:
	\[
	\xymatrix{
	\rpi_0(S^{\sigma}) \ar[r] & \rpi_0(S^0) \ar[r]& \rpi_0(\textup{Even}(S^0)_1) \ar[r] & \rpi_{-1}(S^{\sigma})\\
	& \underline{A} \ar[u]_-{\cong} \ar[r]& \rpi_0(\textup{Even}(S^0)_1)\ar@{=}[u] \ar[r]& 0 \ar@{=}[u]
	}
	\]
where $\underline{A}$ denotes the (2-local) Burnside Mackey functor. The image of $\eta_*$ is precisely
the ideal $([C_2]-2)$, so $\rpi_0(\textup{Even}(S^0)_1) = \underline{A}/([C_2]-2) \cong \underline{\mathbb{Z}}_{(2)}$,
which was to be shown. 
\end{proof}

The vanishing in (\ref{odd-zero}) is enough to build a ring structure on $\textup{Even}(S^0)$.

\begin{lemma} The spectrum $\ev$ admits a multiplication $\ev \wedge \ev \to \ev$
compatible with the unit $S^0 \to \ev$.
\end{lemma}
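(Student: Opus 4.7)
The plan is to build $\mu$ by relative obstruction theory along the slice tower of the target: write $\ev \simeq \holim_n P^n \ev$ and construct compatible lifts $\mu_n \colon \ev \wedge \ev \to P^n \ev$ satisfying $\mu_n \circ (u \wedge 1_{\ev}) = \eta_n$, where $u \colon S^0 \to \ev$ is the unit and $\eta_n \colon \ev \to P^n \ev$ is the canonical map; taking the homotopy limit then produces $\mu$. The computation $\rpi_0 \ev \cong \underline{\mathbb{Z}}_{(2)}$ together with Theorem \ref{slice-tower} identifies $P^0 \ev$ with $\hz_{(2)}$, and we take $\mu_0$ to be the composite
\[
\ev \wedge \ev \xrightarrow{r \wedge r} \hz_{(2)} \wedge \hz_{(2)} \xrightarrow{m} \hz_{(2)},
\]
where $r = \eta_0$ and $m$ is the multiplication on $\hz_{(2)}$; the unit compatibility is immediate.

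At the inductive step, the fiber sequence $P^n_n \ev \to P^n \ev \to P^{n-1} \ev$ presents an obstruction to lifting in $[\ev \wedge \ev, \Sigma P^n_n \ev]^{C_2}$, with the ambiguity among lifts being a torsor over $[\ev \wedge \ev, P^n_n \ev]^{C_2}$. For odd $n$, Theorem \ref{slice-tower} together with Lemma \ref{odd-zero} gives $P^n_n \ev = 0$, so there is nothing to check. For $n = 2j$, we have $P^{2j}_{2j} \ev \simeq \Sigma^{j\rho} \hz \tilde M_j$ with $\tilde M_j := P^0 \rpi_{j\rho} \ev$, which is a $\underline{\mathbb{Z}}_{(2)}$-module Mackey functor because $\rpi_{j\rho} \ev$ is a module over $\rpi_0 \ev$; the obstruction then lies in $\textup{H}^{j\rho + 1}(\ev \wedge \ev; \tilde M_j)$.

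To show this group vanishes, apply Lemma \ref{even-split} in each factor to produce an equivalence of $\hz$-modules $\hz \wedge (\ev \wedge \ev) \simeq \hz \wedge (W \wedge W')$, where $W, W'$ are wedges of even-dimensional slice cells. Since $\hz \tilde M_j$ is an $\hz$-module, cohomology with coefficients in $\tilde M_j$ is computed on $W \wedge W'$. Using $C_{2+} \wedge C_{2+} \simeq C_{2+} \vee C_{2+}$, this wedge decomposes into summands of the forms $S^{k\rho}$ and $C_{2+} \wedge S^{2k}$, contributing $\pi^{C_2}_{(k-j)\rho - 1}\hz \tilde M_j$ and $\pi^u_{2(k-j) - 1}\hz \tilde M_j$ respectively. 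The latter vanish because the underlying odd integer homotopy of an Eilenberg-MacLane spectrum is zero, and the former vanish by the HHR-style gap property for Eilenberg-MacLane spectra of cohomological Mackey functors of the form $P^0 N$.

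Finally, once a lift $\mu_n$ is obtained, the unit compatibility is arranged by modifying $\mu_n$ by an appropriate element of $\textup{H}^{j\rho}(\ev \wedge \ev; \tilde M_j)$: the discrepancy $\mu_n \circ (u \wedge 1_{\ev}) - \eta_n$ lives in $\textup{H}^{j\rho}(\ev; \tilde M_j)$, and the restriction map $\textup{H}^{j\rho}(\ev \wedge \ev; \tilde M_j) \to \textup{H}^{j\rho}(\ev; \tilde M_j)$ is split surjective, since under the decomposition of Lemma \ref{even-split} the map $u \wedge 1_{\ev}$ becomes the inclusion of $S^0 \wedge W'$ as a wedge summand of $W \wedge W'$. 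The main obstacle is the gap $\pi^{C_2}_{m\rho - 1}\hz \tilde M_j = 0$ for all $m$: while the HHR gap theorem applied to the pure slice $\Sigma^{j\rho}\hz \tilde M_j$ immediately gives this at $m = j$, the full statement requires the cohomological nature of $\tilde M_j$ and the fact that the $RO(C_2)$-graded homotopy of $\hz$-modules over such Mackey functors avoids the $n\rho - 1$ line.
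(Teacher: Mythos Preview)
Your approach mirrors the paper's exactly: induct up the slice tower, use that odd slices vanish (Lemma~\ref{odd-zero}), identify the even-slice obstruction as lying in $\textup{H}^{j\rho+1}(\ev \wedge \ev;\, P^0\rpi_{j\rho}\ev)$, and then reduce via Lemma~\ref{even-split} to computing $\pi^{C_2}_{m\rho-1}\textup{H}\tilde M_j$ on individual even slice cells. You correctly isolate the crucial input as the vanishing
\[
\pi^{C_2}_{m\rho-1}\textup{H}\tilde M_j = 0 \quad \text{for all } m,
\]
but you do not prove it: you invoke an unspecified ``HHR-style gap property'' and then, in your final paragraph, explicitly flag this as ``the main obstacle'' still requiring justification.

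The paper closes this gap with an elementary argument you have essentially set up but not executed. By the very definition of $P^0$, the Mackey functor $\tilde M_j = P^0\rpi_{j\rho}\ev$ has \emph{injective restriction}. For any such $\underline{M}$ one checks $\pi^{C_2}_{m\rho-1}\textup{H}\underline{M} = 0$ directly: for $m \le 0$ the trivial component of $m\rho-1$ is negative and connectivity of $\textup{H}\underline{M}$ applies; for $m \ge 2$ one iterates the cofiber sequence $C_{2+} \to S^0 \to S^\sigma$ to embed $\pi^{C_2}_{(m-1)+m\sigma}\textup{H}\underline{M}$ into $\pi^{C_2}_{m-1}\textup{H}\underline{M} = 0$; and for $m=1$ the same cofiber sequence gives $\pi^{C_2}_\sigma\textup{H}\underline{M} = \ker(\textup{res}) = 0$. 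This last step is precisely where the injectivity of restriction---the ``cohomological nature of $\tilde M_j$'' you allude to---enters, and it is the whole content of the gap. Your extra care with unit compatibility via the torsor structure on lifts is correct but more than the paper spells out.
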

\begin{proof} We build the map by induction up the slice tower of $\ev$.  Since the odd
slices of $\ev$ are zero, we are only concerned with obstructions in
	\[
	[\ev \wedge \ev, \Sigma^{n\rho+1}\textup{H}P^0\rpi_{n\rho}\ev]^{C_2}.
	\]
Every Mackey functor with injective restriction maps is a module over 
$\underline{\mathbb{Z}}$. So
we need only show that
	\[
	\fpi_{k\rho-1}\textup{Map}_{\hz}(\hz \wedge (\ev \wedge \ev), \textup{H}\underline{M}) = 0.
	\]
where $\underline{M}$ has injective restriction maps.

By (\ref{even-split}), we know
that $\hz \wedge \ev \wedge \ev$ splits as a wedge of even-dimensional slice cells. So it's enough to
check that $\fpi_{k\rho -1}\textup{H}\underline{M} = 0$.
That's essentially true by the definition of the slice filtration. We can verify it just to be sure, though.
It amounts to showing that $\fpi_{k\rho -1}\textup{H}\underline{M} = 0$. By connectivity, we need only
check that case when $k=1$, so we want to know $\fpi_{\sigma}\textup{H}\underline{M}$. From
the cofiber sequence
	\[
	C_{2+} \to S^0 \to S^\sigma
	\]
we see that $\fpi_{\sigma}\textup{H}\underline{M} = \textup{ker}(\textup{res})$. This vanishes by assumption on
$\underline{M}$.
\end{proof}

\subsection{Some algebraic preliminaries on $P_{\star}$}
Let $P_{\star} \subset \mathcal{A}_{\star}$ denote the left $\hf_{\star}$-submodule
$\hf_{\star}[\xi_1, \xi_2, ...]$. This is a left $\mathcal{A}_{\star}$-comodule algebra, but it is
\emph{not} a Hopf algebroid. The trouble is that
	\[
	\eta_R(u_{\sigma}) = a_{\sigma}\tau_0 + u_{\sigma},
	\]
so $P_{\star}$ is not even a right $\hf_{\star}$-module. If we want a Hopf algebroid, we can take
$(\underline{\F}, P^{\circ}_{\star})$ defined by
$P^{\circ}_{\star} = \underline{\F}[\xi_1, \xi_2, ...]$.

There is an extension of Hopf algebroids
	\[
	(\underline{\F}[a_{\sigma}], P^{\circ}_{\star}[a_{\sigma}]) 
	\to (\hf_{\star}, \mathcal{A}_{\star}) \to (\hf_{\star}, \Lambda)
	\]
where
	\[
	\Lambda = \hf_{\star}[\tau_0, \tau_1, ...]/ (\tau_i^2 = a_{\sigma}\tau_{i+1}).
	\]
\begin{lemma} As a left $\mathcal{A}_{\star}$-comodule algebra,
	\[
	P_{\star} \cong \mathcal{A}_{\star} \Box_{\Lambda} \hf_{\star}.
	\]
\end{lemma}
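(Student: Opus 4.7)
The cotensor product $\mathcal{A}_\star \Box_\Lambda \hf_\star$ consists of those $x \in \mathcal{A}_\star$ on which the right $\Lambda$-coaction $\psi_R := (\mathrm{id} \otimes \pi) \circ \Delta$ agrees with $x \mapsto x \otimes 1$, where $\pi: \mathcal{A}_\star \to \Lambda$ is the projection of the quoted extension. The plan is to prove the lemma by a two-sided inclusion. For $P_\star \subseteq \mathcal{A}_\star \Box_\Lambda \hf_\star$, it suffices to check that each algebra generator $\xi_i$ is a $\Lambda$-coinvariant: the coproduct $\Delta(\xi_i) = \sum_j \xi_{i-j}^{2^j} \otimes \bar\xi_j$, which transports to the equivariant setting via Theorem \ref{arep-structure}, projects to $\xi_i \otimes 1$ under $\mathrm{id} \otimes \pi$ because $\pi$ kills every $\bar\xi_j$ with $j \geq 1$ (by naturality of conjugation). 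Since coinvariants form a sub-$\hf_\star$-algebra of $\mathcal{A}_\star$, the inclusion follows.

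For the reverse inclusion, the key intermediate step is to establish that $\mathcal{A}_\star$ is free as a left $P_\star$-module on the squarefree monomials $\tau_I := \prod_{i \in I} \tau_i$ indexed by finite subsets $I \subseteq \mathbb{Z}_{\geq 0}$. The defining relation $\tau_i^2 = a_\sigma \tau_{i+1} + \xi_{i+1}(u_\sigma + a_\sigma \tau_0)$ lets us iteratively reduce any $\tau$-monomial to a $P_\star$-linear combination of squarefree ones, giving generation. Linear independence follows by filtering by $\mathrm{RO}(C_2)$-degree and comparing with the underlying non-equivariant dual Steenrod algebra $\mathbb{F}_2[\zeta_1, \zeta_2, \ldots]$, which is classically free over $\mathbb{F}_2[\zeta_1^2, \zeta_2^2, \ldots]$ on the squarefree monomials in the $\zeta_i$.

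With freeness in hand, write any $x \in \mathcal{A}_\star$ uniquely as $x = \sum_I p_I \tau_I$ with $p_I \in P_\star$. Since each $p_I$ is a $\Lambda$-coinvariant by the first step,
        \[
        \psi_R(x) = \sum_I p_I \cdot \psi_R(\tau_I).
        \]
The coproduct $\Delta(\tau_i)$ contains the distinguished summand $1 \otimes \bar\tau_i$ (arising from the $j = i$ term with $\xi_0 = 1$), and $\pi(\bar\tau_i) = \tau_i$ in $\Lambda$. So $\psi_R(\tau_I)$ has $1 \otimes \tau_I$ as its unique term of maximal right $\tau$-degree. Matching coefficients of this top filtration degree against $\psi_R(x) = x \otimes 1$, whose right-hand side has no $\tau$'s on the right, forces $p_I = 0$ for every $I \neq \emptyset$, so $x = p_\emptyset \in P_\star$. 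The identification respects the left $\mathcal{A}_\star$-coaction automatically: $\Delta(\xi_i) \in \mathcal{A}_\star \otimes_{\hf_\star} P_\star$, so $P_\star$ embeds as a left subcomodule of $\mathcal{A}_\star$.

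The main obstacle is the freeness of $\mathcal{A}_\star$ over $P_\star$ together with the rigorous identification of the leading term in $\psi_R(\tau_I)$. The relation $\tau_i^2 = a_\sigma \tau_{i+1} + \xi_{i+1}(u_\sigma + a_\sigma \tau_0)$ mixes $\tau$-lengths, so filtering by $\tau$-length alone does not work; a natural remedy is to bigrade by $\mathrm{RO}(C_2)$-degree and $\xi$-length together with the squarefree normal form, which makes the leading-term argument unambiguous.
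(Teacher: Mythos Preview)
Your approach is correct in outline but takes a substantially different route from the paper. The paper's proof is a single sentence: it observes that $P_{\star} = \hf_{\star} \otimes_{\underline{\F}[a_{\sigma}]} P^{\circ}_{\star}[a_{\sigma}]$ and then invokes Ravenel's general result \cite[A1.1.16]{Rav2} on extensions of Hopf algebroids, applied to the displayed extension $(\underline{\F}[a_{\sigma}], P^{\circ}_{\star}[a_{\sigma}]) \to (\hf_{\star}, \mathcal{A}_{\star}) \to (\hf_{\star}, \Lambda)$. No explicit coaction computations are needed.

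Your direct double-inclusion argument is more elementary and self-contained, which has some value if one does not want to appeal to the machinery of Hopf algebroid extensions. However, it carries more overhead: you must establish freeness of $\mathcal{A}_{\star}$ over $P_{\star}$ on squarefree $\tau$-monomials and then run a leading-term argument. You correctly flag that the ``maximal $\tau$-degree'' filtration is too coarse (for instance, $\psi_R(\tau_0\tau_2)$ contains both $1 \otimes \tau_0\tau_2$ and $\xi_1^2 \otimes \tau_0\tau_1$ at the same $\tau$-length), and your proposed refinement via a lexicographic or bidegree ordering does repair this. A minor point: your coproduct formulas mix conjugates and non-conjugates in a way that does not match the standard conventions (e.g.\ $\Delta(\xi_i) = \sum \xi_{i-j}^{2^j} \otimes \xi_j$, no bar), though this does not affect the logic since $\pi$ kills the augmentation ideal of the $\xi$'s either way.
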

\begin{proof} Since $P_{\star} = \hf_{\star}\otimes_{\uF[a_{\sigma}]}P^{\circ}_{\star}[a_{\sigma}]$
the result follows from
\cite[A1.1.16]{Rav2}.
\end{proof}

\begin{corollary}\label{change-of-rings} $\underline{\textup{Ext}}_{\arep}(P_{\star}) \cong 
\underline{\textup{Ext}}_{\Lambda}(\hf_{\star})$.
\end{corollary}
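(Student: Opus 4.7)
The plan is to invoke a standard change-of-rings isomorphism for Hopf algebroid extensions in the style of \cite[A1.3.12]{Rav2}. Under the convention that $\underline{\textup{Ext}}_\Gamma(N)$ denotes $\textup{Cotor}^*_\Gamma(\hf_\star, N)$ computed as a Mackey functor, the goal is to produce a natural isomorphism
$$\textup{Cotor}^*_{\arep}(\hf_\star, P_\star) \cong \textup{Cotor}^*_{\Lambda}(\hf_\star, \hf_\star).$$

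The key algebraic input is the lemma just established, $P_\star \cong \arep \,\Box_\Lambda\, \hf_\star$. The general change-of-rings principle asserts that for an extension $\Gamma \to \Sigma$ of Hopf algebroids and a $\Sigma$-comodule $N$, one has $\textup{Cotor}^*_\Gamma(M, \Gamma \,\Box_\Sigma\, N) \cong \textup{Cotor}^*_\Sigma(M, N)$ provided $\Gamma$ is suitably flat over $\Sigma$. Applied with $\Gamma = \arep$, $\Sigma = \Lambda$, and $M = N = \hf_\star$ (the trivial $\Lambda$-comodule), this gives the corollary. Concretely, one verifies the isomorphism at the level of cobar complexes: $\arep \,\Box_\Lambda\, (-)$ sends the cobar resolution of $\hf_\star$ over $\Lambda$ to a resolution of $P_\star$ over $\arep$, and cotensoring on the left against $\hf_\star$ recovers the same complex that computes the right-hand side.

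The main obstacle is ensuring that this classical proof survives the passage to the $RO(C_2)$-graded, Mackey-functor-enriched setting. The substantive hypothesis to check is that $\arep \,\Box_\Lambda\, (-)$ is exact, which reduces to exhibiting $\arep$ as a free right $\Lambda$-comodule on a copy of $P_\star$. This should fall out of Theorem \ref{arep-structure} after matching generators: the relation $\tau_i^2 = a_\sigma \tau_{i+1} + \overline{u_\sigma}\,\xi_{i+1}$ lets one untangle a ``$\Lambda$-part'' from a ``$P_\star$-part'', yielding a splitting $\arep \cong \Lambda \otimes_{\hf_\star} P_\star$ of right $\Lambda$-comodules. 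With this identification the flatness hypothesis is immediate and the cobar-comparison argument goes through verbatim, completing the proof.
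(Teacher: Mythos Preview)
Your proposal is correct and follows precisely the route the paper intends: the corollary is stated without proof because it is immediate from the preceding lemma $P_\star \cong \arep \Box_\Lambda \hf_\star$ together with the standard change-of-rings isomorphism for Hopf algebroid extensions (the same circle of ideas from \cite{Rav2} already cited in that lemma's proof). Your additional care in spelling out the flatness hypothesis and the $\Lambda$-comodule splitting of $\arep$ is a reasonable expansion, but it is not a different argument.
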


We'll need to compute something about these Ext groups for the next section. 

\begin{proposition}\label{ext-computation} 
	\begin{enumerate}[(i)]
	\item The Mackey functors $\underline{\textup{Ext}}^{s, s+n\rho-1}_{\mathcal{A}_{\star}}(P_{\star})$
	vanish,
	\item the Mackey functors
	$\underline{\textup{Ext}}^{s, s+n\rho+1}_{\mathcal{A}_{\star}}(P_{\star})$
	vanish when evaluated at $C_2$,
	\item if we let $v_i:= [\tau_i]$ in the cobar complex, then
		\[
		\bigoplus \underline{\textup{Ext}}^{s, s+n\rho}_{\mathcal{A}_{\star}}(P_{\star})
		\cong \uF[v_0, v_1, ...].
		\]
	\end{enumerate}
\end{proposition}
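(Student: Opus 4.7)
By the change-of-rings isomorphism in Corollary \ref{change-of-rings}, it suffices to compute $\underline{\textrm{Ext}}_\Lambda^{*,*}(\hf_\star, \hf_\star)$. The first thing to notice is that each $\tau_i$ is primitive in $\Lambda$: the terms $\xi_{i-j}^{2^j}\otimes\tau_j$ appearing in the $\mathcal{A}_\star$-coproduct of $\tau_i$ are killed upon passage to the quotient. Thus $v_i := [\tau_i]$ defines a cobar 1-cocycle of internal degree $(2^i-1)\rho+1$, and the $v_i$ multiply to give a ring homomorphism
\[
\phi\colon \hf_\star[v_0, v_1, \ldots] \longrightarrow \bigoplus_{s,n}\underline{\textrm{Ext}}^{s,\, s+n\rho}_\Lambda.
\]

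At the underlying ($C_2$) level the statements follow easily: since $\textrm{res}(a_\sigma)=0$, the algebra $\Lambda(C_2)$ collapses to the exterior Hopf algebra $E_\F[\tau_0, \tau_1, \ldots]$ on primitive generators of underlying odd degrees $2^{i+1}-1$. The standard Koszul computation identifies its Ext with $\F[v_0, v_1, \ldots]$, concentrated in total degrees of the same parity as the cohomological degree. This delivers the $(C_2)$-level content of all three parts, and in particular the entirety of (ii).

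For the $C_2/C_2$-level of (i) and (iii), the plan is to filter the cobar complex of $\Lambda$ by $\tau$-word length per slot. Because $\tau_i^2 = a_\sigma\tau_{i+1}$ strictly decreases length, the associated graded is the cobar complex of the exterior Hopf algebra $E_{\hf_\star}[\tau_i]$ on primitives, whose cohomology is the polynomial ring $\hf_\star[v_0, v_1, \ldots]$. A tridegree count then shows that in bidegree $(s, s+n\rho)$ the ``minimal'' cocycles are precisely the pure monomials $[\tau_{i_1}|\cdots|\tau_{i_s}]$: any cobar element with a slot of $\tau$-length $>1$ demands a compensating $\hf_\star$-coefficient of negative constant degree, and such terms turn out to be coboundaries. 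The same sort of accounting, applied to the shifted tridegree $(s, s+n\rho-1)$, rules out the existence of any cocycles there, giving (i).

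The main obstacle is the Pontryagin-dual piece of $\hf_\star$---the square-zero ideal generated by the $\theta/(a_\sigma^k u_\sigma^n)$---which at the $C_2/C_2$-level could a priori produce spurious candidate cocycles outside the span of the pure monomials. The key tool for eliminating them is the formula for $\eta_R$ on these elements from Theorem \ref{arep-structure},
\[
\eta_R\!\left(\frac{\theta}{a_\sigma^k u_\sigma^n}\right) = \partial\!\left(\frac{1}{a_\sigma^{k+1}(u_\sigma+\tau_0 a_\sigma)^{n+1}}\right),
\]
which exhibits each such expression as the differential of a $\tau_0$-involving cobar element and thereby ensures that $\theta$-divisible classes do not survive to Ext in the targeted tridegrees.
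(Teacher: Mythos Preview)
Your overall strategy---change of rings to $\Lambda$, then filter so that the associated graded is an exterior Hopf algebra on the $\tau_i$---matches the paper's, and your treatment of the underlying ($C_2$)-level is fine. But the execution at the $C_2/C_2$-level diverges from the paper's and contains a gap.

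The paper filters $\Lambda$ by assigning $\tau_i$ weight $2^{i+1}-2$, which has the effect of \emph{retaining} the nontrivial right unit $\eta_R$ in the associated graded $(\hf_\star,\hf_\star[\tau_i]/\tau_i^2)$. Since $\eta_R$ on $\hf_\star$ involves only $\tau_0$, the paper then splits off the $E(\tau_0)$-factor and computes $\mathrm{Cotor}_{E(\tau_0)}(\hf_\star,\uF)$ explicitly from the standard periodic resolution, obtaining
\[
\left(\uF[u_\sigma^2,a_\sigma]\oplus J\right)[v_0]\big/(v_0 a_\sigma,\,v_0 J),\qquad J=\uF\{\theta/a_\sigma^k u_\sigma^{2n+1}\},
\]
and finishes by observing that nothing here lies in the degrees $s+n\rho-1$ or (after tensoring with $\uF[v_1,v_2,\ldots]$) contributes unwanted classes in degree $s+n\rho$.

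Your $\tau$-length filtration instead \emph{trivializes} $\eta_R$ in the associated graded (since the $\tau_0$-terms in $\eta_R(\theta/\cdots)$ raise length), so your $E_1$ is all of $\hf_\star[v_i]$ and the burden shifts to analyzing the ensuing differentials. Your final paragraph mishandles this: the formula for $\eta_R(\theta/a_\sigma^k u_\sigma^n)$ tells you what the cobar differential does \emph{to} such an element, not that it is itself a boundary. In fact, as the paper's explicit answer shows, plenty of $\theta$-divisible classes \emph{do} survive to Ext (the ideal $J$); the point is that they sit in degrees which, after a check, do not meet $s+n\rho-1$ or interfere at $s+n\rho$. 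Your ``tridegree count'' and ``same sort of accounting'' are asserted but not carried out, and the blanket claim that $\theta$-divisible classes do not survive is false without that degree analysis. To repair your argument you would need either to run the length-filtration spectral sequence honestly (tracking the $d_1$ induced by $\eta_R$ on the Pontryagin-dual generators) or, more simply, to adopt the paper's filtration and isolate the $E(\tau_0)$ piece.
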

\begin{proof} First filter $\Lambda$ by declaring that $\tau_i$ have filtration $2^{i+1}-2$ and
extending multiplicatively. The associated graded is the Hopf algebroid 
$(\hf_{\star}, \hf_{\star}[\tau_i]/ \tau_i^2)$.
Write $E(\tau_i)$ for the exterior algebra on the primitive $\tau_i$ over $\uF$.
Then examining the cobar complex
reveals
	\[
	\mathrm{Cotor}_{\hf_{\star}[\tau_0, \tau_1, ...]/(\tau_i^2)}(\hf_{\star}, \hf_{\star})
	= \mathrm{Cotor}_{E(\tau_0, \tau_1, ...)}(\hf_{\star}, \uF),
	\]
where $\hf$ is viewed as a right $E(\tau_0, \tau_1, ...)$ comodule via the right unit
	\[
	\eta_R: \hf_{\star} \longrightarrow \hf_{\star}\otimes_{\uF}E(\tau_0, \tau_1, ...).
	\]
The right unit on $\hf_{\star}$ only involves $\tau_0$, so we get an isomorphism
	\[
	\mathrm{Cotor}_{E(\tau_0, \tau_1, ...)}(\hf_{\star}, \uF)
	\cong \mathrm{Cotor}_{E(\tau_0)}(\hf_{\star}, \uF)[v_1, v_2, ...]
	\]	
where $v_i$ corresponds to $[\tau_i]$ in the cobar complex. To compute the remaining cotor,
we use the standard resolution by relative injectives:
	\[
	\uF \to E(\tau_0) \to \Sigma E(\tau_0) \to \Sigma^2E(\tau_0) \to \cdots
	\]
and apply $\hf_{\star}\Box_{E(\tau_0)}(-)$ to get the complex:
	\[
	\hf_{\star} \to \Sigma \hf_{\star} \to \Sigma^2\hf_{\star} \to \cdots.
	\]
The differential is non-trivial, but can be computed using the formula for the right unit. The answer,
additively, is:
	\[
	\mathrm{Cotor}_{E(\tau_0)}(\hf_{\star}, \uF) = 
	\left(\uF[u_{\sigma}^2, a_{\sigma}] \oplus J\right)[v_0]/ (v_0a_{\sigma}, v_0J).
	\]
Here $J$ denotes the square-zero piece:
	\[
	J = \uF\left\{ \dfrac{\theta}{a_{\sigma}^ku_{\sigma}^{2n+1}} \right\}. 
	\]
The proposition now follows by noting that there is no room for
differentials near the
degrees of interest.
\end{proof}

In the next section, we will be concerned with the $\arep$-comodule structure of $\hf_{\star}\ev$.
For the sake of inductive arguments, it will be helpful to record two lemmas which allow one
to conclude information based off of the behavior of a comodule in a range of dimensions.
The first is trivial, and the second is less so.

\begin{lemma}[Approximate Milnor-Moore] \label{approx-split} Let $(B, \Sigma)$ be a graded connected
Hopf algebroid, $M$ a graded connected right $\Sigma$-comodule algebra,
and $C = M \Box_{\Sigma} B$. Suppose
	\begin{enumerate}[(i)]
	\item there is a comodule algebra map $f: M \to \Sigma$ surjective in degrees $\le N$,
	\item $C$ is a $B$-module and as such is a direct summand of $M$ in degrees $\le N$.
	\end{enumerate}
Then $M \cong C \otimes_B\Sigma$ as a left $C$-module and right $\Sigma$-comodule
in degrees $\le N$. 
\end{lemma}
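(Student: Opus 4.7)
The plan is to adapt the standard Milnor--Moore argument for split Hopf algebroid extensions to the weakened hypotheses given. In the absolute case (where $f$ is globally surjective and $C$ is a direct summand in all degrees), one has a canonical comparison map $\Phi : C \otimes_B \Sigma \to M$ defined via a choice of section $s$ of $f$, by $\Phi(c \otimes \sigma) := c \cdot s(\sigma)$, and one verifies directly that it is an isomorphism of $C$-modules and $\Sigma$-comodules. The plan is to show the same construction still works in our range.

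First I would show that $f|_C$ lands in $B \subset \Sigma$: since $f$ is a comodule algebra map, one has $(f \otimes \mathrm{id})\psi_M(c) = \Delta(f(c))$, and for $c \in C$ the left-hand side lies in $M \otimes_B B$, forcing $f(c) \in B$. Combined with the surjectivity in (i) and the splitting in (ii), a short argument then shows $f|_C : C \to B$ is an isomorphism in degrees $\le N$. Next I would construct a section $s : \Sigma \to M$ of $f$ in degrees $\le N$ which is both $B$-linear and a map of right $\Sigma$-comodules: surjectivity of $f$ yields set-theoretic lifts, the $B$-module freeness of $\Sigma$ coming from the Hopf algebroid structure permits $B$-linear lifts, and a standard obstruction-theoretic argument, bootstrapped off the fact that $f$ is itself a comodule map, upgrades $s$ to a comodule section in the relevant range.

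With $s$ in hand I define $\Phi$ as above. That $\Phi$ is $C$-linear is immediate; $B$-balancedness uses the $B$-module structure on $C$ from (ii) together with $f$ being an algebra map; and the comodule property follows from $s$ being a comodule section. Finally, that $\Phi$ is an isomorphism in degrees $\le N$ follows by induction on degree, using the splitting $M_n \cong C_n \oplus R_n$ from (ii) to match $C_n$ with $C_n \otimes_B 1$ and to identify the residue $R_n$ with products of lower-degree elements of $C$ and elements of $s(\overline{\Sigma})$. The main obstacle is the construction of the comodule section $s$, where one must carefully combine the surjectivity hypothesis with the Hopf algebroid structure of $\Sigma$; everything else is essentially bookkeeping with the splittings from (ii) and the $B$-module structure on $\Sigma$.
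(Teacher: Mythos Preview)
The paper's own proof is the single sentence ``The proof is the same as in A.1.1.17 of \cite{Rav2},'' so there is no detailed argument to compare against; your sketch follows the standard Milnor--Moore line invoked there and is essentially on target. Two remarks are worth making.

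First, the claim that $f|_C : C \to B$ is an isomorphism in degrees $\le N$ is false in general---take $M$ already of the form $C' \otimes_B \Sigma$ with $C' \supsetneq B$ and $f$ induced by the augmentation on $C'$---and it plays no role in the rest of your argument, so you should simply drop it. What you actually need, and correctly establish, is only that $f|_C$ lands in $B$.

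Second, producing a \emph{comodule} section $s : \Sigma \to M$ is asking $\Sigma$ to behave projectively in the comodule category, which is the wrong universal property for a cofree object; your ``obstruction-theoretic'' gloss hides real difficulty here. The cleaner route, and the one Ravenel takes, is to build the comparison map in the opposite direction. Hypothesis (ii) hands you a $B$-linear retraction $p : M \to C$ of the inclusion, and then
\[
\Psi := (p \otimes 1)\circ \psi_M : M \longrightarrow C \otimes_B \Sigma
\]
is automatically a right $\Sigma$-comodule map by coassociativity of $\psi_M$. Graded-connectedness together with hypothesis (i) then lets you run an induction on degree to see $\Psi$ is an isomorphism through degree $N$. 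This sidesteps the obstacle you flagged entirely.
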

\begin{proof} The proof is the same as in A.1.1.17 of \cite{Rav2}.
\end{proof}

\begin{definition} We will say that an $RO(C_2)$-graded
$\hf_{\star}$-module $M$ is {\bf pure} if it is of
the form
	\[
	M \cong \bigoplus \hf_{\star}\{e_{\alpha}\} \oplus \bigoplus \left[\hf_{\star}\right]_{C_2}\{f_{\beta}\}
	\]
where $|e_{\alpha}| = n_{\alpha}\rho$ and $|f_{\beta}| = 2m_{\beta}$ for some integers
$n_{\alpha}$ and $m_{\beta}$ and some indexing set. If moreover, we can choose the $e_{\alpha}$
and $f_{\beta}$ such that $n_{\alpha}, m_{\beta}\ge 0$ for all $\alpha, \beta$ then we will
say that $M$ is {\bf pure and connected}. 
\end{definition}

\begin{lemma}[Approximate Ext] \label{approx-ext}
Let $M$ and $M'$ be $\arep$-comodules which are pure and connected 
as $\hf_{\star}$-modules,
and fix a positive integer $N$. Suppose that
there is an
isomorphism on the submodules
	\[
	\langle e_{\alpha}, f_{\beta} | n_\alpha, m_{\beta}\le N\rangle \cong
	\langle e'_{\alpha}, f'_{\beta} | n'_{\alpha}, m'_{\beta}\le N\rangle
	\]
for some decomposition as in the definition above. Suppose, moreover, that
this is an isomorphism of comodules modulo the generators not listed. Then the Mackey functors
	\[
	\underline{\textup{Ext}}^{s, V}_{\arep}(M) \quad \underline{\textup{Ext}}^{s, V}_{\arep}(M')
	\]
are 
	\[
	\begin{cases}
	\textup{isomorphic }& V-s = k\rho-1, \,\,k\le N\\
	\textup{isomorphic mod }a_{\sigma} & V-s = k\rho, \,\,k\le N\\
	\textup{isomorphic on underlying groups } & V-s = k\rho, k\rho +1\textup{ and }k\le N
	\end{cases}
	\]
\end{lemma}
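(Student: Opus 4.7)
The plan is to compare the two comodules by filtering them by the index $N$ and using a long exact sequence of $\underline{\textup{Ext}}$ to reduce to a vanishing statement about a ``tail'' piece. For a pure connected comodule $M = \bigoplus \hf_{\star}\{e_{\alpha}\} \oplus \bigoplus [\hf_{\star}]_{C_2}\{f_{\beta}\}$, let $F_N M \subset M$ denote the $\hf_\star$-submodule generated by those $e_\alpha$ with $n_\alpha \le N$ and those $f_\beta$ with $m_\beta \le N$. First I would verify that $F_N M$ is in fact an $\arep$-subcomodule; this follows from the connectivity of $\arep$ as an underlying spectrum together with the purity assumption, because a generator of low index can only coact to elements of underlyingly smaller dimension, which by the pure decomposition forces them into $F_N M$. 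The hypothesis of the lemma then says precisely that $F_N M \cong F_N M'$ as $\arep$-comodules. Writing $Q := M/F_N M$, the long exact sequence of $\underline{\textup{Ext}}_{\arep}$ associated to $0 \to F_N M \to M \to Q \to 0$ reduces the lemma to corresponding vanishing statements for $\underline{\textup{Ext}}_{\arep}^{*,V}(Q)$ (and similarly $Q'$) in the stems $V - s \in \{k\rho - 1,\, k\rho,\, k\rho + 1\}$ with $k \le N$.

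To establish these vanishings, filter $Q$ further by index and reduce to the case of a single generator. For $Q = \hf_\star\{e\}$ with $|e| = n\rho$, $n > N$, one has $\underline{\textup{Ext}}_{\arep}^{*, V}(Q) \cong \underline{\textup{Ext}}_{\arep}^{*, V - n\rho}(\hf_\star)$. The Ext of $\hf_\star$ over $\arep$ can in turn be computed via a Cartan--Eilenberg spectral sequence for the extension $(\uF[a_\sigma], P^{\circ}_\star[a_\sigma]) \to (\hf_\star, \arep) \to (\hf_\star, \Lambda)$ of \S5.3 combined with Proposition \ref{ext-computation}. The upshot of that computation is that the nonzero classes of $\underline{\textup{Ext}}_{\arep}(\hf_\star)$ live in stems of the form $k\rho$ (coming from the polynomial piece $\uF[u_\sigma^2, a_\sigma]$ together with its $v_0$-companions) and in stems $k\rho - 1$ (coming from the $a_\sigma$-torsion piece $J$, which has vanishing restriction), with both families supported in $k \ge 0$ in a precise sense. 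Shifting by $-n\rho$ with $n > N$ pushes every nonzero class out of the range $k \le N$ for the odd stems, leaves only $a_\sigma$-multiples surviving in stems $k\rho$ with $k \le N$, and makes all surviving classes invisible on underlying groups in stems $k\rho$ and $k\rho + 1$ with $k \le N$. The induced generator case $Q = [\hf_\star]_{C_2}\{f\}$ is cleaner: by the Frobenius-type adjunction it reduces to the classical $\mathcal{A}_*$-Ext of $\mathbb{F}_2$, which is supported in integer stems.

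The hard part will be verifying the filtration step and doing the Mackey-functor-level bookkeeping for $\underline{\textup{Ext}}_{\arep}(\hf_\star)$. The three different strengths of the conclusion (full isomorphism in odd stems, isomorphism mod $a_\sigma$ in even stems, isomorphism on underlying groups in adjacent stems) are not extraneous: they match exactly the three kinds of surviving classes in $\underline{\textup{Ext}}_{\arep}(\hf_\star)$, so the distinction is forced by the computation rather than chosen for convenience. A conceptually cleaner proof would rely on a systematic theory of $RO(C_2)$-graded homological algebra compatible with the slice filtration, in which the relevant vanishing ranges and quotient behaviors would be automatic; in its absence, the argument proceeds by a direct unwinding of Proposition \ref{ext-computation} and the Cartan--Eilenberg spectral sequence above, coupled with the purity and connectivity of $M$ and $M'$.
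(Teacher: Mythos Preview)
Your strategy---filter by the index $N$, run a long exact sequence in $\underline{\textup{Ext}}$, and reduce to a vanishing statement for a single shifted free summand---is a reasonable shape, but there are two genuine gaps, and the paper takes a quite different route.

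First, the claim that $F_N M$ is an $\arep$-subcomodule is not adequately justified. The hypothesis is phrased as ``an isomorphism of comodules \emph{modulo the generators not listed}'' precisely because the coaction on a low-index generator may in principle involve high-index generators; your one-line connectivity argument does not settle this, since $\hf_\star$ has nonzero classes in many $RO(C_2)$-degrees and the tensor in $\arep \otimes_{\hf_\star} M$ is taken over the nontrivial right $\hf_\star$-action $\eta_R$. Any honest verification here already requires the kind of degree-by-degree inequality chasing that the paper carries out.

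Second, and more seriously, you have conflated $\underline{\textup{Ext}}_{\arep}(\hf_\star)$ with $\underline{\textup{Ext}}_{\arep}(P_\star)$. Proposition~\ref{ext-computation} together with Corollary~\ref{change-of-rings} computes the \emph{latter}, which equals $\underline{\textup{Ext}}_{\Lambda}(\hf_\star)$; the description you give---the polynomial piece $\uF[u_\sigma^2, a_\sigma]$, the $a_\sigma$-torsion piece $J$, and the $v_i$'s---is verbatim that computation. The Cartan--Eilenberg spectral sequence for the extension you cite does not collapse $\underline{\textup{Ext}}_{\arep}(\hf_\star)$ to this: its other input is $\underline{\textup{Ext}}_{P^\circ_\star}(\uF)$, which carries the full complexity of the classical Adams $E_2$-term. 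The object $\underline{\textup{Ext}}_{\arep}(\hf_\star)$ is the equivariant Adams $E_2$-page for the sphere, and it is not supported only in the stems you list.

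The paper sidesteps both issues by never computing any Ext group globally. Instead it works directly at the chain level: it shows that the reduced cobar complexes $\textup{Cobar}^s(M)$ and $\textup{Cobar}^s(M')$ agree as $\hf_\star$-modules in each internal degree $V$ with $V - s \in \{k\rho,\, k\rho-1\}$ and $k \le N$, via an elementary analysis of which generators $e$ can appear in a basis monomial $c[a_1|\cdots|a_s]e$ of a given degree. The key constraints are that each nonconstant $a_i \in \bar{\arep}$ has $a_i^0 \ge 1$ and $a_i^1 \ge 0$, combined with the sign pattern of nonzero elements of $\hf_\star$. After matching the chain groups, one checks separately (Steps~3--5) that cycles and boundaries match, with the ``mod $a_\sigma$'' and ``on underlying groups'' weakenings arising because a bounding element one degree up can involve a single extra generator of index $N+1$, but only with an $a_\sigma$-divisible coefficient. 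If you were to salvage your filtration approach, the required vanishing for the quotient $Q$ would come down to exactly this degree analysis applied to $M = \hf_\star$---at which point the detour through the long exact sequence buys nothing.
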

\begin{proof} We will show that the terms of the reduced
cobar complex agree in the appropriate
dimensions, and then make sure that the cycles and boundaries remain the same. Recall
that
	\[
	\textup{Cobar}^s(M) := \rarep^{\otimes s} \otimes_{\hf_{\star}} M.
	\]
As an $\mathbb{F}_2$-vector space, this is generated by elements of the form
	\[
	c [a_1 | a_2 | \cdots | a_s] e\quad \text{and} \quad c [ a_1 | a_2 | \cdots | a_s] \frac{f}{u^r_{\sigma}}
	\]
where $c \in \hf_{\star}$, the $a_i$ are non-constant monomials in $\tau_i$ and $\xi_i$,
$e = e_{\alpha}$ for some $\alpha$ and $f = f_{\beta}$ for some $\beta$. 

\underline{Step 1}. The degree $s+k\rho$ part of $\textup{Cobar}^s(M)$ is contained
in the span of $e_{\alpha}$ and $f_{\beta}$ with $n_{\alpha}, m_{\beta} \le k$. 

If $x$ is an
element in an $RO(C_2)$-graded module, we will write $\textup{deg}(x) = x^0 + x^1\sigma$. Then
	\[
	\textup{deg}(c [a_1 | a_2 | \cdots | a_s] e) = (c^0 + \sum a_i^0 + e^0) +
	(c^1 + \sum a_i^1 + e^1)\sigma,
	\]
	\[
\textup{deg}\left(c [a_1 | a_2 | \cdots | a_s] \frac{f}{u^r_\sigma}\right) = (c^0 + \sum a_i^0 + f^0 -r) +
	(c^1 + \sum a_i^1 + f^1 + r)\sigma.
	\]
Observe that $\sum a_i^0\ge s$ since the minimal $a^0_i$ that can occur for
a monomial $a_i$ in $\rarep$ is $1$. Also observe that $\sum a_i^1 \ge 0$ since every non-constant
monomial has this property. If we restrict to those basis elements in degree $s + k\rho$, we see
	\[
	\begin{cases}
	e^0 \le k - c^0\\
	e^1 \le k - c^1
	\end{cases}
	\]
	\[
	\begin{cases}
	f^0 \le k - c^0 + r\\
	f^1 \le k - c^1 - r
	\end{cases}
	\]
Now we use the fact that $e = e_{\alpha}$ and $f = f_{\beta}$ so that $e^0=e^1 = n$ for some
$n\ge 0$ and $f^1 = 0$ and $f^0 = 2m$ for some $m\ge 0$. Then these inequalities become:
\[
	\begin{cases}
	n \le k - c^0\\
	n \le k - c^1
	\end{cases}
	\]
	\[
	\begin{cases}
	2m \le k - c^0 + r\\
	0 \le k - c^1 - r
	\end{cases}
	\]
But $c \in \hf_{\star}$ is only nonzero when $c^0$ and $c^1$ have opposite sign, so at least
one of these is nonnegative and we conclude that $n\le k$. If $c^0 <0$ then $c$ becomes
$a_{\sigma}$-divisible, while $[\hf_{\star}]_{C_2}$ is $a_{\sigma}$-torsion and we conclude
that our generator vanishes in the second case. If $c^0\ge 0$ and divisible by $a_{\sigma}$
we're also okay. If $c^0\ge 0$ and not divisible by $a_{\sigma}$, then it must be of the
form $u_{\sigma}^{\ell}$, in which case $c^0 = -c^1$. Adding the last two inequalities
then yields $2m \le 2k$ and hence $m\le k$.

\underline{Step 2}. The degree $s+k\rho -1$ part of $\textup{Cobar}^s(M)$ is contained
in the span of 

The same argument for degree $s +k\rho-1$ yields the inequalities
\[
	\begin{cases}
	n \le k - c^0-1\\
	n \le k - c^1
	\end{cases}
	\]
	\[
	\begin{cases}
	2m \le k - c^0 + r-1\\
	0 \le k - c^1 - r
	\end{cases}
	\]
If $c^0\ge 0$ then $n \le k-1$. But if $c^1\ge 0$ then in fact $c^1 \ge 2$ by the gap in the
equivariant homology of a point, so then $n\le k-2$. In either case, $n\le k-1$. The same
argument as before shows that $m\le k- \frac{1}{2}$. But $m$ is an integer, so
$m\le k- 1$. 

Combining the results of these two arguments, and the identical result
for $M'$ in place of $M$, we have shown that
	\[
	\left(\textup{Cobar}^s(M)\right)_V \cong \left(\textup{Cobar}^s(M')\right)_V
	\]
when
	\[
	V-s = \begin{cases}
	k\rho &k\le N,\\
	k\rho -1 & k\le N+1
	\end{cases}
	\]
Now we need to check that this induces an isomorphism in homology in the degrees claimed
in the statement of the lemma.
The only difficult thing to check is that the witnesses for boundaries do not have too high of a
degree. 

\underline{Step 3}. Both $M$ and $M'$ have identical
 $\underline{\textup{Ext}}^{s,V}$ when $V -s = k\rho - 1$ for $k\le N$.
 
Suppose $y \in \textup{Cobar}^{s+1}(M)$ has degree $s+1+(N\rho -1) = s+N\rho$,
and there is some $x \in \textup{Cobar}^s(M)$ with $dx = y$. Since $x$ has degree $s+N\rho$,
what we've done so far guarantees that $y$ remains a boundary in $\textup{Cobar}^{s+1}(M')$.

\underline{Step 4}. Both $M$ and $M'$ have the same $\underline{\textup{Ext}}^{s,V}$ modulo
$a_{\sigma}$, when $V-s = k\rho$ for $k\le N$.

It follows from what we've done so far that the cycles in homological degree $s$ are the same
for both $M$ and $M'$. The only thing we have to worry about are the boundaries.

Suppose $y \in \textup{Cobar}^{s+1}(M)$ has degree $s+1+N\rho$
and $x \in \textup{Cobar}^{s}(M)$ has $dx = y$. Then we claim there is some $x'$ in the
span of $\rarep^{\otimes s} \otimes \{e_{\alpha}, f_{\beta}\}$ for $n_{\alpha}, m_{\beta} \le N$
such that $dx' = y$ modulo $a_{\sigma}$.

To that end, let's consider elements in $\textup{Cobar}^{s}(M)$ of degree $s+1+N\rho$. Using
notation as before, we must have
	\[
	\begin{cases}
	n \le N - c^0+1\\
	n \le N - c^1
	\end{cases}
	\]
	\[
	\begin{cases}
	m \le N + \frac{1}{2}
	\end{cases}
	\]
Since $m$ is an integer, $m\le N$. Thus, writing $x$ in terms of generators, the only possible
components which are in high degrees are those with $e_{\alpha}$ having $n_{\alpha} = N+1$.
By the inequalities above, this implies $c^0 = 0$ and $\sum a_i^0 = s$. So we are concerned
about generators of the form
	\[
	a_{\sigma}^r[ a_1 | \cdots | a_s] e, \quad\text{where}\quad |e| = (N+1)\rho, \,\,\,
	 a_i \in \{\tau_0, \xi_1\}. 
	\]
In order for this element to have degree $s+1+N\rho$, we must have $r\ge 1$, so this
element is divisible by $a_{\sigma}$. The differentials are linear over $\uF[a_{\sigma}]$
(since $a_{\sigma}$ is primitive), so the claim follows.

\underline{Step 5}. Both $M$ and $M'$ have the same \emph{underlying} group for
$\underline{\textup{Ext}}^{s,V}$ when $V-s = k\rho+1$ and $k\le N$.

This follows from the same vanishing argument as in the previous step using
the free generators.
\end{proof}

\begin{remark} One should think that things shouldn't be so hard. The author welcomes
any suggestions.
\end{remark}

\subsection{Properties of $\textup{Even}(S^0)$ and equivalence with $\bpr$}

\begin{lemma}\label{coaction-factors} There is a factorization:
	\[
	\xymatrix{
	\uF\{e^i_{\alpha}, f^j_{\beta}|i,j\le k\} \ar@{-->}[r]\ar[d] &
	 P^{\circ}_{\star} \otimes_{\uF} \uF\{e^i_{\alpha}, f^j_{\beta}|i,j\le k\}\ar[d]\\
	 \hf_{\star}\ev_k \ar[r]_-{\psi} & \arep \otimes_{\hf_{\star}} \hf_{\star}\ev
	}
	\]
This implies that $\uF\{e_{\alpha}^i, f^j_{\beta}| i,j\le k\}$ is a $P^{\circ}_{\star}$-comodule
and that the full coaction factors as:
	\[
	\hf_{\star} \ev_k \to P_{\star} \otimes_{\hf_{\star}} \hf_{\star}\ev_k \to \arep \otimes_{\hf_{\star}}
	\hf_{\star}\ev.
	\]
\end{lemma}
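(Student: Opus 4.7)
The plan is to proceed by induction on $k$. The base case $\ev_0 = S^0$ is trivial, since $\hf_\star S^0 = \hf_\star$ and the unique atomic generator $1 \in \hf_0$ is primitive. For the inductive step, I would use the cofiber sequence $\bigvee_\alpha W_\alpha \to \ev_{k-1} \to \ev_k$ from the construction (each $W_\alpha$ a slice cell of dimension $2k-1$). Combined with Lemma~\ref{even-split}, this gives a short exact sequence of $\arep$-comodules
\[
0 \to \hf_\star \ev_{k-1} \to \hf_\star \ev_k \to \bigoplus_\alpha \hf_\star \Sigma W_\alpha \to 0.
\]
Each new atomic generator $x \in \{e^k_\alpha, f^k_\beta\}$ lifts a fundamental class of a spherical cofiber ($S^{k\rho}$ or $C_{2+} \wedge S^{2k}$), which is primitive. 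Hence $\psi(x) = 1 \otimes x + \Delta_x$ with $\Delta_x \in \arep \otimes_{\hf_\star} \hf_\star \ev_{k-1}$, and using the $\eta_R$-twisted right $\hf_\star$-action on $\arep$ I would normalize $\Delta_x = \sum a_i \otimes z_i$ with each $z_i$ atomic and $|a_i| = |x| - |z_i|$.

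The crux is to show each coefficient $a_i$ lies in $P^{\circ}_{\star}$. Using the relation $\tau_i^2 = a_{\sigma}\tau_{i+1} + \overline{u_{\sigma}}\xi_{i+1}$, I would take the PBW basis of $\arep$ over $\hf_{\star}$ consisting of monomials $\tau_{i_1}\cdots\tau_{i_r}\,\xi_{j_1}^{b_1}\cdots\,\overline{u_{\sigma}}^{\,b}$ with strictly increasing $\tau$-indices. Writing $RO(C_2)$-degrees as $(a, a')$ with respect to $\{1, \sigma\}$, a direct calculation shows that such a monomial times $u_{\sigma}^{p} a_{\sigma}^{q} \in \hf_{\star}$ has $a - a' = r + 2b + 2p + q$. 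For new atomic generators of $\rho$-degree (namely $x = e^k_\alpha$ and $z_i = e^j_\gamma$), the coefficient $|a_i| = (k-j)\rho$ lies on $a = a'$, forcing $r = b = p = q = 0$, so $a_i \in P^{\circ}_{\star}$. The $f^j_\beta$-case reduces, after passing to the underlying homotopy where $f^j_\beta$ is detected, to the classical statement that the Steenrod coaction on the $\mathbb{F}$-homology of a complex built from even cells is polynomial in the $\xi_i$.

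Once the $\uF$-span of atomic generators is closed under $P^{\circ}_{\star}$-coaction, the consequences asserted in the lemma are formal: $\uF\{e^i_\alpha, f^j_\beta \mid i,j \le k\}$ is a $P^{\circ}_{\star}$-comodule, and extending $\hf_{\star}$-linearly (using $P_{\star} = \hf_{\star} \otimes_{\uF} P^{\circ}_{\star}$) yields the factorization $\hf_{\star}\ev_k \to P_{\star} \otimes_{\hf_{\star}} \hf_{\star}\ev_k \to \arep \otimes_{\hf_{\star}} \hf_{\star}\ev$. The main obstacle I expect is ruling out Pontryagin-dual ``$\theta$-torsion'' contributions in $\Delta_x$: the naive degree inequality still permits monomials like $\tau_{i_1}\cdots \tau_{i_r}\cdot \theta/(a_{\sigma}^{k} u_{\sigma}^{n})$ to sit in pure $\rho$-degrees, and excluding them will likely require invoking the purity of $\hf_{\star}\ev_{k-1}$ supplied by the inductive hypothesis (so the target side of the tensor has no $\theta$-summand) together with a compatibility check against the way $\eta_R$ acts on such torsion classes.
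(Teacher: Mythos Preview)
The paper's proof is literally the single sentence ``This follows formally for degree reasons.'' Your argument is the explicit unpacking of that sentence, and the core step---the $a-a'$ count on PBW monomials showing that a coefficient in pure $\rho$-degree forces $r=b=p=q=0$---is exactly the intended degree reason. So you are taking the same approach as the paper, only with the details supplied.

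Your worry about $\theta$-torsion contributions is a genuine subtlety the paper does not address. You are right that the naive inequality alone does not exclude terms like $\tau_{i_1}\cdots\tau_{i_r}\cdot(\theta/a_\sigma^s u_\sigma^t)$ landing in $\rho$-degree, and your proposed fix is the correct one: by Lemma~\ref{even-split} (and induction) the module $\hf_\star\ev_{k-1}$ is pure, so after expanding $\Delta_x$ against the atomic basis the $\hf_\star$-coefficients on the right factor are absorbed into $\arep$ via the \emph{left} unit, where the $\theta$-part contributes $a-a'\le -4$ and hence cannot cancel against PBW monomials to give total $a-a'=0$ without making the whole term vanish (the square-zero ideal kills the required $u_\sigma$ or $a_\sigma$ powers). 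For the $f$-generators your reduction to the underlying classical statement is fine; alternatively one can note that $[\hf_\star]_{C_2}$ has no $\theta$-summand at all, so the issue does not arise there.
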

\begin{proof} This follows formally for degree reasons.
\end{proof}

We
would like to show that
	\[
	\hf_{\star}\ev = P_{\star}.
	\]
We will prove this by a simultaneous induction on the following properties:
	\begin{enumerate}[(i)]
	\item $\rpi_{*\rho}\ev$ is a constant Mackey functor in a range,
	\item $\ev$ admits a partially defined map $\mathbb{P}_2(\ev) \to \ev$,
	\item the canonical map $\hf_{\star}\ev \to P_{\star}$ hits all the $\overline{\xi}_k$ in a range.
	\end{enumerate}
Property (i) will allow us, by obstruction theory, to extend our symmetric multiplication in (ii) a bit further.
This will allow us to define a certain power operation on the top available $\overline{\xi}_k$ to get the next one,
which extends the range in (iii). This gives new input for the Adams spectral sequence, and allows us
to extend the range in (i) and repeat the process.

The next two lemmas will be used to show the vanishing of certain obstructions.

\begin{lemma}\label{coh-vanish-torsion-free}
Suppose $X$ has the property that
	\[
	\hz \wedge X \cong \hz \wedge W
	\]
where $W$ is a wedge of even-dimensional slice cells. Let $\underline{M}$ be a
torsion-free, constant Mackey functor. Then, for all $n \in \mathbb{Z}$,
	\[
	\textup{H}\underline{M}^{k\rho +1}(\mathbb{P}_2(X)) = 0.
	\]
\end{lemma}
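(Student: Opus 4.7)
The plan is to use change of base ring to reduce to $\mathbb{P}_2(W)$, then split that via distributivity and verify the vanishing on each summand using the gap. First, since $\textup{H}\underline{M}$ is an $\hz$-module, $\textup{H}\underline{M}^{k\rho+1}(\mathbb{P}_2(X))$ depends only on the $\hz$-module $\hz \wedge \mathbb{P}_2(X)$. By Lemma \ref{lem-change-base} applied to $S^0 \to \hz$, the latter is equivalent to $\hz \wedge \mathbb{P}_2(W)$, so I may replace $X$ by $W$. An iterated application of Lemma \ref{lem-distr} then decomposes $\mathbb{P}_2(W)$ into self-terms $\mathbb{P}_2(W_\alpha)$, pairwise smash products $W_\alpha \wedge W_\beta$ for $\alpha \neq \beta$, and free-cell summands from the second formula of that lemma.

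Each pairwise product is itself an even-dimensional slice cell (possibly with one or two $C_{2+}$ factors), and for these the vanishing is direct: $\textup{H}\underline{M}^{k\rho+1}(S^{n\rho}) = \fpi_{(n-k)\rho-1}\textup{H}\underline{M} = 0$ by the gap (which applies to any torsion-free constant $\underline{M}$ because $\textup{H}\underline{M} \simeq M \otimes_{\mathbb{Z}} \hz$ and the gap passes through a flat tensor), while $\textup{H}\underline{M}^{k\rho+1}(C_{2+} \wedge S^{2m})$ reduces by induction/restriction to a non-equivariant cohomology group in odd degree, which vanishes because $M$ is concentrated in degree $0$. When $W_\alpha = C_{2+} \wedge S^{2m}$, a further application of Lemma \ref{lem-distr} identifies $\mathbb{P}_2(C_{2+} \wedge S^{2m})$ with a wedge involving $C_{2+} \wedge \mathbb{P}_2(S^{2m})$, where $S^{2m}$ has trivial $C_2$-action so that $\mathbb{P}_2(S^{2m}) \simeq S^{4m} \wedge \textup{B}\mu_{2+}$. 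Induction/restriction reduces $\textup{H}\underline{M}^{k\rho+1}$ to $\textup{H}^{2k+1-4m}(\mathbb{R}P^\infty; M)$, which is the $2$-torsion $M[2] = 0$ since $M$ is torsion-free.

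The remaining, and hardest, case is $W_\alpha = S^{m\rho}$. Here I appeal to the filtration of Theorem \ref{thm-extended-power-filtration} and the associated cohomology spectral sequence converging to $\textup{H}\underline{M}^{\star}(\mathbb{P}_2(S^{m\rho}))$. Every gradation is a sphere $S^{n'\rho}$ or $S^{n'\rho + \sigma}$, so the $E_1$-page in degree $k\rho+1$ consists of groups $\fpi_{(n'-k)\rho-1}\textup{H}\underline{M}$ and $\fpi_{(n'-k)\rho+\sigma-1}\textup{H}\underline{M}$, both of which vanish --- the first by the gap, and the second because the degree $(j-1) + (j+1)\sigma$ lies outside both the polynomial and Pontryagin-dual support regions of $\fpi_{\star}\hz$ for every integer $j$. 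The main obstacle is that over $\hf$ the filtration of $\mathbb{P}_2(S^{m\rho})$ splits outright (Theorem \ref{thm-extended-power-filtration}), whereas over $\hz$ the connecting maps can be multiplication by $2$ and need not vanish; however, since the vanishing already takes place on the $E_1$-page, no information about differentials is required.
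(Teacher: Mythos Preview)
Your argument is correct and follows essentially the same decomposition as the paper: reduce to $\hz$-modules via Lemma~\ref{lem-change-base}, split $\mathbb{P}_2(W)$ with Lemma~\ref{lem-distr}, and check the four types of summands separately. One small slip: the equivalence $\mathbb{P}_2(S^{2m}) \simeq S^{4m}\wedge \textup{B}\mu_{2+}$ is not literally true even on underlying spectra (the bundle $2m\tau$ over $\mathbb{R}P^{\infty}$ is not stably trivial), but since you only need this inside $C_{2+}\wedge(-)$ it suffices that $2m\tau$ is \emph{orientable}, giving an $\textup{H}\mathbb{Z}$-Thom isomorphism and hence the identification $\textup{H}\underline{M}^{k\rho+1}(C_{2+}\wedge\mathbb{P}_2(S^{2m})) \cong H^{2k+1-4m}(\mathbb{R}P^{\infty};M)$ that you use. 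For the final case $\mathbb{P}_2(S^{m\rho})$, the paper instead appeals to the Bockstein computation on $\hf_{\star}\mathbb{P}_2(S^{m\rho})$ together with the long exact sequence for $\underline{\mathbb{Z}}\xrightarrow{2}\underline{\mathbb{Z}}\to\underline{\mathbb{F}}_2$; your direct $E_1$-vanishing argument via the filtration of Theorem~\ref{thm-extended-power-filtration} is an equally valid (and arguably cleaner) route, since the required vanishing $\pi^{C_2}_{j\rho-1}\textup{H}\underline{M}=\pi^{C_2}_{j\rho+\sigma-1}\textup{H}\underline{M}=0$ holds for any constant Mackey functor.
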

\begin{proof} We are immediately reduced to the
case where $\underline{M} = \underline{\mathbb{Z}}$. Notice that
	\[
	\hz^{\star}(\mathbb{P}_2(X)) = \rpi_{-\star}\textup{Map}_{\hz}(\hz \wedge \mathbb{P}_2(X), \hz)
	\]
where $\textup{Map}_{\hz}(-,-)$ is the derived mapping spectrum in the homotopy theory of $\hz$-modules.
We will split the source and verify the vanishing on each piece.

Recall (\ref{lem-change-base}) that
	\[
	\hz \wedge \mathbb{P}_2(X) \cong \mathbb{P}^{\underline{\mathbb{Z}}}_2(\hz \wedge X).
	\]
By our assumption on $X$, repeated application of (\ref{lem-distr}) yields a splitting
	\[
	\hz \wedge \mathbb{P}_2(\textup{Even}(S^0))
	\cong
	\left(\hz \wedge W_1\right) \vee \left(\hz \wedge W_2\right) \vee
	 \left(\hz \wedge W_3\right) \vee \left(\hz \wedge W_4\right)
	\]
where:
	\begin{itemize}
	\item $W_1$ is a wedge of extended powers $\mathbb{P}_2(S^{m\rho})$,
	\item $W_2$ is a wedge of spheres of the form $S^{m\rho}$,
	\item $W_3$ is a wedge of extended powers $C_{2+} \wedge \mathbb{P}_2(S^{2m})$,
	\item $W_4$ is a wedge of spheres of the form $C_{2+} \wedge S^{2m}$.
	\end{itemize}
We know that $\hz^{k\rho +1}(S^{m\rho})$ and $\hz^{k\rho +1}(C_{2+} \wedge S^{2m})$ vanish.
The groups $\hz^{k\rho +1}(C_{2+} \wedge \mathbb{P}_2(S^{2m}))$ are just the underlying cohomology
groups $\textup{H}\mathbb{Z}^{2k+1}(\mathbb{P}_2(S^{2m}))$ which vanish classically (group cohomology
of $\Sigma_2$ with torsion-free coefficients vanishes in even degrees). So we are left with showing that
$\hz^{k\rho +1}(\mathbb{P}_2(S^{m\rho})) = 0$. This follows from our computation of the action
of the Bockstein on $\hf_{\star}(\mathbb{P}_2(S^{m\rho}))$ (recall that we have implicitly localized at
2).
\end{proof}

\begin{lemma}\label{easy-vanishing}
 Suppose $X$ admits a filtration with associated graded $\textup{gr}(X) = W$,
a wedge of even-dimensional slice cells of dimension $\le 2N$. Then, for any Mackey
functor $\underline{M}$,
	\[
	[F_{2j}\mathbb{P}_2(X), \Sigma^{k\rho+1}\textup{H}\underline{M}] = 0,
	\]
whenever $k\ge j+N$. 
\end{lemma}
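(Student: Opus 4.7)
The plan is to combine the $E\mu_2$-filtration from Construction~\ref{geometric-filtration} with the given filtration on $X$ to obtain a double filtration of $F_{2j}\mathbb{P}_2(X)$ whose associated graded pieces are wedges of slice cells. Standard long exact sequence inductions then reduce the vanishing to a direct check on each slice cell.

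Concretely, I would first apply the $E\mu_2$-filtration, reducing by long exact sequences to showing vanishing of Hom out of $\textup{gr}_i\mathbb{P}_2(X)$ for $i\le 2j$. By Proposition~\ref{proposition-filtration}, these subquotients are of the form $S^{i'\rho}\wedge X^{\wedge 2}$ and $S^{i'\rho+\sigma}\wedge N^{C_2}(X)$ with $i'\le j$. Next I would filter $X^{\wedge 2}$ (respectively $N^{C_2}(X)$) using the given filtration on $X$; the associated graded is a wedge of pairwise smash products (respectively norms) of slice cells from $W$. Using the standard identifications $S^{m\rho}\wedge S^{m'\rho}\cong S^{(m+m')\rho}$, $S^{m\rho}\wedge(C_{2+}\wedge S^{2m'})\cong C_{2+}\wedge S^{2(m+m')}$, and $(C_{2+}\wedge S^{2m})\wedge(C_{2+}\wedge S^{2m'})\cong(C_{2+}\vee C_{2+})\wedge S^{2(m+m')}$, each such piece is itself a wedge of even-dimensional slice cells of controlled dimension.

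After this reduction, it suffices to show $[S^{i'\rho}\wedge T, \Sigma^{k\rho+1}\textup{H}\underline{M}]^{C_2}=0$ for each even slice cell $T$ appearing in the combined graded pieces. When $T=C_{2+}\wedge S^{2m}$, the Hom is an underlying homotopy group of $\textup{H}\underline{M}$ in odd total degree, which vanishes by connectivity of the Eilenberg--MacLane spectrum. When $T=S^{m\rho}$, the Hom equals $\pi^{C_2}_{(i'+m-k)\rho-1}\textup{H}\underline{M}$, which lies in the ``gap'' $\rpi_{*\rho-1}$ of the $RO(C_2)$-graded Bredon homotopy of a point once $k\ge j+N$, and hence vanishes.

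The main obstacle is the dimensional bookkeeping required to verify that the bound $k\ge j+N$ is genuinely sharp: one must exploit both the even-dimensional nature of the slice cells in $W$ and the ``$+1$'' shift in the target degree $k\rho+1$ in order to ensure that every graded piece's mapping group lands in the gap rather than merely in a region where connectivity alone would force $k\ge j+2N$.
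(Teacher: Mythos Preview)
Your strategy—filter $F_{2j}\mathbb{P}_2(X)$ by the $E\mu_2$-filtration and then by the given filtration on $X$, and reduce to maps out of individual slice cells—is exactly what the paper does; its one-line proof asserts the resulting pieces are cells $S^{m\rho}$, $S^{m\rho-1}$, $C_{2+}\wedge S^{2m}$, $C_{2+}\wedge S^{2m-1}$ with $m\le j+N$ and then invokes connectivity from $m-k\le 0$. But the bookkeeping you flag in your last paragraph is a genuine problem, and your proposed fix does not work for an arbitrary Mackey functor. Smashing two cells of $W$, each of dimension $\le 2N$, produces a cell of dimension $\le 4N$, so the double filtration only yields $m\le j+2N$; the paper's asserted bound $m\le j+N$ is simply incorrect. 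You try to close the gap via the vanishing line $\rpi_{*\rho-1}\textup{H}\underline{M}=0$, but this fails in general: from the cofiber sequence $C_{2+}\to S^0\to S^\sigma$ one reads off $\pi^{C_2}_{\rho-1}\textup{H}\underline{M}=\pi^{C_2}_{\sigma}\textup{H}\underline{M}=\ker(\textup{res})$. Taking $X=S^\rho$, $j=0$, $k=1$ (so $N=1$ and $k=j+N$) gives $F_0\mathbb{P}_2(S^\rho)=S^{2\rho}$ and $[S^{2\rho},\Sigma^{\rho+1}\textup{H}\underline{M}]=\ker(\textup{res})$, which is nonzero for, say, $\underline{M}(C_2/C_2)=\F$ and $\underline{M}(C_2)=0$. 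So the lemma is false as stated.

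Your gap argument \emph{does} go through once one adds the hypothesis that $\underline{M}$ has injective restriction (equivalently $P^0\underline{M}=\underline{M}$): then $\textup{H}\underline{M}$ is a $0$-slice, hence $\rpi_{n\rho-1}\textup{H}\underline{M}=0$ for all $n$, and combined with the vanishing of nonzero-degree underlying homotopy this dispatches every graded piece. Since the paper's only use of the lemma takes $\underline{M}=P^0\rpi_{n\rho}\ev$, which has injective restriction by construction, nothing downstream is harmed—but both your proof and the paper's require this extra hypothesis to be correct.
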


\begin{proof} Combining the filtration of $X$ with the filtration of $\mathbb{P}_2$, we are reduced to
checking that each of the following vanish for $m \le j+N\le k$:
	\[
	[S^{m\rho}, \Sigma^{k\rho+1}\textup{H}\underline{M}], \quad
	[S^{m\rho-1}, \Sigma^{k\rho +1}\textup{H}\underline{M}],\quad
	[C_{2+} \wedge S^{2m}, \Sigma^{k\rho +1}\textup{H}\underline{M}],\quad
	[C_{2+} \wedge S^{2m-1}, \Sigma^{k\rho +1}\textup{H}\underline{M}].
	\]
But we have $m-k \le 0$ so this follows by connectivity.
\end{proof}

Finally, we are ready for the main technical theorem pinning down the homology of $\ev$.

\begin{theorem} The map $\ev \to \hf$ induces an isomorphism of $\arep$-comodules:
	\[
	\hf_{\star}\ev \stackrel{\cong}{\longrightarrow} P_{\star}.
	\]
\end{theorem}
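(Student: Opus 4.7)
We prove the theorem by a simultaneous induction on an integer $k$ on three inter-dependent statements, each asserted in a range of degrees that grows with $k$:
\begin{enumerate}[(i)]
\item $\rpi_{n\rho-1}\ev = 0$ and $\rpi_{n\rho}\ev$ is the constant Mackey functor associated with $\mathbb{Z}_{(2)}[v_1,\ldots,v_k]$, for $n\le k$;
\item the partial symmetric multiplication $\mathbb{P}_2(\ev)\to\ev$ is defined on a large enough piece of $\mathbb{P}_2(\ev)$ to support the operation $Q^{2^j\rho}$ on $\hf_{\star}\ev$ for all $j\le k$;
\item each Milnor generator $\overline{\xi}_j$ with $j\le k$ lies in the image of $\hf_{\star}\ev\to P_{\star}$.
\end{enumerate}
The base case $k=1$ combines $\ev_1=\mathrm{cofib}(\eta)$ (which forces $\overline{\xi}_1$ to be hit, since $\eta$ is detected by $\xi_1$ in the dual Steenrod algebra), Lemma \ref{odd-zero}, and the calculation $\rpi_0\ev = \underline{\mathbb{Z}}_{(2)}$ performed above.

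For the inductive step, property (i) is upgraded first. Statement (iii) at stage $k$, combined with Lemma \ref{coaction-factors} and the approximate Milnor-Moore Lemma \ref{approx-split}, shows that $\hf_\star\ev$ and $P_\star$ agree as $\arep$-comodules on the submodule generated by $1,\overline{\xi}_1,\ldots,\overline{\xi}_k$. Lemma \ref{approx-ext} then converts this into agreement of $\underline{\mathrm{Ext}}_{\arep}^{s,V}$ in the columns $V-s=n\rho-1, n\rho, n\rho+1$ for $n\le k$, and by Corollary \ref{change-of-rings} together with Proposition \ref{ext-computation} this Ext is $\uF[v_0,\ldots,v_k]$. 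Running the equivariant Adams spectral sequence and collapsing the $v_0$-towers via Lemma \ref{odd-zero} and Lemma \ref{hurewicz-vanish} yields the enlarged range for (i). Next (ii) is upgraded by obstruction theory on the slice tower of $\ev$ applied to the putative map $\mathbb{P}_2(\ev)\to\ev$. Only even-slice obstructions arise (by Lemma \ref{odd-zero}), and they live in groups of the form $[\mathbb{P}_2(\ev),\Sigma^{k\rho+1}\hz P^0\rpi_{k\rho}\ev]^{C_2}$; property (i) guarantees that $\rpi_{k\rho}\ev$ is torsion-free and constant, so Lemma \ref{coh-vanish-torsion-free} (applied to the top filtration quotient using the splitting of Lemma \ref{even-split}) together with Lemma \ref{easy-vanishing} (applied to the lower filtration pieces) kill every obstruction. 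Finally, (iii) at stage $k+1$ follows from Theorem \ref{action}: the newly extended partial multiplication supports the operation $Q^{2^k\rho}$, and $Q^{2^k\rho}\overline{\xi}_k=\overline{\xi}_{k+1}$ places $\overline{\xi}_{k+1}$ in the image of $\hf_\star\ev\to P_\star$.

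Passing to the colimit in $k$ gives surjectivity of $\hf_\star\ev\to P_\star$, and injectivity follows by a rank comparison: Lemma \ref{even-split} splits $\hz\wedge\ev$ (and hence $\hf\wedge\ev$) as a wedge of even-dimensional slice cells whose $\hf_\star$-ranks in each $RO(C_2)$-degree are pinned down by (i) and match those of $P_\star$ read off from $\mathbb{Z}_{(2)}[v_1,v_2,\ldots]$. The main obstacle, as the introductory remarks to this section emphasize, is Lemma \ref{approx-ext} itself: converting partial agreement of comodule structures into genuine agreement of Ext in precisely the three columns $k\rho-1, k\rho, k\rho+1$ requires careful $RO(C_2)$-graded bookkeeping and exploitation of $a_\sigma$-divisibility, and this is what makes the whole induction turn.
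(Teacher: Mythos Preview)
Your overall architecture matches the paper's: a simultaneous induction feeding homological information (the $\overline{\xi}_j$'s being hit) into approximate Ext calculations, thence via the Adams spectral sequence into homotopy information, thence via slice-tower obstruction theory into an extended partial multiplication, and back around via $Q^{2^k\rho}\overline{\xi}_k=\overline{\xi}_{k+1}$. That is exactly the loop the paper runs.

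However, your numerical ranges are off in a way that breaks the obstruction-theory step as you have written it. You assert (i) only for $n\le k$, but the point of the approximate Milnor--Moore and approximate Ext lemmas is that once $\overline{\xi}_1,\ldots,\overline{\xi}_k$ are hit, the comodules $\hf_{\star}\ev$ and $P_{\star}$ agree through degree $(2^{k+1}-2)\rho$ (the first missing generator is $\overline{\xi}_{k+1}$ in degree $(2^{k+1}-1)\rho$), so Lemma~\ref{approx-ext} yields Ext agreement, and hence the needed homotopy information, for $n\le 2^{k+1}-2$, not merely $n\le k$. This exponential range is what makes the argument close. Correspondingly, the source of your obstruction problem cannot be the full $\mathbb{P}_2(\ev)$: the obstructions you write down, in $[\mathbb{P}_2(\ev),\Sigma^{n\rho+1}\textup{H}P^0\rpi_{n\rho}\ev]^{C_2}$, would require $\rpi_{n\rho}\ev$ to be constant torsion-free for \emph{all} $n$, which is precisely the circularity the introduction to \S6 warns against. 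The paper instead maps only out of the doubly truncated piece $F_{2^{k+1}}\bigl(\mathbb{P}_2(\ev_{2^k-1})\bigr)$; this source is just large enough to carry $Q^{2^k\rho}$, and its obstructions split cleanly: for $n\le 2^{k+1}-2$ use Lemma~\ref{coh-vanish-torsion-free} with the homotopy information just obtained, and for $n\ge 2^{k+1}-1$ use Lemma~\ref{easy-vanishing} (which applies because $\ev_{2^k-1}$ has cells only through dimension $2(2^k-1)$, giving $N=2^k-1$ and $j=2^k$). Your invocation of Lemma~\ref{easy-vanishing} ``on lower filtration pieces'' is not well-posed without this truncation of the source, since $\ev$ itself has cells in arbitrarily high dimension.

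Finally, your injectivity argument via a rank comparison is not the paper's route and is not obviously complete: the ranks of the free $\hf_{\star}$-module $\hf_{\star}\ev$ are not \emph{directly} read off from $\rpi_{*\rho}\ev$ without already knowing the Adams $E_2$-term, which is what is at stake. The paper instead argues (its Step~1) that once all $\overline{\xi}_i$ are hit, Milnor--Moore gives $\hf_{\star}\ev\cong P_{\star}\otimes C$ with $C$ primitive on even generators; the Adams spectral sequence then forces any positive-degree generator of $C$ to survive to a homotopy class detected by the mod~$2$ Hurewicz map, contradicting Lemma~\ref{hurewicz-vanish}. This same mechanism is also used inside the induction (Step~5) to rule out spurious primitives in the partial range, a point your outline elides.
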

\begin{proof} We will reduce to inductively showing that the image of 
$\hf_{\star}\ev$ contains all the $\overline{\xi}_i$. Then
we'll reduce this to an Ext calculation which we perform by comparing 
$\underline{\textup{Ext}}_{\arep}(\hf_{\star}\ev)$ to $\underline{\textup{Ext}}_{\arep}(P_{\star})$
in a range. Throughout, we will use the equivariant Adams spectral sequence for
the cohomology theory $\hf$ found in \cite[Cor. 6.47]{HK}.
\newline\newline
\underline{Step 1}. It's enough to show that the image of $\hf_{\star}\ev$ contains
$\overline{\xi}_i$ for all $i\ge 1$.

Indeed, in this case we conclude that, in the notation of Lemma \ref{coaction-factors},
the map $\uF\{e^i_{\alpha}, f^j_{\beta}\} \to P^{\circ}_{\star}$ is surjective. By the comodule
splitting lemma, we conclude that $\uF\{e^i_{\alpha}, f^j_{\beta}\}$ is cofree over
$P^{\circ}_{\star}$. But then $\hf_{\star} \otimes_{\uF}\uF\{e^i_{\alpha}, f^j_{\beta}\} = \hf_{\star}\ev$
is of the form $P_{\star} \otimes C$ as a left $\mathcal{A}_{\star}$-comodule, where $C$ is
primitive and generated in even degrees. The Adams spectral sequence then takes the form
	\[
	\underline{\textup{Ext}}_{\arep}(P_{\star}) \otimes C \Rightarrow \rpi_{\star}\ev.
	\]
Since these $\textup{Ext}$-Mackey functors vanish in degrees $V-s = n\rho -1$, every
element in homotopy dimension $n\rho$ is a permanent cycle. In particular, all of the
positive degree elements of $C$ would survive to homotopy classes which are detected
by the mod 2 Hurewicz map. There are no such elements, by Lemma \ref{hurewicz-vanish},
so $C$ is generated in degree 0. But there is only room for one such generator and we conclude
$\hf_{\star}\ev = P_{\star}$, as desired. This completes the first step.

Note that the unit map $1: S^0 \to \ev_1$ extends over $\textup{cofib}(\eta)$, by construction. 
The left comodule action on the top class $e_\rho$ is:
	\[
	e_{\rho} \mapsto 1 \otimes e_{\rho} + \overline{\xi}_1 \otimes 1.
	\] 
So we may assume, by induction on $k\ge 1$, that
	\begin{itemize}
	\item The elements $\overline{\xi}_1, ..., \overline{\xi}_k$ are
	in the image of the map $\hf_{\star}\ev_{2^k - 1} \to P_{\star}$.
	\end{itemize}
\underline{Step 2}. If $\overline{\xi}_1, ..., \overline{\xi}_k$ are in the image, then
$\overline{\xi}_{k+1}$ is in the image if we can construct a dotted arrow making the following
diagram commute:
	\[
	\xymatrix{
	F_{2^{k+1}}(\mathbb{P}_2(\ev_{2^k-1})) \ar[r]\ar[d] & \mathbb{P}_2(\hf) \ar[d]\\
	\ev \ar[r] & \hf
	}
	\]
Indeed, this subcomplex of the extended power is just large enough to contain the operation
$Q^{2^k\rho}$. Moreover, $\hf_{\star}\ev_{2^k-1}\to \hf_{\star}\ev$ induces an isomorphism
on degree $(2^k-1)\rho$ so there is an element $x \in \hf_{\star}\ev_{2^k-1}$ which hits
$\overline{\xi}_k$. So, by commutativity of the diagram, there is an element $Q^{2^k\rho}x \in
\hf_{\star}\ev$ which hits $Q^{2^k\rho}\overline{\xi}_k = \overline{\xi}_{k+1} \in P_{\star}$,
by Theorem \ref{action}, which
was to be shown.
\newline\newline
\underline{Step 3}. It suffices to show that $P^0\rpi_{n\rho}\ev$ is constant and torsion-free
for $n\le 2^{k+1}-2$.
 
We'll show that, in this case, we can build the diagram from Step 2. We construct the map
	\[
	F_{2^{k+1}}(\mathbb{P}_2(\ev_{2^k-1})) \to \ev
	\]
by inducting up the slice tower of $\ev$. Since the odd slices vanish, the obstructions live in
	\[
	[F_{2^{k+1}}(\mathbb{P}_2(\ev_{2^k-1})), \Sigma^{n\rho+1}\textup{H}P^0\rpi_{n\rho}\ev]^{C_2}.
	\]
When $n\le 2^{k+1} - 2$, we are, by hypothesis, in the realm of Lemma \ref{coh-vanish-torsion-free}
and these groups vanish. When $n\ge 2^k + 2^k - 1 = 2^{k+1} - 1$ these vanish by Lemma
\ref{easy-vanishing}. The homotopy commutativity of the diagram in Step 2 is a statement
about classes in $\hf^0$. These are detected on the bottom cell, and the result follows
since $S^0 \to \ev \to \hf$ is the unit. 
\newline\newline
\underline{Step 4}. It suffices to show that 
	\begin{enumerate}[(i)]
	\item $\underline{\textup{Ext}}^{s,V}_{\arep}(\hf_{\star}\ev) = 0$ for $V-s = (2^{k+1}-2)\rho -1$,
	\item $\underline{\textup{Ext}}^{s,V}_{\arep}(\hf_{\star}\ev)$ has surjective restriction map
	for $V-s = (2^{k+1} -2)\rho$,
	\item the \emph{underlying} group of $\underline{\textup{Ext}}^{s,V}_{\arep}(\hf_{\star}\ev)$ is
	 $v_0$-torsion-free when $V-s = (2^{k+1}-2)\rho$,
	\item the \emph{underlying} group of  $\underline{\textup{Ext}}^{s,V}_{\arep}(\hf_{\star}\ev)$ is
	zero for $V-s = (2^{k+1} - 2)\rho +1$.
	\end{enumerate}
	
By Step 3, we need to know that $P^0\rpi_{n\rho}\ev$ is constant and torsion-free in some range.
This is equivalent to checking that the underlying groups of $\rpi_{n\rho}$ are torsion-free and
the restriction map is surjective. 
Using the fact that $v_0$ corresponds to multiplication by 2,
we can check this property on the $E_{\infty}$-page of the Adams spectral sequence. The conditions
listed above ensure that the $E_2$-page is equal to the $E_{\infty}$-page in homotopy
dimension $n\rho$, modulo $a_{\sigma}$, for $n$ in the requisite range. This is all we need, since
$a_{\sigma}$ vanishes under the restriction map.
\newline\newline
\underline{Step 5} 
	\begin{enumerate}[(a)]
	\item $\underline{\textup{Ext}}^{s,V}_{\arep}(\hf_{\star}\ev) =
	\underline{\textup{Ext}}^{s,V}_{\arep}(P_{\star})$ for $V-s = (2^{k+1} -2)\rho -1$,
	\item modulo $a_{\sigma}$, we have $\underline{\textup{Ext}}^{s,V}_{\arep}(\hf_{\star}\ev) =
	\underline{\textup{Ext}}^{s,V}_{\arep}(P_{\star})$ when $V-s = (2^{k+1}-2)\rho$,
	\item the \emph{underlying} groups
	$\underline{\textup{Ext}}^{s,V}_{\arep}(\hf_{\star}\ev)$ agree with the
	\emph{underlying} groups of $\underline{\textup{Ext}}^{s,V}_{\arep}(P_{\star})$ for
	$V-s = (2^{k+1}-2)\rho$ and $V-s = (2^{k+1} -2)\rho +1$.
	\end{enumerate}

First, by our induction hypothesis, Lemma \ref{approx-split} implies that
$\hf_{\star}\ev$ is of the form $P_{\star} \otimes C$ in a range, where $C$ is primitive
on even generators. Then Lemma \ref{approx-ext} implies that, in our range,
the Ext groups in homotopy dimension $n\rho -1$ vanish. It follows that elements
of $C$ in positive degree and in our range must survive to homotopy classes, and
the same argument as in Step 1 shows that this can't happen. So $\hf_{\star}\ev$
and $P_{\star}$ are isomorphic as comodules modulo terms of high degree. So, again
by Lemma \ref{approx-ext} and Proposition \ref{ext-computation} the result follows.
\end{proof}

\begin{remark} I learned from Paul Goerss that it is possible to modify this induction to 
give a construction of $\bpr$ without using power operations. The idea is to
put a bit more control over the attaching maps in Priddy's construction, and keep track
of the comodule structure and homotopy as you go along using Lemma \ref{approx-ext}
and Proposition \ref{ext-computation}. The present approach has the advantage of
efficiency, and we simultaneously
construct and compute the action of power operations on $\bpr$, which is useful
in proving splitting theorems.
\end{remark}

\begin{theorem}\label{main-bpr-thm}
Without invoking the existence of $\bpr$, it is possible to establish the following
properties of $\textup{Even}(S^0)$ from its construction:
	\begin{enumerate}[(i)]
	\item The spectrum underlying $\textup{Even}(S^0)$ is $\textup{BP}$,
	\item the geometric fixed points of $\textup{Even}(S^0)$ are equivalent to $\textup{H}\mathbb{F}_2$,
	\item the homotopy Mackey functors $\rpi_{*\rho}\textup{Even}(S^0)$ are constant and given by
	$\underline{\mathbb{Z}}_{(2)}[v_1, v_2, ...]$ where $|v_i| = (2^i-1)\rho$, and $\rho$ is the regular
	representation of $C_2$.
	\item the homotopy Mackey functors $\rpi_{*\rho -1}\textup{Even}(S^0)$ vanish.
	\end{enumerate}
These last two properties determine the slice tower of $\textup{Even}(S^0)$.
\end{theorem}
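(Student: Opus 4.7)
The plan is to prove properties (iv), (iii), (i), (ii) in that order, using the computation $\hf_{\star}\ev = P_{\star}$ of the previous subsection as the central input; the final sentence of the theorem is then immediate from Theorem \ref{slice-tower}.

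Property (iv) is nearly free: for $k\ge 1$ the vanishing of $\rpi_{k\rho-1}\ev$ is built into the construction and was recorded in Lemma \ref{odd-zero}, and for $k\le 0$ the dimension $k\rho-1$ has negative underlying degree so it follows from the connectivity of $\ev$. For (iii) I will run the equivariant Adams spectral sequence based on $\hf$. Its $E_2$-page $\underline{\textup{Ext}}^{s,V}_{\arep}(\hf_{\star}\ev)$ coincides with $\underline{\textup{Ext}}^{s,V}_{\arep}(P_{\star})$ by the main homology computation, and with $\underline{\textup{Ext}}^{s,V}_{\Lambda}(\hf_{\star})$ by change of rings (Corollary \ref{change-of-rings}). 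Proposition \ref{ext-computation} then delivers the necessary vanishing in dimensions $V-s=n\rho-1$ (no incoming differentials) and on underlying in $V-s=n\rho+1$ (no outgoing differentials), and identifies the $V-s=n\rho$ line with $\uF[v_0, v_1, \ldots]$ as a Mackey functor. The resulting collapse, combined with the $v_0$-towers that resolve $\mathbb{Z}_{(2)}$-extensions (since $v_0$ detects multiplication by $2$), yields $\pi^{C_2}_{n\rho}\ev \cong \mathbb{Z}_{(2)}[v_1, v_2, \ldots]_n$ as a constant Mackey functor; connectivity handles $n<0$.

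For (i), I will apply Priddy's theorem to the underlying spectrum. On underlying the construction attaches even-dimensional cells to $S^0_{(2)}$ and kills all odd homotopy (by (iii) and (iv), reading off the $C_2$-values of the constant Mackey functors), and the underlying mod-$2$ homology is $\mathbb{F}_2[\overline{\xi}_1, \overline{\xi}_2, \ldots]$, which is the underlying of $P_{\star}$. Priddy \cite{P} then identifies the underlying spectrum with $\textup{BP}$.

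Property (ii) is where the real work lives. With (iii) and (iv) in hand, Theorem \ref{slice-tower} forces the slice tower of $\ev$ to have $2n$-slice $\Sigma^{n\rho}\textup{H}\underline{A_n}$, where $\underline{A_n}$ is the constant Mackey functor on the weight-$n$ piece of $\mathbb{Z}_{(2)}[v_1, v_2, \ldots]$, and odd slices zero. A short LES argument using the cofibre $\hz \xrightarrow{2} \hz \to \hf$ together with the isotropy separation sequence gives $\Phi^{C_2}\hz \simeq \textup{H}\mathbb{F}_2[y]$ with $|y|=2$, so applying $\Phi^{C_2}$ to the slice tower produces a spectral sequence for $\pi_*\Phi^{C_2}\ev$ whose $E_1$-page is a direct sum of free $\textup{H}\mathbb{F}_2[y]$-modules. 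The hard part will be verifying that the differentials in this spectral sequence, coming from the attaching maps in the slice tower (equivalently, from multiplication by $a_\sigma$ on $\rpis\ev$), cancel everything above degree $0$, leaving $\pi_*\Phi^{C_2}\ev = \mathbb{F}_2$ concentrated in degree zero; a map $\Phi^{C_2}\ev \to \hf$ classifying the generator of $\pi_0$ will then be the desired equivalence.
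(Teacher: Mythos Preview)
Your treatment of (iii) and (iv) matches the paper exactly: (iv) is by construction (Lemma \ref{odd-zero}) and connectivity, and (iii) follows from the equivariant Adams spectral sequence with input the homology computation $\hf_\star\ev = P_\star$ and Proposition \ref{ext-computation}. Your argument for (i) via Priddy is phrased slightly differently from the paper's---the paper says that the underlying spectrum splits as a wedge of suspensions of $\textup{BP}$ by standard splitting theorems, and then counts---but once you observe (as you do) that the underlying homology is $\F[\overline{\xi}_1,\overline{\xi}_2,\ldots]$ as a comodule, the two arguments are essentially the same. One caution: invoking Priddy's theorem verbatim requires that the underlying attaching maps be \emph{non-trivial}, which is not obvious from the equivariant construction; it is cleaner to argue directly from the homology, which you already have.

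The real divergence is (ii). You propose applying $\Phi^{C_2}$ to the slice tower and analyzing the resulting spectral sequence, but you leave the crucial step---showing the differentials cancel everything above degree $0$---unfinished, calling it ``the hard part.'' Completing that would require control over the $a_\sigma$-action on all of $\rpis\ev$, which is strictly more than (iii) and (iv) provide. The paper instead argues homologically, exactly in parallel with (i): from $\hf_\star\ev = P_\star$ one extracts enough information about the $\textup{H}\F_*$-homology of $\Phi^{C_2}\ev$ as an $\mathcal{A}_*$-comodule that standard splitting theorems force $\Phi^{C_2}\ev$ to be a wedge of suspensions of $\textup{H}\F$, and again there is only room for one copy. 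This bypasses the slice tower entirely for (ii) and is much shorter. Your proposed route might be completable, but as written it is a genuine gap, and the homological splitting argument is both the intended and the easier path.
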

\begin{proof} Part (iii) follows from Hu-Kriz's equivariant version of the
Adams spectral sequence together with Proposition \ref{ext-computation}.

For parts (i) and (ii), standard splitting theorems apply to show
that $\Phi^{C_2}\textup{Even}(S^0)$ splits as a wedge of suspensions of $\textup{H}\F$
and $\Phi^e\textup{Even}(S^0)$ splits as a wedge of suspensions of
$\textup{BP}$. But there is only room for one copy in each case.

Part (iv) holds by construction, and the last statement follows from
Theorem \ref{slice-tower}.
\end{proof}

At long last, we invoke the existence of $\bpr$ and its known properties to
deduce a final sanity check: we have produced the correct spectrum, in the end.

\begin{theorem}\label{its-bpr} $\ev \cong \bpr$.
\end{theorem}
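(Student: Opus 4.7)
The plan is to build a comparison map $f \colon \ev \to \bpr$ by obstruction theory and then show it is an equivalence. The key external input is the classical fact (e.g., from Hu--Kriz) that $\bpr$ is a 2-local $C_2$-equivariant commutative ring spectrum with $\rpi_{n\rho - 1}\bpr = 0$ for all $n$. Since $\ev$ is built stage by stage from $S^0_{(2)}$ by coning off attaching maps $\bigvee_\alpha W_\alpha \to \ev_k$ with each $W_\alpha$ a slice cell of dimension $2k+1$, once we have $g_k \colon \ev_k \to \bpr$ the obstruction to extending $g_k$ over $\ev_{k+1}$ lies in a direct sum of copies of $\rpi_{*\rho - 1}\bpr$, which vanishes by hypothesis. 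Starting from the unit $S^0_{(2)} \to \bpr$ and iterating produces the desired map $f$.

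To verify $f$ is an equivalence I would invoke isotropy separation: it suffices that $\Phi^{e}f$ and $\Phi^{C_2} f$ both be equivalences. For the geometric fixed points, Theorem~\ref{main-bpr-thm}(ii) combined with the classical computation $\Phi^{C_2}\bpr \simeq \textup{H}\F$ identifies both sides as non-equivariant $\textup{H}\F$, and $\Phi^{C_2}f$ preserves the unit in $\pi_0 = \F$, hence is an equivalence since any unit-preserving self-map of $\textup{H}\F$ is the identity. For the underlying spectrum, Theorem~\ref{main-bpr-thm}(i) identifies both sides with $\textup{BP}_{(2)}$ and $\pi^u_\star f$ is a graded ring endomorphism of $\mathbb{Z}_{(2)}[v_1, v_2, \ldots]$ fixing $1$.

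The main obstacle is to show that this underlying endomorphism is an isomorphism, i.e.\ that it sends each $v_i$ to an indecomposable. Here one leans on the identification $\hf_\star \ev \cong P_\star$ established earlier in this subsection: the inductively produced classes $v_i \in \pi^u_\star \ev$ have Hurewicz image $\overline{\xi}_i$ modulo decomposables, matching the standard polynomial generators of $\pi_\ast \textup{BP}$ detected in $P_\star \subset \arep$. Thus $\pi^u_\star f$ is an isomorphism on indecomposables, and therefore on the whole polynomial ring, so $f$ is an underlying equivalence and, together with the geometric fixed point equivalence, a $C_2$-equivariant equivalence.
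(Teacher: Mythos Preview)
Your overall strategy matches the paper's: construct a map $f\colon \ev \to \bpr$ extending the unit by obstruction theory, then verify it is an equivalence via isotropy separation. (The paper phrases the obstruction theory dually, inducting up the known slice tower of $\bpr$ rather than the cell structure of $\ev$, but the vanishing input is the same.) Your treatment of $\Phi^{C_2}f$ is fine. The gap is in the underlying case.

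You assert that $\pi^u_* f$ is a graded \emph{ring} endomorphism of $\mathbb{Z}_{(2)}[v_1, v_2, \ldots]$, but $f$ was produced only as a map of spectra, not as a ring map; nothing in the cell-by-cell extension argument forces multiplicativity. Without it, knowing that each $f_*(v_i)$ is indecomposable does not determine $f_*$ on monomials, so you cannot conclude that $f_*$ is an isomorphism along these lines. Your Hurewicz argument really shows that $H\F_*f$ carries $\overline{\xi}_i$ to an indecomposable, but to exploit that you would first need $H\F_*f$ to be an isomorphism --- and once you have that, the equivalence follows directly without ever passing through $\pi_*$.

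The paper closes this cleanly and avoids the issue: both $\textup{BP}$ and $\textup{H}\F$ are connective with mod~$2$ cohomology cyclic over the Steenrod algebra, generated by the unit class. Hence any self-map inducing an isomorphism on $\pi_0$ induces a surjection, and therefore an isomorphism, on $H\F^*$; by Whitehead for $2$-local connective spectra this is an equivalence. No ring structure on $f$ is required. The paper also records an alternative: show $f$ induces an isomorphism on $\hf_\star$, hence on Adams $E_2$-terms and on $\rpi_{*\rho}$, and invoke the $RO(C_2)$-graded Whitehead theorem of Hill--Meier.
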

\begin{proof}
A map extending the unit
exists by obstruction theory applied to the
known description of the slice tower of $\bpr$. To see this
is an equivalence, one can check on underlying spectra
and geometric fixed points. Both $\textup{BP}$ and $\textup{H}\F$
have the property that any endomorphism inducing an isomorphism
on $\pi_0$ is an equivalence. Indeed, both spectra are connective
and have cohomology which is generated
by the unit over the Steenrod
algebra. The result follows. 

Alternatively, one can check that the map induces an isomorphism of
homology, hence Adams $E_2$-terms, and hence an equivalence
on $\rpi_{*\rho}$. Since $\rpi_{*\rho -1} = 0$ for both spectra, we can
apply an $RO(C_2)$-graded version of the Whitehead theorem due
to Hill-Meier \cite[Lem 2.4]{HM}.
\end{proof}

\printbibliography
\end{document}